\renewcommand{\emptyset}{\varnothing}
\newcommand{\NN}{\mathbb N}
\newcommand{\QQ}{\mathbb Q}
\newcommand{\ZZ}{\mathbb Z}
\newcommand{\id}{\mathrm{id}}
\theoremstyle{definition}
\newtheorem{thm}{Theorem}[section]
\newtheorem{cor}[thm]{Corollary}
\newtheorem{lem}[thm]{Lemma}
\newtheorem{prop}[thm]{Proposition}
\newtheorem{defn}[thm]{Definition}
\newtheorem{eg}[thm]{Example}
\newtheorem{rem}[thm]{Remark}
\newtheorem{obs}[thm]{Observation}
\newtheorem{question}[thm]{Question}
\newtheorem{maintheorem}{Theorem}	\newtheorem{fact}[thm]{Fact}
\numberwithin{equation}{section}
\newcommand{\longword}{\widetilde{\omega}}
\newcommand{\indexedforests}{\operatorname{\mathsf{Forest}}}
\newcommand{\forestpoly}[1]{\mathfrak{P}_{#1}} 
\newcommand{\qsym}[2][]{
{\ifx&#1&%
  {\operatorname{QSym}_{#2}}
\else
  {{}^{#1}\!\operatorname{QSym}_{#2}}
\fi}
} 
\newcommand{\eqsym}[2][]{
{\ifx&#1&%
  {\operatorname{EQSym}_{#2}}
\else
  {{}^{#1}\!\operatorname{EQSym}_{#2}}
\fi}
} 
\newcommand{\qseq}[2][]{
{\ifx&#1&%
  {\operatorname{QSeq}_{#2}}
\else
  {{}^{#1}\!\operatorname{QSeq}_{#2}}
\fi}
}
\newcommand{\qsymide}[2][]{
{\ifx&#1&%
  {\operatorname{QSym}_{#2}^+}
\else
  {{}^{#1}\!\operatorname{QSym}_{#2}^+}
\fi}
} 
\newcommand{\sym}[1]{\operatorname{Sym}_{#1}} 
\newcommand{\esym}[1]{\operatorname{ESym}_{#1}} 
\newcommand{\symide}[1]{\sym{#1}^+} 
\newcommand{\supp}{\operatorname{supp}} 
\newcommand{\compatible}[2][]{
{\ifx&#1&%
  {\mathcal{C}(#2)}
\else
  {\mathcal{C}^{m}(#2)}
\fi}
} 
\newcommand{\internal}[1]{\operatorname{IN}(#1)} 
\newcommand{\suchthat}{\;|\;}
\newcommand{\nvect}{\mathsf{Codes}}
\newcommand{\schub}[1]{\mathfrak{S}_{#1}} 
\newcommand{\red}[1]{\operatorname{Red}(#1)} 
\newcommand{\des}[1]{\operatorname{Des}(#1)} 
\newcommand{\desnc}[1]{\operatorname{Des}_{\operatorname{NC}}(#1)} 
\newcommand{\ncpart}{\operatorname{NCPart}}
\newcommand{\ncperm}{\operatorname{NCPerm}}
\date{}
\newcommand{\idem}{\operatorname{id}} 
\newcommand{\slide}[2][]{
{\ifx&#1&%
  {\mathfrak{F}_{#2}}
\else
  {\mathfrak{F}_{#2}^{\underline{#1}}}
\fi}
} 
\newcommand{\qdes}[1]{\operatorname{LTer}(#1)} 
\newcommand{\tope}[2][]{
{\ifx&#1&%
  {\mathsf{T}_{#2}}
\else
  {\mathsf{T}_{#2}^{\underline{#1}}}
\fi}
} 
\newcommand{\bope}[2][]{
{\ifx&#1&%
  {\mathsf{B}_{#2}}
\else
  {\mathsf{B}_{#2}^{(#1)}}
\fi}
} 
\newcommand{\rope}[1]{\mathsf{R}_{#1}} 
\newcommand{\End}{\operatorname{End}} 
\newcommand{\Trim}[1]{\operatorname{Trim}({#1})} 
\newcommand{\Th}[1][]{\mathsf{ThMon}^{\underline{#1}}} 
\newcommand{\zigzag}[2][]
{
{\ifx&#1&%
  {\mathsf{ZigZag}_{#2}}
\else
  {\mathsf{ZigZag}_{#2}^{#1}}
\fi}
}
\newcommand{\ltfor}[2][] 
{
{\ifx&#1&%
  {\mathsf{LTFor}_{#2}}
\else
  {\mathsf{LTFor}_{#2}^{#1}}
\fi}
}
\newcommand{\rtfor}[2][] 
{
{\ifx&#1&%
  {\mathsf{RTFor}_{>#2}}
\else
  {\mathsf{RTFor}_{>#2}^{#1}}
\fi}
}
\newcommand{\suppfor}[2][] 
{
{\ifx&#1&%
  {\mathsf{Forest}_{#2}}
\else
  {\mathsf{For}_{#2}^{#1}}
\fi}
}
\newcommand{\binfor}[1][]{
{\ifx&#1&%
    \operatorname{\mathsf{BinFor}}
\else
    \operatorname{\mathsf{BinFor}}^{#1}
\fi}
} 
\newcommand{\hqsym}[2][]{
{\ifx&#1&%
  {\operatorname{HQSym}_{#2}}
\else
  {\operatorname{HQSym}_{#2}^{#1}}
\fi}
} 
\newcommand{\fl}[1]{\mathrm{Fl}_{#1}}
\newcommand{\coinv}[1]{\operatorname{Coinv}_{#1}} 
\newcommand{\qscoinv}[2][]{
{\ifx&#1&%
  {\operatorname{QSCoinv}_{#2}}
\else
  {{}^{#1}\!\operatorname{QSCoinv}_{#2}}
\fi}
}
\definecolor{ao}{rgb}{0.0, 0.5, 0.0}
\newcommand{\sfc}{\mathsf{c}}
\newcommand{\rt}{\Omega}
\newcommand{\reseq}{\mathrm{RESeq}}
\newcommand{\hhmp}{\mathrm{QFl}}
\newcommand{\nfor}{\operatorname{NestFor}} 
\newcommand{\mbnfor}{\operatorname{mBNestFor}} 
\newcommand{\mnfor}{\operatorname{mNestFor}} 
\newcommand{\mrefor}{\operatorname{mREFor}} 
\newcommand{\wt}[1]{\widetilde{#1}} 
\newcommand{\wh}[1]{\widehat{#1}} 
\newcommand{\ul}[1]{\underline{#1}} 
\newcommand{\rletter}[1]{\mathsf{r}_{#1}}
\newcommand{\eletter}[1]{\mathsf{e}_{#1}}
\newcommand{\tl}{\textbf{t}}
\newcommand{\xl}{\textbf{x}}
\newcommand{\ev}{\operatorname{ev}}
\newcommand{\NC}{\operatorname{NC}}
\newcommand{\ForToNC}{\operatorname{ForToNC}}
\newcommand{\sylv}[1]{\operatorname{Syl}(#1)} 
\newcommand{\eope}[1]{{\mathsf{E}_{#1}}}
\newcommand{\wteope}[1]{\wt{\mathsf{E}}_{#1}}
\newcommand{\poltx}{\ZZ[\tl][\xl]} 
\newcommand{\weight}{\operatorname{wt}} 
\newcommand{\mc}[1]{\mathcal{#1}}
\newcommand{\heap}[1]{\mathsf{Heap}(#1)}
\newcommand{\rednc}[1]{\operatorname{Red}_{\NC}(#1)} 
\newcommand{\invnc}[1]{\operatorname{Inv}_{\NC}(#1)} 
\newcommand{\ncrmin}[1]{\longword^{#1}_{\le vert}}
\newcommand{\sylmin}[1]{\longword^{#1}_{vert}}
\newcommand{\sylcont}[1]{\longword^{#1}_{\ge vert}}
\newcommand{\padded}[1]{\operatorname{Pad}_{#1}} 
\renewcommand\emph[1]{\textcolor{blue}{\textit{#1}}} 
\newcommand{\wnode}
{
\begin{tikzpicture}[baseline=-3,scale=1]
		\node at (0,0) {{$\wedge$}};
		\draw[fill=white] (0,.1) circle (.05);
\end{tikzpicture}
}
\newcommand{\bnode}
{
\begin{tikzpicture}[baseline=-3,scale=1]
		\node at (0,0) {{$\wedge$}};
		\draw[fill=black] (0,.1) circle (.05);
\end{tikzpicture}
}
\title{Equivariant quasisymmetry and noncrossing partitions}
\author{Nantel Bergeron}
\address{Dept. of Math. and Stat., York University, Toronto, Ontario M3J 1P3, Canada}
\email{\href{mailto:bergeron@yorku.ca}{bergeron@yorku.ca}}
\author{Lucas Gagnon}
\address{Dept. of Math. and Stat., York University, Toronto, Ontario M3J 1P3, Canada}
\email{\href{mailto:lgagnon@yorku.ca}{lgagnon@yorku.ca}}
\author{Philippe Nadeau}
\address{Universite Claude Bernard Lyon 1, CNRS, Ecole Centrale de Lyon, INSA Lyon, Université Jean Monnet, ICJ UMR5208, 69622 Villeurbanne, France}
\email{\href{mailto:nadeau@math.univ-lyon1.fr}{nadeau@math.univ-lyon1.fr}}
\author{Hunter Spink}
\address{Department of Mathematics,
University of Toronto, Toronto, ON M5S 2E4, Canada}
\email{\href{mailto:hunter.spink@utoronto.ca}{hunter.spink@utoronto.ca}}
\author{Vasu Tewari}
\address{Department of Mathematical and Computational Sciences, University of Toronto Mississauga, Mississauga, ON L5L 1C6, Canada}
\email{\href{mailto:vasu.tewari@utoronto.ca}{vasu.tewari@utoronto.ca}}
\thanks{
NB and LG were supported by the Natural Sciences and Engineering Research Council of Canada (NSERC) and York Research Chair in Applied Algebra.
PN was partially supported by French ANR grant ANR-19-CE48-0011 (COMBIN\'E). HS and VT acknowledge the support of the NSERC, respectively [RGPIN-2024-04181] and [RGPIN-2024-05433].}
\begin{document}

\begin{abstract}
We introduce a definition of ``equivariant quasisymmetry'' for polynomials in two sets of variables. 
Using this definition we define quasisymmetric generalizations of the theory of double Schur and double Schubert polynomials that we call double fundamental polynomials and double forest polynomials, where the subset of ``noncrossing partitions'' plays the role of $S_n$. 
In subsequent work we will show this combinatorics is governed by a new geometric construction we call the ``quasisymmetric flag variety'' which plays the same role for equivariant quasisymmetry as the usual flag variety plays in the classical story.
\end{abstract}

\maketitle
\setcounter{tocdepth}{1}
\tableofcontents

\section{Introduction}

Recall that a polynomial $f(x_1,\ldots,x_n)$ is \emph{quasisymmetric} if the coefficient of a monomial $x_{i_1}^{a_1}\cdots x_{i_k}^{a_k}$ is the same as $x_1^{a_1}\cdots x_k^{a_k}$ for all sequences $i_1<\cdots < i_k$, and that the quasisymmetric polynomials form a subring $\qsym{n}\subset  \ZZ[x_1,\ldots,x_n]$. The following definition is new and the crux of this paper. We say that a polynomial in two sets of variables $f(x_1,\ldots,x_n;\tl)$ with $\tl=(t_1,t_2,\ldots)$ is \emph{equivariantly quasisymmetric} if for $1\le i \le n-1$ we have
$$f(x_1,\ldots,x_{i-1},t_i,x_{i+1},x_{i+2},\ldots,x_{n};\tl)=f(x_1,\ldots,x_{i-1},x_{i+1},t_i,x_{i+2},\ldots,x_{n};\tl).$$
This is a restricted form of variable symmetry, and setting all $t_i=0$ recovers an equivalent  reformulation of the definition of quasisymmetric polynomials that we recall in \Cref{sec:what_is_equivariant_quasisymmetry?}. 
We write $\eqsym{n}$ for the set of equivariantly quasisymmetric polynomials, which we will show is a ring with a $\ZZ[\tl]$-basis of what we call \emph{double fundamental quasisymmetric polynomials} $\slide{c}(x_1,\ldots,x_n;\tl)$, indexed by padded compositions $c=0^{\ell}a_1\cdots a_{n-\ell}$ with $a_i>0$.
This family specializes when $\tl=0$ to the well-known basis $\slide{c}(x_1,\ldots,x_n)$ for $\qsym{n}$ of fundamental quasisymmetric polynomials.\footnote{Our double quasisymmetric polynomials are unrelated to work of Pechenik--Satriano \cite{PeSa23}.}

While a polynomial that is symmetric in $x_1,\ldots,x_n$ takes a single value at any specialization of the $x$-variables $x_{i} = t_{\sigma(i)}$ 
for a
$\sigma\in S_n$, an equivariantly quasisymmetric polynomial need only take the same values at $x_{i} = t_{\sigma(i)}$ for a \emph{noncrossing permutation} $\sigma\in \NC_n$.  
Noncrossing permutations are obtained by applying a backwards cycle to each block of a noncrossing partition of $\{1,\ldots,n\}$. 
For example, when $n=3$ we have
 \[
 f(t_3,t_2,t_1;\tl)=f(t_3,t_1,t_2;\tl)=f(t_1,t_3,t_2;\tl)=f(t_1,t_2,t_3;\tl)=f(t_2,t_1,t_3;\tl).
 \]
Notably, we are unable to reach $f(t_2,t_3,t_1;\tl)$ by applying (quasi)symmetries, and the corresponding permutation is the only element of $S_3\setminus \NC_3$.

In this paper we develop a combinatorial theory for equivariant quasisymmetry which parallels the theory of double Schur polynomials $s_\lambda(x_1,\ldots,x_n;\tl)$, and more generally double Schubert polynomials $\schub{w}(\xl;\tl)$ for $\xl=(x_1,x_2,\ldots)$. 
These will be played respectively by the {double fundamental polynomials}
 $\slide{c}(x_1,x_2,\ldots,x_n;\tl)$ and a family of polynomials we call \emph{double forest polynomials} $\forestpoly{F}(\xl;\tl)$ for $F$ a plane binary indexed forest. 
 We will show that the double forest polynomials give a $\ZZ[\tl]$-basis for $\ZZ[\tl][\xl]$
and specialize when $\tl=0$ to the recently introduced forest polynomial basis of $\ZZ[\xl]$ studied in \cite{NST_a, NT_forest}.

In the sequel to this paper \cite{BGNST_Toappear} we will use the combinatorial theory developed here as a backbone to construct a \emph{quasisymmetric flag variety} $\hhmp_n$ that plays the same role for the double forest polynomials and $\NC_n$ as the usual flag variety does for double Schubert polynomials and $S_n$. 
One of the culminating results of the sequel \cite{BGNST_Toappear} will be that if we define $\qsymide{n}\coloneqq \langle f(x_1,\ldots,x_n)-f(0,\ldots,0)\suchthat f\in \qsym{n}\rangle$, then the quasisymmetric flag has cohomology ring
$$H^\bullet(\hhmp_n)\cong \qscoinv{n}\coloneqq\ZZ[x_1,\ldots,x_n]/\qsymide{n},$$
the ring of \emph{quasisymmetric coinvariants}. 
This exactly parallels the fact that the complete flag variety $\fl{n}$ has cohomology ring the analogously defined symmetric coinvariants, and answers in the best possible way the question of finding a geometric model for the ring of quasisymmetric coinvariants. 
See \Cref{subsec:PriorWork} for further discussion.

\subsection{Double forest polynomials}
\label{subsec:double_forests_intro}

Let $S_{\infty}$ be the group of permutations of $\{1,2,\ldots\}$ that fix all but finitely many elements. 
This group is generated by the adjacent transpositions $s_i=(i,i+1)$, and contains the finite symmetric groups $S_n=\langle s_1,\ldots,s_{n-1}\rangle\subset S_{\infty}$. 
The group $S_{\infty}$ acts on polynomials $\ZZ[\tl][\xl]$ by permuting the $\xl$-variables $w\cdot (x_1,x_2,\ldots)=(x_{w(1)},x_{w(2)},\ldots)$, and the divided difference operation is the endomorphism of $\ZZ[\tl][\xl]$ defined by $$\partial_if=\frac{f-s_i\cdot f}{x_i-x_{i+1}}.$$
This operation characterizes symmetry in the $\xl$-variables, as a polynomial $f(x_1,\ldots,x_n;\tl)$ is symmetric in $x_1,\ldots,x_n$ if and only if $\partial_1f=\cdots=\partial_{n-1}f=0$. 
Furthermore, these operations satisfy the defining relations of the nil-Hecke algebra $\partial_i^2=0$, $\partial_i\partial_j=\partial_j\partial_i$ for $|i-j|\ge 2$, and $\partial_i\partial_{i+1}\partial_i=\partial_{i+1}\partial_i\partial_{i+1}$, which implies that the composite operations $\partial_w=\partial_{i_1}\cdots \partial_{i_{\ell(w)}}$ for any reduced word factorization $w=s_{i_1}\cdots s_{i_{\ell(w)}}$ are well defined.

The double Schubert polynomials $\{\schub{w}(\xl;\tl)\suchthat w\in S_{\infty}\}\subset \ZZ[\tl][\xl]$ of Lascoux--Sch\"utzenburger \cite{LS82} are the unique family of homogenous polynomials satisfying the normalization condition $\schub{w}(\tl;\tl)=\delta_{w,\idem}$ and the recursion
$\partial_i \schub{w}(\xl;\tl)=\delta_{\ell(ws_i)<\ell(w)}\schub{ws_i}$. 
They are equivalently characterized as the unique family of homogenous polynomials such that $(\partial_w\schub{w'})(\tl;\tl)=\delta_{w,w'}$. 
Classically, these polynomials are constructed by the ansatz $\schub{w_{0,n}}(\xl;\tl)=\prod_{i+j\le n}(x_i-t_j)$, where $w_{0,n}\in S_n$ is the longest element, and deriving all other Schubert polynomials by applying divided differences. They can also be defined via a subword model inducing the graphical ``pipe dreams'' ~\cite{Ber93, BJS93, FK96, KM05}, and directly shown to satisfy the recursive characterization from there.

In this paper we define a new operation $$\eope{i}f=\frac{f(x_1,\ldots,x_{i-1},x_i,t_{i},x_{i+1},\ldots,x_{n-1};\tl)-f(x_1,\ldots,x_{i-1},t_i,x_i,x_{i+1},\ldots,x_{n-1};\tl)}{x_i-t_i}$$
we call the \emph{equivariant quasisymmetric divided difference}. This operation characterizes equivariant quasisymmetry, as a polynomial $f(x_1,\ldots,x_n;\tl)$ is equivariantly quasisymmetric if and only if $\eope{1}f=\cdots=\eope{n-1}f=0$.

Analogously to the descent set $\des{w}$ which consists of those $i$ for which $w\mapsto ws_i$ decreases length, indexed forests have a notion $\qdes{F}$ which consists of the labels of the terminal nodes, and we have an analogous ``trimming'' operation $F\mapsto F/i$ to a forest with one less internal node \cite[\S 3]{NST_a}.
The double forest polynomials will be axiomatically characterized in terms of their interaction with $\eope{i}$ by requiring the normalization condition $\forestpoly{F}(\tl;\tl)=\delta_{F,\emptyset}$, and $\eope{i}\forestpoly{F}(\xl;\tl)=\delta_{i\in \qdes{F}}\forestpoly{F/i}(\xl;\wh{\tl}_{i})$ where $\wh{\tl}_{i}=(t_1,\dots,t_{i-1},t_{i+1},\ldots)$. 
We note in particular that this recursion asserts that $\eope{i}\forestpoly{F}$ does not depend on the variable $t_i$. 
This recursion is subtle; for example, if $j\in \qdes{F/i}$ then $\eope{j}\eope{i}\forestpoly{F}$ is not necessarily a variable transformation of $\forestpoly{(F/i)/j}$ due to the $\tl$-variable reindexing in $\eope{i}\forestpoly{F}$. We give some examples of applications of this operation to forest polynomials in \Cref{fig:eq_trims}.

\begin{figure}[!h]
    \centering
    \includegraphics[width=\linewidth]{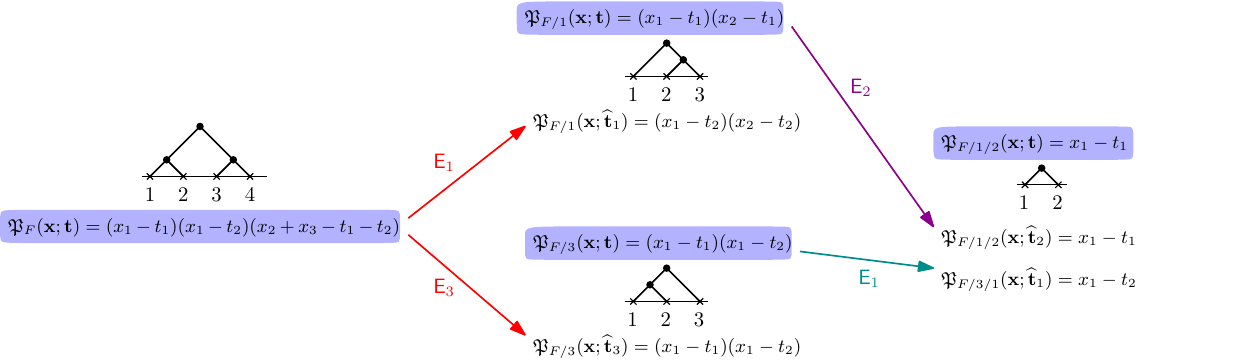}
    \caption{Applying $\eope{i}$ operations for the forest $F$ with code $c(F)=(2,0,1,0,\ldots)$}
    \label{fig:eq_trims}
\end{figure}

\begin{maintheorem}[\Cref{thm:ForestDesiderata},  \Cref{sec:subword_models}]
\label{thm:thmA}
    Double forest polynomials $\forestpoly{F}(\xl;\tl)$ exist, and are computed by a subword model that is graphically represented by certain \emph{vine diagrams}.
\end{maintheorem}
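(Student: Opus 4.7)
The plan is to establish existence and the subword model simultaneously by defining $\forestpoly{F}(\xl;\tl)$ explicitly as a weighted sum over vine diagrams of $F$, with each diagram contributing a product of linear factors of the form $(x_a-t_b)$, and then verifying directly that this family satisfies both the normalization $\forestpoly{F}(\tl;\tl)=\delta_{F,\emptyset}$ and the recursion $\eope{i}\forestpoly{F}(\xl;\tl)=\delta_{i\in\qdes{F}}\forestpoly{F/i}(\xl;\wh{\tl}_i)$. This mirrors the Fomin--Kirillov / Bergeron--Billey strategy for double Schubert polynomials, in which a pipe dream sum is shown directly to satisfy the Lascoux--Sch\"utzenberger recursion, yielding existence and an explicit combinatorial model in one stroke.

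The argument proceeds by induction on $|F|$, the number of internal nodes of $F$ (equivalently, the degree of $\forestpoly{F}$). The base case $F=\emptyset$ is immediate: the unique empty vine diagram has weight $1$, so $\forestpoly{\emptyset}=1$ trivially satisfies normalization, and the recursion is vacuous because $\qdes{\emptyset}=\emptyset$. For the inductive step, the normalization $\forestpoly{F}(\tl;\tl)=0$ for $F\neq\emptyset$ would be proved either by observing that every vine diagram contributes a factor $(t_a-t_a)$ upon the specialization $\xl=\tl$, or by a sign-reversing involution on those diagrams that survive after this specialization. The recursion would then follow from a ladder-move style bijection: for $i\in\qdes{F}$, one identifies the vine diagrams of $F$ whose contributions do not cancel under $\eope{i}$ and matches them weight-preservingly with the vine diagrams of $F/i$ over the reindexed variable set $\wh{\tl}_i$; for $i\notin\qdes{F}$, the diagrams must pair into a sign-reversing involution under the action of $\eope{i}$.

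The main obstacle is precisely this bijection. The operator $\eope{i}$ exchanges $x_i$ with $t_i$ among the first $n-1$ $x$-slots and divides by $(x_i-t_i)$, and the corresponding combinatorial move on vine diagrams must faithfully track both the local change at position $i$ and, crucially, the global reindexing $\tl\mapsto\wh{\tl}_i$ on the right-hand side of the recursion. This reindexing has no direct analogue in the Schubert setting (where divided differences preserve both variable alphabets) and is characteristic of equivariant quasisymmetry, where the natural $t$-alphabet shifts in accordance with the number of internal nodes. Establishing that the bijection is well-defined, weight-preserving, and exhaustive---so that no other vine diagrams contribute to $\eope{i}\forestpoly{F}$---will be the most delicate part of the verification.

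Finally, uniqueness of the polynomial determined by the axioms, needed to conclude that the vine-diagram sum truly is the double forest polynomial, follows from a standard triangularity argument: iterating the recursion, any polynomial $f\in\ZZ[\tl][\xl]$ satisfying the axioms is determined by the constants obtained as $(\eope{i_k}\cdots\eope{i_1}f)(\tl;\tl)$ for all valid trimming sequences $F=F_0\to F_1\to\cdots\to\emptyset$, so any two families of polynomials satisfying the normalization and recursion must coincide. Combined with the vine-diagram construction, this simultaneously proves existence and furnishes the advertised subword model.
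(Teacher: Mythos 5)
Your high-level strategy — define the polynomial directly as a weighted sum over vine diagrams (subwords of the long word $\longword_{[n]}$), then verify the normalization and the $\eope{i}$-recursion, and invoke uniqueness — is exactly the framework the paper uses. But as written the proposal stops precisely where the work begins; the decisive steps are named as difficulties rather than resolved.

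Concretely, three things are missing, and one suggestion you give is wrong.

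First, your ``option (a)'' for normalization — that every surviving diagram under $\xl=\tl$ contributes a factor $(t_a-t_a)$ — is false. The weight $\weight(j^{(i)})=x_i-t_j$ specializes to $t_i-t_j$, which vanishes only when $i=j$; barred letters $\overline{j}^{(i)}$ have weight $t_j-t_i$ independent of $\xl$. For the forest with code $(0,1,0,\ldots)$ the subword $\{\overline{2}^{(1)}\}$ has specialized weight $t_2-t_1\ne 0$. You do need the sign-reversing involution (your ``option (b)''), and it has genuine content: it swaps the first unbarred/barred pair $j^{(i)}\leftrightarrow\overline{j}^{(i)}$ present in a syllable, and one must check (using the Smirnov property of Sylvester words) that exactly one of the pair is present and that the involution is sign-reversing and fixed-point free off the $i^{(i)}$ diagonal.

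Second, the recursion step is not a ``ladder-move'' argument in the pipe-dream sense. The paper first proves two sign-reversing involution lemmas (\Cref{prop:Rsubword}) showing that $\rope{i}^{\pm}$ applied to the full subword sum equals $\rope{i}^{\pm}$ applied to the sum over a smaller word $\longword^{\pm}_{[n],i}$; it then combines these via an injection (\Cref{cor:esubword}) to get a closed expression for $\eope{i}$ of the sum; and then it constructs a global letter-by-letter relabeling bijection $\Phi$ between the diagrams of $F$ on $\longword^{+}_{[n],i}$ that contain $i^{(i)}$ and the diagrams of $F/i$ on $\longword_{[n-1]}$, checked to be weight-preserving with the reindexed variable set $\wh{\tl}_i$. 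You correctly single out the reindexing as the subtle point, but that is exactly the part your proposal does not construct.

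Third, the vanishing branch ($i\notin\qdes{F}$) does not follow from a generic sign-reversing pairing; it requires a combinatorial lemma (\Cref{le:insertsylvester}) characterizing $i\in\qdes{F}$ in terms of whether $i\pm 1$ can appear after $i$ in Sylvester words, so that one can deduce no surviving subword of $\longword^{+}_{[n],i}$ contains $i^{(i)}$. Without this lemma the ``exhaustiveness'' you flag as delicate is simply unaddressed. In short: right skeleton, same as the paper's, but the Smirnov-word involutions, the $\Phi$ reindexing bijection, and the $\qdes{}$ characterization lemma are the whole proof, and they are not yet present.
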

 
 We also show our subword model and vine diagrams can also be adapted to compute double Schubert polynomials. These models do not appear to specialize to pipe dreams.

\begin{rem}
    As noted in \cite{NST_a}, it appears that no simple ansatz works even for ordinary forest polynomials. As we will see, the existence of vine diagrams follows from the axiomatic definition of forest polynomials assuming they do exist, and we will directly verify the axiomatic recursion on these polynomials.
\end{rem}

\subsection{Evaluations at noncrossing permutations}

Given a polynomial $f(\xl;\tl)$, we can consider its  specializations $\ev_wf\coloneqq f(\tl_{w};\tl)$ at permutations of the $t$-variables $\tl_{w}=(t_{w(1)},t_{w(2)},\ldots)$. The double Schubert polynomials satisfy an upper-triangular property with respect to these evaluations under the Bruhat order
$$\ev_\sigma\schub{w}=\begin{cases}0&\sigma \not \ge w\\\displaystyle\prod_{i<j\text{ and }w(i)>w(j)}(t_{w(i)}-t_{w(j)})& \sigma=w\\
\text{Formula of AJS--Billey \cite{AJS,Bil99}}&\sigma>w.\end{cases}$$
 Furthermore, the formula of AJS-Billey is always nonzero and so this shows that double Schubert polynomials evaluations characterize the Bruhat order via 
 \[ 
 w\le \sigma \Longleftrightarrow \ev_{\sigma}\schub{w}\ne 0.
 \]

We show that double forest polynomials behave analogously with respect to $\NC_n$. We construct a bijection $\ForToNC:\suppfor{n}\to \NC_{n}$ from forests supported on $\{1,\ldots,n\}$ to noncrossing partitions
such that the following is true.
\begin{maintheorem}[\Cref{sec:evaluations}]
\label{thm:mainC}
For $\sigma\in \NC_{n}$ and $F\in \suppfor{n}$ we have
    $$\ev_\sigma \forestpoly{F}=\begin{cases}0&\sigma \not \ge \ForToNC(F)\\\displaystyle\prod_{i<j\text{ and }\sigma(i)>\sigma(j)\text{ and }\sigma(i,j)\in \NC_n}(t_{\sigma(i)}-t_{\sigma(j)})& \sigma=\ForToNC(F)\\\text{Formula in \Cref{thm:AJS_Billey_for_Forests}}&\sigma>\ForToNC(F).\end{cases}$$
\end{maintheorem}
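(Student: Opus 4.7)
My plan is to prove the theorem by induction on the number of internal nodes of $F$, using the axiomatic recursion $\eope{i}\forestpoly{F} = \delta_{i\in\qdes{F}}\forestpoly{F/i}(\xl; \wh{\tl}_i)$ from \Cref{thm:thmA}. The engine will be a functional equation obtained by specializing $x_j = t_{\tau(j)}$ in the very definition of $\eope{i}$: the left-hand side becomes an evaluation of $\forestpoly{F/i}$ at a permutation of the reindexed variables $\wh{\tl}_i$, while the right-hand side rewrites as $(\ev_{\sigma^+}\forestpoly{F} - \ev_{\sigma^-}\forestpoly{F})/(t_{\tau(i)} - t_i)$, where $\sigma^+, \sigma^- \in S_n$ are obtained from $\tau$ by inserting the value $i$ at positions $i+1$ and $i$ respectively. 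This expresses unknown evaluations of $\forestpoly{F}$ on $S_n$ in terms of known evaluations of $\forestpoly{F/i}$ on $S_{n-1}$.

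The base case $F = \emptyset$ is immediate, since $\forestpoly{\emptyset}=1$ and $\ForToNC(\emptyset)=\idem$. For the inductive step, I pick $i \in \qdes{F}$ and handle each of the three clauses. For vanishing when $\sigma \not\ge \ForToNC(F)$ in the noncrossing Bruhat order, I would write $\sigma$ as an insertion of the value $i$ into some $\tau \in \NC_{n-1}$ and check that both partners $\sigma^\pm$ fail to dominate $\ForToNC(F)$; by the inductive hypothesis both of their evaluations vanish, and the functional equation then forces $\ev_\sigma \forestpoly{F}=0$. For the diagonal case $\sigma = \ForToNC(F)$, exactly one of $\sigma^\pm$ should match the corresponding diagonal at the $(n-1)$-level, producing a single surviving term which after dividing by the appropriate linear factor yields the stated product over noncrossing inversions. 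The full formula in \Cref{thm:AJS_Billey_for_Forests} for the off-diagonal case $\sigma > \ForToNC(F)$ then emerges by iterating the recursion through a reduced decomposition of $\sigma$, or equivalently by directly analyzing the vine diagram subword model supplied by \Cref{thm:thmA} and classifying which vines contribute non-vanishing terms to $\ev_\sigma \forestpoly{F}$.

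The principal obstacle is combinatorial rather than algebraic: making precise the interaction between $\ForToNC$, the trimming operation $F \mapsto F/i$, and the Bruhat-like order restricted to $\NC_n$. Specifically, I would need to show (i) that $\ForToNC(F/i)$ arises from $\ForToNC(F)$ by an explicit ``noncrossing descent'' move keyed to the terminal index $i$, and (ii) that the insertion map $\tau \mapsto \{\sigma^+, \sigma^-\}$ correctly restricts the upper-Bruhat ideal of $\ForToNC(F/i)$ in $\NC_{n-1}$ onto that of $\ForToNC(F)$ in $\NC_n$, in a way that allows exactly one partner to contribute along the diagonal. I expect the construction of $\ForToNC$ to be engineered precisely to make these compatibilities transparent, so that once these combinatorial lemmas are in place, the three clauses of the theorem follow cleanly from the inductive machinery described above.
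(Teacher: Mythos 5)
Your functional equation is indeed the right engine---it is exactly the recursion \eqref{eq:two_cases_recursion} in the paper's proof of \Cref{thm:AJS_Billey_for_Forests}---but the induction you organize around it does not close, for two structural reasons.

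First, you propose to induct on $|F|$ and to choose the pivot $i$ from $\qdes{F}$. But the functional equation relates \emph{three} quantities: $\ev_{\sigma^+}\forestpoly{F}$, $\ev_{\sigma^-}\forestpoly{F}$, and an evaluation of $\forestpoly{F/i}$. Only the last has a smaller forest; the other two involve the \emph{same} $F$, with one of $\sigma^\pm$ being $\sigma$ itself and the other being $\sigma s_i$. An induction on $|F|$ alone therefore leaves an unknown of exactly the same ``size'' sitting in the equation. Moreover, fixing $i\in\qdes{F}$ does not guarantee that $\sigma$ is one of the two insertion partners $\sigma^\pm$: for that you need $i\in\{\sigma(i),\sigma(i+1)\}$, i.e.\ $i$ must be chosen from the data of $\sigma$, not of $F$. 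When $\sigma$ has no such descent at any $i\in\qdes{F}$, your recursion does not apply at all, and the induction stalls. Your claim in the vanishing case that ``both partners $\sigma^\pm$ fail to dominate $\ForToNC(F)$'' is also false in general: one of $\sigma^\pm$ covers $\sigma$ in Bruhat order, and a cover of a non-dominating element can perfectly well dominate.

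The paper resolves all of this by reversing the priorities. It proves \Cref{thm:AJS_Billey_for_Forests} first, by induction on $\ell(\sigma)$: take a noncrossing reduced word $\omega\in\rednc{\sigma}$ and set $a$ to be its final letter, so $a\in\desnc{\sigma}$. Then both $\ell(\sigma s_a)=\ell(\sigma)-1$ and $\ell(\sigma_{\wh a})<\ell(\sigma)$, so both terms in the recursion are strictly smaller in the inductive parameter---regardless of whether $a\in\qdes{F}$ (if not, the recursion collapses to $\ev_\sigma\forestpoly{F}=\ev_{\sigma s_a}\forestpoly{F}$, which still makes progress). This requires the nontrivial combinatorial inputs you anticipated (\Cref{prop:Removenoncrossing} that $\omega(\wh a)\in\rednc{\sigma_{\wh a}}$, and \Cref{lem:outstandingclaim} matching the weights), but the induction itself is well-founded. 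The three clauses of the present theorem then follow: the off-diagonal formula is \Cref{thm:AJS_Billey_for_Forests} itself, vanishing off the Bruhat ideal is \Cref{thm:AJSB_triangularity} (using \Cref{prop:gobet_williams_bruhat} and \Cref{lem:canoncial_word_facts}), and the diagonal product is \Cref{thm:forestatownperm}. So the right strategy is to prove the uniform AJS--Billey subword formula by descent on $\ell(\sigma)$, not to prove the three clauses separately by induction on the forest.
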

We will furthermore show that the formula in \Cref{thm:AJS_Billey_for_Forests} is always nonzero, and so double forest polynomials evaluations characterize the Bruhat order restricted to $\NC_n$ via 
\[
\ForToNC(F)\le \sigma \Longleftrightarrow \ev_{\sigma}\forestpoly{F}\ne 0.
\]


\subsection{$\star$-compositions and combinatorial positivity}

Under a modified notion of composition we call \emph{$\star$-composition}, the $\eope{i}$ operations satisfy the defining relations of the positive Thompson monoid
$$[\eope{i}\star \eope{j}]=[\eope{j}\star \eope{i+1}]\text{ for }i>j,$$
meaning that the theory of their $\star$-compositions is governed by the monoid structure on $\indexedforests$. 
From these $\star$-compositions we can build algebraic operations $[\ev\star \eope{F}]\colon\ZZ[\tl][\xl]\to \ZZ[\tl]$ with the property that $[\ev \star \eope{F}]\forestpoly{G}=\delta_{F,G}$. This will in particular imply for any $f(\xl;\tl)\in \ZZ[\tl][\xl]$ that
$$f(\xl;\tl)=\sum_{F\in \indexedforests}a_F(\tl)\forestpoly{F}(\xl;\tl)\text{ where }a_F(\tl)=[\ev\star \eope{F}]f.$$
Say that $a(\tl)\in \ZZ[\tl]$ is \emph{Graham-positive} if it lies in $\ZZ_{\ge 0}[t_2-t_1,t_3-t_2,\ldots]$. First studied by Graham \cite{Gra01}, this is the natural torus-equivariant notion of positivity in ``type $A$'', specializing when $\tl=0$ to the usual notion of positivity. Using these operations we are able to show the following result.
\begin{maintheorem}[\Cref{thm:schubertexpandpos}, \Cref{thm:forestmultpos}]\label{thm:mainB}
    Double Schubert polynomials have a decomposition
    $$\schub{w}(\xl;\tl)=\sum_{F\in \indexedforests}a_F^w(\tl) \forestpoly{F}(\xl;\tl)\text{ with }a_F^w(\tl)\text{ Graham positive},$$
    and for forests $F,G$ we have
    $$\forestpoly{F}(\xl;\tl)\forestpoly{G}(\xl;\tl)=\sum_{H\in \indexedforests}a^H_{F,G}(\tl)\forestpoly{H}(\xl;\tl)\text{ with }a^H_{F,G}(\tl)\text{ Graham positive}.$$
    Furthermore, the Graham-positivity is realized by an explicit combinatorial algorithm for computing the coefficients.
\end{maintheorem}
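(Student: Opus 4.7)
The plan rests on two ingredients established earlier in the paper: the dual pairing $[\ev\star\eope{F}]\forestpoly{G}=\delta_{F,G}$, which realizes the coefficient $a_F(\tl)$ of $\forestpoly{F}$ in the double forest expansion of $f$ as $[\ev\star\eope{F}]f$, and the vine-diagram subword model of \Cref{thm:thmA}, which presents both $\schub{w}(\xl;\tl)$ and $\forestpoly{F}(\xl;\tl)$ as manifestly monomial-positive sums of products of factors $x_a-t_b$. I would combine these two to extract the desired coefficients explicitly and read off Graham-positivity from the structure of the surviving terms after specialization $\xl=\tl$.

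For the Schubert expansion, the first step is to apply $[\ev\star\eope{F}]$ to the vine-diagram expansion $\schub{w}=\sum_D \weight(D)$. I would analyze how a single $\eope{i}$ acts on a factored monomial $\prod(x_a-t_b)$, identifying a local combinatorial move on the diagram that mirrors the way pipe dreams transform under ordinary divided differences, but adapted to the $\tl$-reindexing $\wh{\tl}_i$ forced by the forest recursion. Iterating this move along any $\star$-reduced factorization of $\eope{F}$ yields $a_F^w(\tl)$ as a finite sum of products of the form $\prod_k(t_{a_k}-t_{b_k})$ with $a_k>b_k$; independence of the chosen factorization is automatic from the Thompson monoid relations $[\eope{i}\star\eope{j}]=[\eope{j}\star\eope{i+1}]$ for $i>j$. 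Each such product is Graham-positive, so the full sum is Graham-positive.

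The product case is handled in parallel: $\forestpoly{F}\forestpoly{G}$ expands as a sum over pairs of vine diagrams of products of factored monomials, and applying $[\ev\star\eope{H}]$ produces $a^H_{F,G}(\tl)$ as an explicit sum of the same shape. Alternatively, one can organize this as a Monk-style rule by first treating the case where $\forestpoly{H}$ has minimal degree and iterating via the Thompson relations to build up general $H$; this will give the explicit combinatorial algorithm asserted in the statement. In both parts, the fact that $[\ev\star\eope{H}]$ lands in $\ZZ[\tl]$ (rather than $\ZZ[\tl][\xl]$) is ensured by the recursion in \Cref{thm:thmA} together with the $\ev$ at the end.

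The main obstacle is demonstrating that the raw divided-difference formula is cancellation-free. Each $\eope{i}$ is a signed difference of two specializations, so a priori the resulting expansion could contain negative monomials canceling against positive ones, and a genuinely positive combinatorial algorithm requires a sign-reversing involution pairing such terms, in the spirit of the Knutson--Miller proof of positivity for double Schubert pipe dreams. The additional subtlety here is the $\tl$-variable reindexing $\wh{\tl}_i$ that appears in the forest recursion: the involution must be compatible with the shifting $\tl$-labeling at every step along the $\star$-composition, which is where I expect the noncrossing combinatorics of $\NC_n$ and the bijection $\ForToNC$ from \Cref{thm:mainC} to enter decisively in pinning down which $(a,b)$ pairs can arise.
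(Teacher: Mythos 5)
You correctly identify the extraction formula $a_F(\tl)=[\ev\star\eope{F}]f$ and that the $\eope{i}$ satisfy Thompson-monoid relations under $\star$-composition, but your plan then leans on a step you yourself flag as ``the main obstacle'' and never resolve: a sign-reversing involution showing that feeding the vine expansion of $\schub{w}$ (or of $\forestpoly{F}\forestpoly{G}$) through the signed operators $\eope{i_1},\dots,\eope{i_k}$ is cancellation-free. That gap is not small. The paper explicitly poses ``give an explicit combinatorial interpretation for the $a^w_F(\tl)$'' as an open problem, so the Knutson--Miller-style involution you are hoping for would in fact answer a question the authors leave unsolved; it cannot be assumed.

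The paper's proof is organized around a different idea that sidesteps any term-level cancellation argument. Rather than expanding $\schub{w}$ or $\forestpoly{F}\forestpoly{G}$ into vine diagrams and acting on monomials, it works in the operator algebra: the $\mrefor$-monoid relations lift to $\star$-compatible operator identities (\Cref{thm:straighteningrules}), and the two \emph{extra} rewrites $[\eope{i}\star\rope{i+1}^-]$ and $[\eope{i}\star\rope{i}^+]$ expand with manifestly Graham-positive coefficients (\Cref{prop:additionalrelations}). Iterating these pushes every $\eope{}$ to the right of every $\rope{}^{\pm}$ (\Cref{lem:straightenmonadically}), so any $[\ev\star\Phi_{i_1}\star\cdots]$ becomes a Graham-positive combination of operators $[\ev\star\rope{j_1}^{\epsilon_1}\star\cdots\star\rope{j_\ell}^{\epsilon_\ell}\star\eope{K}]$. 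The $\eope{K}$ block is then absorbed by the recursion for the target polynomial (directly for $\forestpoly{F}$, via $\eope{i}=\rope{i}^-\partial_i$ for $\schub{w}$), and the residual pure $\rope{}^{\pm}$ composition is, by \Cref{lem:RcomposeNC}, evaluation at a noncrossing permutation, whose Graham-positivity is supplied by \Cref{th:AJSB} and \Cref{thm:AJS_Billey_for_Forests}. For the product case one also needs the Leibniz rule $\eope{i,A}(fg)=(\eope{i,A}f)(\rope{i,A}^-g)+(\rope{i,A}^+f)(\eope{i,A}g)$ of \Cref{prop:ForestLeibniz} together with \Cref{thm:PartialApplicationGrahamPositive}, neither of which your sketch supplies; working with pairs of vine diagrams as you propose puts you right back into the unresolved cancellation problem.
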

We note that as a special case of this result we obtain a Graham-positive decomposition of double Schur polynomials into double fundamental quasisymmetric polynomials, as well as a Graham-positive multiplication rule for double fundamental quasisymmetric polynomials. 
Although our algorithm is manifestly Graham-positive, we understand almost nothing about how it behaves in even the simplest cases. 
We thus pose the following tasks.
\begin{question}
\begin{enumerate}[align=parleft,left=0pt,label=(\arabic*)]
    \item Give an explicit combinatorial interpretation for the $a^w_F(\tl)$.
    \item As a special case, give an explicit combinatorial interpretation for the decomposition of double Schur polynomials into double fundamental quasisymmetric polynomials.
    \item Determine the ``equivariant shuffle product rule'' for multiplying double fundamental quasisymmetric polynomials.
    \item As a special case, determine an ``equivariant Monk's rule'' corresponding to multiplication by $\slide{0^{n-1}1}=x_1+\cdots+x_n-t_1-\cdots-t_n$.
\end{enumerate}
\end{question}

The interested reader can find tables containing the values from (1)-(4) at the end of the paper.


\subsection{Prior work}
\label{subsec:PriorWork}
This work has a number of antecedents. By work of Borel \cite{Bor53}, the complete flag variety $\fl{n}$ has cohomology ring given by the symmetric coinvariants
$$H^\bullet(\fl{n})=\coinv{n}\coloneqq \ZZ[x_1,\ldots,x_n]/\symide{n},$$
where $\symide{n}\coloneqq \langle f(x_1,\ldots,x_n)-f(0,\ldots,0)\suchthat f\in \sym{n}\rangle$.
Subsets of Schubert polynomials form bases both for $\coinv{n}$ and $\symide{n}$, and the interplay between the geometry of the flag variety and the combinatorics of Schubert polynomials is one of the central themes in algebraic combinatorics.

Inspired by this, in \cite{ABB04} Aval--Bergeron--Bergeron introduced the ring of quasisymmetric coinvariants and showed that
$$\operatorname{rank}(\qscoinv{n}^{(i)})=\frac{n-i}{n+i}\binom{n-i}{i}$$
for $i=0,\ldots,n-1$. For fixed $n$ this sequence is increasing and in particular fails to be symmetric, which rules out the possibility of a smooth projective variety $X$ for which the quasisymmetric coinvariants may be realized as $H^\bullet(X)$. Later work of the third and fifth authors \cite{NT_forest} showed that forest polynomials give bases for $\qsymide{n}$ and $\qscoinv{n}$, analogously to how Schubert polynomials give bases for $\symide{n}$ and $\coinv{n}$. In \cite{NST_a} the third, fourth, and fifth authors developed a combinatorial theory for forest polynomials based on the trimming operation $\tope{i}$ (obtained by setting $\tl=0$ in $\eope{i}$) which paralleled the divided difference theory of Schubert polynomials. This was used by the same authors in \cite{NST_c} to show that there was a geometric theory paralleling the combinatorial theory based on certain toric Richardson varieties forming a toric complex we called $\operatorname{HHMP}_n$, and it was shown using a spectral sequence argument that
$$\qscoinv{n}\subset H^\bullet(\operatorname{HHMP}_n).$$ In the sequel \cite{BGNST_Toappear} we will show that to have the geometry match the equivariant combinatorial theory (and to have an equality of the cohomology ring with the quasisymmetric coinvariants) it is necessary to work with a new much more combinatorially complicated toric complex that we will call the quasisymmetric flag variety $\hhmp_n$, formed as the union of the $X(F)$ varieties that were first introduced in \cite{NST_c} as canonical translations of the Richardsons appearing in $\hhmp$. It is a primary goal of this paper to supply the necessary combinatorics to carry this out.

Finally, in work of the first two authors \cite{BeGa23} it was shown that if we consider the ``orbit harmonics'' associated to the set of points $$\{(\sigma(1),\ldots,\sigma(n))\suchthat \sigma\in \NC_n\}\subset \QQ^n,$$
i.e. the cokernel of the map $\QQ[x_1,\ldots,x_n]\to \QQ^{|\NC_n|}$ taking $f\mapsto (f_{\sigma})_{\sigma\in \NC_n}$ then we recover a ring whose associated graded is isomorphic to the quasisymmetric coinvariants with $\QQ$-coefficients. 
These results, when generalized appropriately to our context, will be established in the sequel \cite{BGNST_Toappear} using \Cref{thm:mainC}, and will correspond geometrically to the determination of the $T$-equivariant cohomology ring $H^\bullet_T(\hhmp_n)$ via the combinatorial graph cohomology ring of Goresky--Kottwitz--MacPherson \cite{GKM98}.

\subsection{Outline of the article}
In \Cref{sec:what_is_equivariant_quasisymmetry?} we define the notion of equivariant quasisymmetry and compare it to how symmetric polynomials are considered in an equivariant context. 
In \Cref{sec:preliminaries} we go over combinatorial preliminaries needed for the remainder of the article.
Moving beyond preliminaries, the article has three main parts.  

In the first part, we study equivariant quasisymmetry using forests and double forest polynomials.  
In \Cref{sec:equivariant_forest_polynomials} we introduce the axiomatic definition of double forest polynomials. 
In \Cref{sec:subword_models} we create subword models and graphical vine models for double Forest polynomials and prove that double forest polynomials exist using the subword model as an ansatz.
In \Cref{sec:VineSchub} we show that the vine model computes double Schubert polynomials and state some related results.  

In the second part of the paper we use the combinatorics of noncrossing partitions to understand the evaluations of double forest polynomials at noncrossing partitions.  
In \Cref{sec:EQNP} we show that noncrossing partitions characterize equivariant quasisymmetry and give a natural bijection between indexed forests and noncrossing partitions.  
In \Cref{sec:evaluations} we prove the analogue of the AJS--Billey formula for double forest polynomials and noncrossing partitions.  

In the final part of the article we introduce monoids which interpolate between forests and noncrossing partitions in order to give in-depth descriptions of certain structure constants related to equivariant quasisymmetry.  
In \Cref{sec:monoids} we introduce some monoids that are used in the remainder of the article.
In \Cref{sec:coeffextraction} we introduce the $\star$-composition for the $\eope{i}$ operations, and show how to algebraically extract coefficients in double forest polynomial decompositions. 
In \Cref{sec:graham} we show how certain positive straightening rules allow us to verify double Schuberts expand Graham-positively into double forests, and that the structure coefficients for double forest multiplication are Graham-positive.

For the convenience of the reader, we include three tables of computed values at the end of the article.  
Table~\ref{table:forestpolys} contains examples of double forest polynomials.  
Table~\ref{table:Schubtofor} contains the expansion of double Schubert polynomials into double forest polynomials. 
Table~\ref{table:fundamentalprod} gives the multiplicative structure constants for the basis of double fundamental polynomials in $\eqsym{4}$.




\subsection*{Acknowledgements}
We would like to thank Fr\'ed\'eric Chapoton and Allen Knutson for helpful correspondence/conversations.
We are very grateful to the Fields Institute for providing a fantastic work environment.

\section{Equivariant quasisymmetry}
\label{sec:what_is_equivariant_quasisymmetry?}
We will work with polynomials $\ZZ[\tl][\xl]$ in two infinite sets of variables
\[
\xl = \{x_{1}, x_{2}, \ldots\}
\qquad\text{and}\qquad
\tl = \{t_{1}, t_{2}, \ldots \}.
\]
For geometric reasons we call $\tl$ the \emph{equivariant variables} and $\xl$ the \emph{non-equivariant variables}.
Given a nonnegative integer $n$ we shall denote the truncated sets of variables $\{x_1,\dots,x_n\}$ and $\{t_1,\dots,t_n\}$ by $\xl_n$ and $\tl_n$ respectively.

\subsection{Equivariantly quasisymmetric polynomials}
\label{section:equivariant_quasisymmetrics}
Our definition of equivariant quasisymmetry generalizes an algebraic formulation of quasisymmetry that we now recall. For $f\in \ZZ[\xl]$, the $i$th \emph{Bergeron--Sottile} map \cite{BS98,NST_c,NST_a}
\[
\rope{i}f=f(x_1,\ldots,x_{i-1},0,x_i,x_{i+1},\ldots),
\]
makes the substitutions $x_i\mapsto0$ and $x_j \mapsto x_{j-1}$ for all $j\ge i$. 
Then we say that a polynomial $f\in \ZZ[\xl_n]$ is \emph{quasisymmetric} if $\rope{i+1}f=\rope{i}f$ for all $1 \le i < n$.

We denote
    $$\qsym{n}= \{f(\xl_n)\in \ZZ[\xl_n]\suchthat \rope{i+1}f=\rope{i}f\text{ for }1\le i < n\},$$
    the ring of \emph{quasisymmetric polynomials}. 
    This algebraic formulation is equivalent to the usual definition of quasisymmetry for  $f \in \ZZ[\xl_n]$ which says that the coefficients of the monomials $x_1^{a_1}\cdots x_k^{a_k}$ and  $x_{i_1}^{a_1}\cdots x_{i_k}^{a_k}$ are equal for every increasing sequence $(i_1,\ldots,i_k)$ of distinct indices.

\begin{defn}
\label{defn:equivariant_bergeron_sotille}
    For $f(\xl;\tl)\in \ZZ[\tl][\xl]$, we define the \emph{equivariant Bergeron--Sottile maps} to be
    \begin{align*}
    \rope{i}^-f(\xl;\tl)&=f(x_1,\ldots,x_{i-1},t_i,x_i,x_{i+1},\ldots;\tl)\\
    \rope{i}^+f(\xl;\tl)&=f(x_1,\ldots,x_{i-1},x_i,t_i,x_{i+1},\ldots;\tl)
    \end{align*}
     We say that $f\in \ZZ[\tl][\xl_n]$ is \emph{equivariantly quasisymmetric} if 
     $\rope{i}^+f=\rope{i}^-f$ for $1 \le i < n$.
\end{defn}

We denote
\[
\eqsym{n} = \{ f(\xl_n; \tl) \in \ZZ[\tl][\xl_{n}] \;|\;\text{$\rope{i}^+f=\rope{i}^-f$ for $1 \le i < n$}\},
\]
the \emph{equivariantly quasisymmetric polynomials}. 
Note that if we set $\tl=0$ then the condition for equivariant quasisymmetry becomes the algebraic formulation for quasisymmetry, and therefore
$$
f(\xl_n;\tl)\in \eqsym{n} \implies f(\xl_n;\bm{0})\in \qsym{n}.
$$

\begin{thm}
    For all $n \ge 0$, $\eqsym{n}$ is a $\ZZ[\tl]$-algebra.
\end{thm}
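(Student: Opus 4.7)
The plan is to recognize that each equivariant Bergeron--Sottile map $\rope{i}^{\pm}$ is a $\ZZ[\tl]$-algebra homomorphism $\ZZ[\tl][\xl_n]\to \ZZ[\tl][\xl_{n-1}]$, so that the condition defining $\eqsym{n}$ is an equalizer of ring homomorphisms and therefore automatically cuts out a $\ZZ[\tl]$-subalgebra.

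First I would verify that $\rope{i}^{-}$ and $\rope{i}^{+}$ are $\ZZ[\tl]$-algebra homomorphisms. This is immediate from \Cref{defn:equivariant_bergeron_sotille}: each of them is the polynomial evaluation morphism that acts as the identity on $\ZZ[\tl]$, sends $x_{j}\mapsto x_{j}$ for $j<i$, sends $x_{j}\mapsto x_{j-1}$ for $j>i$, and sends $x_{i}$ to either $t_{i}$ (for $\rope{i}^{-}$) or to $x_{i}$ (for $\rope{i}^{+}$). By the universal property of the polynomial ring, any such substitution is a $\ZZ[\tl]$-algebra homomorphism into $\ZZ[\tl][\xl_{n-1}]$.

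Next I would invoke the elementary fact that if $\phi,\psi\colon R\to S$ are two homomorphisms of commutative rings, then the equalizer $\{r\in R \suchthat \phi(r)=\psi(r)\}$ is a subring of $R$: it contains $1$ since $\phi(1)=1=\psi(1)$, and if $\phi$ and $\psi$ agree on $r$ and $r'$ then $\phi(r+r')=\psi(r+r')$ and $\phi(rr')=\phi(r)\phi(r')=\psi(r)\psi(r')=\psi(rr')$. Applying this with $\phi=\rope{i}^{-}$ and $\psi=\rope{i}^{+}$ and intersecting over $1\le i<n$ shows that $\eqsym{n}$ is a subring of $\ZZ[\tl][\xl_n]$. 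Since both $\rope{i}^{\pm}$ act as the identity on $\ZZ[\tl]$, the constants $\ZZ[\tl]$ lie in $\eqsym{n}$, upgrading the subring to a $\ZZ[\tl]$-subalgebra.

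There is no real obstacle here: the whole statement is a formal consequence of the fact that the defining relations of equivariant quasisymmetry are equalities between two ring homomorphisms rather than something more delicate like a $\ZZ[\tl]$-linear condition whose multiplicativity would need to be checked by hand.
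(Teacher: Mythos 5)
Your proof is correct and is essentially the argument the paper gives: the paper's one-line proof also observes that $\eqsym{n}$ is the intersection of the equalizers of the algebra morphisms $\rope{i}^{+}$ and $\rope{i}^{-}$, and you have simply spelled out the (routine) verification that these substitution maps are $\ZZ[\tl]$-algebra homomorphisms and that equalizers of ring maps are subrings.
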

\begin{proof}
    The equalizer of the algebra morphisms $\rope{i}^+$ and $\rope{i}^-$ is a $\ZZ[\tl]$-algebra, and so $\eqsym{n}$ is the intersection of these $\ZZ[\tl]$-algebras.
\end{proof}

\subsection{Comparison with equivariantly symmetric polynomials}

Our definition of equivariant quasisymmetry is closely related to what one might call the equivariantly symmetric polynomials.  
Recall the action of $S_{\infty}$ (and $S_n$) on $\ZZ[\tl][\xl]$ from \S\ref{subsec:double_forests_intro}.
Say that an element $f \in \ZZ[\tl][\xl_n]$ is \emph{equivariantly symmetric} if $f = w \cdot f$ for all $w \in S_{n}$.  
For each $n$, the equivariantly symmetric polynomials in $x_{1}, \ldots, x_{n}$ form a subring $\esym{n}$ of $\ZZ[\tl][\xl_n]$.

The defining condition for equivariant quasisymmetry can be rephrased as $\rope{i}^-(f-s_i\cdot f)=0$, which weakens the condition for equivariant symmetry.  
Therefore, we have the containments
\[
\esym{n}\subset \eqsym{n}\subset \ZZ[\tl][\xl_n].
\]

There are also important differences between equivariant symmetry and equivariant quasisymmetry.  
The ordinary symmetric polynomials $\sym{n}$ are exactly the equivariantly symmetric polynomials in the subring $\ZZ[\xl_n]$, and moreover $\esym{n} = \ZZ[\tl] \otimes \sym{n}$.  
In contrast, ordinary quasisymmetric polynomials are typically not even contained in $\eqsym{n}$, except when $n=0$ or $n=1$.

\begin{eg}
The polynomial $f=x_1^2x_2+x_1^2x_3+x_2^2x_3$ lies in $\qsym{3}$, but is not equivariantly quasisymmetric:
\[
\rope{1}^{+}(f) = t_1x_1^2+x_1^2x_2+t_1^2x_2
\qquad\text{and}\qquad
\rope{1}^{-}(f) = t_1^2x_1+t_1^2x_2+x_1^2x_2.
\]
\end{eg}

We will see in \Cref{cor:basiscor} that every element of $\qsym{n}$ can be homogenously deformed using the equivariant variables to produce an element of $\eqsym{n}$. For example, one such deformation of this polynomial would be
\begin{multline*}
(x_2^2x_3 + x_1^2x_3 + x_1^2x_2) - (x_2x_3+x_1x_3+x_1x_2)t_2  - (x_1^2+x_2^2)t_2 - x_1^2t_1 \\  + (x_1+x_2+x_3)t_1t_2 + (x_2 + x_1)t_2^2 - (x_3 + x_2)t_1^2 - t_1t_2^2 + t_1^3.
\end{multline*}
Note that for example the coefficient of $t_1^2$ is not quasisymmetric, so $\eqsym{3}\not\subset \ZZ[\tl]\otimes \qsym{3}$.

\section{Combinatorial preliminaries}
\label{sec:preliminaries}
Throughout we set $[n]\coloneqq \{1,\dots,n\}$ for all nonnegative integers $n$. 
We let $\NN$ denote the set of positive integers. In this section we introduce our main combinatorial objects.

\subsection{Binary trees and indexed forests}
\label{subsec:trees_and_forest}
We quickly recall several relevant notions in the context of our primary combinatorial object: indexed forests.
In order to keep the exposition brief, we refer the reader to \cite{NST_c, NST_a} for a more detailed investigation of the associated combinatorics and only record the facts that we shall need.

A \emph{binary plane tree} is a rooted tree $T$ in which each node $v$ is an \emph{internal node} with exactly $2$ children $v_L$ and $v_R$ (the left and right child), or $v$ is a \emph{leaf} with zero children. 
Going forward, all trees will be binary plane trees, so we shall omit these qualifiers. We write $\internal{T}$ for the set of internal nodes of $T$, and we define the size of a tree to be $|T|\coloneqq |\internal{T}|$. For $v\in \internal{T}$, we call the descendants of $v_{L}$ the \emph{left descendants} of $v$ and the descendants of $v_{R}$ the \emph{right descendants} of $v$.

We write $\ast$ for the trivial tree with one node. This node is both the root and a leaf, and $\internal{\ast}=\emptyset$.

\begin{defn}\label{def:indexed_forest}
    An \emph{indexed forest} is an infinite sequence  $F = (T_{1}, T_{2}, \ldots)$ of binary trees where all but finitely many of the trees are $\ast$.
    We write $\indexedforests$ for the set of all indexed forests.
\end{defn}

Given Definition~\ref{def:indexed_forest}, we extend our notation and terminology for binary trees to indexed forests.  For $F = (T_{1}, T_{2}, \ldots)$, the internal nodes of $F$ are
\[
\internal{F}=\bigcup_{i=1}^{\infty}\internal{T_i},
\]
and we write $|F|=|\internal{F}|$.  Similarly, the leaves of $F$ are the union of the leaves of the $T_{i}$.  
We say that $v$ is \emph{terminal} if $v_{L}$ and $v_{R}$ are leaves. 
The forest with all constituent trees trivial is called the \emph{empty} forest and denoted by $\emptyset$. 

We use distinct labeling conventions for the leaves 
and the internal vertices of an indexed forest $F = (T_{1}, T_{2}, \ldots)$.  
The leaves of $F$ will be absolutely identified with $\NN$ from left to right, so that $T_{1}$ has leaves $1$ through $|T_{1}|+1$, $T_{2}$ has leaves $|T_{1}| + 2$ through $|T_{1}| + |T_{2}| + 2$, and so on.  
The \emph{canonical label} of $v \in \internal{F}$ will be the value of the rightmost leaf descendant of $v_L$.

We define its \emph{support} $\supp(F)$ to be the set of leaves in $\NN$ that appear in the nontrivial trees in $F$, and for fixed $n\ge 1$ we denote the subset of forests supported on $[n]$ by 
\begin{align*}
\suppfor{n}=\{F\in \indexedforests\suchthat \supp(F)\subset [n]\}.
\end{align*}

A \emph{Sylvester word} of a forest $F\in \indexedforests$ is an ordered sequence comprising the canonical labels of the vertices in $\internal{F}$ such that the label of $v\in \internal{F}$ appears later than the labels of its children $v_L,v_R$.
Let
\[
\sylv{F} = \{\text{Sylvester words for $F$}\}.
\]
See Figure~\ref{fig:indexed_forest_sylvester_words} for an example.
\begin{figure}[!h]
    \centering
    \includegraphics[width=\linewidth]{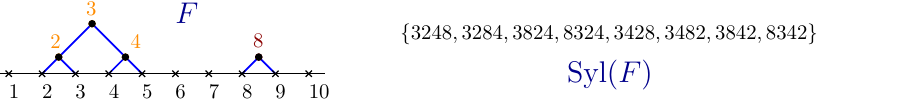}
    \caption{An indexed forest $F$ and its associated Sylvester words}
    \label{fig:indexed_forest_sylvester_words}
\end{figure}

A word $w$ in the alphabet $\NN$ is \emph{injective} (in the sense of \cite{BW83}) if each of its letters are distinct. We write $\supp{w}\subset \NN$ for the set of letters that $w$ is a permutation of. Every finite injective word $w$ is a Sylvester word for a unique indexed forest, which is determined as follows.

For $w$ a permutation of a finite linear order $I$, we define the \emph{binary search tree} of $w$ to be the labeled binary tree $T(w)$ obtained by the following insertion procedure.
If $w$ is empty, then $T(w)$ is the empty tree.  
So suppose $w=w'a$ and let $T'\coloneqq T(w')$. 
If $T'$ is empty, then $T(w)$ is the tree with a single node labeled $a$.
If $T'$ is not empty, let $b$ be the label of its root node. 
Now $T(w)$ is recursively obtained by inserting $a$ in the right (resp. left) subtree of $T'$ if $a>b$ (resp. $a<b$).
When this procedure terminates, $a$ is necessarily the label of a terminal node.

For $w$ an injective word, decompose $\supp{w}=I_1\sqcup \cdots \sqcup I_k$ as a disjoint union of maximal disjoint contiguous intervals in $\NN$, and let $w_1,\ldots,w_k$ be the associated permutations of $I_1,\ldots,I_k$ as they appear in $w$ (so that $w$ is a shuffle of the letters of $w_1,\ldots,w_k$).
Then $w \in \sylv{F}$, where $F\in \indexedforests$ has its nontrivial trees $T(w_1), \dots, T(w_k)$ with canonical labels that agree with the associated binary search labelings. 
For example the injective word $w=3482$ has $w_1=342$ and $w_2=8$, and these insert to the binary trees constituting the forest $F$ in \Cref{fig:indexed_forest_sylvester_words}, showing that $3482\in \sylv{F}$.

\begin{rem}
\label{rem:NewSylv}
Our choice of terminology is inspired by Hivert--Novelli--Thibon's  Sylvester congruence \cite[\S 3]{HNT05}.  
This is an equivalence relation on injective words\footnote{It is defined more generally for words but it suffices to restrict to this setting for our purposes.} generated by the relation $w_1\, b\, w_2\, ac\, w_3\sim w_1\, b\, w_2\, ca\, w_3$ whenever  $a<b<c$. 
One of the main results in \cite{HNT05} states that the binary search tree of an injective word $w$, treated as a permutation of $\supp{w}$ treated as a linear sub-order of $\NN$ uniquely determines its equivalence class in the Sylvester congruence.  
Our sets $\sylv{F}$ partition the set of all injective words into coarser equivalence classes under the modified version of the Sylvester relation that
\begin{align}
\label{eqn:forsylvrelations}
w_1acw_2\sim w_1caw_2\text{ whenever there exists some $a<b<c$ with $b\not\in w_2$}.
\end{align} 
The additional equivalences we allow are when there is such a $b$ between $a$ and $c$ which is not present in the word at all, in which case $a,c$ are in distinct trees of the indexed forest and we allow them to freely commute.
\end{rem}

Indexed forests can be encoded by sequences of nonnegative numbers.
We let $\nvect$ denote the set of sequences $(c_i)_{i\geq 1}$ of nonnegative integers where all but finitely many entries are $0$.
For $F\in \indexedforests$ we define the \emph{flag} $\rho_F\colon \internal{F}\to \NN$ by setting $\rho_F(v)$ to be  the label of the leaf obtained by going down left edges starting from $v$.
The \emph{code} of $F$, denoted by $\sfc(F)$, is defined as
\begin{align*}
    \sfc(F)=(c_i)_{i\in \NN} \text{,\ \ \  where } c_i=|\{v\in \internal{F}\suchthat \rho_F(v)=i\}|.
\end{align*}
This gives a bijection $\sfc\colon \indexedforests\to \nvect$.
The flag also allows us to define another relevant notion. 
For $F\in \indexedforests$, let
\begin{align*}
    \qdes{F}\coloneqq \{\rho_F(v)\suchthat v \text{ a terminal node in } F\}.
\end{align*}
Of particular interest is the case where $\qdes{F}\subset \{n\}$ for some $n\in \NN$. 
We call such an $F$ a \emph{zigzag forest}, and write $\zigzag{n}$ for the collection of all such forests.
Figure~\ref{fig:zigzag} depicts a zigzag forest in $\zigzag{6}$. Observe that a zigzag forest necessarily has a unique Sylvester word, and it is obtained by reading the canonical labels starting at the root node and ending at the terminal node.
\begin{figure}
    \centering
    \includegraphics[width=0.75\linewidth]{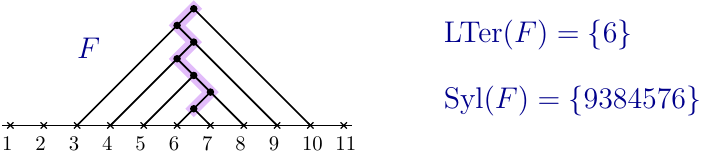}
    \caption{A zigzag forest in $\zigzag{6}$}
    \label{fig:zigzag}
\end{figure}

We define the \emph{$i$th elementary forest} $\underline{i}$ as shown in Figure~\ref{fig:ith_elementary}.
It is the unique indexed forest of size $1$ with $i\in \qdes{F}$.
The elementary forests generate the monoid on $\indexedforests$ \cite[\S 4]{NST_a} in which $F\cdot G$ is the forest obtained by attaching, for each $i$, the $i$th leaf of $F$ to the $i$th root of $G$. 
This monoid is isomorphic to the \emph{Thompson monoid}
\[
\Th\coloneqq\langle 1,2,\ldots \suchthat i\cdot j=j\cdot (i+1)\text{ for all }i>j\rangle,
\]
under the map $i\mapsto \underline{i}$, and we will always make this identification in what follows. Every element $F$ of $\Th$ has a unique representative $F=1^{a_1}\cdot 2^{a_2}\cdots$ where the generators are multiplied in weakly increasing order, and $c(F)=(a_1,a_2,\ldots)$.

\begin{figure}[!h]
    \centering
    \includegraphics[scale=1]{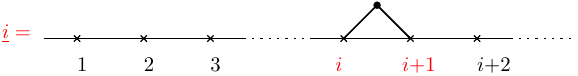}
    \caption{The $i$th elementary forest}
    \label{fig:ith_elementary}
\end{figure}

Given $F\in\indexedforests$ and $i\in \qdes{F}$, we define the \emph{trimmed forest} $F/i$ to be the indexed forest obtained by deleting the terminal node $v\in \internal{F}$ satisfying $\rho_F(v)=i$. Equivalently, $F/i$ is the unique forest such that $F=(F/i)\cdot i$. As shown in \cite{NST_a} if $c=c(F)$ then  $\qdes{F}=\{i\suchthat c_i>0\text{ and }c_{i+1}=0\}$, and for $i\in \qdes{F}$ we have $c(F/i)=(c_1,\ldots,c_{i-1},c_i-1,c_{i+2},c_{i+3},\ldots)$.

The analogue of reduced words for forests are the \emph{trimming sequences}, defined by $$\Trim{F}=\{(i_1,\ldots,i_{|F|})\suchthat F=i_1\cdots i_{|F|}\}.$$

While the trimming sequences for $F/i$ are obtained by deleting the last letter from all trimming sequences of $F$ that end in $i$, the analogous fact for Sylvester words involves a nontrivial shift.
\begin{obs}
\label{obs:SylvesterTrimming}
    If $F\in \indexedforests$ and $j\in \qdes{F}$, then the Sylvester words for $F/j$ are obtained by deleting the last letter from each Sylvester word for $F$ which ends in $j$ and replacing $k\mapsto k-\delta_{k>j}$.
\end{obs}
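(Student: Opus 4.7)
The plan is to track explicitly how the internal-node structure and the canonical labeling change when passing from $F$ to $F/j$, and then transfer the Sylvester-word constraint through this translation.

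First, I would establish that if $v\in \internal{F}$ is the terminal node with $\rho_F(v)=j$, then the $F$-canonical label of $v$ is $j$, and moreover $v$ is the unique internal node of $F$ with canonical label $j$. The first point is immediate since $v_L$ is the leaf labelled $j$. For the second, suppose $u\neq v$ has $F$-canonical label $j$, so leaf $j=v_L$ is the rightmost leaf descendant of $u_L$. A quick case analysis rules this out: if $v$ lies strictly below $u_L$, then leaf $j{+}1=v_R$ is also a descendant of $u_L$, forcing the rightmost leaf descendant to be at least $j{+}1$; and $v$ cannot lie outside the subtree of $u_L$ since $v_L$ is a child of $v$.

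Next, I would record how canonical labels of the remaining internal nodes shift. In $F/j$ the leaves $v_L$ and $v_R$ merge into a single leaf at position $j$ and all leaves originally labelled $>j{+}1$ shift down by $1$. A short case-by-case check (on whether the $F$-canonical label $k$ of $u\in \internal{F}\setminus\{v\}$ satisfies $k\le j-1$, $k=j+1$, or $k\ge j+2$; the case $k=j$ was excluded above) shows that the $F/j$-canonical label of $u$ equals $k - \delta_{k>j}$, so the map $k\mapsto k-\delta_{k>j}$ is a bijection between $F$-canonical labels of $\internal{F}\setminus\{v\}$ and $F/j$-canonical labels of $\internal{F/j}$. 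Simultaneously, since $v$ has no internal children, the ancestor relation among internal nodes is unaffected by deleting $v$.

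Finally, I would combine these two observations to verify both directions of the claimed bijection. Given a Sylvester word for $F$ ending in $j$, dropping the last letter yields an ordering of the $F$-canonical labels of $\internal{F}\setminus \{v\}$ that is compatible with the (unchanged) ancestor relation, and relabelling via $k\mapsto k-\delta_{k>j}$ produces a Sylvester word for $F/j$. Conversely, any Sylvester word for $F/j$ lifts along the inverse bijection $k'\mapsto k'+\delta_{k'\ge j}$ to an ordering of the $F$-canonical labels of $\internal{F}\setminus\{v\}$; appending $j$ is admissible because $v$ is terminal and therefore its label may occupy any position after its (nonexistent) internal children. The only real subtlety is the uniqueness step in the first paragraph, establishing that no other internal node of $F$ carries the label $j$; once that is in hand, everything else is a mechanical translation of the Sylvester compatibility condition along the label shift.
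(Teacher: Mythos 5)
The paper states this as an Observation without proof, so there is no argument of record to compare against; your proposal supplies the needed verification and is essentially correct. The three points you isolate---that the terminal node $v$ with $\rho_F(v)=j$ is the \emph{unique} internal node with canonical label $j$, that the map $k\mapsto k-\delta_{k>j}$ intertwines the canonical labelings of $\internal{F}\setminus\{v\}$ and $\internal{F/j}$, and that the ancestor poset on these remaining internal nodes is unaffected since $v$ has no internal descendants---are exactly what is needed, and your case analysis for each is sound (the key step really is the uniqueness of the label-$j$ node, which rules out the case $k=j$ in the label-shift computation).

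One small caution concerning the Sylvester compatibility check at the end. The paper's sentence ``the label of $v$ appears later than the labels of its children'' has the direction reversed relative to how Sylvester words are actually used: the binary-search-tree example $342\in\sylv{F}$ and the remark that a zigzag forest's unique Sylvester word is read ``starting at the root node and ending at the terminal node'' both show that ancestors precede descendants. Your phrase ``its label may occupy any position after its (nonexistent) internal children'' inherits the reversed phrasing, and it only addresses half the constraint. Under the correct reading, appending $j$ at the very end is admissible for two reasons: $v$ has no internal descendants whose labels would have to come later, \emph{and} every ancestor of $v$ already has its label placed somewhere earlier in the word, so the constraints at $v$'s parent (and higher) involving $v$ are automatic. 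It is worth stating this second half explicitly, though it does not affect the correctness of the conclusion.
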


\subsection{Noncrossing partitions and noncrossing permutations}
\label{subsec:noncrossing_stuff}

A \emph{set partition} $\mc{P}$ of $[n]$ is a collection of disjoint nonempty sets $\{ B_1, B_{2}, \ldots, B_{k}\}$---called \emph{blocks}---whose union is $[n]$.  
We depict set partitions as \emph{arc diagrams}, placing elements of $[n]$ along the positive $x$-axis and connecting sequential elements of each block with an arc above the axis; see Example~\ref{eg:noncrossing}.
A set partition $\mc{P}$ of $[n]$ is \emph{noncrossing} if, for every pair $P, Q$ of distinct blocks of $\mc{P}$ with $a,b\in P$ and $c,d\in Q$, we do not have $a<c<b<d$; this condition ensures that no arcs in the arc diagram cross.
We denote the set of noncrossing set partitions $\ncpart_n$. 

Each noncrossing partition $\mc{P}$ of $[n]$ determines a unique \emph{noncrossing permutation} $\sigma(\mc{P})\in S_{n}$ as follows.  First write $\mc{P}$ as $B_1/B_2/\cdots/B_k$ with the convention that $\max(B_1)<\max(B_2)<\cdots < \max(B_k)$. 
Associate to each block $B=\{a_p>a_{p-1}>\cdots >a_1\}$ in $\mc{P}$ the long backwards cycle $c_{B}\coloneqq (a_p\, a_{p-1}\, \cdots \, a_1)$, and define $\sigma(\mc{P})\in S_{n}$ as the product of disjoint cycles 
\[
\sigma(\mc{P})\coloneqq \prod_{1\leq i\leq k}c_{B_i}.
\]

Given this correspondence, we use the terms `noncrossing partition' and `noncrossing permutation' interchangeably when this causes no confusion. 
We shall also treat the terms `blocks' and `cycles' as synonyms.

\begin{eg}
\label{eg:noncrossing}
Let $\mc{P}= 138/2/45/67 \in \ncpart_8$.  Then $\mc{P}$ is noncrossing, with arc diagram 
\[
\mathcal{P}= \begin{tikzpicture}[scale = 0.75, baseline = 0.75*-0.2]
\foreach \x in {1, ..., 8}{\draw[fill] (\x - 1, 0) node[inner sep = 2pt] (\x) {$\scriptstyle \x$};}
\foreach \i\j in {1/3, 3/8, 4/5, 6/7}{\draw[thick] (\i) to[out = 35, in = 145] (\j);}
\end{tikzpicture}.
\]
The associated permutation in $\NC_7$ has cycle notation $(8\,3\,1)(2)(4\,5)(7\,6)$ and in one line notation is the more opaque $82154763$.
\end{eg}

\subsection{Nested Forests}
\label{sec:nestfor}

Say that a partition $\mc{P}$ of the set $\NN$ is \emph{finite noncrossing} if there exists an $N \in \NN$ for which $\{n\} \in \mc{P}$ for all $n > N$ and $\{B \in \mc{P} \suchthat \max{B} \le N\}$ is a noncrossing set partition of $[N]$.  
A \emph{nested \textup{(}indexed\textup{)} forest} $\wh{F}$ is a family of binary trees $\left(T_B\right)_{B\in \mc{P}}$ where $\mc{P}$ is a finite noncrossing set partition of $\NN$ and each $T_{B}$ has $|B|$-many leaves.

Every indexed forest is a nested forest, and we extend  our conventions for indexed forests to nested forests.  
For each tree $T_{B}$ in a nested forest $\wh{F}$, we absolutely identify the leaves of $T_{B}$ with $B$ in increasing order from left to right, and define the support of $\wh{F}$ to be the set $\supp{\wh{F}}$ of all leaves of nontrivial trees in $\wh{F}$.
We also write $\internal{\wh{F}}$ to denote the set of internal nodes in all $T_{B}$.  

We denote by $\nfor$ the set of nested forests, and we write $\nfor_n$ for the subset of nested forests with $\supp(\wh{F}) \subseteq [n]$.  
For $\wh{F} = \left(T_B\right)_{B\in \mc{P}} \in \nfor_n$ we let $\ncperm(\wh{F}) \in S_{n}$ be the noncrossing partition whose cycles are the sets $B \in \mc{P}$ which are contained in $[n]$.

\begin{figure}[!h]
    \centering
    \includegraphics[width=0.75\linewidth]{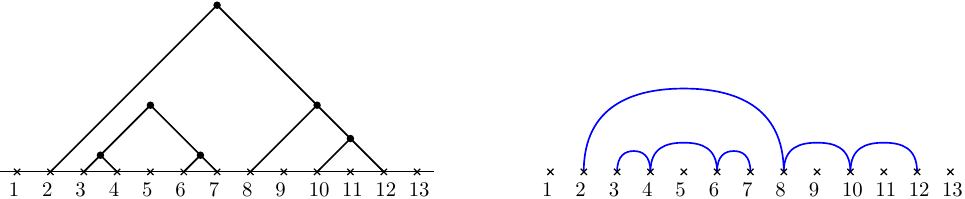}
    \caption{A nested indexed forest and the noncrossing partition obtained from its support}
    \label{fig:nested_forest_noncrossing_via_support}
\end{figure}


\section{Double forest polynomials}
\label{sec:equivariant_forest_polynomials}

Recall the (equivariant) Bergeron--Sottile operators from Section~\ref{section:equivariant_quasisymmetrics}.  
It was shown in \cite{NST_a} that if we define the ``trimming operations'' $$\tope{i}\coloneqq\frac{\rope{i+1}f-\rope{i}f}{x_i}=\rope{i+1}\partial_i=\rope{i}\partial_i,$$ then the forest polynomials $\{\forestpoly{F}(\xl)\suchthat F\in \indexedforests\}$ are the unique family of homogeneous polynomials such that $\forestpoly{\emptyset}=1$ and
$\tope{i}\forestpoly{F}=\delta_{i\in \qdes{F}}\forestpoly{F/i}$. 
The forest polynomials form a basis of $\ZZ[\xl]$, and if we restrict to the forests $\{F\in \indexedforests\suchthat \qdes{F}\subset [n]\}$ then we obtain a basis of $\ZZ[\xl_{n}]$. 
 
We define equivariant forest polynomials via their interactions with a new analogue of the trimming operation using the equivariant Bergeron--Sottile operators $\rope{i}^-$ and $\rope{i}^+$; see \Cref{defn:equivariant_bergeron_sotille}.

\begin{defn}
    We define the \emph{equivariant trimming operation} $\eope{i}$ by
$$\eope{i}f\coloneqq\frac{\rope{i}^+f-\rope{i}^-f}{x_i-t_i}=\rope{i}^+\partial_if=\rope{i}^-\partial_i f.$$
\end{defn}
    Unlike the $\tope{i}$ operations which satisfy the Thompson monoid relations
    $$\tope{i}\tope{j}=\tope{j}\tope{i+1}\text{ for }i>j,$$
    the $\eope{i}$ do not satisfy these relations. 
    In \Cref{sec:coeffextraction} we will define a modified composition under which the  $\eope{i}$ operators satisfy these relations.
\begin{thm}
\label{thm:ForestDesiderata}
There is a unique family of homogenous polynomials $\{\forestpoly{F}(\xl;\tl)\suchthat F\in \indexedforests\}\subset \poltx$ such that, denoting $\wh{\tl}_i=(t_1,t_2,\ldots,t_{i-1},t_{i+1},\ldots)$, we have 
\[
\forestpoly{F}(\tl;\tl) = \begin{cases}
1 & \text{if $F = \emptyset$} \\
0 & \text{otherwise.}
\end{cases}
\qquad\text{and}\qquad
\eope{i}\forestpoly{F}= \begin{cases}
\forestpoly{F/i}(\xl;\wh{\tl}_i) & \text{if $i\in \qdes{F}$} \\
0 & \text{otherwise.}
\end{cases}
\]
\end{thm}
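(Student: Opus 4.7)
The plan is to establish uniqueness axiomatically here, and to defer existence to \Cref{sec:subword_models}, where the subword/vine model of \Cref{thm:thmA} will supply an explicit ansatz satisfying both conditions.

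For uniqueness, I would take two families $\{\forestpoly{F}\}$ and $\{\forestpoly{F}'\}$ satisfying the axioms, set $g_F \coloneqq \forestpoly{F} - \forestpoly{F}'$, and induct on $|F|$. For the base case $F = \emptyset$, $g_\emptyset$ is homogeneous of degree $0$, hence a constant, and $g_\emptyset(\tl;\tl) = 0$ forces $g_\emptyset = 0$. For $|F| \ge 1$, the inductive hypothesis gives $g_{F/i} = 0$ for each $i \in \qdes{F}$, so the recursion yields $\eope{i}g_F = g_{F/i}(\xl;\wh{\tl}_i) = 0$; together with the axiom $\eope{i}g_F = 0$ for $i \notin \qdes{F}$, we conclude $\eope{j}g_F = 0$ for every $j \ge 1$.

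The heart of the argument is then the following lemma: if $g \in \ZZ[\tl][\xl]$ uses only finitely many $x$-variables and satisfies $\eope{j}g = 0$ for all $j \ge 1$, then $g \in \ZZ[\tl]$. To prove it, suppose toward contradiction that $g \notin \ZZ[\tl]$ and let $N$ be the largest index with $x_N$ appearing in $g$. Viewing $g$ as an element of $\ZZ[\tl][\xl_{N+1}]$ trivial in $x_{N+1}$, the definition of $\eope{N}$ collapses to
\[
\eope{N}g = \frac{g(x_1,\ldots,x_N) - g(x_1,\ldots,x_{N-1},t_N)}{x_N - t_N},
\]
which vanishes exactly when $g$ is independent of $x_N$, contradicting the choice of $N$. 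Iterating downward one variable at a time forces $g \in \ZZ[\tl]$. Applied to $g_F$ and combined with $g_F(\tl;\tl) = 0$, this yields $g_F = 0$, completing uniqueness.

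Existence is the substantive task: the construction in \Cref{sec:subword_models} produces explicit homogeneous polynomials, and checking the normalization and the recursion on them reduces to combinatorial identities for the subword model, with the reindexing $\wh{\tl}_i$ arising naturally from the position of the trimmed terminal node of $F$. I expect the main obstacle for the overall theorem to lie on that existence side rather than in the present axiomatic uniqueness step.
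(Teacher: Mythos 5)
Your proposal is correct and takes essentially the same approach as the paper: uniqueness follows from the observation that $\bigcap_i \ker(\eope{i}) = \ZZ[\tl]$ (this is exactly the paper's Remark~\ref{rem:uniqueforest}, argued by looking at the largest $x$-index as you do), and existence is deferred to the subword/vine ansatz verified in Section~\ref{sec:ForestProof}.
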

\begin{proof}
    Deferred to \Cref{sec:ForestProof}.
\end{proof}
\begin{defn}
    We call the polynomials $\forestpoly{F}\in \poltx$ in Theorem~\ref{thm:ForestDesiderata} the \emph{double forest polynomials}.
\end{defn}
\begin{eg}
In lowest degrees we have $\forestpoly{\emptyset}(\xl;\tl)=1$ and $\forestpoly{\underline{i}}=x_1+\cdots+x_i-t_1-\cdots-t_i$.
\end{eg}
\begin{rem}
\label{rem:uniqueforest}
    If a polynomial $f$ depends nontrivially on $x_n$ but not $x_m$ for any $m>n$, then $\eope{n}f\ne 0$.  It follows that 
    \[
    \bigcap_{i=1}^{\infty}\ker(\eope{i})=\ZZ[\tl].
    \] 
    Therefore by induction on $|F|$ it is clear that   double forest polynomials must be unique provided they exist, which we prove in \Cref{sec:ForestProof}.
\end{rem}
\begin{cor}
\label{cor:basiscor}
    For $F\in \indexedforests$ we have $\forestpoly{F}(\xl;\bm{0})=\forestpoly{F}(\xl)$, the ordinary forest polynomial. Furthermore,
    \begin{enumerate}[label=(\arabic*)]
    \item \label{basis:it1} the double forest polynomials $\{\forestpoly{F}(\xl;\tl)\suchthat F\in \indexedforests\}$ are a $\ZZ[\tl]$-basis for $\poltx$, and
    \item \label{basis:it2}
        $\{\forestpoly{F}(\xl;\tl)\suchthat \qdes{F}\subset [n]\}$ is a $\ZZ[\tl]$-basis for $\ZZ[\tl][\xl_n]$.
    \end{enumerate}
\end{cor}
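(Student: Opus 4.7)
The plan is to combine the uniqueness in Theorem~\ref{thm:ForestDesiderata} with a triangularity argument against the ordinary forest polynomial basis $\{\forestpoly{G}(\xl) \suchthat G \in \indexedforests\}$ of $\poltx$ from \cite{NST_a}.

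For the identity $\forestpoly{F}(\xl;\bm{0}) = \forestpoly{F}(\xl)$: setting $\tl = \bm{0}$ in the definition of $\eope{i}$ recovers the ordinary trimming operator $\tope{i}$, and this specialization commutes with the operator since $(\eope{i}f)|_{\tl=\bm{0}}=\tope{i}(f|_{\tl=\bm{0}})$. Hence the recursion of Theorem~\ref{thm:ForestDesiderata} specializes to the one characterizing ordinary forest polynomials. Together with $\forestpoly{\emptyset}(\xl;\tl)=1$ (forced by homogeneity of degree $0$ and $\forestpoly{\emptyset}(\tl;\tl)=1$), uniqueness of the ordinary forest polynomials yields the identity.

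For part~\ref{basis:it1}, I would expand each double forest polynomial as
\[
\forestpoly{F}(\xl;\tl) = \sum_{G \in \indexedforests} c^F_G(\tl)\,\forestpoly{G}(\xl), \quad c^F_G \in \ZZ[\tl].
\]
Since $\forestpoly{F}(\xl;\tl)$ is homogeneous of total degree $|F|$ while $\forestpoly{G}(\xl)$ is homogeneous of $\xl$-degree $|G|$, each $c^F_G$ is homogeneous of $\tl$-degree $|F|-|G|$, vanishing when $|G|>|F|$. Specializing $\tl=\bm{0}$ in the expansion and using the identity above forces $c^F_F=1$ and $c^F_G=0$ for $G \ne F$ with $|G|=|F|$. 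Thus the transition matrix is unitriangular in the size order with entries in $\ZZ[\tl]$ and locally finite, hence invertible over $\ZZ[\tl]$; both spanning and linear independence follow at once.

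For part~\ref{basis:it2}, the key step is to prove that $\forestpoly{F}(\xl;\tl) \in \ZZ[\tl][\xl_n]$ if and only if $\qdes{F} \subset [n]$. The forward direction follows by specializing $\tl=\bm{0}$ and invoking the analogous result for ordinary forest polynomials \cite{NST_a}. For the reverse, $\qdes{F} \subset [n]$ implies $\eope{i}\forestpoly{F}=0$ for all $i>n$ by the recursion, so it suffices to prove $\bigcap_{i>n}\ker(\eope{i}) = \ZZ[\tl][\xl_n]$. If $x_m$ is the largest $\xl$-variable occurring in $f$ with $m>n$ and $f = \sum_{k \ge 0} f_k x_m^k$, then a direct calculation from the definition of $\eope{m}$ yields
\[
\eope{m}f = \sum_{k \ge 1}f_k\,\frac{x_m^k - t_m^k}{x_m - t_m},
\]
whose vanishing forces $f_k=0$ for $k \ge 1$. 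With this in hand, the transition matrix from part~\ref{basis:it1} restricts to the subsets indexed by $\{F \suchthat \qdes{F} \subset [n]\}$ on both sides, and the same unitriangular inversion concludes the proof. The main technical hurdle is the computation of $\eope{m}f$ above; everything else is routine bookkeeping with homogeneity and inversion of unitriangular matrices.
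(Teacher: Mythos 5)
Your argument is correct, and part~(1) makes explicit the triangular deformation argument that the paper leaves nearly implicit (``(1) follows from the fact that $\{\forestpoly{F}(\xl)\}$ is a $\ZZ$-basis of $\ZZ[\xl]$''). The only place to be slightly more careful is the phrase ``locally finite, hence invertible'': you need each $\forestpoly{G}(\xl)$ to expand as a \emph{finite} $\ZZ[\tl]$-combination of $\forestpoly{F}(\xl;\tl)$, which does not follow from homogeneity alone since the constraint $|F|\le|G|$ leaves infinitely many candidate $F$. One clean way to finish is to peel off the top $\xl$-degree piece of $f$ (a $\ZZ[\tl]$-combination of ordinary forest polynomials), subtract the corresponding $\ZZ[\tl]$-combination of double forest polynomials, and iterate; this terminates in finitely many steps.

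For part~(2) your route is genuinely different from the paper's. You first establish the membership criterion ``$\forestpoly{F}\in\ZZ[\tl][\xl_n]$ if and only if $\qdes{F}\subset[n]$'' and then restrict the transition matrix, which tacitly uses that $c^F_G = 0$ whenever $\qdes{F}\subset[n]$ but $\qdes{G}\not\subset[n]$ (true, because $\{\forestpoly{G}(\xl)\suchthat\qdes{G}\subset[n]\}$ is a $\ZZ[\tl]$-basis of $\ZZ[\tl][\xl_n]$). The paper instead takes an arbitrary $f\in\ZZ[\tl][\xl_n]$, writes $f=\sum a_F(\tl)\forestpoly{F}(\xl;\tl)$ by part~(1), computes $\eope{i}f=\sum_{i\in\qdes{F}}a_F(\tl)\forestpoly{F/i}(\xl;\wh{\tl}_i)$ from the recursion, and concludes from linear independence of the reindexed basis that $a_F=0$ whenever $\qdes{F}\not\subset[n]$. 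Both proofs hinge on the same ingredient --- the identification $\bigcap_{i>n}\ker(\eope{i})=\ZZ[\tl][\xl_n]$, which you verify directly via your computation of $\eope{m}f$ for $m$ the largest occurring index and which the paper records in a remark --- but the paper's version is shorter and avoids the block-triangular bookkeeping. Your intermediate membership criterion is, however, of independent interest and is essentially Corollary~\ref{cor:forestdepend} in the paper.
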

\begin{proof}
When $\tl=\bm{0}$, the $\eope{i}$ become the trimming operations $\tope{i}$, and Theorem~\ref{thm:ForestDesiderata} translates to the recursive characterization of forest polynomials in terms of the $\tope{i}$ \cite[Theorem 6.1]{NST_a}, which shows $\forestpoly{F}(\xl;\bm{0})=\forestpoly{F}(\xl)$. 
    Now ~\ref{basis:it1}  follows from the fact that $\{\forestpoly{F}(\xl)\suchthat F\in \indexedforests\}$ is a $\ZZ$-basis of $\ZZ[\xl]$.

    For ~\ref{basis:it2}, begin by noting that
    \[
        f\in \ZZ[\tl][\xl_n]\Longleftrightarrow \eope{i}f=0\text{ for all }i>n.
    \]
    If we now write $f\in \poltx$ uniquely as $f=\sum a_F(\tl)\forestpoly{F}(\xl;\tl),$ then 
    \[
        \eope{i}f=\sum_{i\in \qdes{F}}a_F(\tl)\,\forestpoly{F/i}(\xl;\wh{\tl}_i),
    \]
    which is equal to zero exactly when $a_F=0$ for all $F$ with $i\in\qdes{F}$.
    Demanding $\eope{i}f=0$ for $i>n$ now immediately yields~\ref{basis:it2}.
\end{proof}
The following corollary is proved nearly identically, so we omit the proof.
\begin{cor}\leavevmode
    \begin{enumerate}
        \item $\{\forestpoly{F}(\xl;t_1,\ldots,t_n,0,\ldots)\suchthat F\in \indexedforests\}$ is a $\ZZ[\tl_n]$-basis for $\ZZ[\tl_n][\xl]$.
        \item $\{\forestpoly{F}(\xl;t_1,\ldots,t_n,0,\ldots)\suchthat \qdes{F}\subset [n]\}$ is a $\ZZ[\tl_n]$-basis for $\ZZ[\tl_n][\xl_n]$.
    \end{enumerate}
\end{cor}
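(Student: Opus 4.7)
The plan is to reprise the two-part argument of Corollary~\ref{cor:basiscor} verbatim, with the full specialization $\tl=\bm 0$ replaced by the partial specialization $t_j=0$ for all $j>n$.

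For~(1), I would observe that this partial specialization is precisely the base change $-\otimes_{\ZZ[\tl]}\ZZ[\tl_n]$ along the quotient $\ZZ[\tl]\twoheadrightarrow \ZZ[\tl_n]$. Applied to the $\ZZ[\tl]$-basis $\{\forestpoly{F}(\xl;\tl)\suchthat F\in \indexedforests\}$ of $\poltx$ supplied by Corollary~\ref{cor:basiscor}(1), base change of a free module immediately produces a $\ZZ[\tl_n]$-basis of $\ZZ[\tl_n][\xl]=\poltx\otimes_{\ZZ[\tl]}\ZZ[\tl_n]$, namely $\{\forestpoly{F}(\xl;t_1,\ldots,t_n,0,\ldots)\suchthat F\in \indexedforests\}$. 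Equivalently, one may further specialize $t_1=\cdots=t_n=0$ to recover $\forestpoly{F}(\xl;\bm 0)=\forestpoly{F}(\xl)$ and invoke the homogeneity of double forest polynomials (in the combined grading with $\deg x_i=\deg t_i=1$) to conclude by a graded triangularity argument.

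For~(2), I would first record that on the ring $\ZZ[\tl_n][\xl]$ the equivariant trimming $\eope{i}$ for $i>n$ degenerates (since $t_i=0$) to the ordinary trimming $\tope{i}$, using that at $t_i=0$ one has $\rope{i}^-=\rope{i}$ and $\rope{i}^+=\rope{i+1}$. By the nontriviality statement of Remark~\ref{rem:uniqueforest}, this yields the characterization
\[
f\in \ZZ[\tl_n][\xl_n]\Longleftrightarrow \eope{i} f=0\text{ for all }i>n.
\]
Writing $f\in \ZZ[\tl_n][\xl]$ in the basis from~(1) as $f=\sum_F a_F(\tl_n)\,\forestpoly{F}(\xl;t_1,\ldots,t_n,0,\ldots)$ and applying $\eope{i}$ using Theorem~\ref{thm:ForestDesiderata}---and noting that for $i>n$ removing the entry $t_i=0$ from the sequence $(t_1,\ldots,t_n,0,\ldots)$ leaves it unchanged---gives
\[
\eope{i} f=\sum_{i\in \qdes{F}} a_F(\tl_n)\,\forestpoly{F/i}(\xl;t_1,\ldots,t_n,0,\ldots),
\]
which by~(1) vanishes if and only if $a_F=0$ for every $F$ with $i\in \qdes{F}$. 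Demanding this for all $i>n$ exactly forces $\qdes{F}\subset [n]$ on the surviving summands, which is the claim.

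The only mild obstacle is verifying that $\eope{i}$ for $i>n$ degenerates compatibly with the partial specialization at $t_j=0$ for $j>n$, so that the recursion of Theorem~\ref{thm:ForestDesiderata} passes cleanly through; once this is observed, no new ideas beyond the proof of Corollary~\ref{cor:basiscor} are needed.
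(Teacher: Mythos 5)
Your proof is correct and essentially reprises the argument of Corollary~\ref{cor:basiscor}, which is exactly what the paper intends when it says the result is ``proved nearly identically.'' The base-change formulation of part~(1) is a slightly cleaner repackaging of the same fact, and part~(2) is the verbatim adaptation of the earlier proof together with the (correct) observations that $\eope{i}$ degenerates to $\tope{i}$ on $\ZZ[\tl_n][\xl]$ for $i>n$ and that the depletion $\wh{\tl}_i$ is invisible after setting $t_j=0$ for $j>n$.
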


\begin{cor}
\label{cor:forestdepend}
    A forest polynomial $\forestpoly{F}(\xl;\tl)$ depends only on the variables $x_1,\ldots,x_{\max \qdes{F}}$ and the variables $t_1,\ldots,t_{(\max \supp F)-1}$. In particular, if $F\in \indexedforests_n$ then $\forestpoly{F}(\xl;\tl)\in \ZZ[\tl_{n-1}][\xl_n]$.
\end{cor}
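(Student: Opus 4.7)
The approach is to establish the two claims on variable dependence separately, each using the axiomatic characterization in \Cref{thm:ForestDesiderata} together with \Cref{rem:uniqueforest}, which in particular identifies $\bigcap_i\ker(\eope{i})$ with $\ZZ[\tl]$. The $\xl$-assertion is essentially immediate: if $\forestpoly{F}$ depended nontrivially on some $x_N$ with $N$ taken maximal, then \Cref{rem:uniqueforest} would give $\eope{N}\forestpoly{F}\ne 0$, forcing $N\in\qdes{F}$ via the recursion in \Cref{thm:ForestDesiderata}, and hence $N\le \max\qdes{F}$.

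For the $\tl$-assertion I proceed by induction on $|F|$, with $F=\emptyset$ trivial. Fix $j\ge \max\supp(F)$; the aim is to show $\partial_{t_j}\forestpoly{F}=0$. The plan is first to show $\partial_{t_j}\forestpoly{F}\in \bigcap_i\ker(\eope{i})=\ZZ[\tl]$ and then upgrade this to zero. For $i\ne j$, the operator $\partial_{t_j}$ commutes with $\eope{i}$ because neither $\rope{i}^{+}$, $\rope{i}^{-}$, nor $(x_i-t_i)^{-1}$ involves $t_j$; hence $\eope{i}(\partial_{t_j}\forestpoly{F})=\partial_{t_j}(\eope{i}\forestpoly{F})$, which vanishes for $i\notin \qdes{F}$ and for $i\in \qdes{F}$ equals $\partial_{t_j}\forestpoly{F/i}(\xl;\wh{\tl}_i)$; this last expression is zero by the inductive hypothesis, since $\max\supp(F/i)<\max\supp(F)\le j$ forces $t_j$ not to appear in $\forestpoly{F/i}(\xl;\wh{\tl}_i)$. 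The boundary case $i=j$ uses the already-established $\xl$-assertion: the right leaf-child of any terminal node of $F$ lies in $\supp(F)$, so $\qdes{F}\subseteq[\max\supp(F)-1]$, whence $j>\max\qdes{F}$ and $\forestpoly{F}$ depends on none of $x_j, x_{j+1}, \ldots$. Then $\rope{j}^{+}\forestpoly{F}=\rope{j}^{-}\forestpoly{F}=\forestpoly{F}$, and the same identities hold with $\forestpoly{F}$ replaced by $\partial_{t_j}\forestpoly{F}$, so $\eope{j}(\partial_{t_j}\forestpoly{F})=0$.

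To conclude, differentiate the normalization $\forestpoly{F}(\tl;\tl)=0$ (valid for $F\ne\emptyset$) with respect to $t_j$ via the chain rule:
\[
0=(\partial_{x_j}\forestpoly{F})(\tl;\tl)+(\partial_{t_j}\forestpoly{F})(\tl;\tl).
\]
The $\xl$-assertion kills the first term, and since $\partial_{t_j}\forestpoly{F}\in \ZZ[\tl]$ the substitution $\xl=\tl$ leaves the second term unchanged; hence $\partial_{t_j}\forestpoly{F}=0$ identically, completing the induction. The ``in particular'' statement follows since $F\in \indexedforests_n$ forces $\max\supp(F)\le n$ and $\max\qdes{F}\le n-1$. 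The main subtlety in the argument is the boundary case $i=j$, where $\partial_{t_j}$ and $\eope{j}$ genuinely fail to commute; this is circumvented by first establishing the $\xl$-assertion and using the resulting independence from $x_j, x_{j+1},\ldots$ to make the substitutions in $\rope{j}^{\pm}$ act trivially.
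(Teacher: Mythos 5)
Your proof is correct and follows essentially the same strategy as the paper's: establish the $\xl$-dependence first via nonvanishing of $\eope{k}$, then handle the $\tl$-dependence by induction on $|F|$, pushing the recursion $\eope{i}\forestpoly{F}=\delta_{i\in\qdes{F}}\forestpoly{F/i}(\xl;\wh\tl_i)$ through to smaller forests, and finishing with the normalization $\forestpoly{F}(\tl;\tl)=0$. The only difference is cosmetic packaging: where the paper expands in powers of $t_k$ and argues coefficient-by-coefficient, you apply $\partial_{t_j}$ directly and invoke its commutativity with $\eope{i}$ for $i\neq j$ plus a chain-rule argument on the normalization, which is a somewhat cleaner execution of the same idea.
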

\begin{proof}
To show the $x$-variable dependence, we note that if $k$ is the largest index such that $\forestpoly{F}(\xl;\tl)$ depends on $x_k$ then $$\eope{k}\forestpoly{F}(\xl;\tl)=\frac{1}{x_k-t_k}(\forestpoly{F}(x_1,\ldots,x_k;\tl)-\forestpoly{F}(x_1,\ldots,t_k;\tl))\ne 0,$$
so we must have $k\in \qdes{F}$.

To show the $t$-variable dependence we proceed by induction on $|F|$. Suppose $\forestpoly{F}$ depends on $t_k$ with $k\ge \max \supp F$. Write $\forestpoly{F}=\sum h_i(\xl;\wh{\tl}_k) t_k^i$ where $h_i$ is the coefficient of $t_k^i$. If for some $i\ge 1$ we have that $h_i$ has a nontrivial dependence of $x_j$, then choosing $j$ maximal we have $\eope{j}\forestpoly{F}=\delta_{j\in \qdes{F}}\forestpoly{F/j}(\xl;\wh{\tl}_j)$ has nonzero $t_k^i$ coefficient, contradicting our induction hypothesis as $t_k$ appears as the $k-1$th equivariant variable in $\wh{\tl}_j$ and $k-1\ge \max \supp{F/j}$. Thus only $h_0$ may have a nontrivial $x$-dependence. But now if $h_i\ne 0$ for any $i\ge 1$ then $\forestpoly{F}(\tl;\tl)$ has a nontrivial $t_k^i$-coefficient, contradicting that it is equal to $0$.
\end{proof}

A few more basic properties of double forest polynomials, particularly that if $F,G$ are forests with disjoint sets of leaves that $\forestpoly{F\sqcup G}(\xl;\tl)=\forestpoly{F}(\xl;\tl)\forestpoly{G}(\xl;\tl)$, are easier to establish with the combinatorial model we derive later rather than from first principles. We defer these to \Cref{subsec:Furtheerproperties}.

\subsection{Double fundamental quasisymmetric polynomials}
\label{subsec:double_fund}
 
The fundamental quasisymmetric polynomials are a well-known $\ZZ$-basis of ordinary quasisymmetric polynomials $\qsym{n}$. We now recall from \cite{NST_a} how they are realized as the forest polynomials associated to forests $F\in \zigzag{n}$, and use this to obtain a $\ZZ[\tl]$-basis of $\eqsym{n}$ given by a new family of ``double fundamental quasisymmetric polynomials''.

\begin{defn}  
\label{def:qseq}
A \emph{padded composition} of length $n$ is a sequence $(0^{n-\ell},a_1,\dots,a_{\ell})$ where $\ell\ge 0$ and $a_i>0$ for $1\leq i\leq \ell$. 
The set of padded compositions of length $n$ will be denoted by $\padded{n}$.
For $c=(0^{n-\ell},a_1,\cdots, a_\ell)\in \padded{n}$, let $\operatorname{Set}(c)=\{a_1,a_1+a_2,\dots,a_1+\cdots+a_{\ell-1}\}$.
The associated \emph{fundamental quasisymmetric polynomial} is the generating function 
\[
    \slide{c}(\xl_n)=\sum_{\substack{1\leq i_1\leq \cdots \leq i_m\leq n\\ j\in \operatorname{Set}(c) \Longrightarrow i_j<i_{j+1}
    }}x_{i_1}\cdots x_{i_m},
\]
where $m\coloneqq a_1+\cdots+a_{\ell}$.
\end{defn}
With this indexing convention, $\slide{c}(\xl_n)$ is the fundamental quasisymmetric polynomial whose reverse-lexicographic leading term $\prod_{1\leq i\leq \ell}x_{n-\ell+i}^{a_i}$. 
This would traditionally be indexed by the ``strong composition'' $(a_1,\dots,a_{\ell})$.

By interpreting a padded composition $c$ as an element of $\nvect$ (by appending $0$s at the end), we can associate the unique indexed forest $F_c$ whose code is $c$. 
It was shown in \cite[Theorem 8.3]{NST_a} that
\begin{equation}
\label{eq:fundamental_as_forest}
\slide{c}(\xl_n)=\forestpoly{F_c}(\xl_n),    
\end{equation}
and the map $c\mapsto F_c$ gives a bijection from $\padded{n}$ to $\zigzag{n}$. 
For the zigzag forest in Figure~\ref{fig:zigzag}, the associated padded composition $c\in \padded{6}$ is $c=(0,0,2,2,1,2)$.
\begin{defn}
\label{def:double_fundamental}
     Given $c\in \padded{n}$, we define the associated \emph{double fundamental quasisymmetric polynomial} by
    $$
        {\mathfrak{F}_{c}(\xl_n;\textbf{t})}\coloneqq \forestpoly{F_c}(\xl_n;\textbf{t}).
    $$
\end{defn}

\begin{eg}
\label{eg:double_fundamental}
We give an example of a double fundamental quasisymmetric polynomial computed from the combinatorial model appearing later in Section~\ref{sec:subword_models}.
Consider $c=(0,2,1)\in \padded{3}$. Then the associated forest is given by $1^02^23^1=2\cdot 2\cdot 3\in \indexedforests_5$, which has the unique Sylvester word $\{4,2,3\}$. By \Cref{thm:subwordformula_for_forests} we can compute
\[
    \slide{c}(\xl;\tl)=(x_1-t_4)(x_1-t_1)(x_2+x_3-t_1-t_2)+(x_1+x_2-t_1-t_4)(x_2-t_2)(x_3-t_2).
\]
Observe that the specialization $\tl=\bm{0}$  is $x_1^2x_2+x_1^2x_3+x_2^2x_3+x_1x_2x_3$ which is the fundamental quasisymmetric polynomial indexed by $c$.

\end{eg}
\begin{thm}
\label{thm:qsymbasis}
For $c\in \padded{n}$ we have $\slide{c}(\xl_n;\bm{0})=\slide{c}(\xl_n)$, the ordinary fundamental quasisymmetric polynomial. 
Furthermore, the set $$\{\forestpoly{F}(\xl_n;\tl)\suchthat F\in \zigzag{n}\}=\{\slide{c}(\xl_n;\tl)\suchthat c\in \padded{n}\}$$ is a $\ZZ[\tl]$-basis for $\eqsym{n}$.
\end{thm}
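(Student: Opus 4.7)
The first assertion is a direct chase of definitions: by Definition~\ref{def:double_fundamental}, $\slide{c}(\xl_n;\tl)=\forestpoly{F_c}(\xl_n;\tl)$, so the $\tl=\bm{0}$ specialization equals $\forestpoly{F_c}(\xl_n)$ by Corollary~\ref{cor:basiscor}, which in turn equals $\slide{c}(\xl_n)$ by \eqref{eq:fundamental_as_forest}.

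For the basis claim, my plan is to work relative to the $\ZZ[\tl]$-basis $\{\forestpoly{F}(\xl;\tl)\suchthat \qdes{F}\subset[n]\}$ of $\ZZ[\tl][\xl_n]$ furnished by Corollary~\ref{cor:basiscor}\ref{basis:it2}, and to characterize membership in $\eqsym{n}$ by the support of the expansion. Containment $\forestpoly{F}\in\eqsym{n}$ for $F\in\zigzag{n}$ is immediate from Theorem~\ref{thm:ForestDesiderata}: if $\qdes{F}\subset\{n\}$ then $i\notin\qdes{F}$ for all $1\le i<n$, hence $\eope{i}\forestpoly{F}=0$. Linear independence of $\{\forestpoly{F}\suchthat F\in\zigzag{n}\}$ over $\ZZ[\tl]$ is inherited from the larger basis. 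For the nontrivial direction, spanning, I would take $f\in\eqsym{n}$, expand $f=\sum_{\qdes{F}\subset[n]}a_F(\tl)\forestpoly{F}(\xl;\tl)$, and apply $\eope{i}$ for each $1\le i<n$ using Theorem~\ref{thm:ForestDesiderata} to obtain
\[
0 \;=\; \sum_{\substack{\qdes{F}\subset[n]\\ i\in\qdes{F}}} a_F(\tl)\,\forestpoly{F/i}(\xl;\wh{\tl}_i).
\]
If one can deduce $a_F=0$ whenever $i\in\qdes{F}$, then iterating over $i\in\{1,\ldots,n-1\}$ restricts the support of the expansion to $\qdes{F}\subset\{n\}$, i.e.\ $F\in\zigzag{n}$, which completes the argument.

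The main obstacle is the $\tl$-reindexing $\tl\mapsto\wh{\tl}_i$ in the recursion, and I intend to handle it by an elementary separation of variables. Since each $\forestpoly{F/i}(\xl;\wh{\tl}_i)$ is independent of $t_i$, writing $a_F(\tl)=\sum_k a_{F,k}(\wh{\tl}_i)\,t_i^k$ decouples the displayed identity into the coefficient equations $\sum_{i\in\qdes{F}}a_{F,k}(\wh{\tl}_i)\,\forestpoly{F/i}(\xl;\wh{\tl}_i)=0$ for each $k\ge 0$. Each of these is the image under the injective shift $\phi_i\colon\ZZ[\tl][\xl]\hookrightarrow\ZZ[\tl][\xl]$ determined by $t_j\mapsto t_j$ for $j<i$ and $t_j\mapsto t_{j+1}$ for $j\ge i$, of a $\ZZ[\tl]$-relation among the polynomials $\{\forestpoly{F/i}(\xl;\tl)\suchthat i\in\qdes{F}\}$. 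The map $F\mapsto F/i$ is injective on $\{F\suchthat i\in\qdes{F}\}$ because $F=(F/i)\cdot\underline{i}$ recovers $F$, so these polynomials are distinct members of the $\ZZ[\tl]$-basis provided by Corollary~\ref{cor:basiscor}\ref{basis:it1}. Injectivity of $\phi_i$ then forces every $a_{F,k}$, and hence every $a_F$, to vanish. Past this $t_i$-expansion trick the argument is pure bookkeeping.
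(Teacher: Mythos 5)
Your proof is correct and follows essentially the same strategy as the paper: expand $f$ in the double-forest $\ZZ[\tl]$-basis, apply $\eope{i}$ for each $i<n$, and conclude that the nonzero coefficients are supported on $\zigzag{n}$. You spell out, via the $t_i$-expansion and injectivity of $F\mapsto F/i$, the linear-independence step that the paper treats as immediate (and note a minor typo: your coefficient equation should sum over $F$ with $i\in\qdes{F}$, not over $i\in\qdes{F}$).
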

\begin{proof}
The specialization at $\tl=\bm{0}$ to ordinary fundamental quasisymmetric polynomials follows from \Cref{cor:basiscor} in conjunction with~\eqref{eq:fundamental_as_forest}.

We now establish the second half.
Note that \[
f\in \eqsym{n}\Longleftrightarrow \eope{i}f=0\text{ for all }i\ne n.
\]
In particular $\forestpoly{F}(\xl;\tl)\in \eqsym{n}$ for any $F\in \zigzag{n}$.
By Corollary~\ref{cor:basiscor}, the double forest polynomials $\{\forestpoly{F}(\xl;\tl)\suchthat F\in \zigzag{n}\}$ are $\ZZ[\tl]$-linearly independent, hence it remains to show that they $\ZZ[\tl]$-linearly span $\eqsym{n}$. If we write $f\in \poltx$ uniquely as $f=\sum a_F\forestpoly{F}(\xl;\tl),$ then 
\[
\eope{i}f=\sum_{i\in \qdes{F}}a_F\,\forestpoly{F/i}(\xl;\wh{\tl}_i),
\]
which is equal to zero exactly when $a_F=0$ for all $F$ with $i\in\qdes{F}$. 
If $f\in \eqsym{n}$, then we infer that the nonzero $a_F$ necessarily satisfy $\qdes{F}\subset \{n\}$, or equivalently, $F\in \zigzag{n}$.
This concludes the proof.
\end{proof}
We now describe a version of this result where we work only with equivariant variables $t_1,\ldots,t_n$.

\begin{defn}
    We let $\eqsym{[n]}\coloneqq\eqsym{n}\cap \ZZ[\tl_n][\xl_n]$
\end{defn}

Since the definition of equivariant quasisymmetry only involves the variables $t_1,\ldots,t_{n-1}$ it is straightforward to verify that
$$\eqsym{n}=\eqsym{[n]}\otimes \ZZ[t_{n+1},t_{n+2},\ldots].$$

The following is proved almost identically to \Cref{thm:qsymbasis}, so we omit the proof.
\begin{thm}
    The set $$\{\forestpoly{F}(\xl_n;t_1,\ldots,t_n,0,\ldots)\suchthat F\in \zigzag{n}\}=\{\slide{c}(\xl_n;t_1,\ldots,t_n,0,\ldots)\suchthat c\in \padded{n}\}$$ is a $\ZZ[\tl_n]$-basis for $\eqsym{[n]}$.
\end{thm}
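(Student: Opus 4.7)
I would follow the structure of \Cref{thm:qsymbasis}, but leverage the tensor-product identity
\[
\eqsym{n}=\eqsym{[n]}\otimes_{\ZZ} \ZZ[t_{n+1},t_{n+2},\ldots]
\]
noted just before the theorem in order to avoid re-running the $\eope{i}$-extraction argument in a shifted setting. The set equality between the two descriptions is immediate from \Cref{def:double_fundamental} together with the bijection $c\mapsto F_c$ between $\padded{n}$ and $\zigzag{n}$. For each $F\in \zigzag{n}$, \Cref{cor:forestdepend} shows that $\forestpoly{F}(\xl;\tl)$ depends only on $x_1,\ldots,x_n$ and $t_1,\ldots,t_{n-1}$, so the substitution $t_j=0$ for $j>n$ is vacuous:
\[
\forestpoly{F}(\xl_n;t_1,\ldots,t_n,0,\ldots)=\forestpoly{F}(\xl_n;\tl)\in \ZZ[\tl_{n-1}][\xl_n]\subseteq \ZZ[\tl_n][\xl_n].
\]
Combined with \Cref{thm:qsymbasis} (which places $\forestpoly{F}\in \eqsym{n}$), each such polynomial lies in $\eqsym{[n]}=\eqsym{n}\cap\ZZ[\tl_n][\xl_n]$.

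For the basis property, given any $f\in \eqsym{[n]}\subseteq \eqsym{n}$, I would use \Cref{thm:qsymbasis} to write $f=\sum_{F\in \zigzag{n}}b_F(\tl)\,\forestpoly{F}(\xl;\tl)$ uniquely with $b_F\in \ZZ[\tl]$, and then expand each $b_F(\tl)=\sum_{\beta}b_{F,\beta}(\tl_n)\,t_{>n}^{\beta}$ where $t_{>n}^\beta$ ranges over monomials in $t_{n+1},t_{n+2},\ldots$ and $b_{F,\beta}\in \ZZ[\tl_n]$. Since each $\forestpoly{F}$ lies in $\ZZ[\tl_n][\xl_n]$ while $f$ is independent of $t_{>n}$, grouping by $\beta$ yields $\sum_{F}b_{F,\beta}(\tl_n)\,\forestpoly{F}=0$ for each $\beta\ne 0$; the $\ZZ[\tl]$-linear independence of the basis restricts to $\ZZ[\tl_n]$-linear independence (via $\ZZ[\tl_n]\subset \ZZ[\tl]$), forcing $b_{F,\beta}=0$ for all $\beta\ne 0$. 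Hence $b_F(\tl)=b_{F,0}(\tl_n)\in \ZZ[\tl_n]$, giving the spanning; linear independence over $\ZZ[\tl_n]$ is immediate from the $\ZZ[\tl]$-version.

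A direct alternative (more in the spirit of the ``proved almost identically'' remark) is to rerun the argument of \Cref{thm:qsymbasis} verbatim, starting from the basis $\{\forestpoly{F}(\xl;t_1,\ldots,t_n,0,\ldots)\suchthat \qdes{F}\subseteq [n]\}$ of $\ZZ[\tl_n][\xl_n]$ provided by the corollary just above. Applying $\eope{i}$ for $i=1,\ldots,n-1$ to an expansion $f=\sum a_F\,\forestpoly{F}(\xl;t_1,\ldots,t_n,0,\ldots)$ with $a_F\in \ZZ[\tl_n]$ yields
\[
0=\eope{i}f=\sum_{i\in \qdes{F}}a_F\,\forestpoly{F/i}(\xl;t_1,\ldots,t_{i-1},t_{i+1},\ldots,t_n,0,\ldots),
\]
and after relabeling the equivariant variables and extracting $t_i$-coefficients (the polynomials in the sum do not involve $t_i$), the $(n-1)$ instance of the same basis corollary produces $a_F=0$ for all $F$ with $i\in \qdes{F}$. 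The only routine obstacle is verifying that $\qdes{F/i}\subseteq [n-1]$, which follows immediately from the code description $c(F/i)=(c_1,\ldots,c_{i-1},c_i-1,c_{i+2},c_{i+3},\ldots)$: the support of $c(F)$ in $[n]$ passes to support of $c(F/i)$ in $[n-1]$ when $i\le n-1$. The tensor-product route above sidesteps this bookkeeping entirely.
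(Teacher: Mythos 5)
Your first (tensor-product) route contains a genuine error. You invoke \Cref{cor:forestdepend} to conclude that for $F\in\zigzag{n}$ the polynomial $\forestpoly{F}(\xl;\tl)$ depends only on $t_1,\ldots,t_{n-1}$, so that the substitution $t_j\mapsto 0$ for $j>n$ is vacuous and $\forestpoly{F}(\xl;\tl)\in\ZZ[\tl_n][\xl_n]$. This is false: the corollary bounds the $t$-dependence by $\max\supp F - 1$, and for a zigzag forest $F\in\zigzag{n}$ the support $\supp F$ typically exceeds $[n]$ (one has $\qdes{F}\subset\{n\}$, not $\supp F\subset [n]$). Example~\ref{eg:double_fundamental} makes this explicit: $c=(0,2,1)\in\padded{3}$ gives $F_c=2\cdot2\cdot3\in\indexedforests_5$ and $\slide{c}(\xl;\tl)$ involves $t_4$. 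Consequently the truncation $t_j\mapsto 0$ is a nontrivial specialization, and your basis argument — which extracts $t_{>n}$-coefficients under the premise that each $\forestpoly{F}$ is already $t_{>n}$-free — does not go through. (The membership $\forestpoly{F}(\xl_n;t_1,\ldots,t_n,0,\ldots)\in\eqsym{[n]}$ is still true, but because the specialization $t_j\mapsto 0$ for $j>n$ commutes with the $\eope{i}$ for $i<n$, not because it is vacuous. A repaired version of your first route would simply apply this specialization to the expansion $f=\sum b_F(\tl)\forestpoly{F}(\xl;\tl)$ and use $\psi(f)=f$; you do not get to group by $t_{>n}$-monomials before specializing.)

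Your second ("direct alternative") route is correct and is essentially what the authors intend by ``proved almost identically'' to \Cref{thm:qsymbasis}: start from the $\ZZ[\tl_n]$-basis $\{\forestpoly{F}(\xl;t_1,\ldots,t_n,0,\ldots)\suchthat\qdes{F}\subset[n]\}$ of $\ZZ[\tl_n][\xl_n]$, apply $\eope{i}$ for $1\le i<n$, extract $t_i$-coefficients (noting the images $\forestpoly{F/i}$ live in the $\wh{\tl}_i$-variables and $\qdes{F/i}\subset[n-1]$, as you verify from the code), and invoke the rank-$(n-1)$ version of the basis corollary to force $a_F=0$ when $i\in\qdes{F}$. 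That is a complete proof; if you keep only this paragraph the argument is sound.
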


\section{A combinatorial model for forest polynomials}
\label{sec:subword_models}

Following~\cite{KM05}, we will use the term \emph{subword model} to refer to the realization of a family of polynomials $\mathcal{F}$ as uniformly-defined generating functions for subwords in a fixed word.
A well-known example of a subword model is the celebrated pipe dream formula for single and double Schubert polynomials~\cite{Ber93, BJS93, FK96, KM05, KnUd23}.
In this section we define a subword model which allows us to realize double forest polynomials, and in the next section double Schubert polynomials.  
While both models are reminiscent of the pipe dream formula and we have a similar diagrammatic depiction, the resulting formulas seem to be novel.  

We now define a subword model formally (see \cite[Definition 1.8.1]{KM05} for what guides us).
A \emph{word} is an ordered sequence $\omega = (\omega_{1}, \ldots, \omega_{\ell})$. 
For convenience we shall often omit parentheses and commas in writing our words.
An ordered subsequence $\pi$ of $\omega$ is called a \emph{subword} of $\omega$. 
We shall abuse notation on occasion and denote this by $\pi\subset \omega$.
A \emph{weight function} on $\omega$ is a function $
\weight:[\ell]\to \poltx$,
which we informally view as assigning an element of $\poltx$ to each letter in $\omega$. 
This given, the weight $\weight(\pi)$ of a subword $\pi$ of $\omega$ is the product of the weights of the letters in $\pi$.
For a collection of words $D$, we define 
\[
\mc{R}(\omega;D)=\{\pi\subset \omega\suchthat \pi \in D\}.
\]
Then the subword generating function associated to $D$ is
\begin{equation}
\label{eq:subword_gf}
    \sum_{\pi\in \mathcal{R}(\omega;D)}\weight(\pi).    
\end{equation}

The relevant collections of words for us are $D = \red{w}$ for some permutation $w$ or $D = \sylv{F}$ for some indexed forest $F$.
In these cases, we will write 
\[
\mc{R}(\omega;F)\coloneqq \mc{R}(\omega;\sylv{F}),
\quad\text{ and }\quad
\mc{R}(\omega;w)\coloneqq \mc{R}(\omega;\red{w}).
\]
In Section~\ref{subsec:subword} we define a word $\longword_{[n]}$ and weight function $\weight$ such that the following result holds.

\begin{thm}
\label{thm:subwordformula_for_forests}
For $F\in \indexedforests_{n+1}$ we have
\[
\forestpoly{F}(\xl;\tl)=\sum_{\pi\in \mathcal{R}(\longword_{[n]};F)}\weight(\pi).
\]
\end{thm}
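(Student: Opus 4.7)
The plan is to invoke the uniqueness in \Cref{thm:ForestDesiderata}. Define $P_F(\xl;\tl) \coloneqq \sum_{\pi \in \mathcal{R}(\longword_{[n]};F)} \weight(\pi)$ and verify that the family $\{P_F\}$ satisfies the two defining axioms of double forest polynomials: the normalization $P_F(\tl;\tl) = \delta_{F,\emptyset}$ and the trimming recursion $\eope{i}P_F = \delta_{i \in \qdes{F}}\, P_{F/i}(\xl;\wh{\tl}_i)$. By \Cref{rem:uniqueforest} these two properties identify the family $\{P_F\}$ with $\{\forestpoly{F}\}$ uniquely. I would set up the verification as an induction on $|F|$, with the recursion serving as the load-bearing step.

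The base case $F = \emptyset$ is immediate: $\sylv{\emptyset}$ consists only of the empty word, whose weight is $1$, so $P_{\emptyset} = 1$. For the normalization $P_F(\tl;\tl) = 0$ when $F \neq \emptyset$, I expect each letter weight in $\longword_{[n]}$ to be built from expressions analogous to the $x_j - t_k$ differences visible in \Cref{eg:double_fundamental}; these should vanish at $\xl = \tl$ either factor by factor in each summand or through a pairwise cancellation of summands. Once $\longword_{[n]}$ and $\weight$ are in hand, this should reduce to a direct algebraic check.

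The heart of the argument is the recursion. I would partition $\mathcal{R}(\longword_{[n]};F)$ according to the last letter $j$ of the Sylvester reading of $\pi$, which must lie in $\qdes{F}$. The action of $\eope{i}$ then splits into two regimes. For $j \in \qdes{F}$ with $j \neq i$, the positions of $\longword_{[n]}$ contributing the letter $i$ to a subword ought to interact inertly with the terminal ``$j$'' portion, so that $\rope{i}^{+}$ and $\rope{i}^{-}$ agree on this piece and its contribution to $\eope{i} P_F$ vanishes. For $j = i$, the terminal factor of $\weight(\pi)$ should take the shape $x_{\bullet} - t_i$, so that $\eope{i}$ extracts it cleanly; the remaining stub then corresponds, under \Cref{obs:SylvesterTrimming}, to an element of $\mathcal{R}(\longword_{[n-1]};F/i)$, with the shift $k \mapsto k - \delta_{k>i}$ realizing precisely the reindexing $\tl \mapsto \wh{\tl}_i$ on the weight side.

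The main obstacle will be the bookkeeping for the recursion: making precise how the letter $i$ sits in $\longword_{[n]}$ relative to the non-terminal portions of contributing subwords, and showing that the specific choice of $\weight$ is engineered so that $\eope{i}$ simultaneously annihilates the ``last letter $\neq i$'' pieces and converts the ``last letter $= i$'' pieces into the expected generating function for $F/i$. This is the step where the precise combinatorial structure of $\longword_{[n]}$ must be exploited in full. Once the requisite cancellation and reindexing are in place, the axioms of \Cref{thm:ForestDesiderata} are verified and the identity $P_F = \forestpoly{F}$ follows.
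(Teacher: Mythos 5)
Your high-level strategy is the same as the paper's: define $P_F\coloneqq\sum_{\pi\in \mathcal{R}(\longword_{[n]};F)}\weight(\pi)$, verify the normalization $P_F(\tl;\tl)=\delta_{F,\emptyset}$ and the trimming recursion, and invoke the uniqueness from \Cref{rem:uniqueforest}. This is exactly how the paper proceeds. Your hedge about "pairwise cancellation of summands" for the normalization is correct (the paper uses a sign-reversing involution swapping $j^{(i)}\leftrightarrow\overline{j}^{(i)}$; note that the barred weights $t_j-t_i$ do not vanish at $\xl=\tl$, so factor-by-factor vanishing does not happen).

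The proposed mechanism for the recursion, however, does not work, and this is the genuine gap. You propose to partition $\mathcal{R}(\longword_{[n]};F)$ by the last value $j$ of $\pi$ and argue that for $j\neq i$ "$\rope{i}^{+}$ and $\rope{i}^{-}$ agree on this piece," i.e.\ $\eope{i}\weight(\pi)=0$ term by term. This fails. Take $F$ with code $(1,0,1,0,\ldots)$ (so $\qdes{F}=\{1,3\}$) and apply $\eope{1}$. The subword $\pi=\{1^{(1)},\overline{3}^{(1)}\}\subset\longword_{[4]}$ ends in value $3$, but $\eope{1}\weight(\pi)=\eope{1}\bigl[(x_1-t_1)(t_3-t_1)\bigr]=t_3-t_1\neq 0$; similarly $\{1^{(1)},3^{(2)}\}$ gives $t_1-t_3$, $\{1^{(1)},\overline{3}^{(2)}\}$ gives $t_3-t_2$, and $\{1^{(1)},3^{(3)}\}$ gives $x_2-t_3$. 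The total from subwords ending in $3$ is $x_2-t_2$, which is a genuinely needed part of the answer $\forestpoly{F/1}(\xl;\wh{\tl}_1)=x_1+x_2-t_2-t_3$. Your other claim, that the terminal weight has shape $x_\bullet-t_i$, also fails: the last letter of value $i$ may be barred, i.e.\ $\overline{i}^{(k)}$ with weight $t_i-t_k$.

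The missing idea is the sign-reversing involution in \Cref{prop:Rsubword}. Rather than partition by terminal value, the paper pairs subwords across the \emph{same} value by swapping $j^{(i)}\leftrightarrow\overline{j}^{(i)}$ (for $\rope_i^-$) and $\overline{j}^{(i)}\leftrightarrow j^{(i+1)}$ (for $\rope_i^+$); in the example above this cancels $\{1^{(1)},\overline{3}^{(1)}\}$ against $\{1^{(1)},3^{(2)}\}$ while leaving the other subwords to carry the nonzero contribution. Only after this reorganization does one reduce, via \Cref{cor:esubword}, to subwords of the restricted word $\longword^+_{[n],i}$ that contain $i^{(i)}$, the unique position where the value $i$ carries weight $x_i-t_i$, and a bijection with $\mathcal{R}(\longword_{[n-1]};F/i)$ can be constructed. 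Without the involution step the "extract the last factor" heuristic has nothing to apply to, so the partition-by-last-letter plan cannot be pushed through as written.
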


The proof of Theorem~\ref{thm:subwordformula_for_forests} is deferred to Section~\ref{sec:ForestProof}, after which we give a  diagrammatic construction of our subword model in Section~\ref{subsec:vines}.  The same subword model can be used to compute double Schubert polynomials in Section~\ref{sec:VineSchub}.

\begin{figure}[!h]
    \centering
    \includegraphics[scale=1]{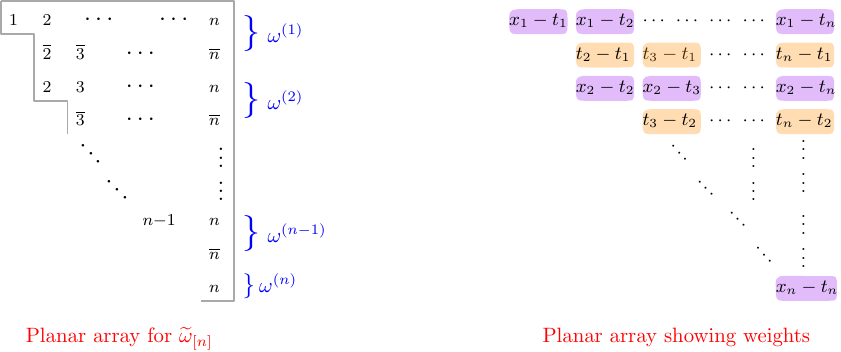}
    \caption{The long word $\longword$ as a planar array}
    \label{fig:longword}
\end{figure}

\subsection{The vine subword model}
\label{subsec:subword}

We now define the subword model used in Theorem~\ref{thm:subwordformula_for_forests}, which we call the \emph{vine subword model}.  Proofs are deferred to Section~\ref{sec:ForestProof}.
For each $1\le k\le n$ let
\[
\omega_{[n]}^{(k)} = (n, n-1, \dots, k+1, k, \overline{k+1}, \dots, \overline{n-1}, \overline{n}),
\]
where the bar is used only as a decoration to distinguish the two instances of $i$ in $\omega^{(k)}$.
To distinguish the letters in $\omega^{(k)}$ we sometimes denote these letters as $i^{(k)}$ for $k\le i \le n$ and $\displaystyle\smash{\overline{i}^{(k)}}$ for $k+1\le i \le n$; when using this convention we say that $i$ is the \emph{value} of both $i^{(k)}$ and $\overline{i}^{(k)}$.

We write $\longword_{[n]}$ for the concatenation
\[
\longword_{[n]} = \omega_{[n]}^{(1)} \omega_{[n]}^{(2)} \omega_{[n]}^{(3)} \cdots \omega_{[n]}^{(n-1)} \omega_{[n]}^{(n)},
\]
omitting the $[n]$ when it is clear from context.  
In Section~\ref{subsec:vines} we will consider $\longword_{[n]}$ as coming from a planar array, which may provide some useful intuition here; see Figure~\ref{fig:longword}.  
We refer to the terms from $\omega^{(k)}$ in $\longword$ as the \emph{$k$th syllable} of $\longword$.
For example $\longword_{[4]}$ consists of four syllables,
\begin{equation}
\label{eq:long_word}
    \longword_{[4]} =4\,3\,2\,1\,\overline{2}\,\overline{3}\,\overline{4}\,\, \,4\,3\,2\,\overline{3}\,\overline{4}\,\,\,4\,3\,\overline{4}\,\,\,4
\end{equation}
When finding subwords we only consider the value of each letter.  For example $|\mathcal{R}(\longword_{[4]}; \{2\})|=3$, counting two instances of $2$ and one instance of $\overline{2}$.

We assign each letter of $\longword$ a polynomial weight in $\xl$ and $\tl$ based on its position, value, and the syllable in which it occurs:
\begin{align*}
\weight(j^{(i)})= 
(x_{i} - t_{j})
\quad\text{ and }\quad
\weight(\overline{j}^{(i)})=(t_{j} - t_{i}).
\end{align*}
For example, the two instances of $2$ in~\eqref{eq:long_word} have weights $x_1-t_2$ and $x_2-t_2$ and the solitary instance of $\overline{2}$ has weight $t_2-t_1$.
This in turn means that the subword generating function from~\eqref{eq:subword_gf} for $D=\{2\}$ equals $(x_1-t_2)+(x_2-t_2)+(t_2-t_1)=x_1+x_2-t_1-t_2$.

\subsection{Proof that double forest polynomials exist}
\label{sec:ForestProof}
We are now ready to prove \Cref{thm:ForestDesiderata}, that double forest polynomials exist, and \Cref{thm:subwordformula_for_forests}, that double forest polynomials are computed by the subword model.  
We prove both results by showing that the recursive characterization $(\rope{i}^{+} - \rope{i}^{-})\forestpoly{F}(\xl;\tl) = (x_{i} - t_{i}) \forestpoly{F/i}(\xl;\hat{\tl}_{i})$ of forest polynomials also holds for the subword model 
\begin{equation}
\label{eq:forest_subword_model}
\sum_{\pi\in \mathcal{R}(\longword_{[n]};F)}\weight(\pi),
\end{equation}
as well as the normalization condition $\forestpoly{F}(\tl;\tl)=\delta_{F,\emptyset}$.
Our approach is to first devise a sign-reversing involution which simplifies the application of $(\rope{i}^{-} - \rope{i}^{+})$ to the expression in~\eqref{eq:forest_subword_model}, and then to introduce a weight-preserving bijection between the surviving terms and the elements of $\mathcal{R}(\longword_{[n]};F/i)$.

To begin, say that a word with any two consecutive letters distinct is \emph{Smirnov}; see~\cite{ShWa16} for instance.  
(In particular note that reduced words and Sylvester words are Smirnov.)  
In the following, denote by $\longword^-_{[n],i}$ (resp. $\longword^+_{[n],i}$) the subword of $\longword_{[n]}$  obtained by deleting $\omega^{(i)}_{[n]}$ (resp. the barred letters from $\omega^{(i)}_{[n]}$ and the unbarred letters from $\omega^{(i+1)}_{[n]}$).
For instance, if $n=6$ and $i=3$, we have
\begin{align*}
    \longword_{[6],3}^{-}&=6\,5\,4\,3\,2\,1\,\overline{2}\,\overline{3}\,\overline{4}\,\overline{5}\,\overline{6}\,\,\,\, 6\,5\,4\,3\,2\,\overline{3}\,\overline{4}\,\overline{5}\,\overline{6}\,\,\,\,
    \textcolor{gray!50}{6\,5\,4\,3\,\overline{4}\,\overline{5}\,\overline{6}}\,\,\,\,
    6\,5\,4\,\overline{5}\,\overline{6}\,\,\,\,
    6\,5\,\overline{6}\,\,\,\,
    6
    \\
    \longword_{[6],3}^{+}&=6\,5\,4\,3\,2\,1\,\overline{2}\,\overline{3}\,\overline{4}\,\overline{5}\,\overline{6}\,\,\,\, 6\,5\,4\,3\,2\,\overline{3}\,\overline{4}\,\overline{5}\,\overline{6}\,\,\,\,
    6\,5\,4\,3\,\textcolor{gray!50}{\overline{4}\,\overline{5}\,\overline{6}\,\,\,\,
    6\,5\,4}\,\overline{5}\,\overline{6}\,\,\,\,
    6\,5\,\overline{6}\,\,\,\,
    6,
\end{align*}
where the gray letters are meant to be ignored.
In terms of the planar array in Figure~\ref{fig:longword}, these words are obtained by striking out two consecutive rows. For $\longword_{[n],i}^{-}$ we strike out the consecutive rows of lengths $i+1$ and $i$, whereas for $\longword_{[n],i}^+$ we strike out the two rows of length $i$.

\begin{prop}
\label{prop:Rsubword}
Let $D$ be a finite set of Smirnov words.
  Then 
  \begin{align*}
      \rope{i}^-\sum_{\pi\in \mathcal{R}(\longword_{[n]};D)}\weight(\pi)&=\rope{i}^-\sum_{\pi\in \mathcal{R}(\longword^-_{[n],i};D)}\weight(\pi)\\
      \rope{i}^+\sum_{\pi\in \mathcal{R}(\longword_{[n]};D)}\weight(\pi)&=\rope{i}^+\sum_{\pi\in \mathcal{R}(\longword^+_{[n],i};D)}\weight(\pi)
  \end{align*}
\end{prop}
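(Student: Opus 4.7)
The plan is to prove both identities by a unified cancellation argument. I first observe that the letters of $\longword_{[n]}$ removed in passing to $\longword^{\pm}_{[n],i}$ form a contiguous block of $\longword_{[n]}$, which I will call the \emph{banned region}: for $\longword^{-}_{[n],i}$ it is the syllable $\omega^{(i)}_{[n]}$, and for $\longword^{+}_{[n],i}$ it is the concatenation of the barred letters of $\omega^{(i)}_{[n]}$ with the unbarred letters of $\omega^{(i+1)}_{[n]}$. Any subword $\pi$ of $\longword_{[n]}$ decomposes uniquely as $\pi = \pi_0 \sqcup \pi_b$ where $\pi_0$ is a subword of $\longword^{\pm}_{[n],i}$ and $\pi_b$ lies in the banned region. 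Since $D$ depends only on value sequences, for each fixed $\pi_0$ the set of allowed $\pi_b$ is stratified by the value sequence $w_b$ of $\pi_b$; moreover every such $w_b$ is Smirnov because it appears as a contiguous sub-block of a Smirnov word from $D$.

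Writing $\tilde{P}(w_b)$ for $\sum \rope{i}^{\pm}(\weight(\pi_b))$ summed over realizations of $w_b$ in the banned region, this decomposition yields
\[
\rope{i}^{\pm}\sum_{\pi\in \mathcal{R}(\longword_{[n]};D)}\weight(\pi) = \sum_{\pi_0 \subset \longword^{\pm}_{[n],i}}\rope{i}^{\pm}(\weight(\pi_0)) \sum_{w_b \in V(\pi_0)} \tilde{P}(w_b),
\]
where $V(\pi_0)$ consists of those Smirnov $w_b$ whose insertion into the value sequence of $\pi_0$ produces an element of $D$. The proposition then reduces to the key vanishing
\[
\tilde{P}(()) = 1 \qquad \text{and} \qquad \tilde{P}(w_b) = 0 \text{ for every nonempty Smirnov } w_b;
\]
granting this, only $w_b = ()$ contributes, which forces the value sequence of $\pi_0$ to lie in $D$ and hence $\pi_0 \in \mathcal{R}(\longword^{\pm}_{[n],i};D)$, recovering the right-hand side.

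To establish the vanishing I analyze the weights in the banned region after applying $\rope{i}^{\pm}$. Under $\rope{i}^{-}$ the letter $i^{(i)}$ has weight $t_i - t_i = 0$, immediately giving $\tilde{P}(w_b) = 0$ whenever the value $i$ appears in $w_b$; for $j > i$ the letters $j^{(i)}$ and $\overline{j}^{(i)}$ have weights $t_i - t_j$ and $t_j - t_i$. Under $\rope{i}^{+}$ the letters $\overline{j}^{(i)}$ and $j^{(i+1)}$ have weights $t_j - t_i$ and $t_i - t_j$. In either case, a realization of a Smirnov $w_b = (v_1, \ldots, v_k)$ avoiding any zero-weight letter is described by a \emph{split point} $k' \in \{0, 1, \ldots, k\}$: its first $k'$ entries come from the first half of the banned region and the remaining $k - k'$ from the second half. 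Positional consistency forces each half to be strictly monotonic in its layout order (one half increasing in value, the other decreasing), and writing $a_j := t_j - t_i$ the weight of the realization equals $\varepsilon(k')\, a_{v_1} \cdots a_{v_k}$, where $\varepsilon(k') \in \{+1, -1\}$ flips as $k'$ changes by one.

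It remains to check that the signed sum $\sum_{k' \text{ valid}} \varepsilon(k')$ vanishes for every nonempty Smirnov $w_b$. A direct inspection of the Smirnov pair $(v_{k'}, v_{k'+1})$ shows that whenever some split point is valid, exactly one of $k' \pm 1$ is also valid---determined by whether this pair is an ascent or a descent---and no two disjoint such pairs can coexist. Thus the valid split points form either the empty set or a single consecutive pair $\{k_0, k_0 + 1\}$, in which case the two signs cancel. This pairing argument is the conceptual heart of the proof, though once the split-point parametrization is in place it is a brief check; the main obstacle is really the factorization step of the second paragraph, which requires careful bookkeeping of how value sequences break across the banned region.
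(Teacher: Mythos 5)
Your proof is correct, and it takes a genuinely different route from the paper's.

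The paper establishes each identity by exhibiting a single sign-reversing involution $\iota$ directly on $\mc{R}(\longword_{[n]};D)\setminus\mc{R}(\longword^{\pm}_{[n],i};D)$: in the $\rope{i}^-$ case, $\iota$ fixes subwords containing $i^{(i)}$ (whose weight $\rope{i}^-$ kills) and otherwise swaps $j^{(i)}\leftrightarrow\overline{j}^{(i)}$ for the \emph{minimal} value $j$ appearing exactly once in the $i$th syllable; the $\rope{i}^+$ case is analogous with the \emph{maximal} such $j$ and the swap $\overline{j}^{(i)}\leftrightarrow j^{(i+1)}$. Smirnovness is used to guarantee the existence of such a $j$. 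You instead factor each subword as $\pi=\pi_0\sqcup\pi_b$ with $\pi_b$ in the contiguous banned region, reduce to the claim $\tilde P(w_b)=0$ for nonempty Smirnov $w_b$, and prove this by pairing the two valid split points, whose contributions cancel in sign. These are related cancellations — unwinding your split-point pairing for a fixed $w_b$ recovers exactly the paper's barred/unbarred swap at the unique ``valley'' value of $w_b$ — but your formulation isolates the cancellation entirely inside the banned region and treats the $-$ and $+$ cases uniformly via the same split-point scheme, which the paper handles with two parallel (minimal/maximal) involutions. The price is the more elaborate factorization bookkeeping you flag; the payoff is that Smirnovness enters in a cleanly localized way, forcing the valid split points to be exactly $\{k_0,k_0+1\}$ or empty.

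One small point of rigor: your assertion that ``whenever some split point $k'$ is valid, exactly one of $k'\pm1$ is also valid, and no two disjoint such pairs can coexist'' is stated but not argued. It is true, and the second clause is actually a consequence of the first: writing $d$ for the length of the maximal strictly decreasing prefix of $w_b$ and $u$ for the minimal index from which the suffix is strictly increasing (or their increasing/decreasing counterparts for $\rope{i}^+$), the valid split points are precisely the interval $[u,d]$; the ``exactly one of $k'\pm1$'' property applied at the endpoints forces $|[u,d]|\geq 2$ and at an interior point would force $|[u,d]|\leq 2$, so $d=u+1$. You may wish to spell this out, but the claim itself is correct.
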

\begin{proof}
    Throughout the proof we write $\longword$ for $\longword_{[n]}$.
    The first equality can be rewritten as 
    \begin{align}
    \label{eq:first_rhs_0}
        \rope{i}^-\left(\sum_{\pi\in \mc{R}(\longword;D)\setminus\mc{R}(\longword^-_{i};D) }\weight(\pi)\right)=0.
    \end{align}
    To establish this we consider the involution $\iota$ on $\mc{R}(\longword;D)\setminus \mc{R}(\longword^-_{i};D)$ defined as follows. 
    If $\pi$ contains $i^{(i)}$, then we declare $\iota(\pi)=\pi$.
    Otherwise, we consider $j$ minimal so that $\pi$ contains exactly one of $\{j^{(i)},\overline{j}^{(i)}\}$. 
    Such a $j$ exists as $\pi$ is Smirnov.
    Now $\iota(\pi)$ is obtained by swapping $j^{(i)}$ for $\overline{j}^{(i)}$ (or vice versa). 
    Next observe that $\rope{i}^-\weight(\pi)=0$ if $\pi$ contains $i^{(i)}$ and $\rope{i}^{-}\weight(\pi)=-\rope{i}^{-}\weight(\iota(\pi))$ otherwise.
    It follows that~\eqref{eq:first_rhs_0} holds.

    The proof of the second equality is similar.
    This time we consider the involution on $\mc{R}(\longword;D)\setminus \mc{R}(\longword^+_{i};D)$ that takes the maximal $j$ such that $\pi$ contains exactly one of $\{\overline{j}^{(i)},j^{(i+1)}\}$ and then swaps $\overline{j}^{(i)}$ for $j^{(i+1)}$ (or vice versa). 
    Again, the existence of such a $j$ follows from the fact that $\pi$ is Smirnov.
    This involution has the property that it negates each $\rope{i}^+\weight(\pi)$
    implying that
    \begin{align}
    \label{eq:second_rhs_0}
        \rope{i}^+\left(\sum_{\pi\in \mc{R}(\longword;D)\setminus\mc{R}(\longword^+_{i};D) }\weight(\pi)\right)=0,
    \end{align}
    which is clearly equivalent to the second equality.
\end{proof}

\begin{cor}
\label{cor:esubword}
Let $D$ be a finite set of Smirnov words. 
Then
    $$\eope{i}\sum_{\pi\in \mc{R}(\longword_{[n]};D)}\weight(\pi)=\rope{i}^+\sum_{\substack{\pi \in \mc{R}(\longword^+_{[n],i};D)\\ i^{(i)}\in \pi}}\weight(\pi\setminus \{i^{(i)}\}).$$
\end{cor}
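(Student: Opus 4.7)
The strategy is to reduce the claim to an identity of polynomial sums. Since $\eope{i} f = (\rope{i}^+ f - \rope{i}^- f)/(x_i - t_i)$, it suffices to prove
\[
\rope{i}^+ F - \rope{i}^- F = (x_i - t_i)\, \rope{i}^+ G,
\]
where $F = \sum_{\pi \in \mc{R}(\longword_{[n]}; D)} \weight(\pi)$ and $G$ is the claimed right-hand side. By \Cref{prop:Rsubword} the left side rewrites as
\[
\rope{i}^+ \sum_{\pi \in \mc{R}(\longword^+_{[n],i}; D)} \weight(\pi) \;-\; \rope{i}^- \sum_{\pi' \in \mc{R}(\longword^-_{[n],i}; D)} \weight(\pi').
\]
Since $\weight(i^{(i)}) = x_i - t_i$ is fixed by $\rope{i}^+$, the subwords $\pi \in \mc{R}(\longword^+_{[n],i}; D)$ containing $i^{(i)}$ contribute precisely $(x_i - t_i)\, \rope{i}^+ G$. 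Hence the identity reduces to showing that the subwords \emph{not} containing $i^{(i)}$ cancel the $\rope{i}^-$ contribution:
\[
\rope{i}^+ \sum_{\substack{\pi \in \mc{R}(\longword^+_{[n],i}; D) \\ i^{(i)} \notin \pi}} \weight(\pi) = \rope{i}^- \sum_{\pi' \in \mc{R}(\longword^-_{[n],i}; D)} \weight(\pi').
\]

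For this cancellation I would construct a bijection $\phi$ between the two indexing sets satisfying $\rope{i}^+ \weight(\pi) = \rope{i}^- \weight(\phi(\pi))$. Comparing the two words, both $\longword^+_{[n],i} \setminus \{i^{(i)}\}$ and $\longword^-_{[n],i}$ agree outside of syllables $i$ and $i+1$, and both contain the barred letters $\overline{i+2}^{(i+1)}, \dots, \overline{n}^{(i+1)}$. The mismatched portion consists of $n^{(i)}, (n-1)^{(i)}, \dots, (i+1)^{(i)}$ in the first word versus $n^{(i+1)}, (n-1)^{(i+1)}, \dots, (i+1)^{(i+1)}$ in the second. Define $\phi$ to replace each $j^{(i)}$ appearing in $\pi$ by $j^{(i+1)}$ while fixing every other letter. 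Since the relative positions and values of these letters match, $\phi$ preserves the value sequence of $\pi$, and in particular membership in $D$ is automatic.

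The principal obstacle, and the main technical content, is verifying the weight identity under $\phi$. For barred letters the weights are pure $t$-expressions, unaffected by either operator. For letters $j^{(k)}$ with $k < i$, both operators fix $x_k$; for $k > i+1$, both operators send $x_k \mapsto x_{k-1}$, yielding the common transformed weight $x_{k-1} - t_j$. The substantive case is the swapped pair: the weight $x_i - t_j$ of $j^{(i)}$ is fixed by $\rope{i}^+$ (which does not touch $x_i$), while the weight $x_{i+1} - t_j$ of $j^{(i+1)}$ becomes $x_i - t_j$ under $\rope{i}^-$ (which substitutes $x_{i+1} \mapsto x_i$). These match, so $\rope{i}^+ \weight(\pi) = \rope{i}^- \weight(\phi(\pi))$ holds termwise, establishing the cancellation and completing the proof.
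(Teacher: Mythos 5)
Your proof is correct and follows essentially the same path as the paper's: both reduce via $(x_i-t_i)\eope{i}=\rope{i}^+-\rope{i}^-$ and Proposition~\ref{prop:Rsubword} to showing the subwords of $\longword^+_{[n],i}$ missing $i^{(i)}$ cancel against those of $\longword^-_{[n],i}$, via the weight-preserving bijection obtained by re-indexing unbarred letters between syllables $i$ and $i+1$. The only cosmetic difference is that you define the bijection in the opposite direction ($j^{(i)} \mapsto j^{(i+1)}$ rather than $j^{(i+1)} \mapsto j^{(i)}$) and spell out the weight verification case-by-case where the paper simply asserts it.
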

\begin{proof}
As before, we write $\longword$ for $\longword_{[n]}$.
By its definition we have $(x_i-t_i)\eope{i}=\rope{i}^+-\rope{i}^{-}$.
From Proposition~\ref{prop:Rsubword} it then follows that
\begin{align}
\label{eq:intermediate_step_existence_forest}
    (x_i-t_i)\eope{i}\sum_{\pi\in \mc{R}(\longword;D)}\weight(\pi)=\rope{i}^+\sum_{\pi\in \mc{R}(\longword^+_{i};D)}\weight(\pi)-
    \rope{i}^-\sum_{\pi\in \mathcal{R}(\longword^-_{i};D)}\weight(\pi).   
\end{align}
Now consider the injection from $\mc{R}(\longword^-_{i};D)$ to $\mc{R}(\longword^+_{i};D)$ that takes a subword $\pi$ of $\longword^-_{i}$ and replaces all instances of unbarred letters of the form $j^{(i+1)}$ by $j^{(i)}$.
Let $\pi'$ be the resulting word; this is clearly in $\mc{R}(\longword^+_{i};D)$. 
Furthermore any word in $\mc{R}(\longword^+_{i};D)$  not containing $i^{(i)}$ can be obtained uniquely as the image of this injection. 
Finally note that
\[
    \rope{i}^-\weight(\pi)=\rope{i}^+\weight(\pi').
\]
Equation~\ref{eq:intermediate_step_existence_forest} now becomes
\begin{align}
    (x_i-t_i)\eope{i}\sum_{\pi\in \mc{R}(\longword;D)}\weight(\pi)=\rope{i}^+\sum_{\substack{\pi\in \mc{R}(\longword^+_{i};D)\\ i^{(i)}\in \pi}}\weight(\pi)
\end{align}
The claim follows from observing that the weight of $i^{(i)}$ equals $x_i-t_i=\rope{i}^+(x_i-t_i)$.
\end{proof}

\begin{lem}
   \label{le:insertsylvester}
    Let $F\in \indexedforests$. The following are equivalent.
    \begin{enumerate}
        \item $i\in\qdes{F}$. 
        \item Neither $i-1$ nor $i+1$ come after $i$ in some  $w\in \sylv{F}$.
        \item Neither $i-1$ nor $i+1$ come after $i$ in any  $w\in \sylv{F}$.
    \end{enumerate} 
\end{lem}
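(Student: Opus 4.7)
My plan is to establish the cycle of implications $(3)\Rightarrow (2)\Rightarrow (1)\Rightarrow (3)$; the first of these is immediate since $\sylv{F}\ne\emptyset$. Throughout I will rely on the structural fact that in any Sylvester word of $F$ the label of an internal node appears before the labels of its internal descendants — equivalently, ancestors precede descendants in the word.

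For $(1)\Rightarrow (3)$, let $v$ be the terminal internal node with canonical label $i$; its children are then the leaves $i$ and $i+1$. I would argue that any internal node $u\ne v$ with canonical label $i+1$ must be a strict ancestor of $v$, with $v$ lying in the subtree rooted at $u_L$: leaf $i+1$ is simultaneously the rightmost leaf of $u_L$'s subtree and a child of $v$, which forces $v$ to sit inside $u_L$'s subtree. Symmetrically, any internal node with canonical label $i-1$ must be a strict ancestor of $v$ with $v$ inside $u_R$, since then leaf $i$ is the leftmost leaf of $u_R$'s subtree and lies in $v$'s subtree. In either case the label $i\pm 1$, whenever it appears among $\internal{F}$, precedes $i$ in every Sylvester word, so neither comes after $i$.

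For $(2)\Rightarrow (1)$ I argue the contrapositive: assume $i$ is the canonical label of some non-terminal internal node $v$. If $v_L$ is internal, I iterate the operation ``replace the current node by its right child'' starting at $v_L$. Each step stays inside an internal subtree whose leaves form a contiguous interval ending at $i$, so the procedure is well-defined and terminates at an internal node $u^*$ whose right child is the leaf $i$ itself; this forces $\rho_F(u^*)=i-1$. As $u^*$ is a descendant of $v$, its label $i-1$ comes after $i$ in every Sylvester word. If instead $v_R$ is internal, the symmetric descent along left children produces a descendant with canonical label $i+1$ that must likewise appear after $i$.

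The main obstacle will be the bookkeeping in the descent for $(2)\Rightarrow (1)$: one must verify at each iteration that the subtree in question retains a contiguous leaf interval with right endpoint $i$ (or left endpoint $i+1$ in the symmetric case), so that the terminal node of the chain has canonical label precisely $i\pm 1$. This relies on the elementary observation that every subtree in a binary plane tree has a contiguous leaf interval, which can be established by a straightforward induction on tree size and then invoked at every step of the descent.
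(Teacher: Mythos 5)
Your proof is correct in substance and takes a genuinely different route from the paper's. The paper argues via the word-combinatorics of the modified Sylvester relation \eqref{eqn:forsylvrelations}: adjacent values $i$ and $i\pm 1$ can never commute past each other, so the relative order of $i$ and $i\pm 1$ is an invariant of the equivalence class $\sylv{F}$, and $i$ can be commuted to the end of some word exactly when neither $i-1$ nor $i+1$ lies to its right. You instead work structurally with the tree: for $(1)\Rightarrow(3)$ you locate any internal node carrying label $i\pm 1$ as a necessary ancestor of the terminal node $v$ (hence preceding $i$ in every Sylvester word), and for $(2)\Rightarrow(1)$ you exhibit, via a rightmost/leftmost descent from $v_L$ or $v_R$, a descendant of $v$ carrying label $i\mp 1$ (hence following $i$ in every Sylvester word). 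Both arguments are valid and of comparable length; the paper's is more in the spirit of its congruence framework, while yours is more elementary and self-contained. One small slip to correct: in the descent argument you write $\rho_F(u^*)=i-1$, but $\rho_F$ is the \emph{flag} (leftmost leaf of the subtree at $u^*$), which need not equal $i-1$ unless $u^*_L$ is a leaf; what you actually need, and what is true, is that the \emph{canonical label} of $u^*$ (the rightmost leaf of $u^*_L$'s subtree) equals $i-1$, since $u^*_R$ is the leaf $i$. (The same confusion does not arise in the symmetric case, where the relevant node's left child is the leaf $i+1$, so the canonical label is immediate.) This is a notational fix only and does not affect the correctness of the argument.
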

\begin{proof}
 Note that $i\in \qdes{F}$ is equivalent to saying that $i$ is the terminal letter of some word in $\sylv{F}$ (by \Cref{obs:SylvesterTrimming} for example).
    
    The relations \eqref{eqn:forsylvrelations} from \Cref{rem:NewSylv} do not allow consecutive adjacent numbers in a Sylvester word to commute past each other. Hence if either of $\{i-1,i+1\}$ comes after $i$ in some $w\in \sylv{F}$, then $i$ cannot commute to the end of the Sylvester word, preventing it from lying in $\qdes{F}$.
    On the other hand, if neither $i-1$ nor $i+1$ comes after $i$ in some $w\in \sylv{F}$, then we claim that $i\in\qdes{F}$. Indeed, if $j$ is the letter to the right of $i$, then one of $i-1,i+1$ has value between $i$ and $j$, so by \eqref{eqn:forsylvrelations} $i$ can commute past $j$ to the right. Applying this repeatedly we can move $i$ to the end of the Sylvester word and hence $i\in \qdes{F}$.
\end{proof}

We are now ready to address the existence of double forest polynomials.
\begin{proof}[Proof of \Cref{thm:ForestDesiderata} and \Cref{thm:subwordformula_for_forests}]
Suppose $F\in \indexedforests_{n+1}$.
Again we write $\longword$ for $\longword_{[n]}$.
By \Cref{rem:uniqueforest} it suffices to show that the ansatz 
\[
\forestpoly{F}'(\xl;\tl)=\sum_{\pi\in \mc{R}(\longword;F)}\weight(\pi)
\]
satisfies 
\begin{enumerate}[label=(\roman*)]
    \item \label{it:ansatz_1}$\forestpoly{F}'(\tl;\tl)=\delta_{F,\emptyset}$, and
    \item \label{it:ansatz_2}$\eope{i}\forestpoly{F}'(\xl;\tl)=\forestpoly{F/i}'(\xl;\wh{\tl}_{i})$ if $i\in \qdes{F}$, and $0$ otherwise. 
\end{enumerate} 
For~\ref{it:ansatz_1}, note that there is an involution on $\mc{R}(\longword;F)$ where we consider the minimal $j$ such that $j^{(i)}$ or $\overline{j}^{(i)}$ appears, and either fix the word if $j=i$, and otherwise swap $j^{(i)}$ for $\overline{j}^{(i)}$ and vice versa (in this latter case exactly one of $j^{(i)}$ and $\overline{j}^{(i)}$ occurs as the words in $\mathcal{R}(\longword;F)$ are Smirnov). 
This is a sign-reversing involution on the weights $\weight(\pi)(\tl;\tl)$ so we conclude that $\forestpoly{F}'(\tl;\tl)=\delta_{F,\emptyset}$.

For~\ref{it:ansatz_2}, we consider cases. 
By Corollary~\ref{cor:esubword} we are interested in the right-hand side of
\[
    \eope{i}\forestpoly{F}'(\xl;\tl)=\rope{i}^+\sum_{\substack{\pi \in \mc{R}(\longword^+_{i};F)\\ i^{(i)}\in \pi}}\weight(\pi\setminus \{i^{(i)}\}).  
\]
If $i\not\in \qdes{F}$, then we claim that there does not exist $\pi\in \mc{R}(\longword^+_i;F)$ so that $i^{(i)}\in \pi$.
 Indeed, in $\longword_i^{+}$ all letters after $i^{(i)}$ have value $\ge i+2$, so by \Cref{le:insertsylvester} we have $i\in \qdes{F}$ which is a contradiction. 
 Therefore $\eope{i}\forestpoly{F}'(\xl;\tl)=0$ in this case, as desired.

On the other hand, suppose $i\in \qdes{F}$. 
We aim to establish that
\begin{align}
\label{eq:intermediate_Riplus_Ei}
    \rope{i}^+\sum_{\substack{\pi \in \mc{R}(\longword^+_{i};F)\\ i^{(i)}\in \pi}}\weight(\pi\setminus \{i^{(i)}\})= \forestpoly{F/i}'(\xl;\wh{\tl}_{i}).
\end{align}
Note that Sylvester words have the property that any letter appears at most once. 
So if we assume that $i^{(i)}$ appears in $\pi\in \sylv{F}$ then we are guaranteed that $\pi$ cannot contain letters in $\{i^{(i-1)},\overline{i}^{(i-1)},i^{(i-2)},\overline{i}^{(i-2)},\dots,i^{(1)},\overline{i}^{(1)}\}$.

For clarity we now make the dependence on $n$ explicit.
There is a bijection
$$\Phi:\longword_{[n],i}^+\setminus \{i^{(i)},i^{(i-1)},\overline{i}^{(i-1)},i^{(i-2)},\overline{i}^{(i-2)},\dots,i^{(1)},\overline{i}^{(1)}\}\to \longword_{[n-1]}$$
obtained by applying the following transformations.
\begin{align*}
    j^{(k)} \mapsto \left\lbrace \begin{array}{ll}
    (j-1)^{(k-1)} & k>i\\
    (j-1)^{(k)} & j>i, k\leq i\\
    j^{(k)} & j<i,k\leq i\\
    \end{array}\right.
    \qquad\qquad 
    \overline{j}^{(k)} \mapsto \left\lbrace \begin{array}{ll}
    (\overline{j-1})^{(k-1)} & k>i\\
    (\overline{j-1})^{(k)} & j>i, k\leq i\\
    \overline{j}^{(k)} & j<i,k\leq i\\
    \end{array}\right.
\end{align*} 
The map $\Phi$ has the property that
\[
    \rope{i}^+\weight(\pi\setminus \{i^{(i)}\})=\weight(\Phi(\pi\setminus \{i^{(i)}\}))(\xl;\wh{\tl}_{i}).
\]
This given, to conclude~\eqref{eq:intermediate_Riplus_Ei} it remains to show that $\pi\mapsto \Phi(\pi\setminus \{i^{(i)}\})$ is a bijection  
\[
\{\pi\in \mathcal{R}(\longword_{[n],i}^+;F)\suchthat i^{(i)}\in \pi\} \longleftrightarrow \mc{R}(\longword_{[n-1]};F/i).
\]

To see that this map is well defined, note that in any $\pi\in \mc{R}(\longword_{[n],i}^+;F)$ containing $i^{(i)}$ we must have all letters after this one have value $\ge i+2$. 
Hence by \Cref{le:insertsylvester} we have $i\in \qdes{F}$. 
Therefore removing $i^{(i)}$ and then applying $\Phi$ gives an element of $\mc{R}(\longword_{[n-1]};F/i)$. 
This map is furthermore clearly injective, so it remains to show that it is surjective. 

For $\pi'\in \mathcal{R}(\longword_{[n-1]};F/i)$, if we append $i$ to the end of the string of values of the letters in $\Phi^{-1}(\pi')$ then we obtain an element of $\sylv{F}$. 
Repeatedly applying the forest Sylvester relation $w_1aiw_2 \sim w_1iaw_2$ when $a\ge i+2$ and $i+1\not\in w_2$ (see \eqref{eqn:forsylvrelations}),
the string of values for $\Phi^{-1}(\pi')\cup \{i^{(i)}\}\subset \longword_{[n]}$ also give an element of $\sylv{F}$ as all letters of $\Phi^{-1}(\pi')$ after $i^{(i)}$ are $\ge i+2$ in value. 
The subword $\Phi^{-1}(\pi')\cup \{i^{(i)}\}\subset \longword_{[n]}$ is then the desired preimage.
\end{proof}

\subsection{Vine diagrams}
\label{subsec:vines}

We conclude the section by giving an alternate presentation of our subword model which we call the \emph{pictorial vine model}.  
We associate each letter in $\longword$ with an entry of the planar array from Figure~\ref{fig:longword}, moving row-by-row from top to bottom, reading odd (unbarred) rows from right to left and even (barred) rows from left to right, so that with the exception of $\omega^{(n)}$ each syllable of $\longword$ corresponds to exactly two consecutive rows.  

We refer to the places that the individual letters occupy as \emph{boxes}.  
Every subword of $\longword$ is uniquely determined by selecting a subset of boxes, and the weight of the selection is the product of the weights of the selected boxes. 
The weight of each box is equal to $x_{i} - t_{j}$ if the box is in the $j$th column of the $i$th odd (unbarred) row and $t_{j} - t_{i}$ if the corresponding box is in the $j$th column of the $i$th even (barred) row.

The upshot of this approach is that we can give a diagrammatic criterion for whether a subword (seen as a subset of boxes) belongs to a particular set $D$, which for our purposes will either be $\sylv{F}$ or, in Section~\ref{sec:VineSchub}, $\red{w}$.  
We do so by placing a pictorial \emph{tile} in each box according to a fixed set of rules and considering the resulting structure; see Figure~\ref{fig:tiles_for_vines} for the tiles used in the vine diagram.  
This parallels the diagrammatic realization of the pipe dream formula for Schubert polynomials, in which each subword is realized as an eponymous diagram~\cite{Ber93,BJS93} .  

For forest polynomials, we first note that a selection of boxes corresponds to the Sylvester word of some forest (i.e. no letter is repeated) if and only if we have chosen at most one box from each column of the diagram. 
If a subset $S$ of the diagram is selected, then we put a $T$-shape into the selected boxes, a horizontal bar in any unselected box above a $T$-shape, and for the remaining boxes we put an odd elbow in odd numbered rows and an even elbow in even numbered rows (see \Cref{fig:tiles_for_vines}). A selection of boxes corresponds to an element of $\sylv{F}$, and hence contributes to $\forestpoly{F}$ if and only if the diagram is combinatorially equivalent to $F$ (see \Cref{fig:vine_take_2}).

\begin{figure}
    \centering
    \includegraphics[scale=1.5]{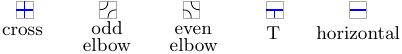}
    \caption{The various tiling pieces. Pieces 2,3,4,5 are used for the forest vine model and pieces 1,2,3 are used for the Schubert vine model.}
    \label{fig:tiles_for_vines}
\end{figure}

\begin{figure}[!h]
    \centering
    \includegraphics[width=\textwidth]{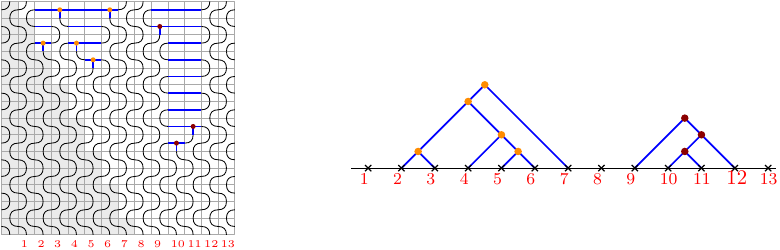}
    \caption{A vine diagram and its associated indexed forest}
    \label{fig:vine_take_2}
\end{figure}

\section{The vine model for Schubert polynomials}
\label{sec:VineSchub}

In this section we state further properties of the vine model from Section~\ref{sec:subword_models}, and in particular the following result.

\begin{thm}
\label{thm:subwordformula_for_schubs}
    For $w\in S_{n+1}$ we have
    \[
        \schub{w}(\xl;\tl)=\sum_{\pi\in \mathcal{R}(\longword_{[n]};w)}\weight(\pi)
    \]
\end{thm}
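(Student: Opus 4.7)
The plan is to verify that the ansatz
\[
f_w(\xl;\tl) \coloneqq \sum_{\pi \in \mathcal{R}(\longword_{[n]}; w)} \weight(\pi)
\]
satisfies the standard axiomatic characterization of double Schubert polynomials, namely the normalization $f_w(\tl;\tl) = \delta_{w,\idem}$ together with the divided difference recursion $\partial_i f_w = f_{w s_i}$ when $\ell(w s_i) < \ell(w)$ and $\partial_i f_w = 0$ otherwise.

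The normalization step would go through essentially verbatim from the proof of \Cref{thm:ForestDesiderata}: within each syllable $\omega^{(k)}_{[n]}$, locate the smallest value $j$ such that exactly one of the letters $j^{(k)}$ or $\overline{j}^{(k)}$ appears in $\pi$ and swap it. This uses only the fact that reduced words, like Sylvester words, are Smirnov, so the involution is well-defined. Under the specialization $\xl = \tl$ the swap negates the weight, and the only fixed subword with nonvanishing specialized weight is the empty subword, yielding $f_{\idem} = 1$ and $f_w(\tl;\tl) = 0$ for $w \ne \idem$.

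The main technical step is the divided difference recursion. A natural first attempt is to exploit the identity $\rope{i}^+ \partial_i = \eope{i}$ together with \Cref{cor:esubword}, which applied to $D = \red{w}$ expresses $\eope{i} f_w$ as $\rope{i}^+$ applied to a sum over subwords $\pi \in \mathcal{R}(\longword_{[n],i}^+; w)$ containing $i^{(i)}$. One would then construct a value-shifting bijection analogous to the map $\Phi$ from the forest proof, identifying these subwords with $\mathcal{R}(\longword_{[n]}; ws_i)$ after appropriate relabeling; the required combinatorial input is that if a reduced word for $w$ has $i$ as its terminal letter then the preceding letters form a reduced word for $ws_i$, and conversely the descent structure of $w$ controls whether any subword terminating in $i^{(i)}$ exists.

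The main obstacle is that $\rope{i}^+$ is not injective, so the identity $\rope{i}^+ \partial_i f_w = \eope{i} f_w = \rope{i}^+ f_{ws_i}$ cannot be turned into the divided difference recursion by cancellation. To circumvent this I would instead prove the recursion directly at the level of subwords: construct an involution on $\mathcal{R}(\longword_{[n]}; w)$ realizing the operator $(x_i - x_{i+1}) \partial_i = 1 - s_i$, pairing subwords whose weights cancel under the $s_i$-action on the $\xl$-variables, and matching the surviving subwords bijectively with $\mathcal{R}(\longword_{[n]}; ws_i)$ together with a residual factor of $(x_i - x_{i+1})$. I expect the case analysis to hinge on how $s_i$ acts on the weights of unbarred letters with $k \in \{i, i+1\}$, since the weights of barred letters and of unbarred letters in other rows are $s_i$-invariant; this is likely to be the most involved part of the argument, paralleling in spirit the classical proofs of the pipe dream formula of \cite{Ber93, BJS93, FK96, KM05}.
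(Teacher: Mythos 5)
Your normalization step is fine and does carry over verbatim, for the reason you state: the sign-reversing involution from the forest proof only needs that subwords in $D$ are Smirnov, which holds for reduced words. Your first attempt at the recursion also correctly identifies the obstacle: \Cref{cor:esubword} gives $\eope{i}f_w = \rope{i}^+\partial_i f_w$, and since $\rope{i}^+$ is not injective, this cannot be lifted back to $\partial_i f_w = f_{ws_i}$.

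The problem is that the fallback you propose --- a sign-reversing involution realizing $(1 - s_i)$ directly on $\mathcal{R}(\longword_{[n]};w)$ with a residual factor of $(x_i - x_{i+1})$ --- is only a plan, and it is far from clear it can be carried out. Unlike the ordinary pipe dream array, the vine array is not $S_n$-friendly: the unbarred row $i$ carries the $n-i+1$ weights $x_i - t_j$ ($j \ge i$) while unbarred row $i+1$ carries only $n-i$ weights $x_{i+1} - t_j$ ($j \ge i+1$), and these two rows are separated by a barred row whose weights $t_j - t_i$ do not involve $\xl$ at all. So the $s_i$-action does not identify rows $i$ and $i+1$ of the array in any natural way, and there is no evident ``move one cross between rows $i$ and $i+1$'' operation that would implement your pairing. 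Even for classical pipe dreams, the direct combinatorial proofs of the $\partial_i$ recursion (Knutson--Miller's mitosis, for instance) are substantially more intricate than a simple $s_i$-involution, and the asymmetry above makes the vine case look worse, not better. You would need to substantiate this step before the argument is a proof.

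The paper sidesteps the divided difference recursion entirely. The key structural observation (\Cref{fact:alphabet}) is that walking along prefixes $L$ of $\longword_{[n]}$ from empty to full, each appended letter $a$ transposes a single pair of entries in an associated variable sequence $B(L)$, interpolating from $(t_1,\ldots,t_n)$ to $(x_1,\ldots,x_n)$. The proof then shows, by induction on $|L|$, that $\schub{w}(B(L);\tl) = \sum_{\pi \in \mathcal{R}(L;w)}\weight(\pi)$: at each step one appeals to the one-step identity $\schub{w}(\ldots,u,v,\ldots;\tl) = \schub{w}(\ldots,v,u,\ldots;\tl) + (u-v)\,\delta_{j\in\des{w}}\schub{ws_j}(\ldots,v,u,\ldots;\tl)$ coming from $\partial_j$, which exactly matches the ``take or don't take the last letter'' recursion on subwords. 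This is essentially an AJS--Billey-style telescoping along $\longword$, and it uses the specific syllable structure of $\longword$ in an essential way. Your outline does not exploit this structure, which is the missing idea.
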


We prove this result in Section~\ref{subsec:Furtheerproperties} after giving a diagrammatic interpretation of this formula.  
Under the assumption that forest polynomials exist, the same argument leads to a second proof of Theorem~\ref{thm:subwordformula_for_forests}; see Remark~\ref{rem:forest_varswap_proof}. 
This second proof is noteworthy as it illuminates our initial intuition for the vine subword model.

\subsection{The vine model for double Schubert polynomials}
\label{subsec:vinemodel}

Before proving Theorem~\ref{thm:subwordformula_for_schubs} we briefly discuss the new pictorial vine model for double Schubert polynomials which it implies. 

The elements of $\mathcal{R}(\longword_{[n]};w)$ correspond exactly to realizations of the permutation $w$ in the planar array from Figure~\ref{fig:longword} as follows.  
Referring to the first three tiling pieces in \Cref{fig:tiles_for_vines}, given a subset $D$ of boxes we place a cross into the selected boxes, and for the remaining boxes we put an odd elbow in odd-numbered rows and an even elbow in even-numbered rows. Then $D$ belongs to $\mathcal{R}(\longword_{[n]};w)$ if and only if the resulting tiles connect column $i$ at the bottom of the diagram to column $w(i)$ at the top, and furthermore no two strands cross more than once; see \Cref{fig:vine_schubert}. Thus if we sum over all such configurations the product of the weights associated to the boxes containing crosses, we recover the formula in \Cref{thm:subwordformula_for_schubs}.

\begin{figure}
    \centering
    \begin{tikzpicture}[scale = 0.35, baseline = 0]
\path[fill = lightgray!30!white, xshift = -1cm] (0, 0) -- (1, 0) -- (1, -1) -- (2, -1) -- (2, -3) -- (3, -3) -- (3, -5) -- (4, -5) -- (4, -7) -- (5, -7) -- (5, -8) -- (0, -8) -- cycle;
\draw[color = gray] (-1, 0) grid (5, -8);
\foreach \x in {1,...,5}{
    \draw[thin, red] (\x-0.5, 0.5) node {$\x$};
    }
\foreach \x/\sigmai in {1/2, 2/5, 3/4, 4/3, 5/1}{
    \draw[thin, red] (\x-0.5, -8.5) node {$\sigmai$};
    }
\foreach \x/\y in {0/1, 2/1, 5/1, 0/7, 1/7, 2/7, 3/7, 5/7}{
    \draw[thick] (\x-1, 0.5-\y) to[out = 0, in = -90] (\x-0.5, 1-\y);
    \draw[thick] (\x, 0.5-\y) to[out = -180, in = 90] (\x-0.5, -\y);
    }
\foreach \fullrow in {3, 5}{
    \foreach \x in {0,...,5}{
        \draw[thick] (\x-1, 0.5-\fullrow) to[out = 0, in = -90] (\x-0.5, 1-\fullrow);
        \draw[thick] (\x, 0.5-\fullrow) to[out = -180, in = 90] (\x-0.5, -\fullrow);
        }
    }
\foreach \x/\y in {0/2, 1/2, 3/2, 5/2, 0/4, 1/4, 2/4, 4/4, 5/4}{
    \draw[thick] (\x-1, 0.5-\y) to[out = 0, in = 90] (\x-0.5, -\y);
    \draw[thick] (\x, 0.5-\y) to[out = -180, in = -90] (\x-0.5, 1 -\y);
    }
\foreach \fullrow in {6, 8}{
    \foreach \x in {0,...,5}{
        \draw[thick] (\x-1, 0.5-\fullrow) to[out = 0, in = 90] (\x-0.5, -\fullrow);
        \draw[thick] (\x, 0.5-\fullrow) to[out = -180, in = -90] (\x-0.5, 1 -\fullrow);
        }
    }
\foreach \x/\y in {1/1, 3/1, 4/1, 2/2, 4/2, 3/4, 4/7}{
    \draw[very thick, blue] (\x, 0.5-\y) -- (\x-1, 0.5-\y);
    \draw[very thick, blue] (\x-0.5, 1-\y) -- (\x-0.5, -\y);
    }
\end{tikzpicture}
    \caption{A vine diagram for the permutation $25431=s_4s_3s_1s_2s_4s_3s_4$.}
    \label{fig:vine_schubert}
\end{figure}
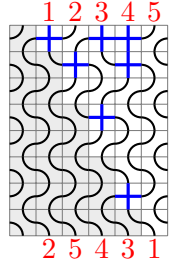

\begin{rem}
    If we set $\tl=\bm{0}$ in the vine model for Schuberts, then we have effectively forbidden cross tiles in even numbered rows, rendering these rows superfluous.
    It is then easily checked that the vine model recovers the well-known pipe dream model for ordinary Schubert polynomials \cite{Ber93,BJS93}. 
    Additionally, our vine model is also compatible with the ``back stable limit'' and gives a new formula for back stable double Schubert polynomials. 
    As discussed in \cite[\S 4.4]{Lam18} this is not the case with the pipe dream formula of Fomin--Kirillov \cite{FK96}  for double Schubert polynomials; indeed the bumpless pipe dreams introduced in \cite[\S 5]{Lam18} fix this but do not directly recover the pipe dream formula when $\tl=\bm{0}$. 
    Setting $\tl=\bm{0}$ in the vine model for double forest polynomials produces the diagrammatic interpretation in \cite[\S 4.2]{NST_2}. 
\end{rem}

We now turn to the proof of Theorem~\ref{thm:subwordformula_for_schubs}.  Throughout we work with a single, fixed value of $n$.  We begin by carefully choosing a sequence of variable substitutions which transitions from $(x_1,\ldots,x_{n})$ to $(t_1,\ldots,t_{n})$.  

\begin{defn}
For $1\le i \le j$ we consider two ordered variable sets:
\begin{align*}
    \overline{B}_{j,i}=&(x_1,\ldots,x_{i-1}, x_{i},t_{i+1},\ldots,t_{j},t_i,t_{j+1},\ldots, t_{n-1}),\,\text{and}\\
    B_{j,i} =&(x_1,\ldots,x_{i-1},t_i,t_{i+1},\ldots,t_{j-1},x_i,t_{j},\ldots,t_{n-1}).
\end{align*}

To each prefix  $L\subset \longword_{[n]}$ we associate one of the sets defined above: let
\[
B(L)=\begin{cases}
\overline{B}_{j,i}&\text{if $\overline{j}^{(i)}$ is the final letter of $L$,}\\
B_{j,i}&\text{if $j^{(i)}$ is the final letter of $L$,}\\
(t_1,\ldots,t_n)& \text{if $L=\emptyset$},
\end{cases}
\]
so that $B(\longword_{[n]})=(x_1,\ldots,x_{n})$.
\end{defn}

Our proof depends on the following analysis of how $B(L)$ changes as $L$ is shortened.  

\begin{fact}\label{fact:alphabet}
Let $L$ be a nonempty prefix of $\longword_{[n]}$ with final letter $a$, and write $L = L'a$.
    \begin{enumerate}
        \item If $a = n^{(i)}$ (resp. $a = \overline{n}^{(i)}$), then $B(L')$ is obtained from $B(L)$ by replacing $x_{i}$ (resp. $t_{n}$) by $t_{n}$ (resp. $t_{i}$) in position $n$.  
        \item If $a = \overline{j}^{(i)} \neq  \overline{n}^{(i)}$, then $B(L')$ is obtained from $B(L)$ by swapping $t_{j}$ in position $j$ with $t_{i}$ in position $j+1$.
        \item If $a = j^{(i)} \neq n^{(i)}$, then $B(L')$ is obtained from $B(L)$ by swapping $x_{i}$ in position $j$ with $t_{j+1}$ in position $j+1$.
    \end{enumerate}
\end{fact}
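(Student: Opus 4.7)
The proof is a direct case-by-case verification. The key preliminary step, given the final letter $a$ of $L$, is to identify the letter of $\longword_{[n]}$ immediately preceding $a$, which then completely determines $B(L')$. Using the structure $\longword_{[n]} = \omega^{(1)}\omega^{(2)}\cdots \omega^{(n)}$ with $\omega^{(i)} = (n, n-1, \ldots, i, \overline{i+1}, \ldots, \overline{n})$, one reads this off mechanically: the predecessor of $a$ is $(j+1)^{(i)}$ when $a = j^{(i)}$ with $j < n$; it is $\overline{j-1}^{(i)}$ when $a = \overline{j}^{(i)}$ with $j > i+1$; it is $i^{(i)}$ when $a = \overline{i+1}^{(i)}$; it is $\overline{n}^{(i-1)}$ when $a = n^{(i)}$ with $i > 1$; and the prefix is empty when $a = n^{(1)}$.

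Once the predecessor is in hand, each case reduces to writing $B(L)$ and $B(L')$ as explicit $n$-tuples from the definitions of $B_{j,i}$ and $\overline{B}_{j,i}$ and reading off the single-position substitution or the two-position swap. For Case~3 one compares $B_{j, i}$ with $B_{j+1, i}$; they agree outside positions $j$ and $j+1$, where $x_i$ and the neighbouring $t$-variable interchange. For Case~2 the comparison is either $\overline{B}_{j,i}$ versus $\overline{B}_{j-1, i}$ (when $j > i+1$) or $\overline{B}_{i+1, i}$ versus $B_{i, i}$ (when $j = i+1$); in both situations positions $j$ and $j+1$ interchange $t_j$ and $t_i$. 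Case~1 compares $B_{n, i}$ with $\overline{B}_{n, i-1}$ (or with $(t_1, \ldots, t_n)$ if $i = 1$) and $\overline{B}_{n, i}$ with $\overline{B}_{n-1, i}$ (or with $B_{i, i}$ if $n = i + 1$), in each instance isolating a single substitution in position $n$.

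The main obstacle is simply careful bookkeeping at the extreme indices, where several ranges in the defining formulas for $B_{j,i}$ and $\overline{B}_{j,i}$ collapse to be empty (notably when $i = 1$, $j = n$, or $j = i + 1$). One must verify that the degenerate tuples still fit the claimed pattern, and that the boundary behaviour is consistent with the endpoints $B(\emptyset) = (t_1, \ldots, t_n)$ and $B(\longword_{[n]}) = (x_1, \ldots, x_n)$ built into the definition of $B(L)$.
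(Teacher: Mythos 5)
The paper states Fact~\ref{fact:alphabet} without proof, so a direct case-by-case verification is exactly the argument one is expected to supply, and your plan carries it out correctly: the list of predecessors in $\longword_{[n]}$ is right, the sub-case split in Cases~(1) and~(2) at the boundary values $i=1$, $j=i+1$, and $j=n$ is the genuine bookkeeping concern, and writing out $B_{j,i}$, $\overline{B}_{j,i}$ as $n$-tuples (with the implicit truncation at position $n$, e.g.\ the trailing $t_i$ of $\overline{B}_{n,i}$ falls off the end) settles each comparison.

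One remark worth surfacing: carrying out Case~(3) as you describe exposes an index slip in the statement itself. Position $j+1$ of $B(L) = B_{j,i}$ contains $t_j$, not $t_{j+1}$, so the swap with $B(L') = B_{j+1,i}$ exchanges $x_i$ in position $j$ with $t_j$ in position $j+1$. This is what makes the recursion in the proof of Theorem~\ref{thm:subwordformula_for_schubs} come out right, since there $\weight(j^{(i)}) = x_i - t_j$ must equal $B(L)_j - B(L)_{j+1}$. Your phrase ``the neighbouring $t$-variable'' quietly avoids committing to the erroneous $t_{j+1}$; in a written-out version you should name the variable explicitly as $t_j$ and note the discrepancy rather than reproduce the statement verbatim.
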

 For example for $n=3$, we have $$\longword_{[3]}=3^{(1)}2^{(1)}1^{(1)}\overline{2}^{(1)}\overline{3}^{(1)}\,\,\,\,\,3^{(2)}2^{(2)}\overline{3}^{(2)}\,\,\,\,\,3^{(3)}$$
 and the variable sets
\begin{multline*}
 B^3(\longword_{[3]})=\underbrace{(x_1,x_2,x_3)}_{B_{3,3}},\quad 
 \underbrace{(x_1,x_2,t_3)}_{\overline{B}_{3,2}}, \,
 \underbrace{(x_1,x_2,t_2)}_{B_{2,2}}, \,
 \underbrace{(x_1,t_2,x_2)}_{B_{3,2}},\\[1.5ex]
 \underbrace{(x_1,t_2,t_3)}_{\overline{B}_{3,1}},\, 
 \underbrace{(x_1,t_2,t_1)}_{\overline{B}_{2,1}}, \,
 \underbrace{(x_1,t_1,t_2)}_{B_{1,1}},\,
 \underbrace{(t_1,x_1,t_2)}_{B_{2,1}}, \,
 \underbrace{(t_1,t_2,x_1)}_{B_{3,1}}, \quad
 \underbrace{(t_1,t_2,t_3)}_{B(\emptyset)}.
\end{multline*}
\begin{proof}[Proof of \Cref{thm:subwordformula_for_schubs}]
For $w\in S_{n+1}$, so that $\des{w}\subset [n]$, we have $\partial_{n+1}\schub{w}=\partial_{n+2}\schub{w}=\cdots=0$.  
Therefore $\schub{w}$ only depends on $x$-variables $x_1,\ldots,x_{n}$, so we can write $\schub{w}(x_1,\ldots,x_n;\tl) = \schub{w}(\xl;\tl)$ without ambiguity.
We will show that for any prefix $L\subset  \longword_{[n]}$ that 
\[
\schub{w}(B(L);\tl)=\sum_{\pi\in \mathcal{R}(L;w)}\weight(\pi).
\]
We show this by induction on $|L|$. 
For $L=\emptyset$ we have $\schub{w}(B(\emptyset);\tl)=\schub{w}(\tl;\tl)=\delta_{w,\idem}$.  
Otherwise write $L=L'a$, where $a$ is $j^{(i)}$ or $\overline{j}^{(i)}$.  
The recursion for $\partial_{j} \schub{w}(\xl;\tl)$ and \Cref{fact:alphabet} gives
 \begin{align}
 \label{eq:barred_recusion}
 \schub{w}(B(L);\tl)&=\left\lbrace
    \begin{array}{ll}
    \schub{w}(B(L');\tl)+\weight(a)\schub{ws_j}(B(L');\tl) & j\in \des{w}\\
    \schub{w}(B(L');\tl) & \text{otherwise.}
    \end{array}\right.
 \end{align}
(Note that in case (1) of \Cref{fact:alphabet}, we set $x_{n+1} = t_{n}$ or $t_{i}$ before applying the recursion for $\partial_{n}$.)
This exactly matches the recursion on subwords $\pi\in \mathcal{R}(L;w)$ given by choosing whether or not $\pi$ has final letter $a$:
\[
\sum_{\pi\in \mathcal{R}(L;w)}\weight(\pi)=\sum_{\pi\in \mathcal{R}(L';w)}\weight(\pi)+\quad\weight(a)\hskip-10pt\sum_{\pi=\pi'a\in \mathcal{R}(L;w)}\weight(\pi').\qedhere
\]
\end{proof}

As an example, in \Cref{fig:recursion_tree_schubert} we fully expand the recursion for $\schub{3142}$, where each node keeps track of the current variable set and permutation.

\begin{figure}[!ht]
    \centering
    \includegraphics[scale=0.8]{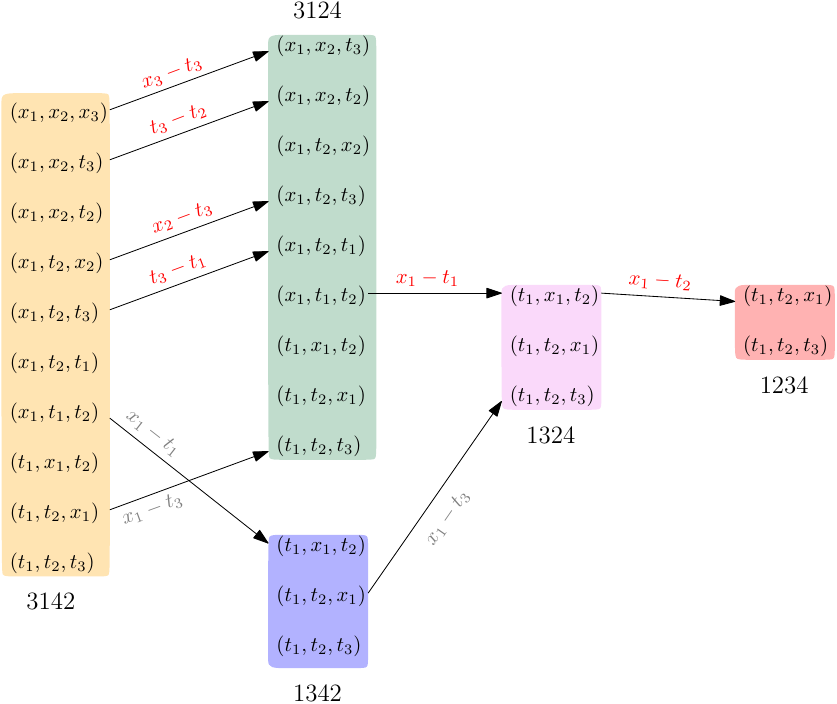}
    \caption{The recursion for $\schub{3142}=(x_2+x_3-t_1-t_2)(x_1-t_1)(x_1-t_2)$ fully expanded out.  Each arrow represents a nonzero second term in our recursion, and the weights of the vine model are recovered by multiplying the labels of any path which reaches the bottom of column $1234$ by moving only down and to the right.}
    \label{fig:recursion_tree_schubert}
\end{figure}

\begin{rem}
    The pipe dream formula for double Schubert polynomials can be derived analogously. Recall that for $w\in S_{n+1}$ this formula states that $\schub{w}(\xl; \tl) = \sum_{\pi\in \mathcal{R}(\longword^{pipe}_{[n]};w)}\operatorname{wt}^{pipe}(\pi)$ for
    $$\longword_{[n]}^{pipe}\coloneqq n^{(1)}\,\,(n-1)^{(2)}n^{(2)}\,\,\cdots \,\,2^{(n-1)}\cdots n^{(n-1)}\,\,1^{(n)}\cdots n^{(n)}$$
    and $\operatorname{wt}^{pipe}(j^{(i)})=x_{j+i-n}-t_{n+1-i}$. One can prove this subword model
    using the variable sets
\begin{align*}A_{j,i}=&(t_1,\ldots,t_{n-i},x_1,\ldots,x_{j+i-n-1},t_{n-i+1},x_{j+i-n},\ldots,x_{i-1})\text{ for }1\le j \le i \le n\end{align*}
and the recursion
\begin{align*}
    \schub{w}(A_{j+1,i};\tl)=&\schub{w}(A_{j,i};\tl)+(x_{j+i-n}-t_{n+1-i})\delta_{j\in \des{w}}\schub{ws_j}(A_{j,i};\tl).
\end{align*}
\end{rem}

\begin{rem}
\label{rem:forest_varswap_proof}
Provided one is willing to assume that forest polynomials exist, one can also derive \Cref{thm:subwordformula_for_forests} using the argument above.  
This proof is our original inspiration for the vine model and establishes the stronger statement that for any prefix $L$ of $\longword_{[n]}$, we have 
\[
\forestpoly{F}(B(L);\tl)=\sum_{\pi\in \mathcal{R}(L;F)}\weight(\pi),
\]
so we sketch it here.

Pursuing a similar inductive argument as we do for double Schubert polynomials, we consider the case $|F|, |L| > 1$ and write $L = L' a$.  Then with 
\[
B(L/a) = \begin{cases}
B(L')(\xl, \hat{\tl}_{j}) & \text{if $a = \overline{j}^{(i)}$} \\
B(L)(\xl, \hat{\tl}_{j}) & \text{if $a = j^{(i)}$}
\end{cases},
\]
the recursion for $\eope{j} \forestpoly{F}(\xl, \tl)$ and \Cref{fact:alphabet} give that
\[
\forestpoly{F}(B(L);\tl)=\begin{cases}\forestpoly{F}(B(L');\tl)+\operatorname{wt}(a)\forestpoly{F/i}(B(L/a) ;\wh{\tl}_j)&j\in \qdes{F}\\
\forestpoly{F}(B(L');\tl)&\text{otherwise.}\end{cases}
\]
The final step of the proof amounts to showing that with our inductive hypothesis on $|L|$ and $|F|$, the terms above correspond exactly and in a weight-preserving manner to elements $\pi \in \mathcal{R}(L;F)$, divided by whether the final letter of $\pi$ has value $j$ or not.  
While accounting for depleted variable sets make this correspondence more cumbersome to describe, it is morally equivalent to the bijection used in the proof of \Cref{thm:subwordformula_for_schubs}.
\end{rem}

\subsection{Further properties of the vine model}
\label{subsec:Furtheerproperties}
We close this section with a couple of elementary results on double forest polynomials that are not obvious from their definition, in contrast to the analogous results in the nonequivariant case, but follow easily from the vine subword model.

\begin{cor}
    If $F,G\in \indexedforests$ have $\supp{F}\cap \supp{G}=\emptyset$, then $\forestpoly{F}\forestpoly{G}=\forestpoly{F\sqcup G}$ where $F\sqcup G$ is the indexed forest obtained by overlaying $F$ and $G$ on a common set of leaves.\footnote{In terms of codes, $F\sqcup G$ is the indexed forest with code $\sfc(F)+\sfc(G)$ where addition is component-wise.} 
\end{cor}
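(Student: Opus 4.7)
The plan is to prove the identity using the subword model for double forest polynomials from \Cref{thm:subwordformula_for_forests}. Concretely, for $n$ sufficiently large I would construct a weight-preserving bijection
\[
    \Phi\colon \mathcal{R}(\longword_{[n]}; F\sqcup G) \longleftrightarrow \mathcal{R}(\longword_{[n]}; F)\times \mathcal{R}(\longword_{[n]}; G),
\]
since then summing $\weight$ over both sides yields $\forestpoly{F\sqcup G} = \forestpoly{F}\forestpoly{G}$ directly. Both sides are independent of $n$ once $n$ exceeds $\max \qdes{F\sqcup G}$ by \Cref{cor:forestdepend}, so this suffices.

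The combinatorial heart of the argument is the identity $\sylv{F\sqcup G} = \sylv{F}\shuffle\sylv{G}$ whenever $\supp F\cap \supp G = \emptyset$. To see this, first observe that the canonical label of any internal node of a forest $H$ lies in $\supp H$, since it is the value of a leaf of a nontrivial tree of $H$. Combined with the description $\sfc(F\sqcup G) = \sfc(F)+\sfc(G)$ of the overlay via codes, the disjoint-support hypothesis forces $\sfc(F)_i \cdot \sfc(G)_i = 0$ for every $i$. Consequently the internal nodes of $F\sqcup G$ split canonically into a copy of $\internal{F}$ and a copy of $\internal{G}$, with disjoint sets of canonical labels and with no parent-child relation in $F\sqcup G$ bridging an $F$-node to a $G$-node. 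A Sylvester word is exactly a linear extension of the parent-child partial order on canonical labels, so a linear extension of $F\sqcup G$ is exactly an interleaving of a linear extension of $F$ with one of $G$.

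With this Sylvester identity available, the bijection $\Phi$ is essentially tautological. A subword $\pi_F\in \mathcal{R}(\longword_{[n]};F)$ selects boxes only from columns whose value lies in $\supp F$, and likewise $\pi_G$ is supported in $\supp G$-columns; since $\supp F\cap \supp G=\emptyset$, the box-sets of $\pi_F$ and $\pi_G$ are automatically disjoint, so $\pi_F\cup \pi_G$ is a bona fide subword of $\longword_{[n]}$ whose induced letter-sequence shuffles those of $\pi_F$ and $\pi_G$ and therefore, by the identity above, lies in $\sylv{F\sqcup G}$. The inverse of $\Phi$ simply partitions any $\sigma\in \mathcal{R}(\longword_{[n]};F\sqcup G)$ according to whether the column of each selected box lies in $\supp F$ or $\supp G$; the Sylvester identity again guarantees the two pieces land in $\sylv{F}$ and $\sylv{G}$ respectively. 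Weight-preservation is automatic because $\weight$ is multiplicative over disjoint unions of boxes.

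The only step demanding genuine care is the Sylvester factorization of the second paragraph: one must confirm that the modified forest-Sylvester equivalence from \Cref{rem:NewSylv} really is cut out by the parent-child partial order on $\internal{F\sqcup G}$, rather than introducing new mixings between $F$ and $G$ nodes. Once that is pinned down the remainder is bookkeeping riding on the subword model.
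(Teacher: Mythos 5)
Your proof is correct and takes essentially the same approach as the paper's: both rest on the shuffle identity $\sylv{F\sqcup G}=\sylv{F}\shuffle\sylv{G}$ for disjointly supported forests combined with the multiplicativity of $\weight$ over the subword model of \Cref{thm:subwordformula_for_forests}. (The paper first reduces to the case where $F,G$ are trees before applying that identity, but this is an inessential shortcut; your closing concern about \Cref{rem:NewSylv} is also unnecessary, since the paper's primary definition of $\sylv{F}$ is already as the set of linear extensions of the parent-child order on $\internal{F}$.)
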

\begin{proof}
It suffices to assume that $F$ and $G$ are indexed trees. 
Now note that $\sylv{F\sqcup G}=\sylv{F}\shuffle \sylv{G}$ where $\shuffle$ denotes the shuffle operation.
Given $\pi \in \sylv{F\sqcup G}$, let $\pi_F$ (resp. $\pi_G$) denote the subword of $\pi$ consisting solely of letters in $\supp(F)$ (resp. $\supp(G)$).
We then have $\weight(\pi)=\weight(\pi_F)\weight(\pi_G)$ which by Theorem~\ref{thm:subwordformula_for_forests} implies the claim. 
\end{proof}

\begin{cor}
\label{cor:coxeter}
    If $w=s_{i_1}\cdots s_{i_k}$ with all $i_j$ distinct, then $\schub{w}(\xl;\tl)$ is a positive,
    multiplicity-free sum of double forest polynomials.
    As a special case, the hook Schur $s_{(m-\ell,1^{\ell})}(\xl_n;\tl)$ decomposes explicitly as 
   \[
   s_{(m-\ell,1^{\ell})}(\xl_n;\tl)=\sum \slide{c}(\xl_n;\tl)
   \]
    where the sum is over all $(0^{n-\ell-1},c_1,\dots,c_{\ell+1})\in \padded{n}$ where all $c_i>0$ and $\sum c_i=m$.
\end{cor}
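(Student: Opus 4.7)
My plan is to prove both statements by leveraging the subword models of \Cref{thm:subwordformula_for_schubs} and \Cref{thm:subwordformula_for_forests}, which express $\schub{w}(\xl;\tl)$ and $\forestpoly{F}(\xl;\tl)$ as weighted sums over subwords of $\longword_{[n]}$ whose value sequences lie respectively in $\red{w}$ and $\sylv{F}$. For the main statement it suffices to show that when $w = s_{i_1}\cdots s_{i_k}$ has all $i_j$ distinct, the set $\red{w}$ decomposes as a disjoint union of Sylvester classes. Summing the subword sums will then yield $\mathcal{R}(\longword_{[n]};w) = \bigsqcup_F \mathcal{R}(\longword_{[n]};F)$ and hence $\schub{w}(\xl;\tl)=\sum_F \forestpoly{F}(\xl;\tl)$, positive and multiplicity-free.

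The key combinatorial input is a comparison of equivalence relations on injective words. Every forest Sylvester swap $w_1acw_2 \sim w_1caw_2$ requires some $a<b<c$ with $b\notin w_2$, which forces $c \geq a+2$; hence forest Sylvester moves are a subclass of commutation moves, and every Sylvester class is contained in a single commutation class. Conversely, when all $i_j$ are distinct, no reduced word of $w$ repeats a letter, so no braid relation is applicable, and $\red{w}$ forms a single commutation equivalence class. Combining the two observations yields the decomposition $\red{w}=\bigsqcup_{F:\,\sylv{F}\subseteq \red{w}}\sylv{F}$, from which the main statement immediately follows.

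For the hook Schur special case, I would identify $s_{(m-\ell,1^{\ell})}(\xl_n;\tl)$ with $\schub{w}(\xl;\tl)$ where $w$ is the Grassmannian permutation of shape $(m-\ell,1^{\ell})$ with descent at $n$. Standard theory gives that reduced words of $w$ have length $m$, use the $m$ consecutive distinct letters $n-\ell,n-\ell+1,\dots,n+m-\ell-1$ each exactly once, and are in bijection with SYTs of the hook, so the main statement applies. To identify the indexing forests as the claimed zigzags $F_c$, I would specialize to $\tl=0$ and invoke the classical expansion of ordinary hook Schurs into fundamental quasisymmetric polynomials via SYT descent compositions, which ranges over precisely the padded compositions $c=(0^{n-\ell-1},c_1,\dots,c_{\ell+1})$ with $c_i>0$ and $\sum c_i=m$. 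Since $\{\forestpoly{F}(\xl)\}$ is a $\ZZ$-basis of $\ZZ[\xl]$ by \Cref{cor:basiscor}, the Part 1 decomposition at $\tl=0$ must index over the same set; combined with $\slide{c}(\xl_n;\tl)=\forestpoly{F_c}(\xl_n;\tl)$ from \Cref{def:double_fundamental}, the equivariant identity follows.

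I expect no serious obstacle: the forest-Sylvester-versus-commutation comparison is a one-line inequality, the Coxeter-implies-single-commutation-class argument is standard, and the basis-uniqueness shortcut bypasses constructing an explicit bijection between reduced words of the Grassmannian and codes of zigzag forests. The one subtlety is keeping conventions consistent, in particular the exact set of simple reflections appearing in reduced words of Grassmannian permutations for hook shapes and the resulting composition indexing.
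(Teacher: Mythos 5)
Your proposal is correct and follows essentially the same route as the paper's proof: for the first part you observe that Sylvester moves are commutation moves and that a Coxeter-like element with distinct generators has $\red{w}$ a single commutation class, giving the decomposition into Sylvester classes (the paper cites full commutativity and~\cite{HNT05} in the same way, leaving the subword-model bookkeeping implicit which you helpfully spell out); for the second part you identify the hook Schur with the appropriate Grassmannian $\schub{w}$, apply Part~1, and pin down the indexing set by specializing $\tl=\bm{0}$ to the classical Schur-into-fundamentals expansion together with basis uniqueness, exactly as the paper does.
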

\begin{proof}
    For the first half, note that $\red{w}$ is a disjoint union of $\sylv{F}$ over some indexed forests $F$.
    This is because $w$ is fully commutative, i.e. any two elements of $\red{w}$ can be related by a sequence of commutation moves, and any two elements of $\sylv{F}$ are also related via commutation moves \cite[Definition 8]{HNT05}.

    Regarding the second half of the claim, note that $s_{(m-\ell,1^{\ell})}(x_1,\dots,x_n;\tl)=\schub{w}$ where $w=s_{n-\ell}s_{n-\ell+1}\cdots s_{n-1} s_{n+m-\ell-1}\cdots s_{n+1}s_{n}$.
    The explicit expansion holds, as it is true under the specialization $\tl=0$; the latter is a special case of the well-known expansion of Schur polynomials into fundamental quasisymmetric polynomials \cite[Chapter 7]{EC2}.
\end{proof}

The column shape and the row shape are special cases of hook shapes.
In these special cases one has that the double fundamental quasisymmetric polynomial $\slide{0^{n-a}1^a}$ (resp. $\slide{0^{n-1}a}$) equals the double elementary (resp. complete) symmetric polynomial $e_a(\xl_n;\tl)$ (resp. $h_a(\xl_n;\tl)$).
In particular, $\slide{0^{n-1}1}=x_1+\cdots+x_n-t_1-\cdots-t_n$.


\section{Equivariant quasisymmetry and noncrossing partitions}
\label{sec:EQNP}
For each $\sigma \in S_{\infty}$, we define the evaluation map
\[
\begin{array}{rcl}
\ev_{\sigma}: \ZZ[\tl][\xl] & \to & \ZZ[\tl] \\
f(\xl; \tl) & \mapsto & f(t_{\sigma(1)}, t_{\sigma(2)}, \ldots; \tl)
\end{array}.
\]
For $\lambda$ a nonempty integer partition with at most $n$ parts, the non-constant double Schur polynomials $s_\lambda(x_1,\ldots,x_n;\tl)$ have the property that $\ev_{\sigma}s_\lambda=0$ for all $\sigma\in S_n$. 
This follows immediately from the fact that $s_\lambda$ is symmetric in the $x$-variables, so $\ev_{\sigma}s_\lambda=\ev_{\idem}s_\lambda=0$. The analogous vanishing for equivariant quasisymmetric polynomials involves the noncrossing partitions $\NC_n\subset S_n$.

\begin{thm}
\label{prop:qsymvanishesonNc}
If $f(x_1,\ldots,x_n;\tl)\in \eqsym{n}$ and $\sigma\in \NC_n$, then $\ev_{\sigma}f=\ev_{\idem} f$. 
In particular, if $\slide{c}(\xl;\tl)$ is a nonconstant double fundamental quasisymmetric polynomial (so that $c \in \padded{n} \setminus \{\emptyset\}$), then $\ev_{\sigma}\slide{c}=0$ for all $\sigma\in \NC_n$.
\end{thm}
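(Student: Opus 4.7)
The plan is to show that for each $\sigma \in \NC_n$ the vector $(t_{\sigma(1)}, \dots, t_{\sigma(n)})$ can be connected to the identity vector $(t_1, \dots, t_n)$ by a chain of ``allowed swaps''. When an arbitrary vector $(a_1, \dots, a_n)$ has $a_i = t_i$ (or $a_{i+1} = t_i$) for some $i$, specializing the defining identity of equivariant quasisymmetry at $x_j = a_j$ for $j \ne i, i+1$ gives
\[
f(a_1, \dots, a_n; \tl) = f(a_1, \dots, a_{i-1}, a_{i+1}, a_i, a_{i+2}, \dots, a_n; \tl).
\]
So it suffices to prove, by induction on $n$, that for every $\sigma \in \NC_n$ the vector $(t_{\sigma(1)}, \dots, t_{\sigma(n)})$ can be obtained from $(t_1, \dots, t_n)$ via a sequence of such swaps.

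For the inductive step, let $B = \{1 = a_1 < a_2 < \dots < a_k\}$ be the block of $\sigma$ containing $1$. When $k = 1$, $\sigma$ fixes $1$ and restricts to a noncrossing permutation on $\{2, \dots, n\}$, so the inductive hypothesis supplies a reducing sequence of swaps at indices $\ge 2$ that never touch position $1$. When $k \ge 2$, the noncrossing condition forces every block of $\sigma$ meeting $\{2, \dots, a_2 - 1\}$ to be contained in this interval, so $\sigma$ restricts to a noncrossing permutation $\tau$ on $\{2, \dots, a_2 - 1\}$. By induction, $\tau$ can be reduced to the identity using swaps at indices in $\{2, \dots, a_2 - 2\}$, which leave positions $1$ and $a_2, \dots, n$ fixed. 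After this reduction, positions $2, \dots, a_2 - 1$ are fixed points, value $1$ sits at position $a_2$, and value $a_k$ sits at position $1$.

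Next I would slide value $1$ leftward by applying swaps at consecutive indices $a_2 - 1, a_2 - 2, \dots, 1$. Each swap at an index $p \ge 2$ is permitted because position $p$ still carries the fixed value $p$, and the final swap at index $1$ is permitted because position $2$ holds $1 = t_1$. The resulting permutation $\sigma'$ fixes $1$, and a direct check shows that $\sigma'|_{\{2, \dots, n\}}$ is noncrossing: the original block $B$ together with the flattened inner structure fuse into a single noncrossing block $\{2, 3, \dots, a_2\} \cup \{a_3, \dots, a_k\}$ equipped with its backwards cycle, while each block of $\sigma$ originally nested in a gap $(a_j, a_{j+1})$ with $j \ge 2$ remains unchanged and stays confined between two consecutive elements of this merged block. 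A second application of the induction hypothesis to $\sigma'|_{\{2, \dots, n\}}$ completes the reduction of $\sigma$ to the identity vector.

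The ``In particular'' clause is then immediate: for a nonempty $c \in \padded{n}$, the associated zigzag forest $F_c$ is nonempty, so the normalization condition of Theorem~\ref{thm:ForestDesiderata} gives $\ev_{\idem} \slide{c} = \forestpoly{F_c}(\tl; \tl) = 0$, and the main claim then forces $\ev_\sigma \slide{c} = 0$ for every $\sigma \in \NC_n$. The main obstacle I anticipate is the combinatorial check that the permutation produced by the sliding step is noncrossing on $\{2, \dots, n\}$; this reduces to verifying that each block originally nested in a gap $(a_j, a_{j+1})$ of $B$ lies strictly between consecutive elements of the merged block $\{2, \dots, a_2\} \cup \{a_3, \dots, a_k\}$ and hence cannot cross it.
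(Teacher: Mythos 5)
Your proof is correct, and it takes a genuinely different route from the paper's. The paper defines the equivalence relation $\sim_{\NC}$ on $S_n$ generated by $\sigma\sim\sigma s_i$ when $i\in\{\sigma(i),\sigma(i+1)\}$, shows via \Cref{lem:NCsi} that such swaps preserve $\NC_n$, and then characterizes which noncrossing permutations admit a length-decreasing swap (the ``noncrossing descents'' of \Cref{prop:NCsidec}). Iterating this greedy descent gives a noncrossing reduced word for each $\sigma$ and hence shows $\NC_n$ is exactly the equivalence class of $\idem$ (\Cref{prop:NCequiv}). Your proof bypasses the descent characterization entirely by doing a direct recursion keyed to the block of $\sigma$ containing $1$: flatten the interval strictly nested under the first arc, slide $t_1$ home, and observe that what remains is a noncrossing permutation on $\{2,\ldots,n\}$. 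Each approach has its merits: the paper's machinery (noncrossing descents, noncrossing reduced words, the canonical word $\ncrmin{\sigma}$) is reused extensively in Sections 8--10, so the upfront investment pays off; your approach is more elementary and self-contained, needing no descent classification. Note also that by \Cref{lem:NCsi} your intermediate permutations all remain in $\NC_n$, so you are implicitly producing a specific noncrossing reduced word, just via a different recursion than the Gobet--Williams $\operatorname{vert}$-code construction.

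One point worth making explicit: your induction hypothesis is stated for $\NC_m$ with $m<n$, but you apply it to noncrossing permutations of a contiguous interval $\{a,\ldots,b\}$ sitting inside $[n]$. This is fine because the allowed-swap structure is translation-invariant---the swap at position $j$ is permitted iff $\tau(j)=j$ or $\tau(j+1)=j$, a condition that transports cleanly under shifting both positions and values by the same constant---but since the $t$-variable indexed by a position changes under the shift, it is worth spelling this out. With that clarification the argument is complete and correct, including the ``in particular'' clause, which matches the paper's.
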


As an example, we evaluate the equivariant fundamental $\slide{(0,2,1)}(\xl;\tl)$ computed in Example~\ref{eg:double_fundamental} according to the noncrossing partition $\sigma=213$:
\[
\slide{(0,2,1)}(t_2,t_1,t_3;\tl)=(t_2-t_4)(t_2-t_1)(t_1+t_3-t_1-t_2)+(t_2+t_1-t_1-t_4)(t_1-t_2)(t_3-t_2)=0.
\]
The reader is invited to check that the $\slide{(0,2,1)}(\xl;\tl)$ vanishes at each of the four remaining elements of  $\NC_3=S_3\setminus \{231\}$.  

We prove Theorem~\ref{prop:qsymvanishesonNc} in Section~\ref{sec:noncrossing_evaluation_proof} by characterizing $\NC_n$ in terms of an equivalence relation on $S_{n}$.  
We then extend the relationship between noncrossing partitions and forest polynomials in Section~\ref{subsec:relating_nc_and_forests} by constructing a bijection between $\indexedforests_{n}$ and $\NC_{n}$ which, in relation to our proof, is canonical.

\subsection{Noncrossing word combinatorics and Theorem~\ref{prop:qsymvanishesonNc}}
\label{sec:noncrossing_evaluation_proof}
We now introduce an equivalence relation which encodes noncrossing permutations.
\begin{defn}
Let $\sim_{\NC}$ denote the equivalence relation on $S_{n}$ generated by the relations $\sigma\sim_{\NC}\sigma s_i$ if $i\in\{\sigma(i),\sigma(i+1)\}$. 
\end{defn}

The relation $\sim_{\NC}$ reflects the structure of equivariant quasisymmetry in the sense that if $\sigma \sim_{\NC} \tau$, we must have $\ev_{\sigma}f=\ev_{\tau} f$ for every $f(x_1,\ldots,x_n;\tl)\in \eqsym{n}$.  In this section we show that $\NC_n$ is the equivalence class of $\sim_{\NC}$ containing $\idem\in S_n$, see~\Cref{prop:NCequiv}, and use this to deduce \Cref{prop:qsymvanishesonNc}.

\begin{eg}
    For $n=3$ the equivalence classes are
    $$\NC_3 = \{123,132,213,312,321\}\text{ and }\{231\}.$$
    For $n=4$ the equivalence classes are
    \begin{align*}
    \NC_4 = \{1234,1324,2134,3124,3214,1243,2143,1423,1432,4123,4132,4213,4231,4321\},\text{ and }\\
    \{1342,3142\},
    \{2314\},\{2341\},\{2431,2413\},\{3412\},\{3421,3241\},\{4312\}\end{align*}
    $\NC_4$ is pictured in \Cref{fig:NC4} along with the generating equivalences for $\sim_{\NC}$.
\end{eg}

\begin{figure}[!ht]
    \centering
    \includegraphics[scale=0.8]{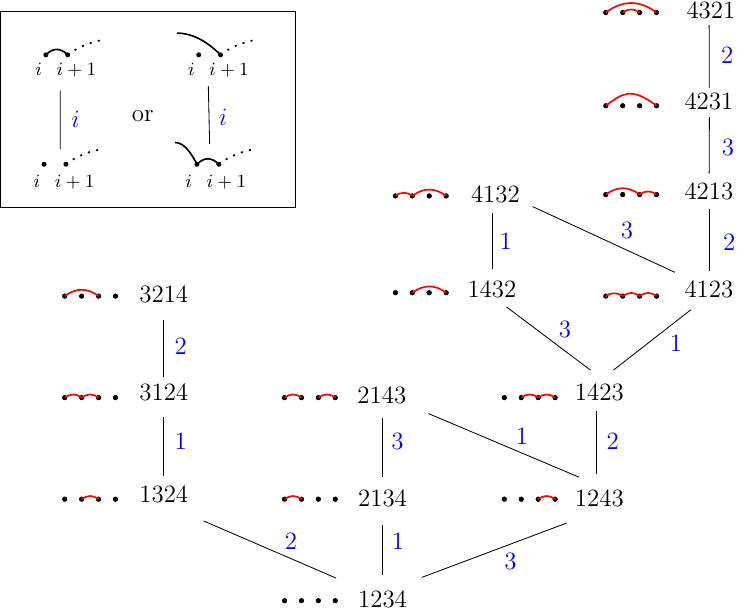}
    \caption{$\NC_4$ with the generating equivalences labeled.
    \label{fig:NC4}
    }
\end{figure}

We first show that $\NC_n$ is the equivalence class containing $\idem$ under $\sim_{\NC}$. \footnote{See also \cite[\S 3.7]{biane:hal-04891027} where Biane states essentially this criterion.}


\begin{lem}
\label{lem:NCsi}
    Let $\sigma\in \NC_{n}$. 
    Then $\sigma s_i\in \NC_{n}$ if and only if $i\in \{\sigma(i),\sigma(i+1)\}$.
\end{lem}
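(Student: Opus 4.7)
I would argue by case analysis on whether $i$ and $i+1$ lie in the same cycle of $\sigma$ or in different cycles. The linchpin observation, which I will establish first, is that if $i, i+1$ belong to the same block $B = \{b_1 < b_2 < \cdots < b_p\}$ of the noncrossing partition underlying $\sigma$, then because they are consecutive integers they must be consecutive in the natural ordering of $B$, i.e.\ $i = b_r$ and $i+1 = b_{r+1}$ for some $r$. Since $\sigma|_B$ is the backwards long cycle $(b_p, \ldots, b_1)$, this forces $\sigma(i+1) = i$. So the ``same cycle'' hypothesis is equivalent to $\sigma(i+1) = i$.

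For the ``if'' direction, I treat the two sub-hypotheses separately. If $\sigma(i+1) = i$, then $s_i$ splits the cycle of $B$ into the singleton $\{i\}$ and $B \setminus \{i\}$, and a direct computation shows that $\sigma s_i$ acts on $B \setminus \{i\}$ as the backwards long cycle of that set; the resulting partition is a refinement of the original noncrossing one and so remains noncrossing. If $\sigma(i) = i$, then $s_i$ merges the singleton $\{i\}$ into the block $B_2 \ni i+1$, inserting $i$ between $i+1$ and its predecessor in $B_2$. One checks by direct computation that $\sigma s_i$ acts on $B_2 \cup \{i\}$ as the backwards long cycle of $B_2 \cup \{i\}$; the noncrossing-ness of the new partition follows by showing that any putative crossing of $B_2 \cup \{i\}$ with another block $C$ would, by adjacency of $i$ to $i+1 \in B_2$, force a crossing of $B_2$ with $C$ in the original $\sigma$.

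For the ``only if'' direction I prove the contrapositive: assume $\sigma(i) \neq i$ and $\sigma(i+1) \neq i$ and deduce $\sigma s_i \notin \NC_n$. By the linchpin observation, $i$ and $i+1$ are in distinct cycles $B_1 \ni i$, $B_2 \ni i+1$, and $|B_1| \ge 2$. Using noncrossing-ness together with the adjacency $i < i+1$, I would argue that the blocks $B_1, B_2$ must fit into one of three configurations: (i) $i = \max B_1$ and $i+1$ is not $\min B_2$, forcing $B_1 \subset (c_{s-1}, i+1)$; (ii) $i+1 = \min B_2$ and $i$ is not $\max B_1$, forcing $B_2 \subset (i, a_{r+1})$; (iii) $i = \max B_1$ and $i+1 = \min B_2$ side by side. (The fourth possibility, $i$ interior to $B_1$ and $i+1$ interior to $B_2$, is ruled out by noncrossing-ness.) In each case $\sigma s_i$ merges $B_1$ and $B_2$ into a single cycle, and I would directly compute $(\sigma s_i)(i+1) = \sigma(i)$ and compare it to what is required for a backwards long cycle on $B_1 \cup B_2$: in each configuration, the value $\sigma(i)$ disagrees with the immediate predecessor of $i+1$ inside $B_1 \cup B_2$ (which is $i$ itself), yielding the required obstruction.

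The main obstacle will be the structural classification into configurations (i)--(iii) and then verifying the backwards-cycle violation in each; case (iii) in particular is the subtlest because the merged partition may itself be noncrossing, so the obstruction must be extracted purely from the cycle-order data rather than from a crossing in the partition. The payoff is that once the structural classification is in hand, the violations are detected by a single short computation comparing $\sigma(i+1)$ with the largest element of $B_1 \cup B_2$ strictly below $i+1$.
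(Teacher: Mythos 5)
Your proof is correct and follows essentially the same route as the paper's: the observation that $i,i+1$ lie in the same block iff $\sigma(i+1)=i$, direct computation of the cycle structure of $\sigma s_i$ in the two ``if'' subcases, and for ``only if'' the comparison $(\sigma s_i)(i+1)=\sigma(i)\neq i$ against what the backwards cycle on $B_1\cup B_2$ would demand, namely that $i+1$ map to its predecessor $i$ in that merged block. The configuration classification (i)--(iii) is a detour you can drop entirely: as you yourself note at the end of the sketch, the decisive computation comparing $\sigma(i)$ with $i$ is the same in every configuration (and, contrary to your remark about case (iii) being special, the merged set partition can be noncrossing in cases (i) and (ii) as well, so the obstruction must always come from cycle-order data). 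The paper proceeds directly to that single computation without any classification. One small point in your favor: in the subcase $\sigma(i+1)=i$, the correct split really is into $\{i\}$ and $B\setminus\{i\}$ as you wrote; the displayed formula in the paper's proof of this subcase does not literally give $c_B s_i$, though the conclusion is unaffected since either resulting partition refines $\mathcal{P}$ and is therefore noncrossing.
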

\begin{proof}
    Let $\mathcal{P}=\mathcal{P}(\sigma)$ be the set partition given by the blocks of $\sigma$. We proceed by taking cases.
    
    If $i=\sigma(i)$, then $\{i\}$ is a singleton block in $\mc{P}$. 
    Now consider the block $B$ in $\mc{P}$ containing $i+1$.
    Since $i\not\in B$ we have 
    \[
    c_Bs_i=c_{B\sqcup \{i\}}
    \]
    where $\sqcup$ denotes disjoint union. 
    Replacing $\{i\}$ and $B$ by $B\sqcup \{i\}$ cannot violate the noncrossing property, so $\sigma s_i\in \NC_{n}$ in this case.
    
    Next suppose that $i=\sigma(i+1)$. 
    This then means that both $i$ and $i+1$ belong to the same block, say $B$, in $\mc{P}$.
    It then follows that 
    \[
    c_B s_i=c_{B\cap \{1,\ldots,i\}}c_{B\cap \{i+1,\dots,n\}},
    \] splitting $B$ into blocks $B\cap \{1,\dots,i\}$ and $B\cap \{i+1,\dots,n\}$. 
    Again we see that $\sigma s_i \in \NC_{n}$.

    Finally, if $i\not\in\{\sigma(i),\sigma(i+1)\}$, then $i\in A\in \mathcal{P}$ with $|A|\ne 1$ and $i+1\in B\in \mathcal{P}$ with $A\ne B$. 
    Multiplying $\sigma$ by $s_{i}$ merges the cycles $c_A$ and $c_B$ into a single cycle $c_Ac_Bs_i$ containing $A$ and $B$. 
    However, this merged cycle is not equal to $c_{A\sqcup B}$: 
    \[
    (c_Ac_Bs_i)(i+1)=\sigma s_i(i+1)=\sigma(i)\ne i=c_{A\sqcup B}(i+1),
    \] 
    so $\sigma s_i\not\in \NC_{n}$.
\end{proof}

In order to conduct a more careful analysis of the $\sim_{\NC}$ classes, we develop a combinatorics of reduced words for noncrossing partitions; these tools will be used again in later sections.

\begin{defn}
For $\sigma\in \NC_n$, 
\begin{enumerate}
\item a \emph{noncrossing descent} of $\sigma$ is an $1 \le i < n$ such that $\sigma s_i\in \NC_n\text{ and }\ell(\sigma s_i)<\ell(\sigma)$; and

\item a \emph{noncrossing reduced word} for $\sigma \in \NC_{n}$ is a word $(i_{1}, i_{2}, \ldots, i_{\ell})$ such that for all $1\le k \le \ell$, the permutation $s_{i_{1}} s_{i_{2}} \cdots s_{i_{k}}$ is noncrossing and satisfies $i_k\in \desnc{s_{i_1}\cdots s_{i_{k}}}$.

\end{enumerate}
We write $\desnc{\sigma}$ and $\rednc{\sigma}$ for the sets of noncrossing descents and noncrossing reduced words for $\sigma$, respectively.
\end{defn}
We note that $\desnc{\sigma}$ is always a subset of the usual descent set $\des{\sigma} = \{i\suchthat \ell(\sigma s_i)<\ell(\sigma)\}$. Similarly, $\rednc{\sigma}$ is always a subset of the set of ordinary reduced words $\red{\sigma}$.

Furthermore, by \Cref{lem:NCsi} each noncrossing descent corresponds to a generating equivalence for $\sim_{\NC}$ that decreases the length of $\sigma$, and each noncrossing reduced word corresponds to a length-increasing chain of generating equivalences from $\idem$ to $\sigma$.

\begin{eg}
\label{eg:noncrossing_reduced_words}
Taking $\sigma = 82154763 \in \NC_{7}$ from Example~\ref{eg:noncrossing} we see that $\desnc{\sigma}=\{2, 4, 6\}$. Indeed one may check that $\sigma s_2=81254763$, 
$\sigma s_4=82145763$, and
$\sigma s_6=82154673$ belong to
belongs to $\NC_8$; for instance 
\[
\mc{P}(\sigma s_2) = \begin{tikzpicture}[scale = 0.75, baseline = 0.75*-0.2]
\foreach \x in {1, ..., 8}{\draw[fill] (\x - 1, 0) node[inner sep = 2pt] (\x) {$\scriptstyle \x$};}
\foreach \i\j in {1/2, 2/3, 3/8, 4/5, 6/7}{\draw[thick] (\i) to[out = 35, in = 145] (\j);}
\end{tikzpicture}
\quad\text{and}\quad
\mc{P}(\sigma s_6) = \begin{tikzpicture}[scale = 0.75, baseline = 0.75*-0.2]
\foreach \x in {1, ..., 8}{\draw[fill] (\x - 1, 0) node[inner sep = 2pt] (\x) {$\scriptstyle \x$};}
\foreach \i\j in {1/3, 3/8, 4/5}{\draw[thick] (\i) to[out = 35, in = 145] (\j);}
\end{tikzpicture}.
\]
The remaining descents at position $1$ and $7$ give permutations $28154763$ and $82154736$, which are not noncrossing.
Considering noncrossing reduced words, this means that every element of $\rednc{\sigma}$ must end in $2$, $4$, or $6$; for example
\[
(7, 6, 5, 4, 3, 2, 1, 4, 5, 4, 6, 7, 6, 2)\in\rednc{\sigma}.
\]
One can check by computer that of the 183365 elements of $\red{\sigma}$, only 336 are noncrossing.
\end{eg}
 
\begin{prop}
\label{prop:NCsidec}
    Let $\sigma\in \NC_{n}$. Then $i\in \desnc{\sigma}$ exactly when $i$ is the minimal element of its cycle and $i+1$ is not.  
    In particular for each $\sigma\in \NC_{n}\setminus\{\idem\}$ we have $\desnc{\sigma}\ne \emptyset$.
    
\end{prop}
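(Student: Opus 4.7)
The plan is to combine \Cref{lem:NCsi} with the fact that each cycle $c_B$ of $\sigma \in \NC_n$ is a \emph{backwards} long cycle, so the minimum of a block $B$ is the unique element of $B$ sent to something larger than itself (namely $\max(B)$), while every other element of $B$ is sent to something smaller than itself. With this dictionary, the condition ``$i$ is min of its cycle'' becomes ``$\sigma(i) \ge i$,'' and the condition ``$i+1$ is not min of its cycle'' becomes ``$\sigma(i+1) < i+1$.''

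For the forward direction, assume $i \in \desnc{\sigma}$. By \Cref{lem:NCsi} we have $i \in \{\sigma(i), \sigma(i+1)\}$, and being a descent gives $\sigma(i) > \sigma(i+1)$. In the case $i = \sigma(i)$, the element $i$ is a fixed point (hence min of its singleton cycle), and $\sigma(i+1) < i < i+1$ forces $i+1$ to not be min of its cycle. In the case $i = \sigma(i+1)$, we immediately get $\sigma(i+1) = i < i+1$ so $i+1$ is not min of its cycle, and $\sigma(i) > \sigma(i+1) = i$ forces $i$ to be min of its cycle.

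For the converse, assume $i$ is the min of its cycle $A$ and $i+1$ is not the min of its cycle $B$. If $|A| = 1$ then $\sigma(i) = i$, while $\sigma(i+1) < i+1$ with $\sigma(i+1) \ne i$ gives $\sigma(i+1) < i = \sigma(i)$, so $i$ is a descent with $i = \sigma(i)$, and we conclude via \Cref{lem:NCsi}. Otherwise $|A| > 1$. If $A = B$, then $i = \min A$ and $i+1 \in A$ forces $i+1$ to be the second-smallest element of $A$, so in the backwards cycle $\sigma(i+1) = i$, and $\sigma(i) = \max A \ge i+1 > i = \sigma(i+1)$ gives the descent; again \Cref{lem:NCsi} applies. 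The case $A \ne B$ is impossible: since $i+1 \in B$ is not the min, there exists some $b \in B$ with $b < i+1$, and $b \ne i$ (as $i \in A$) forces $b < i$; then $\max A \ge i+2$ (as $\max A > i$ and $\max A \ne i+1$), producing elements $b < i < i+1 < \max A$ with $b, i+1 \in B$ and $i, \max A \in A$, which crosses.

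For the second assertion, if $\sigma \in \NC_n \setminus \{\idem\}$ then some element is not the minimum of its cycle; let $j$ be the smallest such. Since $1$ is always a minimum of its cycle we have $j \ge 2$, and by minimality of $j$ the element $j-1$ is the minimum of its cycle, so by the characterization just proved $j-1 \in \desnc{\sigma}$. The main obstacle is the case analysis in the converse direction, specifically ruling out the case $A \ne B$ using the noncrossing property; the rest is direct bookkeeping with the backwards-cycle convention.
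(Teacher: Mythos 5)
Your proof is correct and takes essentially the same approach as the paper, namely combining \Cref{lem:NCsi} with the characterization $\ell(\sigma s_i)<\ell(\sigma)\Leftrightarrow\sigma(i+1)<\sigma(i)$ and the backwards-cycle description of cycle minima. The paper leaves the case analysis (in particular the noncrossing-pattern contradiction when $i$ and $i+1$ would lie in distinct nontrivial cycles) implicit, whereas you spell it out; the final nonemptiness argument is the same, phrased via the smallest non-minimal element rather than the largest initial run of minimal ones.
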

\begin{proof}
    The first part follows from using \Cref{lem:NCsi} together with the fact that $\ell(\sigma s_i)<\ell(\sigma)$ if and only if $\sigma(i+1)<\sigma(i)$. 
    Now consider a non-identity $\sigma\in \NC_n$.
    Find the maximal $i$ with the property that for all $1\leq j\leq i$ we have that $j$ is the minimal element in its block. 
    Since $\sigma\neq \idem$ we know that $i<n$.
    By its definition, we must have that $i\in \desnc{\sigma}$.
\end{proof}

\begin{cor}
\label{prop:NCRW}
Every noncrossing partition has a noncrossing reduced word.
\end{cor}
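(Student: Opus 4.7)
The plan is to prove this by induction on the Coxeter length $\ell(\sigma)$, using \Cref{prop:NCsidec} to guarantee that a suitable descent always exists so that the inductive step can proceed.

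For the base case $\ell(\sigma) = 0$, we have $\sigma = \idem$, and the empty word is vacuously a noncrossing reduced word. For the inductive step, suppose every $\tau \in \NC_n$ with $\ell(\tau) < \ell$ admits a noncrossing reduced word, and let $\sigma \in \NC_n$ with $\ell(\sigma) = \ell \ge 1$. By \Cref{prop:NCsidec}, $\desnc{\sigma}$ is nonempty, so we may pick some $i \in \desnc{\sigma}$. By definition of noncrossing descent, $\sigma s_i \in \NC_n$ and $\ell(\sigma s_i) = \ell - 1$, so by the inductive hypothesis there is a noncrossing reduced word $(i_1, \ldots, i_{\ell-1}) \in \rednc{\sigma s_i}$.

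I then claim $(i_1, \ldots, i_{\ell-1}, i) \in \rednc{\sigma}$. For $1 \le k \le \ell-1$ the required conditions hold by hypothesis on the prefix, and for $k = \ell$ the partial product is $(\sigma s_i) s_i = \sigma$, which lies in $\NC_n$ and satisfies $i \in \desnc{\sigma}$ by the choice of $i$. This completes the induction.

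The main (and only nontrivial) input is \Cref{prop:NCsidec}, which ensures $\desnc{\sigma}$ is nonempty whenever $\sigma \ne \idem$; without this the induction could get stuck, and indeed the analogous naive statement for arbitrary equivalence relations on $S_n$ generated by descent-type moves need not hold. Since that proposition has already been established, no obstacle remains.
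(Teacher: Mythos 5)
Your proof is correct and takes exactly the same approach as the paper: the paper's one-line proof says ``This follows from iteratively applying \Cref{prop:NCsidec},'' and your inductive argument is simply that iteration spelled out in full detail.
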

\begin{proof}
    This follows from iteratively applying \Cref{prop:NCsidec}.
\end{proof}

\begin{cor}
\label{prop:NCequiv}
The equivalence class of $\sim_{\NC}$ containing $\idem\in S_n$ is $\NC_n$.
\end{cor}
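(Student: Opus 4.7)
The plan is to prove both containments between the $\sim_{\NC}$-equivalence class of $\idem$ and $\NC_n$.

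For the containment of the equivalence class in $\NC_n$, I would argue by induction on the length of a chain of generating relations starting at $\idem$. The base case is immediate since $\idem \in \NC_n$. For the inductive step, suppose $\sigma \in \NC_n$ and $\sigma \sim_{\NC} \sigma s_i$ via a generating relation, meaning $i \in \{\sigma(i), \sigma(i+1)\}$. Then \Cref{lem:NCsi} directly gives $\sigma s_i \in \NC_n$. Thus every permutation $\sim_{\NC}$-equivalent to $\idem$ lies in $\NC_n$.

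For the reverse containment, I would use \Cref{prop:NCRW}: every $\sigma \in \NC_n$ admits a noncrossing reduced word $(i_1, i_2, \ldots, i_\ell) \in \rednc{\sigma}$. Setting $\tau_k \coloneqq s_{i_1} \cdots s_{i_k}$, the definition of a noncrossing reduced word guarantees that each $\tau_k$ lies in $\NC_n$ and that $i_k \in \desnc{\tau_k}$. Then $\tau_k s_{i_k} = \tau_{k-1} \in \NC_n$, so by \Cref{lem:NCsi} we have $i_k \in \{\tau_k(i_k), \tau_k(i_k+1)\}$; this is exactly a generating relation $\tau_k \sim_{\NC} \tau_{k-1}$. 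Chaining these relations yields $\sigma = \tau_\ell \sim_{\NC} \tau_{\ell-1} \sim_{\NC} \cdots \sim_{\NC} \tau_0 = \idem$, completing the proof.

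There is essentially no obstacle here because all of the work has been done in the preceding lemmas: \Cref{lem:NCsi} provides the exact characterization of when multiplication by $s_i$ preserves the noncrossing property, and \Cref{prop:NCRW} provides the existence of length-increasing chains of noncrossing permutations witnessing each element of $\NC_n$. The corollary is simply the observation that these two facts together establish that the generating relations for $\sim_{\NC}$ traverse $\NC_n$ exactly.
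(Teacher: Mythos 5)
Your proof is correct and follows the same two-step strategy as the paper: use \Cref{lem:NCsi} to show the $\idem$-class is contained in $\NC_n$, and use \Cref{prop:NCRW} to show the reverse. You have simply unpacked what the paper states tersely ("$\NC_n$ is a union of equivalence classes" and "every noncrossing partition is equivalent to the identity") into explicit inductions, and that unpacking is accurate.
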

\begin{proof}
    By \Cref{lem:NCsi} we know that $\NC_n$ is a union of equivalence classes, and by \Cref{prop:NCRW} every noncrossing partition is equivalent to the identity.
\end{proof}

\begin{proof}[Proof of \Cref{prop:qsymvanishesonNc}]
By the definition of equivariant quasisymmetry, $f$ takes a constant value on all $(t_{\sigma(1)},\ldots,t_{\sigma(n)})$ for $\sigma$ in a fixed equivalence class of $\sim_{\NC}$, and $\NC_n$ is the equivalence class containing $\idem$ by \Cref{prop:NCequiv}, so $\ev_{\sigma}f=\ev_{\idem}f=0$. If $f=\slide{c}(x_1,\ldots,x_n;\tl)$ with $c\ne \emptyset$ then $\ev_{\idem}f=0$ by the normalization condition on double forest polynomials, so $\ev_{\sigma} f=0$.
\end{proof} 

\subsection{A bijection $\ForToNC$ from indexed forests to noncrossing permutations}
\label{subsec:relating_nc_and_forests}

In this section we describe a bijection $\ForToNC: \indexedforests_{n} \to \NC_{n}$ and relate it to the combinatorial structure of noncrossing reduced words.  This bijection will be used in later sections and in particular Theorem~\ref{thm:forestatownperm}.

We begin with a graphical description of our bijection. 
Recall the definition of a nested forest from Section~\ref{sec:nestfor} and the map $\ncperm : \nfor_{n} \to \NC_{n}$ defined therein.

Given a nested forest $\wh{F} \in \nfor_{n}$, define an operation of `deleting the left edge of $v$' for $v \in \internal{F}$ as follows.  
If $v$ is a root, delete $v$ as well as the two edges incident to it. 
If $v$ is the child of $u$, then delete node $v$ as well as the three edges incident to it, and subsequently insert a new edge $uv_R$ from $u$ to the right child $v_{R}$ of $v$, making $v_{R}$ the left (resp.~right) child of $u$ if $v$ was the left (resp.~right) child of $u$ originally.  
The result will be another nested forest.  

\begin{defn}
\label{defn:LC}
For $F\in \indexedforests$, let $\operatorname{LC}(F)\in \nfor$ be the nested forest obtained by deleting the left edge of each right child $v\in\internal{F}$. 
We define $\ForToNC:\suppfor{n}\to \NC_n$ by $$\ForToNC(F)=\ncperm(\operatorname{LC}(F)).$$
\end{defn}

The map $\ForToNC$ is a bijection.  As shown in Figure~\ref{fig:nc_partition_to_indexed_forest}, we can recover the code $c(F)=(c_1,c_2,\ldots)$ by setting, for an element $i$ contained in the cycle $B$ of $\ncperm(\operatorname{LC}(F))$,
\[
c_{i} = \begin{cases} 
|B| & \text{if $i = \min{B}$ and $B$ is nested in some other cycle,} \\
|B| - 1 & \text{if $i = \min{B}$ and $B$ is not nested in any other cycle,} \\
0 & \text{otherwise.}
\end{cases}
\]

\begin{figure}[ht!]
    \centering
    \includegraphics[width=\textwidth]{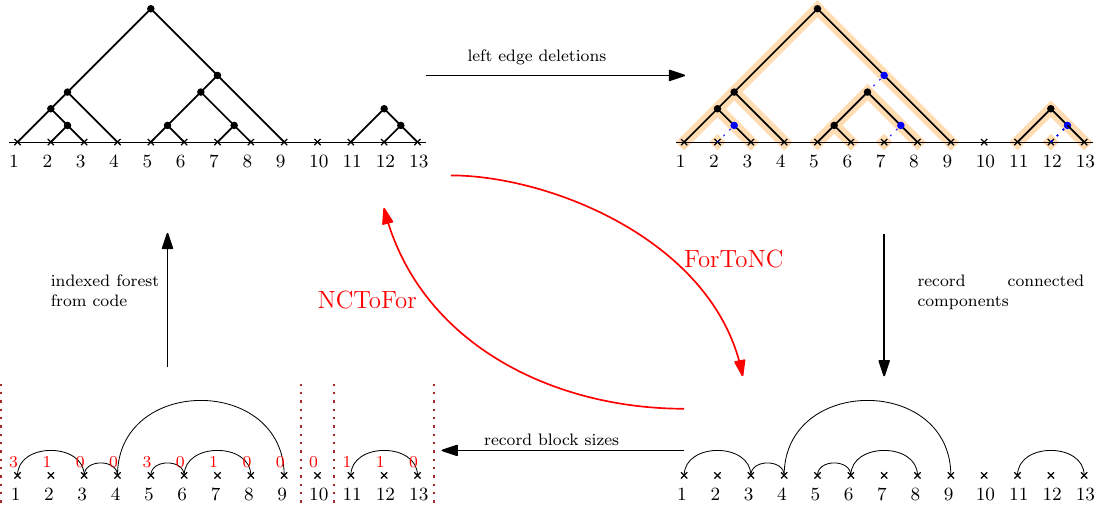}
    \caption{An example of the map $\ForToNC$ and its inverse for $n = 13$}
    \label{fig:nc_partition_to_indexed_forest}
\end{figure}

\begin{eg}
\label{re:unique_noncrossing_descent}
    Recall that the permutations in $S_n$ that have at most one descent comprise the distinguished class of Grassmannian permutations, so it is natural to consider the $\sigma \in \NC_n$ for which $|\desnc{\sigma}|\leq 1$. 
     Proposition~\ref{prop:NCsidec} shows that this occurs exactly when there exists an $1\leq i\leq n$ so that every element in $\{1,\dots,i\}$ is the minimal in its cycle in $\sigma$ and no element in $\{i+1,\dots,n\}$ is minimal in its cycle. It is not hard to show that under the $\ForToNC$ bijection these noncrossing partitions are the ones that map to the zigzag forests in $\zigzag{i}$ that are supported on $[n]$, and the associated permutations are the unique elements of $\NC_n$ for which there is a unique noncrossing reduced word.
\end{eg}

\subsection{The $\ForToNC$ bijection via heaps}
We now discuss how this bijection naturally arises in the study of noncrossing reduced words.  
For $\sigma \in S_{n}$, the Tits--Matsumoto theorem \cite{Mat64,Tits69} states that any two elements of $\red{\sigma}$ are connected by a sequence of commutation relations $\mathbf{a}ij\mathbf{b} \Leftrightarrow \mathbf{a}ji\mathbf{b}$ for $|j-i|>1$ and braid relations $\mathbf{a}i(i + 1)i\mathbf{b} \Leftrightarrow \mathbf{a}(i + 1)i(i + 1)\mathbf{b}$. 
The \emph{commutation class} of a reduced word $\omega \in \red{\sigma}$ is the subset $\mathcal{C}(\omega) \subseteq \red{\sigma}$ of words which can be obtained from $\omega$ by applying exclusively commutation relations.  

\begin{prop}
\label{prop:noncrossing_commutation_class}
For $\sigma \in \NC_{n}$, the set $\rednc{\sigma}$ is a commutation class in $\red{\sigma}$.
\end{prop}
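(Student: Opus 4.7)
The plan is to establish the two conditions that together say $\rednc{\sigma}$ equals a commutation class in $\red{\sigma}$: (a) $\rednc{\sigma}$ is closed under single commutation moves $\mathbf{a}ij\mathbf{b} \leftrightarrow \mathbf{a}ji\mathbf{b}$ with $|i-j|>1$, and (b) any two elements of $\rednc{\sigma}$ are linked by a chain of such moves.

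For (a), let $\omega=(i_1,\ldots,i_\ell)\in \rednc{\sigma}$ and suppose we swap $i_k$ with $i_{k+1}$ where $|i_k-i_{k+1}|>1$. Only the prefix at position $k$ changes: from $\tau s_{i_k}$ to $\tau s_{i_{k+1}}$, where $\tau = s_{i_1}\cdots s_{i_{k-1}}$. Since the intervals $\{i_k,i_k+1\}$ and $\{i_{k+1},i_{k+1}+1\}$ are disjoint, $\tau s_{i_k}$ agrees with $\tau$ at the positions $i_{k+1}$ and $i_{k+1}+1$. Applying \Cref{lem:NCsi} to the noncrossing permutation $\tau s_{i_k} s_{i_{k+1}}$ yields $i_{k+1}\in\{\tau(i_{k+1}),\tau(i_{k+1}+1)\}$, so a second application of \Cref{lem:NCsi} gives $\tau s_{i_{k+1}}\in \NC_n$. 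Hence the swapped word is again noncrossing reduced.

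For (b), I would induct on $\ell(\sigma)$. Given $\omega,\omega'\in\rednc{\sigma}$ with respective last letters $i$ and $j$, the case $i=j$ reduces immediately via the inductive hypothesis applied to $\rednc{\sigma s_i}$. For $i\ne j$, the crucial point is that $\desnc{\sigma}$ cannot contain two consecutive integers: by \Cref{prop:NCsidec}, having both $i$ and $i+1$ in $\desnc{\sigma}$ would require $i+1$ to be simultaneously the minimum of its cycle and not the minimum. Thus $|i-j|>1$, and a short check using \Cref{lem:NCsi} (together with the fact that $s_i$ fixes positions $j,j+1$) shows $\sigma s_is_j=\sigma s_js_i\in\NC_n$. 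Invoking \Cref{prop:NCRW}, choose any $\omega''\in\rednc{\sigma s_is_j}$. Then the two words $\omega''ji$ and $\omega''ij$ both lie in $\rednc{\sigma}$ and differ by one commutation. By the inductive hypothesis applied to $\omega|_{1..\ell-1}$ and $\omega''j$ in $\rednc{\sigma s_i}$, and to $\omega'|_{1..\ell-1}$ and $\omega''i$ in $\rednc{\sigma s_j}$, we obtain a chain of commutation equivalences $\omega \sim \omega''ji \sim \omega''ij \sim \omega'$.

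The main obstacle is locating the right inductive structure in step (b). The observation that consecutive integers cannot both be noncrossing descents---a direct but easily overlooked consequence of \Cref{prop:NCsidec}---is exactly what ensures the two ``last letter'' cases can always be bridged by a single commutation at a common noncrossing reduced word for $\sigma s_is_j$. Step (a) is a routine verification using \Cref{lem:NCsi}; step (b) is where the combinatorics of noncrossing descents does the real work.
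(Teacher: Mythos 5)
Your proof is correct and follows essentially the same route as the paper's: part (a) is the paper's closure-under-commutation argument (the paper bundles the two applications of \Cref{lem:NCsi} into its Fact 8.5(2), but the content is identical), and part (b) is the same induction on $\ell(\sigma)$, bridging through a common noncrossing reduced word for $\sigma s_i s_j$ (the paper calls this $\phi$). The observation that $\desnc{\sigma}$ contains no two consecutive integers is the paper's Fact 8.5(1), which you derive directly from \Cref{prop:NCsidec} exactly as the paper does.
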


In order to prove~\Cref{prop:noncrossing_commutation_class} we collect the following simple consequences of~\Cref{lem:NCsi}.

\begin{fact}
\label{fact:NCcomm}
For $\sigma \in \NC_{n}$ and $a \in \desnc{\sigma}$,
\begin{enumerate}
\item for any other $b \in\desnc{\sigma}$, we have $|b-a| > 1$,

\item if $|b - a| > 1$, then $b \in \desnc{\sigma s_{a}}$ if and only if $b \in \desnc{\sigma}$, and 

\item there exists an $\omega \in \rednc{\sigma}$ whose final letter is $a$.

\end{enumerate}
\end{fact}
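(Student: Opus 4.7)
The strategy is to reduce all three parts to the cycle-minimality characterization of noncrossing descents in \Cref{prop:NCsidec}: $i\in \desnc{\sigma}$ precisely when $i$ is the minimum of its cycle in $\sigma$ and $i+1$ is not. Under this lens each claim becomes a short bookkeeping exercise.

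Part (1) is immediate: if $b\in\{a-1,a+1\}$ were also a noncrossing descent, then one of $a$ or $a+1$ would be forced to be simultaneously a cycle minimum and a non-minimum, which is absurd. For part (3), I would induct on $\ell(\sigma)$ using \Cref{prop:NCRW}: since $\sigma s_a\in\NC_n$ has strictly smaller length than $\sigma$, it admits a noncrossing reduced word $(i_1,\ldots,i_{\ell-1})$, and appending $a\in \desnc{\sigma}$ yields the desired word $(i_1,\ldots,i_{\ell-1},a)\in \rednc{\sigma}$ ending in $a$; the required conditions hold because the initial segment is noncrossing reduced for $\sigma s_a$ and the final letter is by hypothesis a noncrossing descent of $\sigma$.

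The main work is (2), and I expect the subtlety there to be purely bookkeeping. I would revisit the case analysis from the proof of \Cref{lem:NCsi} specialized to $a\in \desnc{\sigma}$: because $\sigma s_a$ is noncrossing with smaller length, exactly one of two scenarios can occur. Either $\{a\}$ is a singleton cycle of $\sigma$ that merges with the cycle $B\ni a+1$ (whose minimum is strictly less than $a$ and remains the minimum of the merged cycle), or $a$ and $a+1$ share a cycle $A$ of $\sigma$ with $\min A=a$, which $s_a$ splits into $\{a\}$ and $A\setminus\{a\}$ with minima $a$ and $a+1$ respectively. In both scenarios, every cycle disjoint from $\{a,a+1\}$ persists unchanged, and the minimum of every modified cycle lies in $\{a,a+1,\min B\}$. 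Since $|b-a|>1$ excludes $b$ and $b+1$ from $\{a,a+1\}$, the cycle containing $b$ (and likewise $b+1$) is either literally unchanged or has its minimum preserved under the modification, so the minimality status of both $b$ and $b+1$ is identical in $\sigma$ and $\sigma s_a$. Invoking \Cref{prop:NCsidec} then yields the equivalence $b\in \desnc{\sigma}\iff b\in\desnc{\sigma s_a}$; the gap condition $|b-a|>1$ is exactly what prevents an accidental reassignment of $b$ or $b+1$ to a cycle whose minimum has shifted.
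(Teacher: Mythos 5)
Your proof is correct. The paper states this Fact without proof, describing the three items as ``simple consequences of Lemma~\ref{lem:NCsi}''; your argument supplies those details by routing everything through the cycle-minimality criterion of Proposition~\ref{prop:NCsidec}, which is itself a direct corollary of that lemma, so you are taking the intended path. Two tiny remarks: in part~(3) no induction on $\ell(\sigma)$ is actually needed --- applying Corollary~\ref{prop:NCRW} once to $\sigma s_a$ (which is shorter and noncrossing because $a\in\desnc{\sigma}$) and appending $a$ already gives the word, and your verification that the result lies in $\rednc{\sigma}$ is exactly right; and in part~(2) the key observation you make implicitly is worth stating outright: in the merge case the only element whose cycle-minimality flips is $a$ itself, and in the split case the only one is $a+1$, so the hypothesis $|b-a|>1$ ensures neither $b$ nor $b+1$ is one of these two elements, hence both of their minimality statuses are preserved and Proposition~\ref{prop:NCsidec} applied to $\sigma$ and to $\sigma s_a$ gives the claimed equivalence.
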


\begin{proof}[Proof of Proposition~\ref{prop:noncrossing_commutation_class}]
We first show that applying a commutation relation to a  noncrossing reduced word produces another noncrossing reduced word.  As this is a recursive condition on prefixes, it is enough to check commutations of the final two letters in any $\omega \in \rednc{\sigma}$.  
To this end write $\omega =  \omega_{1}\cdots \omega_{\ell-2} \omega_{\ell-1} \omega_{\ell}  \in \rednc{\sigma}$ and assume that $|\omega_{\ell} - \omega_{\ell-1}| > 1$.  
We aim to show that $\omega' =  \omega_{1}\cdots  \omega_{\ell-2}  \omega_{\ell}  \omega_{\ell-1}$ is a noncrossing reduced word.  
As $\omega_{1}\cdots  \omega_{\ell-2} $ is by assumption a noncrossing reduced word, we need only show that $\omega_{\ell-1} \in \desnc{\sigma}$ and $\omega_{\ell} \in \desnc{\sigma s_{\omega_{\ell-1}}}$.
To see this, note that $\omega_{\ell-1}$ is a noncrossing descent for $\sigma s_{\omega_{\ell}}$, so by Fact~\ref{fact:NCcomm} (2) it is also a noncrossing descent for $\sigma$.  
Thus $\sigma s_{\omega_{\ell-1}}$ is a noncrossing partition, so by Fact~\ref{fact:NCcomm} (2) again $\omega_{\ell}$ is a noncrossing descent for $\sigma s_{\omega_{\ell-1}}$.  

We now prove that any two noncrossing reduced words for $\sigma$ are related by commutation moves.  
We prove this by induction on $\ell(\sigma)$, with base case $\ell(\sigma) \le 1$ holding vacuously, so take $\omega, \psi \in \rednc{\sigma}$ for $\sigma$ having length at least two.  
If the final letters of $\omega$ and $\psi$ are equal then our inductive hypothesis completes the proof. 
So assume that $\omega$ and $\psi$ have distinct final letters $u$ and $v$.  
We then have that $u, v \in \desnc{\sigma}$ and by Fact~\ref{fact:NCcomm} (a), $|u-v| > 1$.  
By Fact~\ref{fact:NCcomm} (b), $\tau = \sigma s_{u} s_{v} = \sigma s_{v} s_{u} \in \NC_{n}$, so we can take $\phi \in \rednc{\tau}$ so that $\phi v \in \rednc{\sigma s_{u}}$, $\phi u \in \rednc{\sigma s_{v}}$, and both $\phi uv$ and $\phi vu$ belong to $\rednc{\sigma}$.  

Now write $\omega = \omega'u$ and $\psi = \psi'v$, so that $\omega' \in \rednc{\sigma s_{u}}$ and $\psi' \in  \rednc{\sigma s_{v}}$.  As $\ell(\sigma s_{u}) = \ell(\sigma s_{v}) = \ell(\sigma) - 1$, our inductive hypothesis makes it possible to reach $\phi v$ from $\omega'$ using commutation moves, and likewise $\phi u$ from $\psi'$.  Thus, we can go from $\omega = \omega' u$ to $\phi v u$ in the same way, then apply the commutation of $u$ and $v$ to get $\phi u v$, and finally go to $\psi' v = \psi$.
\end{proof}

Using~\Cref{prop:noncrossing_commutation_class} we can apply the theory of heaps to understand noncrossing reduced words.  
What follows is a straightforward application of the theory developed in~\cite{Stem96}, following~\cite{Vien86heaps}.  

\begin{defn}
\label{defn:heaporder}
For $\omega = \omega_{1} \cdots \omega_{\ell} \in \red{\sigma}$, the \emph{heap order} is the partial order $\preceq$ on $[\ell]$ generated by taking $i \prec j$ whenever $i < j$ and $|\omega_{i} - \omega_{j}| \le 1$.
The associated \emph{heap} $\heap{\omega}$ of $\omega$ is the labeled poset $([\ell],\preceq)$ together with the labeling $i\mapsto \omega_i$.
\end{defn}

We visualize the heap $\heap{\omega}$ by embedding the Hasse diagram of $\preceq$ into the first quadrant so that node $i$ sits in the line $x = \omega_{i}$, with the largest elements under $\preceq$ appearing at the bottom; see Figure~\ref{fig:NC_heap}.

\begin{prop}[{\cite[Proposition 2.2]{Stem96}}]
For $\omega \in \red{\sigma}$, the heap $\heap{\omega}$ is uniquely determined by the commutation class $\mc{C}(\omega)$, and 
\[
\mc{C}(\omega) = \{ \omega_{f(1)} \omega_{f(2)} \cdots \omega_{f(\ell)} \suchthat \text{$f: [\ell] \to [\ell]$ is a linear extension of $\preceq$} \}.
\]
\end{prop}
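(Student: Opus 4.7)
The plan is to prove two assertions in tandem: (i) $\heap{\omega}$, viewed as an abstract labeled poset, is invariant under commutation moves and therefore depends only on $\mc{C}(\omega)$; and (ii) the words $\omega_{f(1)} \cdots \omega_{f(\ell)}$, as $f$ ranges over linear extensions of $\preceq$, coincide with $\mc{C}(\omega)$.

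For (i), it suffices to handle a single commutation move. Suppose $\omega' = \omega_1 \cdots \omega_{i-1}\, \omega_{i+1}\, \omega_i\, \omega_{i+2} \cdots \omega_\ell$ with $|\omega_i - \omega_{i+1}| \ge 2$. I would show that the transposition $\tau = (i,i+1) \colon [\ell] \to [\ell]$ is a label-preserving isomorphism $\heap{\omega} \to \heap{\omega'}$. Label compatibility $\omega'_{\tau(j)} = \omega_j$ is immediate. For the order, the key observation is that the positions $i$ and $i+1$ are already incomparable in $\preceq_\omega$: the only candidate direct generating relation fails because $|\omega_i - \omega_{i+1}| \ge 2$, and any longer chain between them would have to pass through a position strictly between $i$ and $i+1$, of which there are none. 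Hence, for any third position $a$, the generating relations involving $a$ and one of $\{i,i+1\}$ depend only on labels, and these labels are permuted compatibly by $\tau$; the remaining pairs avoid $\{i,i+1\}$ entirely and are fixed. Verifying the claim on generators extends it to the full partial orders.

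For (ii), note first that $\omega$ itself corresponds to $f = \id$, so it belongs to the right-hand set. A classical fact about finite posets states that any two linear extensions are connected by a sequence of swaps of adjacent incomparable elements. Applied to the word $\omega_{f(1)} \cdots \omega_{f(\ell)}$, such a swap exchanges two adjacent letters $\omega_{f(k)}, \omega_{f(k+1)}$, and by the definition of $\preceq$, the incomparability of $f(k)$ and $f(k+1)$ is equivalent to $|\omega_{f(k)} - \omega_{f(k+1)}| \ge 2$, i.e., to the availability of a commutation. Thus the right-hand set is closed under commutations and contains $\omega$, hence is contained in $\mc{C}(\omega)$. Conversely, any commutation move on a word $\omega_{f(1)} \cdots \omega_{f(\ell)}$ swaps two adjacent incomparable positions (by the same equivalence), and therefore yields another word of the same form; part (i) ensures that the heap and its incomparability relation are well-defined throughout this process.

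The main obstacle is the careful verification of the order isomorphism in (i), specifically tracking how the transposition $\tau$ interacts with the generating relations of the heap order at the boundary positions $i$ and $i+1$. Once this is in hand, the rest of the argument reduces to standard facts about linear extensions of finite posets together with the Tits--Matsumoto theorem, so no substantially new ideas are required beyond those already present in the proof of Proposition~\ref{prop:noncrossing_commutation_class}.
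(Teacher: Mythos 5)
The paper does not prove this statement---it is quoted verbatim from Stembridge~\cite[Proposition 2.2]{Stem96} and used as a black box---so there is no in-paper proof to compare against. Your proposed argument is correct and is essentially the standard one found in Stembridge's paper: show invariance of $\heap{\omega}$ under single commutation moves by checking that the transposition $\tau=(i,i+1)$ maps generating relations to generating relations (using that $i$ and $i+1$ are incomparable because there is no position strictly between them, and that labels away from $\{i,i+1\}$ are untouched while labels at $\{i,i+1\}$ are carried along with $\tau$), and then identify the commutation class with linear-extension words by observing that two positions adjacent in a linear extension are comparable if and only if the order relation between them is a cover, which for this poset forces a generating relation (hence $|\omega_p-\omega_q|\le 1$), and combine this with the standard fact that any two linear extensions of a finite poset are joined by adjacent transpositions of incomparable pairs. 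One small point worth making explicit in a written-up version: the step ``comparable $\Leftrightarrow$ $|\omega_{f(k)}-\omega_{f(k+1)}|\le 1$'' relies on the observation that a cover relation in the transitive closure of a generating set must itself be a generating relation, together with the fact that adjacency in a linear extension forces any comparability to be a cover; you gesture at this but it deserves a sentence.
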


\begin{figure}
\begin{center}
\begin{tikzpicture}[scale = 0.75, baseline = 0]
\draw (0, 0) -- (8, 0);
\foreach \x in {1, 2, 3, 4, 5, 6, 7, 8}{
	\draw[thin] (\x + 0.05 - 0.5, 0.05) -- (\x - 0.05 - 0.5,- 0.05);
	\draw[thin] (\x - 0.05 - 0.5, 0.05) -- (\x + 0.05 - 0.5, - 0.05);
	\draw[thin, gray, dashed] (\x-0.5, 0) -- (\x-0.5, 0.5*13.5);
	}
\foreach \x in {1, 2, 3, 4, 5, 6, 7}{
	\node[above] at (\x, 0.5*13.5) {$\scriptstyle \x$};
	}
\foreach \x/\y/\c in {6/1/black, 7/2/black, 6/3/white, 5/4/black, 
4/5/white, 3/6/black, 4/7/white, 5/8/white, 6/9/white, 7/10/white,
4/3/black, 2/3/black, 1/4/black, 2/5/white}{
	\draw[fill = \c] (\x, 0.6*\y) circle (2pt) node[inner sep = 2pt] (\x\y) {};
	}
\foreach \y/\p in {61/72, 72/63, 63/54, 54/45, 45/36, 36/47, 47/58, 58/69, 69/710, 43/54, 23/14, 14/25, 25/36}{
	\draw[<-] (\y) -- (\p);
	}
\end{tikzpicture}
\qquad\qquad
\begin{tikzpicture}[scale = 0.75, baseline = 0]
\draw (0, 0) -- (8.1, 0);
\foreach \x in {1, 2, 3, 4, 5, 6, 7, 8}{
	\draw (\x + 0.05 - 0.5, 0.05) -- (\x - 0.05 - 0.5,- 0.05);
	\draw (\x - 0.05 - 0.5, 0.05) -- (\x + 0.05 - 0.5, - 0.05);
	\node[inner sep = 0, outer sep = 0pt] (leaf\x) at (\x - 0.5, 0) {};
	\node[below] at (leaf\x) {$\scriptstyle \x$};
	}
\foreach \x/\y/\c in {6/1/black, 7/2/black, 5/4/black, 3/6/black, 
4/3/black, 2/3/black, 1/4/black}{
	\draw[fill = \c] (\x, 0.6*\y) circle (2pt) node[inner sep = 0, outer sep = 0] (\x\y) {};
	}
\foreach \y/\p in {61/72, 72/54, 54/36, 43/54, 23/14, 14/36}{
	\draw[-] (\y.center) -- (\p.center);
	}
\draw (14.center) -- (leaf1.center);
\draw (23.center) -- (leaf2.center);
\draw (23.center) -- (leaf3.center);
\draw (43.center) -- (leaf4.center);
\draw (43.center) -- (leaf5.center);
\draw (61.center) -- (leaf6.center);
\draw (61.center) -- (leaf7.center);
\draw (72.center) -- (leaf8.center);
\end{tikzpicture}
\end{center}
\caption{The heap associated to $\rednc{\sigma}$ alongside the indexed forest $\ForToNC^{-1}(\sigma)$ for $\sigma = 82154763$.
The final nodes in each column of the heap (colored black) correspond exactly to the internal nodes of $\ForToNC^{-1}(\sigma)$.}
\label{fig:NC_heap}
\end{figure}
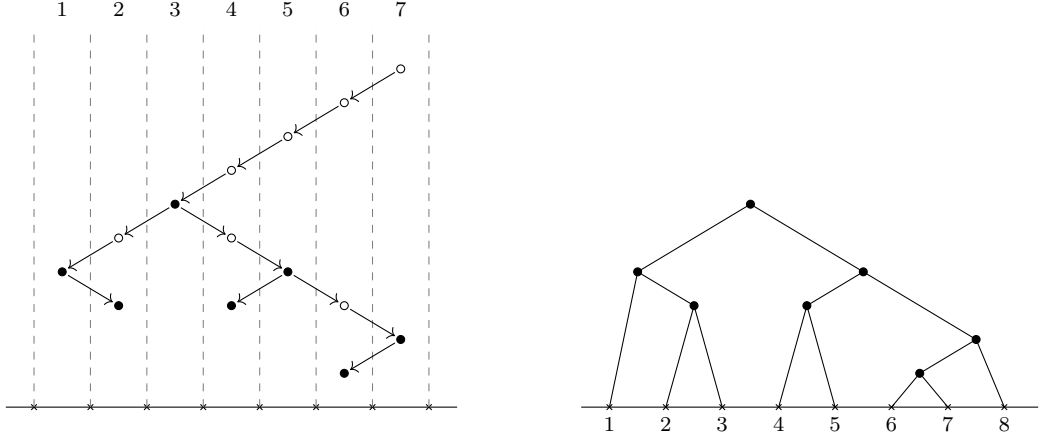

\begin{eg}
\label{eg:heaps}
Taking $\sigma = 82154763 \in \NC_{8}$ with $(7, 6, 5, 4, 3, 2, 1, 4, 5, 4, 6, 7, 6, 2) \in \rednc{\sigma}$ from Example~\ref{eg:noncrossing_reduced_words}, the heap order is shown in Figure~\ref{fig:NC_heap} alongside the indexed forest which maps to $\sigma$ under $\ForToNC$. 
The cardinality of $\rednc{\sigma}$ is the number of linear extensions of this poset, which we calculate to be 336.  
\end{eg}

The following proposition shows how the heap associated to $\sigma\in \NC_n$ naturally encodes the forest $\ForToNC^{-1}(\sigma)$.

\begin{prop}
\label{prop:heapclaims}
For $F \in \suppfor{n}$, write $\sigma = \ForToNC(F)$ and $\preceq$ for the heap order determined by the commutation class $\rednc{\sigma}$.  
\begin{enumerate}
\item The Hasse diagram of $\preceq$ is a forest $F$.

\item Removing the leaves of $F$ produces the Hasse diagram for $\preceq$ restricted to the final occurrence of each letter.

\end{enumerate}
\end{prop}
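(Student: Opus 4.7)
The plan is to prove (1) and (2) together by induction on $\ell(\sigma)$. The base case $\ell(\sigma) = 0$ gives $\sigma = \idem$ and $F = \emptyset$, for which the heap is empty and both claims hold vacuously. For the inductive step, pick a noncrossing descent $j \in \desnc\sigma$ via \Cref{prop:NCsidec} and use Fact~\ref{fact:NCcomm}(3) together with \Cref{prop:noncrossing_commutation_class} to produce $\omega = \omega' j \in \rednc\sigma$ with $\omega' \in \rednc{\sigma s_j}$. The inductive hypothesis applied to $\sigma s_j$ gives that $\heap{\omega'}$ is a forest and that its Hasse diagram restricted to the final occurrences of each letter matches $F' = \ForToNC^{-1}(\sigma s_j)$ with its leaves removed.

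Appending $j$ adjoins a single new $\preceq$-maximal element $p$ (labeled $j$) to $\heap{\omega'}$, whose potential upper covers are the final occurrences in $\omega'$ of letters in $\{j-1, j, j+1\}$. The critical structural claim is that \emph{at most one} of these is an upper cover of $p$: if zero, then $p$ becomes an isolated new vertex (corresponding to a new tree of $F$), and if one, then a single new Hasse edge is added. Either way, the forest property in (1) is preserved. For (2), the new element $p$ becomes the final occurrence of letter $j$ in $\omega$, and the transition from $F'$ to $F$ on the noncrossing-partition side reverses one of the two sub-cases of \Cref{lem:NCsi} --- either splitting off a singleton $\{j\}$ from a larger block (reversing case 1) or merging two blocks containing $j$ and $j+1$ into a single block (reversing case 2). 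Matching this block-level move with the addition of the new Hasse edge from $p$ propagates the sub-poset correspondence of (2) from $F'$ to $F$.

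The main obstacle is the at-most-one-upper-cover claim, which requires a careful argument leveraging the noncrossing descent hypothesis: by \Cref{prop:NCsidec}, $j$ is minimal in its cycle of $\sigma$ but $j+1$ is not, and this constrains the distribution of letters $j-1, j, j+1$ in reduced words for $\sigma s_j$ so as to forbid both the final $j-1$ and the final $j+1$ from simultaneously being upper covers of $p$. A secondary subtlety is precisely tracking the block-split or block-merge operation $F' \to F$ on nested forests via the $\operatorname{LC}$ construction, and verifying that the resulting change in the Hasse diagram of final occurrences matches the change from $F'$ with its leaves removed to $F$ with its leaves removed.
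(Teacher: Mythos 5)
Your inductive framework could in principle be made to work, but as written it has two genuine gaps, and it's worth noting the paper proves both parts directly (non-inductively) in a way that avoids the harder bookkeeping you've set yourself up for.

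For (1): your ``at most one new cover'' claim is correct, but it is exactly the nontrivial content of the statement, and you have not actually proved it. Gesturing at \Cref{prop:NCsidec} (``$j$ minimal in its cycle, $j+1$ not'') is not enough by itself: the fact you actually need is \Cref{lem:canoncial_word_facts}\ref{8.8(1)}, which says that for a noncrossing descent $j$, every pair of letters $j-1$ and $j+1$ in a noncrossing reduced word must be separated by a $j$. That is precisely what forbids the final $j-1$ and the final $j+1$ from both being covered by the new maximal element, and the paper's proof of \ref{8.8(1)} requires the full $\operatorname{vert}$-code analysis from the proof of that lemma, not just \Cref{prop:NCsidec}. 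Once you have \ref{8.8(1)} in hand, the paper shows you don't even need induction: given $a<b<c$ with $\omega_a=\omega_c-1$ and $\omega_b=\omega_c+1$, an occurrence $c'$ of $\omega_c$ between them gives $a\prec c'\prec c$, so $c$ cannot cover both $a$ and $b$. That one-paragraph argument establishes ``each element covers at most one element'' globally, without peeling off a descent at a time. If you want to keep your inductive formulation you should explicitly invoke \ref{8.8(1)} for the prefix $\omega'$; as written the critical step is simply asserted.

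For (2): what you have is a plan, not a proof. The transition $F'=\ForToNC^{-1}(\sigma s_j)\to F=\ForToNC^{-1}(\sigma)$ is \emph{not} the trimming operation $F\mapsto F/i$, and how a block split or merge under $\operatorname{LC}^{-1}$ changes the internal-node tree, and simultaneously how the ``final occurrence of each letter'' subset and its induced Hasse diagram change when a letter $j$ ceases to be final, is exactly the hard part; you flag it as a ``secondary subtlety'' but do not carry it out. The paper sidesteps this entirely by working with the single canonical reduced word $\ncrmin{\sigma}$ (using \Cref{prop:noncrossing_commutation_class} to reduce to it), reading off the final occurrences from the $\operatorname{vert}$ code, and verifying edge-by-edge that the internal edges of $F$ match covers among final occurrences. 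If you pursue induction you will essentially be forced to reproduce that $\operatorname{vert}$-code analysis anyway to control the base case and the combinatorics of ``final occurrences,'' so the direct route is shorter.
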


We defer the proof of~\Cref{prop:heapclaims} to Section~\ref{sec:NoncrossingCombi}, as it requires an in-depth study of a particular family of noncrossing reduced words that we have not yet introduced.  

\section{Evaluations at noncrossing partitions}
\label{sec:evaluations}

There is a well-known combinatorial formula for the image of a double Schubert polynomial under $\ev_{w}$, which we restate in Theorem~\ref{th:AJSB} below.  
This formula was independently discovered by Andersen--Jantzen--Soergel \cite{AJS} and Billey \cite{Bil99}, so we refer to it as the \emph{AJS--Billey formula}.  
In what follows, we remind the reader that the notations $\mathcal{R}(\omega; w)$ and $\mathcal{R}(\omega; F)$ were defined in \Cref{sec:subword_models}.

\begin{thm}[{\cite{AJS,Bil99}}]
\label{th:AJSB}
Let $v, \sigma \in S_{n}$ and fix a reduced word  $\omega = (i_{1}, \ldots, i_{\ell}) \in \red{\sigma}$.
Then letting $\sigma^{(p)}=s_{i_1}\cdots s_{i_p}$ we have
\[
\ev_{\sigma}(\schub{v})=\sum_{\pi \in \mathcal{R}(\omega; v)} \weight_\pi^{\circ}
\qquad\text{where}\qquad
\weight_{\pi}^{\circ} =\prod_{i_p\in \pi}(t_{\sigma^{(p-1)}(i_{p}+1)}-t_{\sigma^{(p-1)}(i_{p})}).
\]
\end{thm}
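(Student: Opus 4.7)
The plan is to prove the formula by induction on $\ell = \ell(\sigma)$, by matching both sides against a shared recursion indexed by the last letter $i_\ell$ of the reduced word $\omega$.

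The base case $\ell = 0$ is immediate: then $\sigma = \idem$ and $\omega$ is the empty word, so $\ev_\idem \schub{v} = \schub{v}(\tl;\tl) = \delta_{v,\idem}$ by the normalization condition on double Schuberts, while $\mathcal{R}(\emptyset; v)$ is empty unless $v = \idem$, in which case it contributes a single empty subword with $\weight^\circ$ equal to the empty product $1$.

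For the inductive step, set $\sigma' = \sigma s_{i_\ell} = \sigma^{(\ell-1)}$ with $\omega' = (i_1,\dots,i_{\ell-1}) \in \red{\sigma'}$. Applying the identity $f - s_{i_\ell} \cdot f = (x_{i_\ell} - x_{i_\ell + 1})\partial_{i_\ell} f$ to $f = \schub{v}$ and specializing $\xl = (t_{\sigma'(1)}, t_{\sigma'(2)}, \ldots)$ — which sends $s_{i_\ell} \cdot \xl$ to $(t_{\sigma(1)}, t_{\sigma(2)}, \ldots)$ — yields
\[
\ev_{\sigma'}\schub{v} - \ev_\sigma\schub{v} = (t_{\sigma'(i_\ell)} - t_{\sigma'(i_\ell+1)}) \, \ev_{\sigma'}(\partial_{i_\ell}\schub{v}).
\]
Using the divided difference recursion $\partial_{i_\ell}\schub{v} = \delta_{\ell(vs_{i_\ell})<\ell(v)} \schub{vs_{i_\ell}}$, this rearranges to
\[
\ev_\sigma\schub{v} = \ev_{\sigma'}\schub{v} + \delta_{i_\ell \in \des{v}}(t_{\sigma'(i_\ell+1)} - t_{\sigma'(i_\ell)}) \, \ev_{\sigma'}\schub{vs_{i_\ell}}.
\]

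On the subword side, I would split $\mathcal{R}(\omega; v)$ by whether the last position of $\omega$ is used. Subwords avoiding this position form exactly $\mathcal{R}(\omega'; v)$ with unchanged weights. A subword $\pi = \pi' i_\ell$ that does use the last position must satisfy $\pi' \in \red{vs_{i_\ell}}$ (in particular $i_\ell \in \des{v}$), so $\pi' \in \mathcal{R}(\omega'; vs_{i_\ell})$; moreover its weight factors as $\weight_{\pi'}^\circ \cdot (t_{\sigma^{(\ell-1)}(i_\ell+1)} - t_{\sigma^{(\ell-1)}(i_\ell)})$, with the partial permutation $\sigma^{(\ell-1)}$ appearing in the last factor being precisely $\sigma'$. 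The two recursions thus coincide, and the inductive hypothesis applied to $(\sigma', \omega')$ closes the argument.

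The proof is essentially bookkeeping — no deep step is required. The only mild subtlety will be tracking signs and indices carefully: the factor $(t_{\sigma'(i_\ell+1)} - t_{\sigma'(i_\ell)})$ arising from the substitution must match the weight attached to position $\ell$ in the subword formula, which by the convention in the theorem uses $\sigma^{(\ell-1)} = \sigma'$ rather than $\sigma^{(\ell)} = \sigma$. Once this alignment is observed, the induction runs without incident.
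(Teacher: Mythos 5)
Your proof is correct, but note that the paper does not actually supply a proof of this theorem: it cites it to Andersen--Jantzen--Soergel and Billey and immediately points the reader to surveys for details. So there is no in-paper argument to compare against directly. That said, your strategy --- inducting on $\ell(\sigma)$ by peeling off the last letter $i_\ell$ of the reduced word, applying $f - s_{i_\ell}\cdot f = (x_{i_\ell}-x_{i_\ell+1})\partial_{i_\ell}f$ at the specialization $x_j = t_{\sigma^{(\ell-1)}(j)}$, and matching this against the split of $\mathcal{R}(\omega;v)$ by whether the final position is used --- is exactly the recursion scheme the authors \emph{do} carry out in Section 8.2 for the forest analogue (\Cref{thm:AJS_Billey_for_Forests}), where the key identity is their display~\eqref{eq:two_cases_recursion} with $\eope{a}$ in place of $\partial_{i_\ell}$. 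It is also the same substitution mechanism (via the observation that replacing $\xl$ by $\tl_{\sigma'}$ turns $s_{i_\ell}\cdot f$ into $\ev_\sigma f$ when $\sigma=\sigma' s_{i_\ell}$) that underlies the proof of \Cref{thm:subwordformula_for_schubs}. The indices and signs all check out: $\sigma^{(\ell-1)}=\sigma'$, the substituted factor $(t_{\sigma'(i_\ell+1)}-t_{\sigma'(i_\ell)})$ is precisely the weight the theorem attaches to position $\ell$, and the inductive hypothesis applies because the prefix partial products $\sigma^{(p)}$ for $p<\ell$ agree for $\omega$ and $\omega'$.
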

We refer the reader to \cite{TymSur} or \cite[Chapter 18]{AF24} for a geometric perspective on this result; see also \cite{GK21} for a generalization.

\begin{eg}
    For the permutations $v=312$ and $\sigma=321$, we have $\schub{312}(\xl_3;\tl_3)=(x_1-t_1)(x_1-t_2)$ hence $\ev_{\sigma}(\schub{v})=(t_3-t_2)(t_3-t_1)$. For $\omega =(2,1,2)=(i_1,i_2,i_3)$ we must have $\pi=(i_1,i_2)$ so we alternately verify by Theorem~\ref{th:AJSB} that $\ev_{\sigma}(\schub{v})=(t_3-t_2)(t_{s_2\cdot 2}-t_{s_2\cdot 1})=(t_3-t_2)(t_3-t_1)$.
\end{eg}

In this section we give a similar, AJS--Billey type formula for $\ev_{\sigma}$ applied to each double forest polynomial.

\begin{thm}
\label{thm:AJS_Billey_for_Forests}
Let $F \in \indexedforests_n$, $\sigma \in \NC_n$, and fix $\omega = (i_{1}, \ldots, i_{\ell})\in \rednc{\sigma}$.
Then letting $\sigma^{(p)}=s_{i_1}\cdots s_{i_p}$ we have
\[
\ev_{\sigma}(\forestpoly{F})=\sum_{\pi \in \mathcal{R}(\omega;F)} \weight_\pi^{\circ}
\qquad\text{where}\qquad
\weight_{\pi}^{\circ}=\prod_{i_p\in \pi}(t_{\sigma^{(p-1)}(i_{p}+1)}-t_{\sigma^{(p-1)}(i_{p})}).
\]
\end{thm}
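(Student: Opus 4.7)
The plan is to induct on $\ell(\sigma)$, with the base case $\sigma=\idem$ immediate from the normalization $\forestpoly{F}(\tl;\tl)=\delta_{F,\emptyset}$ together with the fact that $\mc{R}(\emptyset;F)$ is empty unless $F=\emptyset$. For the inductive step, fix $\omega=(i_1,\ldots,i_\ell)\in\rednc{\sigma}$, set $i=i_\ell\in\desnc{\sigma}$, $\sigma'=\sigma s_i$, and $\omega'=(i_1,\ldots,i_{\ell-1})\in\rednc{\sigma'}$. The classical identity $f = s_i f + (x_i-x_{i+1})\partial_i f$ combined with the $s_i$-invariance of $\partial_i f$ yields
\[
\ev_\sigma \forestpoly{F} = \ev_{\sigma'}\forestpoly{F} + (t_{\sigma'(i+1)}-t_{\sigma'(i)})\,\ev_{\sigma'}(\partial_i \forestpoly{F}),
\]
and the inductive hypothesis applied to $(\sigma',\omega')$ handles the first summand, producing precisely the contributions from subwords of $\omega$ that omit the last letter.

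For the second summand, \Cref{lem:NCsi} forces $\sigma(i)=i$ or $\sigma(i+1)=i$, so $\sigma'$ takes the value $i$ at position $i+1$ or $i$ respectively. Combining the $s_i$-invariance of $\partial_i\forestpoly{F}$ with the identity $\rope{i}^+\partial_i = \eope{i}$ shows in both cases that
\[
\ev_{\sigma'}(\partial_i \forestpoly{F}) = \ev_\tau(\eope{i}\forestpoly{F}),
\]
where $\tau$ is the length-$(n-1)$ sequence obtained from $\sigma'$ by deleting the (unique) entry equal to $i$. When $i\notin\qdes{F}$ the right side is $0$ while \Cref{le:insertsylvester} simultaneously guarantees that no subword of $\omega$ ending at position $\ell$ lies in $\sylv{F}$, so the desired equality holds trivially.

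When $i\in\qdes{F}$ the identity $\eope{i}\forestpoly{F} = \forestpoly{F/i}(\xl;\wh{\tl}_i)$ gives
\[
\ev_\tau(\eope{i}\forestpoly{F}) = \ev_{\tilde b}(\forestpoly{F/i})\big|_{t_k \mapsto t_{\psi(k)}},
\]
where $\psi:[n-1]\to[n]\setminus\{i\}$ is the order-preserving bijection and $\tilde b\in S_{n-1}$ is defined by $\tilde b(j) = \psi^{-1}(\tau(j))$. A direct verification shows $\tilde b\in\NC_{n-1}$ with $\ell(\tilde b) < \ell(\sigma)$: it arises from $\sigma'$ by deleting $i$ either as a fixed point (if $\sigma(i+1)=i$) or as the second-smallest element of the cycle containing $\{i,i+1\}$ (if $\sigma(i)=i$). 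Invoking the inductive hypothesis on $\forestpoly{F/i}$ at $\tilde b$ and composing with the bijection from \Cref{obs:SylvesterTrimming} between $\sylv{F}$-subwords ending in $i$ and $\sylv{F/i}$-subwords via $\psi^{-1}$ recovers the subwords $\pi\in\mc{R}(\omega;F)$ that include the last letter, weighted exactly by $(t_{\sigma'(i+1)}-t_{\sigma'(i)})\,\weight_{\pi\setminus\{i_\ell\}}^\circ$.

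The main obstacle is the combinatorial compatibility lemma needed to invoke the inductive hypothesis in this last step: letting $\omega'_{\neq i}$ denote the subword of $\omega'$ obtained by removing every occurrence of the letter $i$, one must show that $\psi^{-1}(\omega'_{\neq i}) \in \rednc{\tilde b}$ and that its partial products agree with those of $\omega'$ under the relabeling, so that the two weight systems match after the substitution $t_k \mapsto t_{\psi(k)}$. Proving this reduces to tracking the evolving position $c(p) = (\sigma'^{(p)})^{-1}(i)$ via the update rule $c(p) = s_{i_p}(c(p-1))$ with $c(0)=i$, and verifying that the noncrossing reduced word constraint on $\omega'$ forces $c(p-1)\notin\{i_p,i_p+1\}$ whenever $i_p\neq i$ and $c(p-1)<i_p$ iff $i_p>i$, which is precisely the condition under which the relabeled letter $\tilde i_p = \psi^{-1}(i_p)$ correctly encodes the action of $s_{i_p}$ on the relabeled partial product.
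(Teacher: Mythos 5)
Your overall strategy matches the paper's: induct on $\ell(\sigma)$, peel off the last letter $i=i_\ell$, rewrite $\ev_\sigma\forestpoly{F}$ via the $\eope{i}$ recursion as $\ev_{\sigma s_i}\forestpoly{F}$ plus a term involving $\forestpoly{F/i}$ and a depleted noncrossing permutation $\tilde b$, and then match the two summands with the subword sum over $\pi$ omitting or containing the last letter. Your derivation of the recursion from $f = s_i f + (x_i - x_{i+1})\partial_i f$ and the $s_i$-invariance of $\partial_i f$ is a valid alternative to the paper's direct use of the $\eope{i}$ axiom, and your $\tilde b$ agrees with the paper's $\sigma_{\wh{i}}$. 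The bijection between $\sylv{F}$-subwords of $\omega$ containing the last letter and $\sylv{F/i}$-subwords of $\omega(\wh{i})$ via \Cref{obs:SylvesterTrimming} is the same bijection the paper uses.

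Where you diverge is in the key technical lemma. Your reformulation via $c(p) = (\sigma^{(p)})^{-1}(i)$ is genuinely elegant: if one shows $c(p)\in\{i,i+1\}$ for all $p<\ell$ (equivalently your claim that $c(p-1)\notin\{i_p,i_p+1\}$ whenever $i_p\ne i$), then the conjugation identity $g^{-1}\sigma^{(p-1)}g = (\sigma_{\wh{i}})^{(q-1)}$ (or $g^{-1}\sigma^{(p-1)}s_i g = (\sigma_{\wh{i}})^{(q-1)}$, depending on which of $\{i,i+1\}$ the value $c(p-1)$ takes) follows by a clean induction on $p$ using $gs_{j_q} = s_{i_p}g$ and $gs_{j_q}s_i = s_{i_p}s_i g$, and this simultaneously gives $\omega(\wh{i})\in\rednc{\sigma_{\wh{i}}}$ and the weight-factor identity. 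This is equivalent to, and cleaner than, \Cref{lem:outstandingclaim}.

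However, the claim that $c(p)$ stays in $\{i,i+1\}$ is itself the entire difficulty, and you assert it without proof. It is not obviously implied by any single ingredient you cite; it is a global constraint on the noncrossing reduced word that requires $i-1$ never to cross the wire of value $i$ when $c=i$, and $i+1$ never to when $c=i+1$. The paper's version of this statement (\Cref{lem:outstandingclaim}) requires roughly a page of careful word-rewriting, using the inequalities on $\operatorname{vert}$ from \eqref{eqn:helpfulineqs} and \Cref{lem:canoncial_word_facts}\ref{8.8(1)}, together with \Cref{prop:Removenoncrossing}, which itself has an argument via commutation classes starting from $\ncrmin{\sigma}$. You correctly flag this as ``the main obstacle'', but the sketch stops exactly where the real work begins: verifying that the noncrossing descent condition on every prefix, combined with the fact that $i$ is a noncrossing descent of $\sigma$, rules out the bad crossings. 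Until that claim about $c(p)$ is proved, the proposal has a genuine gap at its heart.
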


The proof of Theorem~\ref{thm:AJS_Billey_for_Forests} is given in Section~\ref{subsec:ajs_billey_forests}.

\begin{eg}
    Consider $\sigma=4321 \in \NC_4$. Pick $\omega=(3,2,1,2,3,2)$, the solitary element in $\rednc{\sigma}$.
    Now suppose $F$ has code $(2,0,1,0,\dots)$.
    From Table~\ref{table:forestpolys} we know that
    \[
        \forestpoly{F}(\xl;\tl)= (x_1-t_1)(x_1-t_2)(x_2+x_3-t_1-t_2)
    \]
    In particular $\ev_\sigma\forestpoly{F}=(t_4-t_1)(t_4-t_2)(t_3-t_1)$. Let us check that we obtain the same result by invoking Theorem~\ref{thm:AJS_Billey_for_Forests}.
    We have $\sylv{F}=\{(2,1,3),(2,3,1)\}$. 
    Only $(2,1,3)$ appears as the subword $\pi=(i_2,i_3,i_5)\subset  \omega$.
    We thus compute 
    \[
        \ev_{\sigma}\forestpoly{F}=\weight_{\pi}^{\circ}=(t_{s_3\cdot 3}-t_{s_3\cdot 2})(t_{s_3s_2\cdot 2}-t_{s_3s_2\cdot 1})(t_{s_3s_2s_1s_2\cdot 4}-t_{s_3s_2s_1s_2\cdot 3})=(t_4-t_2)(t_4-t_1)(t_3-t_1).
    \]
\end{eg}

The rest of the section is laid out as follows: Section~\ref{sec:NoncrossingCombi} states some combinatorial results about noncrossing reduced words, after which Section~\ref{subsec:ajs_billey_forests} proves the main result.  
We conclude in Section~\ref{sec:AJS_consequences} by proving some consequences of Theorem~\ref{thm:AJS_Billey_for_Forests} which have parallels for Schubert polynomials, including Graham-positivity and upper-triangularity of evaluations.

\subsection{Bruhat combinatorics of noncrossing partitions}
\label{sec:NoncrossingCombi}

Recall that the (strong) Bruhat order on $S_{n}$ is given by $\sigma \le \tau$ if and only if some (equivalently any) reduced word $\omega \in \red{\sigma}$ occurs as a subword in some $\psi \in \red{\tau}$.  
In Section~\ref{subsec:relating_nc_and_forests} we introduced the notion of a noncrossing reduced word and established some basic properties of this definition.  
However, we have not yet considered (strong) Bruhat order in relation to noncrossing partitions, which is a key aspect of reduced word combinatorics.     
In this section we extend the combinatorics of noncrossing reduced words to the Bruhat order and develop what we need for the remainder of the paper.

We begin by reviewing the work of Gobet--Williams \cite{GobetWilliams}, who use the subword combinatorics of our long word $\longword$ to determine the Bruhat order on noncrossing partitions.  
A particularly nice feature of this result is that it (inadvertently) identifies a canonical noncrossing reduced word for each noncrossing partition.
Given $\sigma \in \NC_{n}$ and $i \in [n]$, let
\[
\operatorname{vert}_{i}(\sigma) = 
\begin{cases}
2 |\{ \text{cycles in $\sigma$ nesting $i$}\}| & \text{if $i$ is maximal in its cycle, and} \\
2 |\{ \text{cycles in $\sigma$ nesting $i$}\}| + 1 & \text{otherwise.}
\end{cases}
\]
For example, if $\sigma = 82154763$ as in Example~\ref{eg:noncrossing}, then $\operatorname{vert}(\sigma) = (1\, 2\, 1\, 3\, 2\, 3\, 2\, 0)$.

It will always be the case that $\operatorname{vert}_i(\sigma)\le 2i-1$. Since the total number of $i$'s (barred or unbarred) in $\longword$ is $2i-1$, the following are well-defined.
\begin{defn}
We define  $\ncrmin{\sigma}\subset \longword$ as the subword obtained by selecting, for each $i$ such that $\operatorname{vert}_i(\sigma)\ne 0$, the first $\operatorname{vert}_i(\sigma)$ instances of $i$.  
Similarly, define $\sylcont{\sigma}\subset \longword$ as the subword obtained by selecting, for each $i$ such that $\operatorname{vert}_i(\sigma)\ne 0$, all instances of $i$ at or beyond the first $\operatorname{vert}_i(\sigma)$ instances of $i$.  
Finally, let
\[
\sylmin{\sigma}=\ncrmin{\sigma}\cap \sylcont{\sigma},
\]
so that $\sylmin{\sigma}\subset \longword$ contains exactly the $\operatorname{vert}_i(\sigma)$'th instance of each letter $i$ for which $\operatorname{vert}_i(\sigma)\ne 0$.
\end{defn}

An example with $\sigma = 82154763$ as in Example~\ref{eg:noncrossing} is shown in Figure~\ref{fig:vert_code_nantel}; see Figure~\ref{fig:sylmin_example} for an accompanying pictorial description.

\begin{figure}[!h]
    \centering
 \begin{align*}
 \ncrmin{\sigma}=&\,
 {\color{purple!40!blue!80}7}\, 
 {\color{purple!40!blue!80}6}\, 
 {\color{purple!40!blue!80}5}\, 
 {\color{purple!40!blue!80}4}\, 
 {\color{purple!40!blue!80}3}\, 
 {\color{purple!40!blue!80}2}\, 
 {\color{purple!40!blue!80}1}\, 
 {\color{purple!40!blue!80}\overline{2}}\, {\color{lightgray}\overline{3}}\, 
 {\color{purple!40!blue!80}\overline{4}}\, 
 {\color{purple!40!blue!80}\overline{5}}\, 
 {\color{purple!40!blue!80}\overline{6}}\, 
 {\color{purple!40!blue!80}\overline{7}}\,\,\, 
 {\color{lightgray}7}\, 
 {\color{purple!40!blue!80}6}\, 
 {\color{lightgray}5}\, 
 {\color{purple!40!blue!80}4}\, 
 {\color{lightgray}3}\, 
 {\color{lightgray}2}\, 
 {\color{lightgray}\overline{3}}\, 
 {\color{lightgray}\overline{4}}\, 
 {\color{lightgray}\overline{5}}\, 
 {\color{lightgray}\overline{6}}\, {\color{lightgray}\overline{7}}\,\,\, 
 {\color{lightgray}7}\, 
 {\color{lightgray}6}\, 
 {\color{lightgray}5}\, 
 {\color{lightgray}4}\, 
 {\color{lightgray}3}\, 
 {\color{lightgray}\overline{4}}\, 
 {\color{lightgray}\overline{5}}\, 
 {\color{lightgray}\overline{6}}\, {\color{lightgray}\overline{7}}\,\,\, 
 {\color{lightgray}7}\, 
 {\color{lightgray}6}\, 
 {\color{lightgray}5}\, 
 {\color{lightgray}4}\, 
 {\color{lightgray}\overline{5}}\, 
 {\color{lightgray}\overline{6}}\, {\color{lightgray}\overline{7}}\,\,\, 
 {\color{lightgray}7}\, 
 {\color{lightgray}6}\, 
 {\color{lightgray}5}\, 
 {\color{lightgray}\overline{6}}\, {\color{lightgray}\overline{7}}\,\,\, 
 {\color{lightgray}7}\, 
 {\color{lightgray}6}\, 
 {\color{lightgray}\overline{7}} \,\,\,
 {\color{lightgray}7}\, \\[1ex]
 \sylcont{\sigma}=&\,
 {\color{lightgray}7}\, 
 {\color{lightgray}6}\, 
 {\color{lightgray}5}\, 
 {\color{lightgray}4}\, 
 {\color{black}3}\, 
 {\color{lightgray}2}\, 
 {\color{black}1}\, 
 {\color{black}\overline{2}}\, 
 {\color{black}\overline{3}}\, 
 {\color{lightgray}\overline{4}}\, 
 {\color{black}\overline{5}}\, 
 {\color{lightgray}\overline{6}}\, 
 {\color{black}\overline{7}}\,\,\, 
 {\color{black}7}\, 
 {\color{black}6}\, 
 {\color{black}5}\, 
 {\color{black}4}\, 
 {\color{black}3}\, 
 {\color{black}2}\, 
 {\color{black}\overline{3}}\, 
 {\color{black}\overline{4}}\, 
 {\color{black}\overline{5}}\, 
 {\color{black}\overline{6}}\, 
 {\color{black}\overline{7}}\,\,\, 
 {\color{black}7}\, 
 {\color{black}6}\, 
 {\color{black}5}\, 
 {\color{black}4}\, 
 {\color{black}3}\, 
 {\color{black}\overline{4}}\, 
 {\color{black}\overline{5}}\, 
 {\color{black}\overline{6}}\, 
 {\color{black}\overline{7}}\,\,\, 
 {\color{black}7}\, 
 {\color{black}6}\, 
 {\color{black}5}\, 
 {\color{black}4}\, 
 {\color{black}\overline{5}}\, 
 {\color{black}\overline{6}}\, 
 {\color{black}\overline{7}}\,\,\, 
 {\color{black}7}\, 
 {\color{black}6}\, 
 {\color{black}5}\, 
 {\color{black}\overline{6}}\, 
 {\color{black}\overline{7}}\,\,\, 
 {\color{black}7}\, 
 {\color{black}6}\, 
 {\color{black}\overline{7}} \,\,\,
 {\color{black}7}\, \\[1ex]
 \sylmin{\sigma}=&\,
 {\color{lightgray}7}\, 
 {\color{lightgray}6}\, 
 {\color{lightgray}5}\, 
 {\color{lightgray}4}\, 
 {\color{red}3}\, 
 {\color{lightgray}2}\, 
 {\color{red}1}\, 
 {\color{red}\overline{2}}\, 
 {\color{lightgray}\overline{3}}\, 
 {\color{lightgray}\overline{4}}\, 
 {\color{red}\overline{5}}\, 
 {\color{lightgray}\overline{6}}\, 
 {\color{red}\overline{7}}\,\,\, 
 {\color{lightgray}7}\, 
 {\color{red}6}\, 
 {\color{lightgray}5}\, 
 {\color{red}4}\, 
 {\color{lightgray}3}\, 
 {\color{lightgray}2}\, 
 {\color{lightgray}\overline{3}}\, 
 {\color{lightgray}\overline{4}}\, 
 {\color{lightgray}\overline{5}}\, 
 {\color{lightgray}\overline{6}}\, 
 {\color{lightgray}\overline{7}}\,\,\, 
 {\color{lightgray}7}\, 
 {\color{lightgray}6}\, 
 {\color{lightgray}5}\, 
 {\color{lightgray}4}\, 
 {\color{lightgray}3}\, 
 {\color{lightgray}\overline{4}}\, 
 {\color{lightgray}\overline{5}}\, 
 {\color{lightgray}\overline{6}}\, 
 {\color{lightgray}\overline{7}}\,\,\, 
 {\color{lightgray}7}\, 
 {\color{lightgray}6}\, 
 {\color{lightgray}5}\, 
 {\color{lightgray}4}\, 
 {\color{lightgray}\overline{5}}\, 
 {\color{lightgray}\overline{6}}\, 
 {\color{lightgray}\overline{7}}\,\,\, 
 {\color{lightgray}7}\, 
 {\color{lightgray}6}\, 
 {\color{lightgray}5}\, 
 {\color{lightgray}\overline{6}}\, 
 {\color{lightgray}\overline{7}}\,\,\, 
 {\color{lightgray}7}\, 
 {\color{lightgray}6}\, 
 {\color{lightgray}\overline{7}} \,\,\,
 {\color{lightgray}7}\,
 \end{align*}
    \caption{The subword with $\longword^{\sigma}_{\le vert}$ for $\sigma = 82154763$.}
    \label{fig:vert_code_nantel}
\end{figure}

\begin{prop}[{\cite[Theorem 1.2 and \S 6.1]{GobetWilliams}}]
\label{prop:gobet_williams_bruhat}
For each $\sigma \in \NC_{n}$, we have $\ncrmin{\sigma}\in\rednc{\sigma}$. 
Moreover, given also any $\tau\in \NC_n$ we have 
\[
\tau\le \sigma
\quad\text{if and only if}\quad
\ncrmin{\tau}\subset\ncrmin{\sigma}
 \quad\text{if and only if}\quad
 \operatorname{vert}_i(\tau)\le \operatorname{vert}_i(\sigma)\text{ for all }1\leq i\leq  n.
\]
\end{prop}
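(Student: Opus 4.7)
The proposition has three parts. The equivalence between $\ncrmin{\tau}\subset\ncrmin{\sigma}$ and the coordinatewise inequalities $\operatorname{vert}_i(\tau)\le\operatorname{vert}_i(\sigma)$ for all $i$ is immediate from the definition, since $\ncrmin{\rho}$ selects the first $\operatorname{vert}_i(\rho)$ copies of each letter $i$ from the fixed word $\longword$. So the real content is to establish that $\ncrmin{\sigma}\in\rednc{\sigma}$ and that $\tau\le\sigma$ in Bruhat order is equivalent to $\ncrmin{\tau}\subset\ncrmin{\sigma}$.

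I would prove $\ncrmin{\sigma}\in\rednc{\sigma}$ by induction on $\ell(\sigma)$, with the base case $\sigma=\idem$ trivial. By \Cref{prop:NCsidec}, whenever $\sigma\ne\idem$ there exists $i\in\desnc{\sigma}$ with $\sigma s_i\in\NC_n$ and $\ell(\sigma s_i)=\ell(\sigma)-1$. The key combinatorial step is to verify that $\operatorname{vert}_j(\sigma)=\operatorname{vert}_j(\sigma s_i)+\delta_{j,i}$ for every $j$. This follows from the two-case analysis underlying \Cref{lem:NCsi}: either $\{i\}$ is a singleton block of $\sigma$ which merges with the block of $i+1$ under right multiplication by $s_i$, or $i$ and $i+1$ share a block $B$ in $\sigma$ which splits into $\{i\}$ and $B\setminus\{i\}$. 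A direct inspection of both cases shows that for $j\ne i$ neither the set of cycles nesting $j$ nor the max-of-its-block status of $j$ is affected, while at $j=i$ the status changes by exactly one unit of $\operatorname{vert}$. Consequently $\ncrmin{\sigma}$ is obtained from $\ncrmin{\sigma s_i}$ by inserting a single additional copy of $i$ at the slot in $\longword$ corresponding to the $\operatorname{vert}_i(\sigma)$-th occurrence of $i$. A short check using the structure of $\longword$ (all letters strictly after this new $i$ have value either $i$ or $\ge i+2$, and the $i$-valued letters are unselected) confirms that the resulting word multiplies out to $\sigma$; the noncrossing condition on every prefix is then inherited inductively via \Cref{lem:NCsi}.

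The Bruhat equivalence follows the same inductive skeleton. The direction $\ncrmin{\tau}\subset\ncrmin{\sigma}\Rightarrow\tau\le\sigma$ is immediate from the subword characterization of Bruhat order in $S_n$ once both words are known to be reduced. For the converse, I would induct on $\ell(\sigma)$ and fix some $i\in\desnc{\sigma}$: by the standard lifting property of Bruhat order in $S_n$, either $\tau\le\sigma s_i$, in which case the inductive hypothesis yields $\ncrmin{\tau}\subset\ncrmin{\sigma s_i}\subset\ncrmin{\sigma}$, or $i\in\des{\tau}$ with $\tau s_i\le\sigma s_i$. The main obstacle is to show that in this second situation $i$ is automatically a noncrossing descent of $\tau$, so that the induction hypothesis applies to the pair $(\tau s_i,\sigma s_i)$. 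This reduces to a careful compatibility check between the cycle structure of $\tau$ at positions $i,i+1$ and that of $\sigma$, handled again by the $\operatorname{vert}$-bookkeeping developed in the first part; once established, induction delivers $\ncrmin{\tau s_i}\subset\ncrmin{\sigma s_i}$, and the extra copy of $i$ appended to each side (via the increment $\operatorname{vert}_i(\cdot)\mapsto\operatorname{vert}_i(\cdot)+1$ proved above) preserves the containment, giving $\ncrmin{\tau}\subset\ncrmin{\sigma}$.
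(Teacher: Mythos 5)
Note first that the paper cites this result from Gobet--Williams without providing its own proof (the remark immediately after only records a change of conventions), so your argument is assessed on its own terms rather than compared against a paper proof.

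For the first claim, $\ncrmin{\sigma}\in\rednc{\sigma}$, the key identity $\operatorname{vert}_j(\sigma)=\operatorname{vert}_j(\sigma s_i)+\delta_{j,i}$ for $i\in\desnc{\sigma}$ is correct, and one can indeed show that the newly inserted letter $i$ commutes past every later \emph{selected} letter to the end of $\ncrmin{\sigma s_i}$, because a case analysis (essentially the one carried out inside the proof of \Cref{lem:canoncial_word_facts}) gives $\operatorname{vert}_{i-1}(\sigma)\le\operatorname{vert}_i(\sigma)$ with equality forcing $\operatorname{vert}_i(\sigma)$ even, and $\operatorname{vert}_{i+1}(\sigma)\le\operatorname{vert}_i(\sigma)$ with equality forcing it odd; \Cref{prop:noncrossing_commutation_class} then closes the induction. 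However your parenthetical ``all letters strictly after this new $i$ have value either $i$ or $\ge i+2$'' is wrong as a statement about $\longword$ (there are many later $(i-1)$-valued letters), and ruling out selected $(i-1)$'s requires exactly the parity check above, which your sketch does not supply.

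The Bruhat direction has a genuine gap. You assert that when $\tau\le\sigma$, $i\in\desnc{\sigma}$, and $i\in\des{\tau}$, then $i$ is automatically a noncrossing descent of $\tau$, so the induction can be applied to $(\tau s_i,\sigma s_i)$, and you call this ``the main obstacle.'' That claim is false. Take $n=4$, $\sigma=4321$ (blocks $\{1,4\}/\{2,3\}$, with $\desnc{\sigma}=\{2\}$) and $\tau=1432$ (blocks $\{1\}/\{2,4\}/\{3\}$). Then $\tau\le\sigma$ and $2\in\des{\tau}$, but $2\notin\desnc{\tau}$ since $3$ is minimal in its block; indeed $\tau s_2=1342\notin\NC_4$. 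In this example one still has $\tau\le\sigma s_2=4231$, so the proposition itself survives, but your stated dichotomy collapses: one would need to prove separately that $i\in\des{\tau}\setminus\desnc{\tau}$ forces $\tau\le\sigma s_i$, i.e.\ that $\operatorname{vert}_i(\tau)<\operatorname{vert}_i(\sigma)$ in that situation, which is an additional argument of comparable weight that your outline does not contain.
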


\begin{rem}
    Proposition~\ref{prop:gobet_williams_bruhat} and the statements in \cite{GobetWilliams} differ in a few superficial ways, the most notable being that the results of~\cite{GobetWilliams} describe the set $\{\sigma^{-1}  \;|\; \sigma \in \NC_{n}\}$, and accordingly each reduced word appears in the opposite order (see~\cite[Rem.~3.5]{BeGa23} for a more comprehensive translation).
\end{rem}

Now recall that by Proposition~\ref{prop:noncrossing_commutation_class}, every element of $\rednc{\sigma}$ can be obtained from $\ncrmin{\sigma}$ using commutation relations only.  
We use this to reduce many properties of noncrossing reduced words to properties of $\ncrmin{\sigma}$, and we now collect some of these properties for later use.

\begin{lem}
\label{lem:canoncial_word_facts}
Let $\sigma \in \NC_{n}$.
\begin{enumerate}[label=(\arabic*)]
\item \label{8.8(1)} For any $i \in \desnc{\sigma}$ with $i > 1$, any pair of letters $i-1$ and $i+1$ appearing in a noncrossing reduced word for $\sigma$ must have an $i$ in between them (in this statement we ignore all bar decorations).

\item \label{8.8(2)} If $\sigma = \ForToNC(F)$, then $\sylmin{\sigma} \in \mc{R}(\longword; F)$.

\item \label{8.8(3)} Every $\pi \in \mc{R}(\longword; F)$ is contained in $\sylcont{\pi}$.
\end{enumerate}
\end{lem}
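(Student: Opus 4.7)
Plan. I would treat the three parts in order; the arguments for~\ref{8.8(2)} and~\ref{8.8(3)} lean on the heap structure of Proposition~\ref{prop:heapclaims}, while~\ref{8.8(1)} is handled via a reduction to the canonical word $\ncrmin{\sigma}$ from Proposition~\ref{prop:gobet_williams_bruhat}.

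For~\ref{8.8(1)}, the plan is to exploit that $\rednc{\sigma}$ is a single commutation class (Proposition~\ref{prop:noncrossing_commutation_class}), so any two noncrossing reduced words are connected by transpositions of adjacent letters $a,b$ with $|a-b|\ge 2$. The property $P$, ``between every $(i-1)$ and every $(i+1)$ there is an $i$'', is preserved under any such swap: the potentially dangerous case $\{a,b\}=\{i-1,i+1\}$ cannot arise under $P$ (it would require adjacent $(i-1)(i+1)$, which $P$ forbids), and every other case leaves the witness pattern of $i$'s in each interval undisturbed, as can be checked by a short case analysis on which of $a,b$ lies in $\{i-1,i,i+1\}$. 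It therefore suffices to verify $P$ on $\ncrmin{\sigma}$ itself. I would do this by a positional analysis in $\longword$ syllable by syllable, using that $i\in\desnc{\sigma}$ forces $i$ to be minimal in its cycle while $i+1$ is not (Proposition~\ref{prop:NCsidec}); this translates into inequalities among $\operatorname{vert}_{i-1}(\sigma),\operatorname{vert}_i(\sigma),\operatorname{vert}_{i+1}(\sigma)$ that guarantee the first $\operatorname{vert}_i(\sigma)$ instances of $i$ interleave the selected $(i{-}1)$'s and $(i{+}1)$'s correctly.

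For~\ref{8.8(2)}, I would apply Proposition~\ref{prop:heapclaims} to $\omega=\ncrmin{\sigma}$. By part~(2) of that proposition, the internal nodes of $F=\ForToNC^{-1}(\sigma)$ correspond precisely to the \emph{final} occurrences of each letter inside $\omega$; by construction the $\operatorname{vert}_j(\sigma)$-th instance of $j$ in $\longword$ is exactly this final occurrence of $j$ in $\omega$, so $\sylmin{\sigma}$ collects one position per internal node of $F$. Reading these positions in $\longword$-order produces a linear extension of the heap order restricted to internal nodes, which by the very construction of Sylvester words from binary search trees lies in $\sylv{F}$. This gives $\sylmin{\sigma}\in\mc{R}(\longword;F)$.

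For~\ref{8.8(3)} (understood as $\pi\subset\sylcont{\sigma}$ for $\sigma=\ForToNC(F)$), the plan is induction on $|F|$. Given $\pi\in\mc{R}(\longword;F)$, its rightmost letter realizes some $i\in\qdes{F}$, and the subword machinery behind Corollary~\ref{cor:esubword}---in particular the bijection between the subwords of $\longword_{[n]}$ of shape $F$ containing $i^{(i)}$ and the subwords of $\longword_{[n-1]}$ of shape $F/i$---lets me strip this letter off $\pi$ and apply the inductive hypothesis to $F/i$ and $\ForToNC(F/i)$. The main obstacle, and the point where careful bookkeeping is required, is verifying that this reduction is compatible with the $\operatorname{vert}$ thresholds: one must check that, under the trim $F\mapsto F/i$, the corresponding passage $\sigma\mapsto\ForToNC(F/i)$ shifts each $\operatorname{vert}_j$ in precisely the way needed to transport $\sylcont{\ForToNC(F/i)}\subset\longword_{[n-1]}$ back into $\sylcont{\sigma}\subset\longword_{[n]}$, including the threshold governing the peeled letter $i$ itself. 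This compatibility is essentially a cycle-statistics statement about how nesting counts of noncrossing permutations behave under the deletion of a terminal block, and is where I would focus the bulk of the work.
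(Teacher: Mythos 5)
For part~\ref{8.8(1)}, your reduction to $\ncrmin{\sigma}$ via the commutation class is essentially the argument in the paper (the paper phrases it more simply: commutation moves never transpose $i$ with $i\pm1$, so the relative order of all $i$'s, $(i{-}1)$'s, and $(i{+}1)$'s is already invariant on $\rednc{\sigma}$), and the positional analysis via $\operatorname{vert}$ inequalities is what the paper does.

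For part~\ref{8.8(2)}, your plan is circular: Proposition~\ref{prop:heapclaims} is proved in the paper \emph{after} Lemma~\ref{lem:canoncial_word_facts}, and its proof explicitly invokes both parts~\ref{8.8(1)} and~\ref{8.8(2)} of this lemma (the very first line of the second half of that proof is ``\,$\sylmin{\sigma}$ is a Sylvester word for $F$, so we know that both graphs are defined on the same underlying set'' --- that is Lemma~\ref{lem:canoncial_word_facts}\ref{8.8(2)}). You cannot derive~\ref{8.8(2)} from Proposition~\ref{prop:heapclaims}. The paper instead proceeds directly: for any $\pi$ selecting each canonical label once, it defines a height function $\operatorname{ht}_\pi:\internal{F}\to[2n-1]$ recording the row of $\longword$ from which $\pi$ picks each label, characterizes membership in $\sylv{F}$ by three inequalities on parent--child pairs, and then checks that $i\mapsto\operatorname{vert}_i(\sigma)$ satisfies those inequalities by a case analysis on how $\ForToNC$ reshapes each parent--child edge.

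For part~\ref{8.8(3)}, the paper gets this essentially for free from the same height-function analysis: $\operatorname{vert}_i(\sigma)$ realizes the pointwise minimum over all admissible $\operatorname{ht}_\pi$, because the allowed increment between a parent and child is forced whenever $\operatorname{vert}$ increments. Your inductive plan via trimming $F\mapsto F/i$ is not obviously wrong, but it requires the compatibility you flag (that $\ForToNC(F/i)$ matches the reindexed $\sigma_{\wh{i}}$ and that $\sylcont{\cdot}$ transforms correctly under that reindexing), which is substantial unproved bookkeeping. This is a much longer route than necessary; even if it can be pushed through, you would still need an independent, non-circular proof of~\ref{8.8(2)}, at which point the height-function machinery is already available and directly yields~\ref{8.8(3)}.
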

\begin{proof}
Take $i \in \desnc{\sigma}$ with $i > 1$.  As commutation relations cannot move an $i$ past an $i-1$ or $i+1$, it is sufficient by \Cref{prop:noncrossing_commutation_class} to verify Statement~\ref{8.8(1)} for $\ncrmin{\sigma}\in\rednc{\sigma}$.  
This will follow from the fact that either $\operatorname{vert}_{i-1}(\sigma)<\operatorname{vert}_{i}(\sigma)$ or both $\operatorname{vert}_{i-1}(\sigma)=\operatorname{vert}_{i}(\sigma)$ and $\operatorname{vert}_{i+1}(\sigma) < \operatorname{vert}_{i}(\sigma)$, which we verify in cases.  
Let $C_{1}, C_{2}, \ldots, C_{s}$ denote the cycles in $\sigma$ which nest $i$ ordered so that $C_{j+1}$ is nested in $C_j$ for all $j$, and denote $C$ for the cycle containing $i$, which we note is nested in $C_s$. Then $\operatorname{vert}_{i}(\sigma) = 2s+\delta_{i\ne \max C}$. 
Each of $i-1$ and $i+1$ must be nested in $C_1,\ldots,C_{s-1}$, and the only additional cycles that can possibly nest them are $C_s$ or $C$.
But by~\Cref{prop:NCsidec} we know that $i=\min C$ and $i+1$ is not minimal in the cycle that contains it, which implies that neither $i-1$ nor $i+1$ can be nested by $C$. 
If $i-1\in C_s$, then $i-1$ is nested precisely by $C_1,\ldots,C_{s-1}$, and so we conclude $\operatorname{vert}_{i-1}(\sigma) \le 2s - 1<2s\le \operatorname{vert}_{i}(\sigma)$. 
If on the other hand $i-1\not\in C_s$, then $i-1$ must be the largest element of its cycle (as otherwise this cycle nests $i$ below $C_s$) so $\operatorname{vert}_{i-1}(\sigma)\le 2s\le \operatorname{vert}_{i}(\sigma)$. We note for use in a later proof that at this point we have established in all cases that whenever $i\in \desnc{\sigma}$, we have
\begin{equation}
\label{eqn:helpfulineqs}\operatorname{vert}_{i}(\sigma)-2\le\operatorname{vert}_{i-1}(\sigma)\le \operatorname{vert}_{i}(\sigma)\text{ and }\operatorname{vert}_{i+1}(\sigma)\le \operatorname{vert}_i(\sigma)+1.
\end{equation}
Here it remains to analyze the case $\operatorname{vert}_{i-1}(\sigma)=\operatorname{vert}_{i}(\sigma)$. But this is only possible if $\operatorname{vert}_{i}(\sigma) = 2s$, and so $i=\max C$. Furthermore by \Cref{prop:NCsidec} we have $i+1$ is not the minimal element of its cycle, so $C$ must be nested in this cycle. We conclude that $i+1$ is not nested by all of the cycles $C_1,\ldots, C_s$ that nest $C$ and so $i+1$ is nested by exactly $C_1,\ldots, C_{s-1}$ which implies $\operatorname{vert}_{i+1}(\sigma) \le 2s-1<2s\le \operatorname{vert}_{i}(\sigma)$.

For the remaining statements, fix a subword $\pi \subset \longword$ whose values are the canonical labels of $\internal{F}$ each appearing exactly once, and consider the function $\operatorname{ht}_{\pi}: \internal{F} \to [2n-1]$ given by
\[
\operatorname{ht}_{\pi}(i) = \begin{cases}
2r -1 & \text{if $\pi$ contains $i^{(r)}$} \\
2r & \text{if $\pi$ contains $\overline{i}^{(r)}$}
\end{cases}
\]
In terms of the planar array for $\longword$, the function $\operatorname{ht}_{\pi}$ sends $i$ to the index of the row from which $\pi$ selects $i$.   
The function $\operatorname{ht}_{\pi}$ completely determines $\pi$, and $\pi$ is a Sylvester word of $F$ if and only if, for each parent-child pair $i$ and $j$ in $\internal{F}$, we have one of:
\begin{enumerate}[label = (\roman*)]
\item $\operatorname{ht}_{\pi}(i) < \operatorname{ht}_{\pi}(j)$, or

\item $j < i$ and $\operatorname{ht}_{\pi}(j) = \operatorname{ht}_{\pi}(i)$ is even, or

\item $i < j$ and $\operatorname{ht}_{\pi}(j) = \operatorname{ht}_{\pi}(i)$ is odd.
\end{enumerate}

We verify directly that the function $i \mapsto \operatorname{vert}_{i}(\sigma)$ satisfies these conditions.  
For a parent and child within the same component of $\ForToNC(F)$, both elements are non-maximal in some cycle of $\sigma$ so the values are equal and even (case (ii)).  
For the left child of a vertex which is removed by $\ForToNC$ during left edge deletion, the parent is the maximum value of a cycle which contains the child, so the values differ by one (case (i)).  
For a removed vertex and its right child, we have two maximal elements in disjoint cycles of $\sigma$ which are nested beneath the same set of larger cycles, so the values are equal and odd (case (iii)).  
Finally, for a removed vertex and a parent which is not removed by $\ForToNC$, the parent is in a cycle which nests the child and the child is maximal in its (disjoint) cycle, so the values differ by one (case (i) again).  This proves Statement~\ref{8.8(2)}.  

For Statement~\ref{8.8(3)} we show that any $\pi$ satisfying the conditions above has $\operatorname{ht}_{\pi}(i) \ge \operatorname{vert}_{i}(\sigma)$ for all $i \in \internal{F}$.  
This follows from the preceding argument: $\operatorname{vert}_{i}(\sigma)$ assigns the minimum allowed value at all internal nodes, only increasing when moving between a parent and child pair $i$ and $j$ with $\operatorname{ht}_{\pi}(i)$ odd and $i > j$ or $\operatorname{ht}_{\pi}(i)$ even and $j > i$ as required.
\end{proof}

We can visualize Lemma~\ref{lem:canoncial_word_facts}\ref{8.8(2)} by constructing the vine diagram corresponding to the Sylvester word $\sylmin{\sigma}$, as shown in Figure~\ref{fig:sylmin_example}.  
The reader will readily identify this vine diagram with the heap in Figure~\ref{fig:NC_heap} by treating the midpoint of \textit{every} box in $\ncrmin{\sigma}$ as a vertex (rather than just those with ``T''-tiles), illustrating the  visual intuition behind ~\Cref{prop:heapclaims}.
\begin{figure}
\begin{center}
\begin{tikzpicture}[scale = 0.6, baseline = 0]
\path[fill = purple!20!blue!20!white, xshift = -1cm] (1, 0) -- (1, -1) -- (2, -1) -- (2, -2) -- (3, -2) -- (3, -1) -- (4, -1) -- (4, -3) -- (5, -3) -- (5, -2) -- (6, -2) -- (6, -3) -- (7, -3) -- (7, -2) -- (8,-2) -- (8, 0) -- cycle;
\draw[color = gray] (-1, 0) grid (8, -4);
\draw[thin] (0, 0.5) node[left] {$\operatorname{vert}(\sigma) = $};
\foreach \x/\vc in {1/1, 2/2, 3/1, 4/3, 5/2, 6/3, 7/2, 8/0}{
    \draw[thin] (\x-0.5, 0.5) node {$\vc$};
    }
\draw[thin] (0, -4.5) node[left] {$\sigma =$};
\foreach \x/\sigmai in {1/8, 2/2, 3/1, 4/5, 5/4, 6/7, 7/6, 8/3}{
    \draw[thin] (\x-0.5, -4.5) node {$\sigmai$};
    }
\foreach \x/\y in {0/1, 0/3, 1/3, 2/3, 3/3, 5/3, 7/3, 8/1, 8/3}{
    \draw[thick] (\x-1, 0.5-\y) to[out = 0, in = -90] (\x-0.5, 1-\y);
    \draw[thick] (\x, 0.5-\y) to[out = -180, in = 90] (\x-0.5, -\y);
    }
\foreach \x/\y in {0/2, 0/4, 1/2, 1/4, 2/4, 3/2, 3/4, 4/4, 5/4, 6/4, 7/4, 8/2, 8/4}{
    \draw[thick] (\x-1, 0.5-\y) to[out = 0, in = 90] (\x-0.5, -\y);
    \draw[thick] (\x, 0.5-\y) to[out = -180, in = -90] (\x-0.5, 1 -\y);
    }
\foreach \x/\y in {1/1, 2/2, 3/1, 4/3, 5/2, 6/3, 7/2, 2/1, 4/1, 4/2, 5/1, 6/1, 6/2, 7/1}{
    \draw[thick] (\x, 0.5-\y) -- (\x-1, 0.5-\y);
    \draw[thick] (\x-0.5, 1-\y) -- (\x-0.5, -\y);
    }
\draw[thick, red, xshift = -1cm] (1, 0) -- (1, -1) -- (2, -1) -- (2, -2) -- (3, -2) -- (3, -1) -- (4, -1) -- (4, -3) -- (5, -3) -- (5, -2) -- (6, -2) -- (6, -3) -- (7, -3) -- (7, -2) -- (8,-2) -- (8, 0);
\end{tikzpicture}
\hspace{1cm}
\begin{tikzpicture}[scale = 0.6, baseline = 0]
\path[fill = purple!20!blue!20!white, xshift = -1cm] (1, 0) -- (1, -1) -- (2, -1) -- (2, -2) -- (3, -2) -- (3, -1) -- (4, -1) -- (4, -3) -- (5, -3) -- (5, -2) -- (6, -2) -- (6, -3) -- (7, -3) -- (7, -2) -- (8,-2) -- (8, 0) -- cycle;
\draw[color = gray] (-1, 0) grid (8, -4);
\draw (0, 0.5) node[left] {$\operatorname{vert}(\sigma) = $};
\foreach \x/\vc in {1/1, 2/2, 3/1, 4/3, 5/2, 6/3, 7/2, 8/0}{
    \draw[thin] (\x-0.5, 0.5) node {$\vc$};
    }
\foreach \x/\y in {0/1, 0/3, 1/3, 2/3, 3/3, 5/3, 7/3, 8/1, 8/3}{
    \draw[thick] (\x-1, 0.5-\y) to[out = 0, in = -90] (\x-0.5, 1-\y);
    \draw[thick] (\x, 0.5-\y) to[out = -180, in = 90] (\x-0.5, -\y);
    }
\foreach \x/\y in {0/2, 0/4, 1/2, 1/4, 2/4, 3/2, 3/4, 4/4, 5/4, 6/4, 7/4, 8/2, 8/4}{
    \draw[thick] (\x-1, 0.5-\y) to[out = 0, in = 90] (\x-0.5, -\y);
    \draw[thick] (\x, 0.5-\y) to[out = -180, in = -90] (\x-0.5, 1 -\y);
    }
\foreach \x/\y in {1/1, 2/2, 3/1, 4/3, 5/2, 6/3, 7/2}{
    \draw[thick] (\x, 0.5-\y) -- (\x-1, 0.5-\y);
    \draw[thick] (\x-0.5, 0.5-\y) -- (\x-0.5, -\y);
    }
\foreach \x/\y in {2/1, 4/1, 4/2, 5/1, 6/1, 6/2, 7/1}{
    \draw[thick] (\x, 0.5-\y) -- (\x-1, 0.5-\y);
    }
\end{tikzpicture}
\end{center}
\caption{The (permutation) vine diagram associated to the subword $\ncrmin{\sigma}$ and the (forest) vine diagram associated to the subword $\sylmin{\sigma}$ for $\sigma = 82145763$.}
\label{fig:sylmin_example}
\end{figure}

\begin{proof}[Proof of~\Cref{prop:heapclaims}]
We first prove that every element covers at most one other element in $\preceq$.  
To this end suppose that we have $1 \le a < b < c \le \ell$ with $a$ and $b$ both covered by $c$ in $\preceq$.  
By definition of the heap order, we must have $|\omega_{a} - \omega_{c}| = |\omega_{b} - \omega_{c}| = 1$, and by assumption we must have $\omega_{a} \neq \omega_{c}$.
Without loss of generality we take $\omega_{a} = \omega_{c} -1$ and $\omega_{b} = \omega_{c} + 1$.  
Then by~\Cref{lem:canoncial_word_facts}\ref{8.8(1)} the letter $\omega_{a}$ must appear in some position $c'$ between positions $a$ and $b$, implying that $a \prec c' \prec c$, a contradiction.

For the second claim, $\sylmin{\sigma}$ is a Sylvester word for $F$, so we know that both graphs are defined on the same underlying set, and we directly verify that each edge in $F$ corresponds to a sequence of covers in the heap order.   
For each non-maximal element $i$ of a cycle in $\sigma$, let $k$ be the next-largest element.  
Then $\operatorname{vert}_{i}(\sigma)$ is odd, $\operatorname{vert}_{k}(\sigma) > \operatorname{vert}_{i}(\sigma)$ for all $j$ between $i$ and $k$, and $\operatorname{vert}_{k}(\sigma) = \operatorname{vert}_{i}(\sigma)$ or $\operatorname{vert}_{i}(\sigma) - 1$.  
Therefore the final occurrence of $i$ and $j$ in $\ncrmin{\sigma}$ appears in a sequence $j \mathbf{b} (j+1) (j+2) \cdots i$ in which $\mathbf{b}$ contains only letters strictly greater than $j$, giving a sequence of covers in $\preceq$ from $j$ to $i$.  
Following a similar argument, we can construct a chain of covers in $\preceq$ from the largest element of each nested cycle to largest element of the nesting cycle which is still smaller than the nested cycle.  As these are precisely the internal edges of $F$, the proof is complete.  
\end{proof}

\subsection{Proof of AJS--Billey for double forest polynomials}
\label{subsec:ajs_billey_forests}

In this section we prove Theorem~\ref{thm:AJS_Billey_for_Forests}.  
We begin by introducing an inductive description of the interaction of the trimming operator $\eope{i}$ and the evaluation operator $\ev_{\sigma}$, proceed to the proof, and conclude with some examples.
  
\begin{defn}
    For a subset $S\subset [n]$ and $a\in [n]$ which may or may not lie in $S$, we write $$S(\wh{a})=\{i-\delta_{i>a}\suchthat i\in S\setminus a\}.$$
    For a $\mc{P}\in \ncpart_{n}$, let $\mc{P}(\wh{a})$ be the noncrossing partition obtained by replacing each block $S$ with $S(\wh{a})$. 
    If $\sigma\coloneqq \sigma(\mc{P})$, we let $\sigma_{\wh{a}}\in \NC_{n-1}$ be the  noncrossing partition corresponding to $\mathcal{P}(\wh{a})$. 
    
    For a noncrossing reduced word $\omega$, let $\omega({\wh{a}})$ be obtained by removing all instances of $a$ in $\omega$ and then decreasing all remaining elements which are larger than $a$ by $1$.
\end{defn}

\begin{eg}
Take $\sigma = 82154763 \in \NC_{8}$ from Example~\ref{eg:noncrossing_reduced_words}, with noncrossing reduced word $\omega = (7, 6, 5, 4, 3, 2, 1, 4, 5, 4, 6, 7, 6, 2) $.  
Then with $a = 4$ we have $\omega(\wh{a}) = (6, 5, 4, 3, 2, 1, 4, 5, 6, 5, 2)$,
\[
\mathcal{P}(\sigma)= \begin{tikzpicture}[scale = 0.75, baseline = 0.75*-0.2]
\foreach \x in {1, ..., 8}{\draw[fill] (\x - 1, 0) node[inner sep = 2pt] (\x) {$\scriptstyle \x$};}
\foreach \i\j in {1/3, 3/8, 4/5, 6/7}{\draw[thick] (\i) to[out = 35, in = 145] (\j);}
\end{tikzpicture}
,\quad\text{and}\qquad
\mathcal{P}(\sigma_{\wh{a}})= \begin{tikzpicture}[scale = 0.75, baseline = 0.75*-0.2]
\foreach \x in {1, ..., 7}{\draw[fill] (\x - 1, 0) node[inner sep = 2pt] (\x) {$\scriptstyle \x$};}
\foreach \i\j in {1/3, 3/7, 5/6}{\draw[thick] (\i) to[out = 35, in = 145] (\j);}
\end{tikzpicture}
\]
so that $\sigma_{\wh{a}} = 7214653$.
\end{eg}

\begin{prop}
\label{prop:Removenoncrossing}
For all $\sigma \in \NC_{n}$, $a \in \desnc{\sigma}$, and $\omega\in \rednc{\sigma}$ we have $\omega({\wh{a}})\in \rednc{\sigma_{\wh{a}}}$.
\end{prop}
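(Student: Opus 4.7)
The plan is to first verify the statement for the canonical noncrossing reduced word $\ncrmin{\sigma}$ from \Cref{prop:gobet_williams_bruhat}, and then propagate to an arbitrary $\omega \in \rednc{\sigma}$ along commutation moves, using that $\rednc{\sigma}$ is a single commutation class by \Cref{prop:noncrossing_commutation_class}.

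For the canonical case, I would establish the sharper equality $\ncrmin{\sigma}(\wh{a}) = \ncrmin{\sigma_{\wh{a}}}$ of sequences of letters, which suffices since the right-hand side lies in $\rednc{\sigma_{\wh{a}}}$ by \Cref{prop:gobet_williams_bruhat}. The key combinatorial identity behind this is
\[
\operatorname{vert}_j(\sigma_{\wh{a}}) = \operatorname{vert}_{j + \delta_{j \geq a}}(\sigma) \quad \text{for all } j \in [n-1],
\]
which matches the multiplicity of each letter on the two sides. I would prove this by splitting into cases based on whether $a+1$ lies in the block $B$ of $a$ in $\sigma$. When $a+1 \in B$, the block $B$ is replaced by $B \setminus \{a\}$; since no other block of $\sigma$ can have $a+1$ as its minimum, the nesting counts and max-status of every element are preserved under the induced relabeling. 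When $a+1 \notin B$, the noncrossing constraint together with $a \in \desnc{\sigma}$ forces $B = \{a\}$, so removing $a$ merely deletes a singleton, which cannot nest anything, and all remaining $\operatorname{vert}$-values persist. Once the multiplicities agree, the compatibility of the positions selected in $\longword_{[n]}$ with those selected in $\longword_{[n-1]}$ is verified syllable-by-syllable.

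For the propagation step, any $\omega \in \rednc{\sigma}$ is reached from $\ncrmin{\sigma}$ by a finite sequence of commutation moves $\cdots ij \cdots \leftrightarrow \cdots ji \cdots$ with $|i - j| > 1$, and I would verify that applying $\wh{a}$ descends to a valid operation on the resulting words. A commutation involving $a$ becomes a no-op once the $a$'s are deleted, and a commutation with $a \notin \{i, j\}$ induces a swap of the shifted letters $i - \delta_{i > a}$ and $j - \delta_{j > a}$, which still differ by more than one except in the configuration $\{i, j\} = \{a-1, a+1\}$. This last configuration is precisely excluded by \Cref{lem:canoncial_word_facts}\ref{8.8(1)}, which forces any $a-1$ and $a+1$ in $\omega$ to be separated by an $a$ and in particular prevents them from being adjacent. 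Hence $\omega(\wh{a})$ lies in the commutation class of $\ncrmin{\sigma}(\wh{a}) = \ncrmin{\sigma_{\wh{a}}}$, which is $\rednc{\sigma_{\wh{a}}}$.

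The main obstacle is the verification of the $\operatorname{vert}$-identity, which requires carefully tracking how nesting and max-status evolve when $a$ is removed, together with the syllable-level bookkeeping relating $\longword_{[n]}$ and $\longword_{[n-1]}$ under deletion and shifting of labels. The commutation-propagation step is essentially formal once \Cref{lem:canoncial_word_facts}\ref{8.8(1)} and \Cref{prop:noncrossing_commutation_class} are in hand.
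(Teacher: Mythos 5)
Your proposal is correct and follows essentially the same route as the paper's proof: verify the base case $\ncrmin{\sigma}(\wh{a}) = \ncrmin{\sigma_{\wh{a}}}$ by matching $\operatorname{vert}$-codes (the paper phrases the identity piecewise for $b<a$ and $b\ge a$, equivalent to your $\operatorname{vert}_j(\sigma_{\wh{a}}) = \operatorname{vert}_{j+\delta_{j\ge a}}(\sigma)$), then propagate along commutation moves using \Cref{prop:noncrossing_commutation_class}, with \Cref{lem:canoncial_word_facts}\ref{8.8(1)} ruling out the only problematic swap $\{a-1,a+1\}$. Your casework on whether $a+1 \in B$ is a more explicit justification of the base-case vert-identity than the paper's one-line appeal to \Cref{prop:NCsidec}, but the argument is the same in substance.
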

\begin{proof}
Proposition~\ref{prop:noncrossing_commutation_class} states that there exists a sequence of noncrossing reduced words $\ncrmin{\sigma} = \psi^{(0)}, \psi^{(1)}, \ldots, \psi^{(k)}, \psi^{(k+1)} = \omega$ such that each $\psi^{(i)}$ and $\psi^{(i+1)}$ differ by a single commutation move.  
Using induction on $k$, we prove the statement that $\omega(\wh{a}) = \psi^{(k+1)}(\wh{a})\in \rednc{\sigma_{\wh{a}}}$.  

For the base case, let $\omega = \ncrmin{\sigma}$.  
We first compute the vertical code from the combinatorial description of $\mc{P}(\sigma_{\wh{a}})$.
Proposition~\ref{prop:NCsidec} implies that any arcs in $\mc{P}$ connected to $a$ do not contribute to the vertical code of $\mc{P}$, so we have
\[
\operatorname{vert}_{b}(\sigma_{\wh{a}}) = \begin{cases}
\operatorname{vert}_{b}(\sigma) & \text{if $b < a$,} \\
\operatorname{vert}_{b+1}(\sigma) & \text{if $b \ge a$.} 
\end{cases}
\]
Thus $\ncrmin{\sigma_{\wh{a}}} = \ncrmin{\sigma}(\wh{a})$; pictorially we can see this as taking the subset of the planar array for $\longword$ corresponding to the subword $\ncrmin{\sigma}$, removing all boxes in column $a$, and the shifting all boxes to the right of this column one step to the left.  

Now assume that $\psi^{(k)}(\wh{a}) \in \rednc{\sigma_{\wh{a}}}$.  
Let $i$ denote the index at which the commutation between $\psi^{(k)}$ and $\omega$ takes place, so that $\omega_{i} = \psi^{(k)}_{i+1}$ and $\omega_{i+1} = \psi^{(k)}_{i}$ and $\omega_{j} = \psi^{(k)}_{j}$ for all $j \notin \{i, i+1\}$.  
If $a \in \{\omega_{i}, \omega_{i+1}\}$ then $\omega(\wh{a}) = \psi^{(k)}(\wh{a})$ and the proof is complete.  
Otherwise $|\omega_{i} - \omega_{i+1}| > 1$, then $|\omega_{i} - \delta_{\omega_{i} > a} - \omega_{i+1} + \delta_{\omega_{i+1}>a}| > 1$ unless possibly $\{\omega_i,\omega_{i+1}\}=\{a-1,a+1\}$, which cannot happen by by Lemma~\ref{lem:canoncial_word_facts}~\ref{8.8(1)} (as the $a$ between $a-1$ and $a+1$ initially in $\psi^{(k+1)}$ remains between $a-1$ and $a+1$ after each commutation). Thus $\omega(\wh{a})$ and $\psi^{(k)}(\wh{a})$ differ by a single commutation.  
Thus by Proposition~\ref{prop:noncrossing_commutation_class}, $\omega(\wh{a}) \in \rednc{\sigma_{\wh{a}}}$.
\end{proof}

\begin{proof}[Proof of Theorem~\ref{thm:AJS_Billey_for_Forests}]
We proceed by induction on the length $\ell = \ell(\sigma)$.  
The case $\ell=0$ is trivial, so we assume $\ell\ge 1$. 
Throughout, we set $a \coloneqq i_{\ell}$, the final letter of our fixed noncrossing reduced word $\omega \in \rednc{\sigma}$.  
While there may be other instances of the letter $a$ in $\omega$, we are solely concerned with the final instance in $\omega$ in the following.

Since $\omega\in \rednc{\sigma}$, we have $\sigma s_{a} \in \NC_{n}$. 
Lemma~\ref{lem:NCsi} implies $a\in\{\sigma(a),\sigma(a+1)\}$, so the $\eope{a}$ recursion then gives 
\begin{align}
\label{eq:two_cases_recursion}
\ev_{\sigma}\forestpoly{F} = 
\begin{cases}
\ev_{\sigma s_{a}}\forestpoly{F} + 
(t_{\sigma(a)}-t_{\sigma(a+1)}) \left((\ev_{\sigma_{\wh{a}}} \forestpoly{F/a})(\wh{\tl}_{a})\right) & \text{if $a \in \qdes{F}$}, \\
\ev_{\sigma s_{a}}\forestpoly{F} & \text{otherwise,}
\end{cases}
\end{align}
where  $(\ev_{\sigma_{\wh{a}}} \forestpoly{F/a})(\wh{\tl}_{a})$ is obtained from $\ev_{\sigma_{\wh{a}}} \forestpoly{F/a}$  by replacing all instances of $t_b$ where $b \geq a$ with $t_{b+1}$.  
We now apply our inductive hypothesis to each case on the right hand side above.

Consider first the case where $a \notin \qdes{F}$. 
This means that no word in $\sylv{F}$ ends in $a$.
Let $\omega' \coloneqq  (i_{1}, \dots, i_{\ell-1})$, which must belong to $\rednc{\sigma s_{a}}$.  
We have by induction that
\begin{align}
    \ev_{\sigma s_a}\forestpoly{F}=\sum_{\pi\in \mc{R}(\omega';F)}\weight_{\pi}^{\circ}
\end{align}
As the $j$th letter of $\omega$ and $\omega'$ agree for all $j < \ell$, $\mc{R}(\omega';F) = \mc{R}(\omega;F)$ and we thus conclude
\[
\ev_{\sigma}(\forestpoly{F})=\ev_{\sigma s_{a}}(\forestpoly{F}) 
= \sum_{\pi\in \mc{R}(\omega';F)}\weight_{\pi}^{\circ}
=\sum_{\pi\in \mc{R}(\omega;F)}\weight_{\pi}^{\circ}.
\]

Now suppose $a \in \qdes{F}$, so that every word in $\sylv{F}$ contains $a$.
Like before, the sum of $\weight_{\pi}^{\circ}$ over all subwords $\pi\subset \omega$ that do not use the last letter in $\omega$ gives $\ev_{\sigma s_a}\forestpoly{F}$, while the remaining terms are divisible by $(t_{\sigma(a)}-t_{\sigma(a+1)})$, so 
\[
\sum_{\pi\in \mc{R}(\omega;F)} \weight_{\pi}^{\circ} 
= \ev_{\sigma s_a}\forestpoly{F} + (t_{\sigma(a)}-t_{\sigma(a+1)})\sum_{\substack{\pi\in \mc{R}(\omega;F)\\ \text{final $a \in \pi$}}}  \prod_{\substack{i_p\in \pi\\ p\neq \ell}} (t_{\sigma^{(p-1)}(i_p+1)}-t_{\sigma^{(p-1)}(i_p)}).
\]
Considering the second term above, we claim that the map $\pi \to \pi(\wh{a})$ gives a bijection from the set $\{ \pi\in \mc{R}(\omega;F) \suchthat \text{final $a \in \pi$}\}$ to $ \mc{R}(\omega(\hat{a});F/a)$.
From Observation~\ref{obs:SylvesterTrimming}, removing the last letter from $\pi$ and decrementing all letters strictly larger than $a$ by $1$ produces a Sylvester word $\pi(\wh{a})$ for $F/a$.
Furthermore, $\pi(\wh{a})$ is a subword of $\omega(\wh{a})$, which by Proposition~\ref{prop:Removenoncrossing} belongs to $\rednc{\sigma_{\wh{a}}}$.

To describe the effect of our bijection on the weight of each subword, let $g:\NN\to\NN$ be the map sending $x\mapsto x+\delta_{x\ge a}$.  Thus $t_{g(i)}$ is the $i$th element of the variable set $\hat{\tl}_{a}$.  
For each fixed $1 \le p < \ell$ with $i_p\ne a$ (which includes all $i_p$ such that $i_p\in \pi$ since only the final letter of $\pi$ is $a$), we can write $(i_{1} i_{2} \cdots i_{p})(\wh{a}) = j_{1}j_{2} \cdots j_{q}$ so that $g(j_1)\cdots g(j_q)$ is the sublist of $i_1\cdots i_p$ where we omit all instances of $a$, and $i_p=g(j_q)$.
By~\Cref{prop:Removenoncrossing}, $j_{1}j_{2} \cdots j_{q-1}\in \rednc{(\sigma_{\wh{a}})^{(q-1)}}$, and we claim that 
\begin{equation}
\label{eq:outstandingclaim}
\sigma^{(p-1)}(i_{p}) = g\Big((\sigma_{\wh{a}})^{(q-1)}(j_{q})\Big)
\qquad\text{and}\qquad
\sigma^{(p-1)}(i_{p} + 1) = g\Big((\sigma_{\wh{a}})^{(q-1)}(j_{q}+1)\Big),
\end{equation}
which we prove in Lemma~\ref{lem:outstandingclaim} below.

Once this has been proved, it follows that 
\begin{align*}
\sum_{\substack{\pi\in \mc{R}(\omega;F)\\ \text{final $a \in \pi$}}} \weight_{\pi}^{\circ}
&= (t_{\sigma(a)}-t_{\sigma(a+1)}) \sum_{\pi(\wh{a}) \in \mc{R}(\omega(\hat{a});F/a)} \quad \prod_{j_q\in \wh{\pi}} \Big(t_{g\big((\sigma_{\wh{a}})^{(q-1)}(j_q+1)\big)}-t_{g\big((\sigma_{\wh{a}})^{(q-1)}(j_q)\big)}\Big) \\
&=(t_{\sigma(a)}-t_{\sigma(a+1)})\;\big(\ev_{\sigma_{\wh{a}}} \forestpoly{F/a}\big)(\wh{\tl}_{a}),
\end{align*}
which in view of the previous considerations completes the proof.
\end{proof}

\begin{lem}
\label{lem:outstandingclaim}
If $\sigma \in \NC_{n}$, $a \in \desnc{\sigma}$, and $\omega = i_{1} \cdots i_{\ell} \in \rednc{\sigma}$ with $i_{\ell} = a$, then for any $1 \le p < \ell$ with $i_{p} \neq i_{\ell}$, we have:
\begin{itemize}
    \item $i_p\ne a-1$ and $\sigma^{(p-1)} g(j) = g \big((\sigma_{\wh{a}})^{(q-1)}(j)\big)$ for all $j$, or
    \item $i_p\ne a+1$ and $\sigma^{(p-1)} s_{a} g(j) = g \big((\sigma_{\wh{a}})^{(q-1)}(j)\big)$ for all $j$,
\end{itemize}  
where $g(x) = x + \delta_{x \ge i_{\ell}}$ and $(i_{1} i_{2} \cdots i_{p})(\wh{a}) = j_{1}j_{2} \cdots j_{q}$ as in the proof of Theorem~\ref{thm:AJS_Billey_for_Forests} above.
\end{lem}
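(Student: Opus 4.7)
The strategy is to prove a stronger factorization: setting $\iota\colon S_{n-1}\hookrightarrow S_n$ to be the monomorphism that fixes $a$ and satisfies $\iota(\tau)(g(j))=g(\tau(j))$, and writing $B_k$ for the number of $a$'s and $r(k)=k-B_k$ for the number of non-$a$'s among $i_1,\dots,i_k$, I will show that
\[
\sigma^{(k)} \;=\; \iota\bigl((\sigma_{\wh{a}})^{(r(k))}\bigr)\, s_a^{B_k \bmod 2}
\]
for every $0\le k\le\ell-1$. Once this factorization is in hand, applying $g(j)$ to both sides and using $s_a^2=\idem$ immediately produces the first option of the lemma when $B_{p-1}$ is even and the second option when $B_{p-1}$ is odd. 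Matching the side conditions $i_p\ne a-1$ in the first case and $i_p\ne a+1$ in the second then reduces to the elementary computations $g(j_q+1)=i_p+1$, respectively $s_a(g(j_q+1))=i_p+1$, already made in the discussion preceding the lemma.

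The factorization is proved by induction on $k$. The base case $k=0$ is immediate, and in the inductive step the cases $i_{k+1}=a$ (which flips the parity while leaving $(\sigma_{\wh{a}})^{(r(k))}$ unchanged) and $|i_{k+1}-a|\ge 2$ (which uses $s_{i_{k+1}}s_a=s_as_{i_{k+1}}$ together with the identity $\iota(s_{g^{-1}(i_{k+1})})=s_{i_{k+1}}$) are routine. The delicate cases are $i_{k+1}=a\pm 1$: since $\iota(s_{a-1})=(a-1,a+1)$ and $\iota(s_a)=s_{a+1}$, a direct computation using the braid identity $s_as_{a\pm 1}s_a=s_{a\pm 1}s_as_{a\pm 1}$ shows that the inductive step propagates exactly when $B_k$ is odd in the first sub-case and $B_k$ is even in the second.

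The main obstacle is therefore to verify this parity compatibility: in any $\omega\in\rednc{\sigma}$ ending in $a$, every occurrence of $a-1$ must be preceded by an odd number of $a$'s and every occurrence of $a+1$ by an even number. By Lemma~\ref{lem:canoncial_word_facts}\ref{8.8(1)}, any $a-1$ and $a+1$ in $\omega$ are separated by an $a$; since $a$ does not commute with $a\pm 1$, the relative order of letters in $\{a-1,a,a+1\}$ is an invariant of the commutation moves supplied by Proposition~\ref{prop:noncrossing_commutation_class}, and so it suffices to check the claim on the canonical word $\ncrmin{\sigma}$ from Proposition~\ref{prop:gobet_williams_bruhat}. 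There the positions of $a-1,a,a+1$ within each syllable of $\longword$ are forced---the three unbarred letters appear in the order $a+1,a,a-1$ and the three barred letters in the order $\bar{a-1},\bar a,\bar{a+1}$---and pairing this syllabic order with the cycle-analysis behind the bounds $\operatorname{vert}_a(\sigma)-2\le\operatorname{vert}_{a-1}(\sigma)\le\operatorname{vert}_a(\sigma)$ and $\operatorname{vert}_{a+1}(\sigma)\le\operatorname{vert}_a(\sigma)+1$ inside the proof of Lemma~\ref{lem:canoncial_word_facts}\ref{8.8(1)} shows that the first $\operatorname{vert}_{a-1}(\sigma)$ copies of $a-1$ fall in the odd-indexed gaps between consecutive selected $a$'s, while the first $\operatorname{vert}_{a+1}(\sigma)$ copies of $a+1$ fall in the even-indexed gaps. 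This yields the required parities and completes the proof.
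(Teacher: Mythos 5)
Your proposal is correct, and it takes a genuinely different route from the paper's. The paper works at the level of words: it constructs, by applying commutation moves and insertions/removals of $aa$ to the prefix $i_1\cdots i_{p-1}$ (or to $i_1\cdots i_{p-1}a$), an explicit possibly non-reduced word representing $\sigma^{(p-1)}$ or $\sigma^{(p-1)}s_a$ in which every occurrence of $a$ and $a-1$ appears inside an adjacent triple $a(a-1)a$, and then applies the identity $gs_{a-1}=s_as_{a-1}s_ag$, $gs_r=s_{g(r)}g$ to each block. You instead prove by induction on $k$ the cleaner permutation-level factorization $\sigma^{(k)} = \iota\bigl((\sigma_{\wh a})^{(r(k))}\bigr)s_a^{B_k\bmod 2}$, which hands over both branches of the lemma at once at $k=p-1$ and automatically identifies which branch (and hence which side condition $i_p\ne a\mp 1$) applies via the parity of $B_{p-1}$. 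The burden shifts entirely to your parity claim, namely that each $a-1$ in $\omega$ is preceded by an odd number of $a$'s and each $a+1$ by an even number; you reduce this to $\ncrmin{\sigma}$ by commutation invariance (legitimate since $a$ does not commute with $a\pm1$, and $a-1,a+1$ are kept apart by an intervening $a$ per Lemma~\ref{lem:canoncial_word_facts}\ref{8.8(1)}) and then read it off the syllable structure of $\longword$ together with the inequalities established in the proof of that lemma. Both arguments lean on the same underlying ingredients (the identity $gs_{a-1}=s_as_{a-1}s_ag$, Proposition~\ref{prop:noncrossing_commutation_class}, and the $\operatorname{vert}$-code analysis), but your factorization lemma makes the role of the parity transparent, whereas the paper's word-surgery keeps it implicit. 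The one place you are terse is the final syllable/$\operatorname{vert}$-code count: to make it airtight one should observe that the $m$-th occurrence of $a-1$ in $\longword$ has $m$ (for $m$ odd) or $m-1$ (for $m$ even) copies of $a$ before it, the $m$-th occurrence of $a+1$ has $m-1$ or $m$, and that truncating to $\ncrmin{\sigma}$ cannot cut below these thresholds because $\operatorname{vert}_{a-1}\le\operatorname{vert}_a$ always, while equality in $\operatorname{vert}_{a+1}=\operatorname{vert}_a+1$ forces $\operatorname{vert}_a$ even (both facts falling out of the cycle analysis in the proof of Lemma~\ref{lem:canoncial_word_facts}\ref{8.8(1)}). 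With that spelled out, the proposal is complete.
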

This lemma implies Equation~\eqref{eq:outstandingclaim} by taking $j = j_{q}$ and $j_{q}+1$: if $j_{q} \ne a-1$ then $g(j_{q} + 1) = g(j_{q}) + 1 = i_{p} + 1$, and if $j_{q} \neq a$ then $s_{a} g(j_{q}) = g(j_{q}) = i_{p}$ and $s_{a} g(j_{q}+1) = g(j_{q}) + 1 = i_{p} + 1$.
\begin{proof}
Let $a = i_{\ell}$.  We begin by observing that $g s_{a-1} = s_{a} s_{a-1} s_{a} g$ and $g s_{r} = s_{g(r)} g$ for all $r \neq a-1$.  
Therefore if $i_1'\cdots i_{s}'$ is a list in which all instances of $a-1$ and $a$ occur as a disjoint union of triples of the form $a(a-1)a$, then $s_{i_1'}\cdots s_{i_{s}'} g=gs_{j_1}\cdots s_{j_{q-1}}$, where $j_1\cdots j_{q-1}$ is the associated depletion $(i_1'\cdots i_{s'}')(\wh{a})$.  
We therefore construct a (possibly nonreduced) word $i_1'\cdots i_{s}'$ for $\sigma^{(p-1)}$ or $\sigma^{(p-1)}s_{a}$ whose depletion is exactly $j_{1} \cdots j_{q}$.  
This is accomplished by modifying either $i_{1} \cdots i_{p}$ or $i_{1} \cdots i_{p}a$ using two operations which do not change the associated permutation or depletion: commutation moves of the form $ab\mapsto ba$ with $|a-b|\ge 2$ and insertions or removing instances of $aa$.

Consider the subsequence of $i_{1} i_{2} \cdots i_{p}$ containing all instances of $a-1$ and $a$.  
Commutation relations do not affect this subsequence, so it is a truncation of the analogous subsequence for $\ncrmin{\sigma}$.  
Because $a\in \desnc{\sigma}$, we have the inequalities $\operatorname{vert}_{a}(\sigma) -2 \le \operatorname{vert}_{a-1}(\sigma) \le \operatorname{vert}_{a}(\sigma)$ and $\operatorname{vert}_{a+1}(\sigma) \le \operatorname{vert}_{a}(\sigma) + 1$ observed in the proof of Lemma~\ref{lem:canoncial_word_facts}\ref{8.8(1)} in equation \eqref{eqn:helpfulineqs}. It follows that this subsequence begins with $a$, has some number of repetitions of $a-1, a-1, a, a$, and then possibly ends with a prefix of $a-1, a-1, a, a$ or $a-1, a, a$.  Moreover every $a+1$ in our word appears between two consecutive $a$'s in this sequence, except possibly before the first $a$, or when the subsequence ends in $a-1, a$, after the last $a$.  

We now proceed to modify the word $i_{1} i_{2} \cdots i_{p-1}$ as described above (adding an extra $a$ at the end in one of the cases).  
First, for each $a, a-1, a-1, a$ or $a, a-1, a$ in our subsequence of $a$'s and $a-1$'s, we can use commutation relations to move the $a$'s until they are adjacent to the $a-1$'s in the original list.  
Next, any two $a-1$'s with no $a$ in between them must be separated entirely in the original list by letters $b$ with $|a-b|\ge 2$, so we can introduce two copies of $a$ at any point in this interval; we do so directly next to each $a-1$. 
The result is a word for $\sigma^{(p-1)}$ in which all $a$'s and $a-1$'s occur in triples of adjacent letters $a(a-1)a$, possibly followed by a ``remainder'' of $a$ or $a(a-1)$ (also adjacent). 
If there is no remainder, we have obtained a word of the desired form for $\sigma^{(p-1)}$; in this case note that the final letter $i_{p}$ cannot be $a-1$ as there is a single $a$ following the previous instance of $a-1$ in our word.

If there is a remainder of either $a$ or $a-1$, then the letters between this remainder and $i_{p}$ (including $i_{p}$) are not $a+1$, as this violates the pattern of $a$, $a-1$, and $a+1$ described above.  
We can therefore move the letter $a$ between the end of our word to the end of the remainder using commutation relations.  
If the remainder is $a$, we move it to the end so that our word ends in $aa$, which we can remove.  
If the remainder is $a(a-1)$, we move the $a$ from the end of our word to the end of the remainder to form an $a(a-1)a$. 
\end{proof}

\subsection{Consequences of the AJS--Billey formula for forests}
\label{sec:AJS_consequences}

We now show that our evaluations of double forest polynomials share two important properties with the actual AJS--Billey formula.  First, recall from the introduction that $a(\tl)$ is Graham-positive if it lies in $\ZZ_{\ge 0}[t_2-t_1,t_3-t_2,\ldots]$.
The previous examples suggest that evaluations of double forest polynomials at noncrossing partitions are Graham-positive. 
Our next result establishes this in general.

\begin{cor}
\label{cor:graham_positivity_forests}
The evaluation of a forest polynomial $\forestpoly{F}$ at a noncrossing partition $\sigma$ is Graham-positive.
\end{cor}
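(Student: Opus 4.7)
The plan is to deduce the corollary directly from Theorem~\ref{thm:AJS_Billey_for_Forests}, which already does the essential work; the only remaining task is to observe that each summand in that formula is manifestly Graham-positive.

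By Corollary~\ref{prop:NCRW} the set $\rednc{\sigma}$ is nonempty, so I would fix any $\omega = (i_1,\dots,i_\ell) \in \rednc{\sigma}$ and invoke Theorem~\ref{thm:AJS_Billey_for_Forests} to write
$$\ev_{\sigma}\forestpoly{F} = \sum_{\pi \in \mc{R}(\omega;F)} \prod_{i_p \in \pi}\bigl(t_{\sigma^{(p-1)}(i_p+1)} - t_{\sigma^{(p-1)}(i_p)}\bigr).$$
Since $\omega$ is in particular an ordinary reduced word for $\sigma$, every successive multiplication $\sigma^{(p-1)} \mapsto \sigma^{(p)} = \sigma^{(p-1)}s_{i_p}$ increases Coxeter length by one. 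This is equivalent to the inequality $\sigma^{(p-1)}(i_p) < \sigma^{(p-1)}(i_p+1)$, so every factor appearing in $\weight_\pi^\circ$ has the form $t_a - t_b$ with $a > b$, and admits the positive telescope
$$t_a - t_b = (t_a - t_{a-1}) + (t_{a-1} - t_{a-2}) + \cdots + (t_{b+1} - t_b) \in \ZZ_{\ge 0}[t_2-t_1, t_3-t_2, \dots].$$

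Because the Graham-positive polynomials form a sub-semiring of $\ZZ[\tl]$, both products of such factors and sums thereof remain Graham-positive, completing the proof. There is no serious obstacle here since all of the combinatorial heavy lifting has been packaged into Theorem~\ref{thm:AJS_Billey_for_Forests}; the only conceptual point to flag is that we really do need a \emph{noncrossing} reduced word to apply that formula, but since such a word is automatically an ordinary reduced word, the sign structure of each factor is guaranteed for free.
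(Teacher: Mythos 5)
Your argument is correct and follows the same route as the paper's proof: fix a noncrossing reduced word via Corollary~\ref{prop:NCRW}, apply Theorem~\ref{thm:AJS_Billey_for_Forests}, and observe that because $\omega$ is in particular an ordinary reduced word we have $\sigma^{(p-1)}(i_p) < \sigma^{(p-1)}(i_p+1)$ for every $p$, so each factor $t_{\sigma^{(p-1)}(i_p+1)} - t_{\sigma^{(p-1)}(i_p)}$ lies in $\ZZ_{\ge 0}[t_2-t_1, t_3-t_2,\dots]$. Your added remarks---explicitly citing the nonemptiness of $\rednc{\sigma}$ and spelling out the telescoping identity showing $t_a - t_b$ with $a>b$ is Graham-positive---are just small elaborations of steps the paper leaves implicit.
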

\begin{proof}
Fix $\omega = (i_{1}, \ldots, i_{\ell})\in \rednc{\sigma}$.
For $1 < p \le \ell$, we have that $i_p$ is not a descent for $\sigma^{(p-1)}$. 
So $\sigma^{(p-1)}(i_p+1) > \sigma^{(p-1)}(i_p)$.  
It follows that every factor in $\weight_{\pi}$ is of the form $t_{b} - t_{a}$ for $b > a$, where $\pi\in\mc{R}(\omega;F)$.
\end{proof}

Now, recall that evaluations of double Schubert polynomials are upper triangular with respect to the Bruhat order, in that $\ev_{\sigma}(\schub{v}) = 0$ if and only if $v \not\le \sigma$.  
This is immediate from the subword criterion for Bruhat comparison, but our analogous result requires some more legwork.  
Recall the bijection $\ForToNC: \indexedforests_{n} \to \NC_{n}$ given in Section~\ref{subsec:relating_nc_and_forests}.

\begin{thm}
\label{thm:AJSB_triangularity}
Let $\sigma\in \NC_n$ and $F\in \indexedforests_n$. Then $\ev_{\sigma}(\forestpoly{F}) = 0$ if and only if $\ForToNC(F) \not\le \sigma$ in the Bruhat order.
\end{thm}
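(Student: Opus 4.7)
The plan is to combine the AJS--Billey formula (\Cref{thm:AJS_Billey_for_Forests}) with the Gobet--Williams Bruhat criterion (\Cref{prop:gobet_williams_bruhat}) and the $\sylmin{}/\sylcont{}$ dichotomy from \Cref{lem:canoncial_word_facts}. First I would note that for any $\omega\in \rednc{\sigma}$, each summand $\weight_\pi^\circ$ appearing in the AJS--Billey formula is a product of linear forms $t_b-t_a$ with $a<b$; this is exactly the content of the proof of \Cref{cor:graham_positivity_forests}. Writing $u_k\coloneqq t_{k+1}-t_k$, the ring $\ZZ[\tl]$ is the polynomial ring $\ZZ[t_1,u_1,u_2,\ldots]$, and each $t_b-t_a=u_a+u_{a+1}+\cdots+u_{b-1}$ is a nonzero sum of these generators. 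Hence each $\weight_\pi^\circ$ is a nonzero Graham-positive element of $\ZZ_{\ge 0}[u_1,u_2,\ldots]$, and summing such products over a nonempty set of subwords cannot introduce cancellation. I would therefore conclude that $\ev_\sigma(\forestpoly{F})=0$ if and only if $\mc{R}(\omega;F)=\emptyset$.

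With this reduction in hand, I would specialize to $\omega=\ncrmin{\sigma}$ and set $\tau\coloneqq \ForToNC(F)$, so that it suffices to prove $\mc{R}(\ncrmin{\sigma};F)\ne \emptyset\iff \tau\le \sigma$. For the ``$\Leftarrow$'' direction, \Cref{prop:gobet_williams_bruhat} gives $\ncrmin{\tau}\subset \ncrmin{\sigma}$, and \Cref{lem:canoncial_word_facts}\ref{8.8(2)} furnishes the canonical subword $\sylmin{\tau}\in \mc{R}(\longword;F)$, which by definition lies inside $\ncrmin{\tau}$. Chaining these inclusions produces $\sylmin{\tau}\in \mc{R}(\ncrmin{\sigma};F)$, which is thus nonempty.

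For the ``$\Rightarrow$'' direction I would take any $\pi\in \mc{R}(\ncrmin{\sigma};F)$ and apply \Cref{lem:canoncial_word_facts}\ref{8.8(3)}, which forces $\pi\subset \sylcont{\tau}$: concretely, for each $i\in \internal{F}$ the row index $\operatorname{ht}_\pi(i)$ of the instance of $i$ selected by $\pi$ satisfies $\operatorname{ht}_\pi(i)\ge \operatorname{vert}_i(\tau)$. Simultaneously, $\pi\subset \ncrmin{\sigma}$ imposes $\operatorname{ht}_\pi(i)\le \operatorname{vert}_i(\sigma)$. Together these inequalities give $\operatorname{vert}_i(\tau)\le \operatorname{vert}_i(\sigma)$ for all $i\in \internal{F}$, and the remaining $i$ are trivial since then $\operatorname{vert}_i(\tau)=0$. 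Applying \Cref{prop:gobet_williams_bruhat} once more yields $\tau\le \sigma$.

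The main hurdle is really the first reduction: one must be genuinely sure that no Graham-positive cancellation can occur in the AJS--Billey sum, which is why the rewriting in terms of the algebraically independent variables $u_k$ is the crucial step. Once that is in place, both directions reduce to a clean comparison between the two ways of measuring how the Sylvester subword $\pi$ sits inside $\longword$---once via $\operatorname{vert}(\sigma)$ from above and once via $\operatorname{vert}(\tau)$ from below---and the Gobet--Williams criterion closes the argument.
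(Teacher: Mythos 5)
Your proposal is correct and follows essentially the same route as the paper: apply the AJS--Billey formula with $\omega = \ncrmin{\sigma}$, use Graham-positivity to reduce vanishing to emptiness of $\mc{R}(\ncrmin{\sigma};F)$, and then invoke \Cref{lem:canoncial_word_facts}\ref{8.8(2)}--\ref{8.8(3)} together with \Cref{prop:gobet_williams_bruhat} to translate that emptiness condition into the Bruhat comparison. The only cosmetic difference is that you spell out the no-cancellation step via the change of variables $u_k = t_{k+1}-t_k$ and unpack the $\operatorname{ht}_\pi$ versus $\operatorname{vert}$ inequalities more explicitly, both of which the paper leaves implicit.
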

\begin{proof}[Proof of~\Cref{thm:AJSB_triangularity}]
By \Cref{prop:gobet_williams_bruhat}, and the fact that a sum of Graham-positive nonzero expressions is nonzero, we know that $\ncrmin{\sigma}$ is a noncrossing reduced word for $\sigma$. By Theorem~\ref{thm:AJS_Billey_for_Forests} it suffices to show that
\[
\ForToNC(F) \leq \sigma \Longleftrightarrow \text{$\mc{R}(\ncrmin{\sigma};F)$ is nonempty}.
\]

First suppose that $\ForToNC(F) \le \sigma$.
By Proposition~\ref{prop:gobet_williams_bruhat} we know that $ \ncrmin{\ForToNC(F)}\subset \ncrmin{\sigma}$ and by \Cref{lem:canoncial_word_facts}\ref{8.8(2)} we know that $\sylmin{\ForToNC(F)}$ is a Sylvester word for $F$, so $\sylmin{\ForToNC(F)}\in \mathcal{R}(\ncrmin{\sigma}; F)$.  

Now suppose that $\mathcal{R}(\ncrmin{\sigma}; F)\neq \emptyset$. 
Then there exists a $\pi\in\sylv{F}$ that is a subword of $\ncrmin{\sigma}$. By~\Cref{lem:canoncial_word_facts}\ref{8.8(3)} this subword has the additional property that $\pi \subset \sylcont{\ForToNC(F)}$, so we must have $\ncrmin{\ForToNC(F)}\subset \ncrmin{\sigma}$.
Thus $\ForToNC(F) \le \sigma$ by \Cref{prop:gobet_williams_bruhat}.
\end{proof}

The AJS--Billey formula also shows that double Schubert polynomials have the property
\[
\ev_{w}\schub{w}(\xl;\tl)=\prod_{(a, b) \in \operatorname{Inv}(w)}(t_{w(a)}-t_{w(b)}).
\]
A similar formula exists for $\ev_{\sigma_F}(\forestpoly{F})$.  For $\sigma \in \NC_{n}$, define the \emph{noncrossing inversion set} of $\sigma$ to be 
\[
\invnc{\sigma}\coloneqq  \{(i,j) \in [n]^{2} \;|\; \text{$i<j$, $\;\sigma(i\,j) \in \NC_{n}\;$ and $\;\sigma(i)>\sigma(j)$}\ \}.
\]

\begin{eg}
    For  $\sigma=82154763\in \NC_{8}$ in \Cref{eg:noncrossing} we have 
\[
\invnc{\sigma} = \{(1,3), (1, 8), (2, 3), (4, 5), (4, 8), (6, 7), (6, 8)\}.
\]
\end{eg}

\begin{prop}
\label{prop:invNC}
For $\sigma \in \NC_{n}$, $(i,j) \in \invnc{\sigma}$ if and only if $i$ is the smallest element in its cycle and either
\begin{enumerate}
\item $j$ belongs to the same cycle as $i$, or

\item $\sigma(j) < i<j$ and $j$ is minimal with respect to this property.  
\end{enumerate}
\end{prop}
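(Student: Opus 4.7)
The plan is to analyze $(i,j)\in \invnc{\sigma}$ directly from the cycle structure of $\sigma$, splitting according to whether $i$ and $j$ lie in the same or distinct cycles. The starting observation is that each cycle $C=\{c_1<\cdots<c_r\}$ of $\sigma\in\NC_n$ acts as the long backwards cycle, so $\sigma(c_m)=c_{m-1}$ for $m>1$ with $\sigma(c_1)=c_r$; in particular $\sigma(i)>i$ exactly when $i=\min C$.

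First I would handle the case where $i$ and $j$ belong to the same cycle $C$. Writing $i=c_k$, $j=c_l$ with $k<l$, the comparison $\sigma(i)=c_{k-1}$ versus $\sigma(j)=c_{l-1}$ forces $\sigma(i)>\sigma(j)$ to hold only when $k=1$, i.e., $i=\min C$. Conversely, when $i=\min C$, a quick calculation shows $\sigma\cdot(i,j)$ splits $C$ into the two sub-intervals $\{c_1,\dots,c_{l-1}\}$ and $\{c_l,\dots,c_r\}$, each acting as a long backwards cycle, and the refined partition stays noncrossing because every other cycle of $\sigma$ nested in a gap of $C$ is contained in exactly one of these sub-intervals. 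This yields condition (1).

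The core of the argument is the case where $i\in A$ and $j\in B$ with $A\ne B$. Multiplication by $(i,j)$ merges $A$ and $B$ into a single block $A\cup B$, and $\sigma\cdot(i,j)\in \NC_n$ demands both (a) that the merged block act as the long backwards cycle on the sorted list $A\cup B$, and (b) that the resulting partition remain noncrossing with every other cycle of $\sigma$. For (a), I would compare $\sigma\cdot(i,j)$ pointwise with the target long backwards cycle on $A\cup B$; since noncrossing blocks $A,B$ are either disjoint intervals or one is nested in a gap of the other, running through the four resulting configurations and imposing $i<j$ together with $\sigma(i)>\sigma(j)$ leaves only the case where $A$ is nested in a gap $(b_s,b_{s+1})$ of $B$, with $i=\min A$ and $j=b_{s+1}$. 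The remaining configurations are ruled out either because the pointwise comparison fails or because one of $i<j$, $\sigma(i)>\sigma(j)$ fails.

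Finally I would interpret requirement (b) as saying that $B$ must be the \emph{innermost} cycle of $\sigma$ enclosing $A$: a cycle $C'$ strictly nested between them would cross the merged block $A\cup B$. A short noncrossing argument then shows that $j=b_{s+1}$ is the smallest integer $j'>i$ with $\sigma(j')<i$, because any such $j'$ must lie in a cycle whose sorted-consecutive pair straddles $i$ (hence encloses $A$), and the innermost such cycle is precisely $B$. Combined with $\sigma(j)=b_s<i$, this recovers condition (2) and completes the characterization. The main obstacle is the structural case analysis in the distinct-cycle case---specifically checking that constraint (a) singles out the nested configuration with the stated indices $k=1$, $l=s+1$, and then upgrading constraint (b) into the innermost-enclosing characterization whose minimality reformulation gives the clean statement in the proposition.
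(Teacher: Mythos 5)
Your proposal is correct and follows essentially the same route as the paper's proof: a case split on whether $i$ and $j$ lie in the same or in distinct cycles, using the long-backwards-cycle description of noncrossing permutations, the observation that in the distinct-cycle case $c_A c_B(i,j)$ must equal the backwards cycle $c_{A\sqcup B}$ (which forces $i=\min A$), and the characterization that the result is noncrossing precisely when no cycle is nested strictly between $A$ and $B$, which is then reformulated as the minimality of $j$. Your treatment of the distinct-cycle case is somewhat more systematic than the paper's terse two-sentence version (you enumerate the positional configurations of $A,B$ and separate the backwards-cycle check from compatibility with the remaining cycles), but the underlying argument and key observations coincide.
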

\begin{proof}
Let $A$ and $B$ denote the parts of $\sigma$ containing $i$ and $j$, respectively.  
If $A = B$, then $\sigma(i)>\sigma(j)$ occurs only when $i=\min A$. 
In this case, clearly $\sigma(i,j)\in \NC_{n}$ as it splits the cycle $A$ into the product $c_{\{1,\ldots,\sigma(j)\}\cap A}\,c_{\{\sigma(j)+1,\ldots,n\}\cap A}$ of two disjoint cycles. 

If, on the other hand, $A \neq B$ then $\sigma(i)>\sigma(j)$ if and only if $A$ is nested in $B$, and $\sigma(j)<i$. 
In this case $c_Ac_B(i,j)=c_{A\sqcup B}$ exactly when $i=\min A$, and $\sigma (i,j)$ is noncrossing exactly when there is no cycle $C$ of $\sigma$ nested between $B$ and $A$, in that $A$ is nested in $C$ and $C$ is nested in $B$.  
The latter is equivalent to $j$ being minimal with respect to the property that $\sigma(j)<i<j$.
\end{proof}

\begin{thm}
\label{thm:forestatownperm}
For $F\in \indexedforests_n$ and $\sigma=\ForToNC(F)$ we have  
\[
\ev_{\ForToNC(F)}(\forestpoly{F}) 
= \prod_{(a,b) \in \invnc{\sigma}} (t_{\sigma(a)} - t_{\sigma(b)}).
\]
\end{thm}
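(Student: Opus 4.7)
My plan is to prove the formula by induction on $|F|$, leveraging the recursive form of the AJS--Billey formula for double forest polynomials (equation~\eqref{eq:two_cases_recursion} from the proof of Theorem~\ref{thm:AJS_Billey_for_Forests}). The base case $F=\emptyset$ gives $\sigma=\idem$, $\invnc{\sigma}=\emptyset$, and both sides equal $1$.

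For the inductive step, pick $a\in\desnc{\sigma}$ (nonempty by Proposition~\ref{prop:NCsidec}). The recursion gives
\[
\ev_\sigma\forestpoly{F} \;=\; \ev_{\sigma s_a}\forestpoly{F} \;+\; \begin{cases}(t_{\sigma(a)}-t_{\sigma(a+1)})\,(\ev_{\sigma_{\wh{a}}}\forestpoly{F/a})(\wh{\tl}_a) & \text{if } a\in\qdes{F}, \\ 0 & \text{otherwise.}\end{cases}
\]
The first summand vanishes by Theorem~\ref{thm:AJSB_triangularity}, since $\ell(\sigma s_a)<\ell(\sigma)$ forces $\ForToNC(F)\not\le\sigma s_a$; meanwhile the left-hand side is nonzero by the same theorem (as $\ForToNC(F)\le\sigma$ trivially), so $a\in\qdes{F}$. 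I would then verify the combinatorial compatibility $\ForToNC(F/a)=\sigma_{\wh{a}}$ by inspecting the $\operatorname{LC}$ construction of Section~\ref{subsec:relating_nc_and_forests}, or equivalently by checking codes: $c(F/a)=(c_1,\dots,c_{a-1},c_a-1,c_{a+2},\dots)$ agrees with the code associated to $\sigma_{\wh{a}}$ after tracking how deleting $a$ from its block in $\sigma$ affects each ``minimum-of-block'' entry. The inductive hypothesis then applies to $\ev_{\sigma_{\wh{a}}}\forestpoly{F/a}$.

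To conclude, I would exhibit an explicit bijection $\{(a,a+1)\}\sqcup\invnc{\sigma_{\wh{a}}}\to\invnc{\sigma}$ sending $(a,a+1)$ to itself, $(c',d')\in\invnc{\sigma_{\wh{a}}}$ to $(g(c'),g(d'))$ when $c'\ne a$, and $(a,d')$ to $(a,g(d'))$ when $c'=a$, where $g(x)=x+\delta_{x\ge a}$. A case analysis using Proposition~\ref{prop:invNC} confirms well-definedness and bijectivity, and shows that the substitution $t_j\mapsto t_{g(j)}$ coming from $\wh{\tl}_a$ carries each factor of $\ev_{\sigma_{\wh{a}}}\forestpoly{F/a}$ to the factor of its image. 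The subtlest case is $c'=a$: here $\sigma_{\wh{a}}(a)=\max(B)-1$ with $B$ the block of $a$ in $\sigma$, and after applying $g$ this becomes $\max B=\sigma(a)$; meanwhile $(a,a+1)\in\invnc{\sigma}$ by Proposition~\ref{prop:invNC} (case~(1) if $a+1\in B$, case~(2) otherwise). The main obstacle is the combinatorial compatibility $\ForToNC(F/a)=\sigma_{\wh{a}}$: although conceptually forced by the proof structure, verifying it cleanly requires carefully unwinding the nested-forest construction with separate handling for the two possibilities for $a$'s position in $\sigma$.
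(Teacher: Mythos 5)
Your argument is correct in outline but follows a genuinely different route from the paper's proof, so let me compare them.

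The paper applies the AJS--Billey formula (Theorem~\ref{thm:AJS_Billey_for_Forests}) directly with the canonical word $\omega=\ncrmin{\sigma}$. Lemma~\ref{lem:canoncial_word_facts}\ref{8.8(3)} shows $\mc{R}(\ncrmin{\sigma};F)$ is a singleton (only $\sylmin{\sigma}$ survives), so $\ev_\sigma\forestpoly{F}$ is a single product of $|F|$ linear factors. It then observes that $(t_{\sigma(a)}-t_{\sigma(b)})\mid\bigl(\ev_\sigma\forestpoly{F}-\ev_{\sigma(a,b)}\forestpoly{F}\bigr)$ combined with $\ev_{\sigma(a,b)}\forestpoly{F}=0$ for $(a,b)\in\invnc{\sigma}$ (Theorem~\ref{thm:AJSB_triangularity}) forces the full product $\prod_{(a,b)\in\invnc{\sigma}}(t_{\sigma(a)}-t_{\sigma(b)})$ to divide $\ev_\sigma\forestpoly{F}$; equality then follows because both are monic products of the same number $|F|=|\invnc{\sigma}|$ of linear factors (the equality $|\invnc{\sigma}|=|\operatorname{Sp}(F)|=|F|$ being the content of the remark immediately following the theorem). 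Your proof instead runs the induction one noncrossing-descent at a time, killing the $\ev_{\sigma s_a}$ term by triangularity and matching the remaining factor against a hands-on bijection $\{(a,a+1)\}\sqcup\invnc{\sigma_{\wh{a}}}\to\invnc{\sigma}$. I checked this bijection carefully, including your deliberate deviation from the naive reindexing in the $c'=a$ case, and it is correct and weight-preserving; in particular $\sigma_{\wh{a}}(a)=\max B-1\mapsto\max B=\sigma(a)$ under $g$, and the fact that $c'=a$ forces $a+1\in B$ (so $a$ is genuinely the minimum of its $\sigma$-block) keeps things consistent.

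The one real piece of extra work your route demands is the identity $\ForToNC(F/a)=\sigma_{\wh{a}}$, which you flag but do not establish. Calling it ``conceptually forced by the proof structure'' overstates the case: nonvanishing of $\ev_{\sigma_{\wh{a}}}\forestpoly{F/a}$ only gives $\ForToNC(F/a)\le\sigma_{\wh{a}}$, and upgrading this to equality is not automatic (lengths in $S_{n-1}$ vs.\ $S_n$ do not line up by a fixed offset). It is a true statement and amenable to the kind of $\operatorname{LC}$-chasing you suggest---splitting on whether the terminal node $v_a$ is a left or right child---but it is a genuine lemma that has to be proved, whereas the paper's divisibility argument sidesteps it entirely by working globally with a single canonical word. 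What your approach buys in exchange is a refinement: you get an explicit recursive decomposition $\invnc{\sigma}\cong\{(a,a+1)\}\sqcup\invnc{\sigma_{\wh{a}}}$ that makes the factor-by-factor matching transparent, and also directly produces $|\invnc{\sigma}|=|F|$ by induction, which the paper leaves to a separate verification in the following remark. If you finish the $\ForToNC(F/a)=\sigma_{\wh{a}}$ lemma, this is a complete alternative proof.
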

\begin{proof}
We apply \Cref{thm:AJS_Billey_for_Forests} with $\omega=\ncrmin{F}$, which lies in $\rednc{\sigma}$ by \Cref{prop:gobet_williams_bruhat}.

By~\Cref{lem:canoncial_word_facts}\ref{8.8(3)}, apart from $\sylmin{\ForToNC(F)}$, every Sylvester subword of $F$ in $\longword$ contains a letter that is not in $\ncrmin{\ForToNC(F)}$, so $\mathcal{R}(\omega; F)$ contains a unique element which gives the same Sylvester word as $\sylmin{\ForToNC(F)}$.  
By \Cref{thm:AJS_Billey_for_Forests} we know that this is of the form $\prod_{i=1}^{|F|} (t_{c_i}-t_{d_i})$
for some sequences $c_1,\ldots,c_{|F|}$ and $d_1,\ldots,d_{|F|}$ with $c_i>d_i$ for all $i$. 

Now, for any polynomial $f$, permutation $w$ and $i\ne j$ we have the divisibility relations $(t_{w(i)}-t_{w(j)}) \big| (\ev_{w} f-\ev_{w(i,j)}f)$. 
So taking $w=\sigma$ and noting that $\ev_{\sigma(i,j)}\forestpoly{F}=0$ for $(i,j)\in \invnc{\sigma}$ by \Cref{thm:AJSB_triangularity} we deduce that $\prod_{(a,b)\in \invnc{\sigma}}(t_{\sigma(a)}-t_{\sigma(b)})$ divides $\ev_{\sigma}(\forestpoly{F})$. 
\end{proof}

\begin{rem}
The set $\invnc{\sigma}$ has a combinatorial description in terms of indexed forests.  
For $F\in \suppfor{n}$ and $v\in \internal{F}$, we define the spread of $v$ as the pair $(i,j)$ such that $i$ is the leftmost leaf descendant of $v$ and $j$ is the rightmost leaf descendant of $v$. 
Define the \emph{spread set} of $F$ as
\[
    \operatorname{Sp}(F)=\{(i,j)\suchthat i<j\text{ spread of some }v\in \internal{F}\}.
\]
Then one can directly verify that $\operatorname{Sp}(F)=\invnc{\ForToNC(F)}$, so an alternative statement of~\Cref{thm:forestatownperm} is 
\[
    \ev_{\ForToNC(F)}(\forestpoly{F}) =\prod_{(a,b)\in \operatorname{Sp}(F)}(t_{\sigma(a)}-t_{\sigma(b)}).
\]
\end{rem}

\section{Monoids}
\label{sec:monoids}
In this section we recall combinatorial monoids which structure the composition of equivariant quasisymmetric divided difference operators and equivariant Bergeron--Sottile maps.  
In previous work~\cite{NST_a, NST_c}, it was established that in the non-equivariant setting the analogous compositions are intimately related to rewriting rules in an augmented version of the positive Thompson monoid.  
Elements of this monoid are parameterized by the indexed forest and nested forests described in Section~\ref{sec:preliminaries}.  
In order to generalize the results of~\cite{NST_a, NST_c} to the equivariant setting, and in particular to account for the variable depletion in Theorem~\ref{thm:ForestDesiderata}, we undertake an even finer study of this monoid and quotients thereof here.

Throughout we denote by $\reseq$ be the set of strings of letters from the alphabet $$\bigcup_{i=1}^{\infty} \{\rletter{i}^-,\rletter{i}^+,\eletter{i}\}.$$
This set has the structure of a free monoid under concatenation.
For example, the product of $\rletter{2}^-\rletter{3}^+\eletter{2} \in \reseq$ and $\rletter{1}^+\rletter{1}^- \in \reseq$ is $\rletter{2}^-\rletter{3}^+\eletter{2}\rletter{1}^+\rletter{1}^-\in \reseq$. 

\subsection{The marked nested forest monoid}
\label{subsec:mnfor}

We quickly recall some notions from \cite{NST_c}. 
A \emph{marked nested forest} \cite[Definition 3.9]{NST_c} is a nested forest $\wh{F}$ in $\nfor$ alongside the extra data marking some finite subset of the roots nodes in the constituent trees of $\wh{F}$ by $\otimes$ such that the root nodes of indexed trees that are nested are \textit{necessarily} marked. 
Write $\mnfor$ for the set of marked nested forests. 
This has a natural monoid structure $\wh{F}\cdot \wh{G}$ given by joining the $i$th leaf of $\wh{F}$ to the $i$th unmarked root of $\wh{G}$ for all $i\in \NN$ and preserving all markings; see \cite[Definition 3.11]{NST_c}.
Just as $\indexedforests$ has a presentation by the Thompson monoid $\Th$,
in \cite{NST_c} it was shown that $\mnfor$ has a similar presentation by the \emph{augmented Thompson monoid} introduced in
\cite[Definition 3.7]{NST_c}.
\begin{thm}[{\cite[Theorem 3.12]{NST_c}}]
\label{thm:mnforpresentation}
    $\mnfor$ has a presentation given by the quotient of the free monoid $\langle \rletter{1}^-,\rletter{2}^-,\ldots,\eletter{1},\eletter{2},\ldots\rangle\subset \reseq$ by the formal commutation relations
    \begin{gather*}\eletter{i} \eletter{j}=\eletter{j} \eletter{i+1}\text{ if }i>j, \qquad \qquad 
    \eletter{i} \rletter{j}^-=\rletter{j}^- \eletter{i+1}\text{ if }i\ge j,\\
    \rletter{i}^- \eletter{j}=\eletter{j} \rletter{i+1}^-\text{ for }i >j, \qquad \qquad 
    \rletter{i}^- \rletter{j}^-=\rletter{j}^-  \rletter{i+1}^-\text{ if }i\ge j.
    \end{gather*}
    under the identification in Figure~\ref{fig:mnestfor}.
\end{thm}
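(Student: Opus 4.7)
The plan is to present the claimed isomorphism as the composite of three verifications: the generating letters map to well-defined elements of $\mnfor$ that respect the relations; the resulting monoid map is surjective; and it is injective via a normal-form argument controlled by a code-type statistic.

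First, I would verify that the four families of relations hold in $\mnfor$. Under the identification from Figure~\ref{fig:mnestfor}, $\eletter{i}$ corresponds to the $i$th elementary forest $\underline{i}$ (an unmarked binary carat at position $i$) and $\rletter{i}^-$ to the marked-root element insertable at position $i$. Each relation is a local claim about joining the $i$th leaf of one forest to the $i$th unmarked root of the next. Because every relation has the shape ``small-index letter on the right commutes past a large-index letter on the left, at the cost of incrementing the large index by one,'' each side can be checked by drawing the two composites and observing that the resulting marked nested forests agree leaf-by-leaf and marking-by-marking. This is the analogue of the check for $\Th$ on $\indexedforests$ and is essentially diagrammatic.

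Next I would establish surjectivity. Given $\wh{F}\in\mnfor$, I would build $\wh{F}$ one generator at a time by an induction on $|\internal{\wh{F}}|+(\text{number of marked roots})$, factoring off an extremal generator at the right: pick the terminal internal node $v$ of smallest flag value among those whose removal leaves a valid marked nested forest, and factor $\wh{F} = \wh{F}'\cdot x$ where $x\in \{\eletter{i}, \rletter{i}^-\}$ depending on whether removing $v$ leaves a genuine internal node or produces a new marked root. Since the monoid action from Section~\ref{subsec:mnfor} is defined by joining to the $i$th unmarked root, this recovers $\wh{F}$ from its generator sequence.

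For injectivity, the key step is to produce a normal form and a canonical statistic. I would use the relations to push every occurrence of $\rletter{}^-$ to the left of every occurrence of $\eletter{}$: iteratively applying $\eletter{i}\rletter{j}^- = \rletter{j}^-\eletter{i+1}$ when $i\ge j$ and $\rletter{i}^-\eletter{j}= \eletter{j}\rletter{i+1}^-$ when $i>j$ lets one write any word as $\rletter{a_1}^-\cdots \rletter{a_p}^-\cdot \eletter{b_1}\cdots \eletter{b_q}$, after which the Thompson-type relations among the $\eletter{}$'s and among the $\rletter{}^-$'s sort each block so that $a_1\le \cdots \le a_p$ and $b_1\le \cdots \le b_q$. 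This gives a well-defined candidate normal form. To see that different normal forms give different marked nested forests, I would define an ``augmented code'' $\sfc(\wh{F}) = (c_i^{\rletter{}}, c_i^{\eletter{}})_{i\ge 1}$ where $c_i^{\eletter{}}$ counts internal nodes with flag value $i$ (as for $\indexedforests$) and $c_i^{\rletter{}}$ counts marked roots at position $i$, and show that the exponents $(a_1,\dots,a_p)$ and $(b_1,\dots,b_q)$ in the sorted normal form recover this augmented code. Uniqueness of the augmented code for a marked nested forest (which follows from its definition) then yields injectivity.

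The main obstacle will be the second half of the injectivity step: checking that the sorting procedure is confluent and that the resulting exponent vectors really match the augmented code of the image forest. Because the relations shift indices by one, a careless reordering can introduce spurious increments, so the bookkeeping must be done carefully. I expect this to parallel the proof that $\indexedforests \cong \Th$ via the flag/code bijection recalled in Section~\ref{subsec:trees_and_forest}, with the marked roots treated as an independent copy of Thompson-like letters that interacts with the $\eletter{}$'s only through index-shifts. Modulo that calculation, the three steps assemble into the asserted presentation.
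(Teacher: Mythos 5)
The paper does not prove this statement; it is imported verbatim from \cite[Theorem~3.12]{NST_c} and no argument is given, so there is nothing internal to compare your proposal against. Evaluating your argument on its own merits, the injectivity step has a genuine gap: the normal form you assert does not exist for every word. Consider the two-letter word $\eletter{1}\rletter{2}^-$. No relation applies to it: the rule $\eletter{i}\rletter{j}^-=\rletter{j}^-\eletter{i+1}$ needs $i\ge j$, which fails at $(i,j)=(1,2)$, and reading $\rletter{i}^-\eletter{j}=\eletter{j}\rletter{i+1}^-$ from right to left would need $i>j$ with $(i,j)=(1,1)$, which also fails. Since all four relation families are length-preserving commutations, the congruence class of $\eletter{1}\rletter{2}^-$ is the singleton $\{\eletter{1}\rletter{2}^-\}$, which is manifestly not of the form $\rletter{a_1}^-\cdots\rletter{a_p}^-\eletter{b_1}\cdots\eletter{b_q}$.

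This is not a bookkeeping issue; it is forced by the structure of $\mnfor$. In the pictorial identification, $\eletter{1}\cdot\rletter{2}^-$ is the nested forest whose nontrivial tree is a carat on the leaf set $\{1,3\}$ with a marked trivial tree on $\{2\}$ nested strictly inside it. Every word of your proposed shape first builds an unmarked indexed forest via the $\eletter{}$-block and then repeatedly marks one of its roots, and each such root is necessarily non-nested at the moment it is marked; so your normal forms can never realize a nested marked root, which is precisely the phenomenon that distinguishes $\mnfor$ from $\indexedforests$ decorated with a free choice of root markings. Relatedly, the augmented code $(c_i^{\rletter{}}, c_i^{\eletter{}})$ does not record nesting, so even after the sorting step it cannot separate all elements. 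To repair the argument you would need either a normal form that keeps irreducible ``frozen'' blocks such as $\eletter{i}\rletter{i+1}^-$ in place (these are exactly the local patterns encoding nesting), or to abandon the rewriting strategy in favor of a direct complete invariant that distinguishes nested from non-nested marked roots.
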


\begin{figure}[!h]
    \centering
    \includegraphics[width=\linewidth]{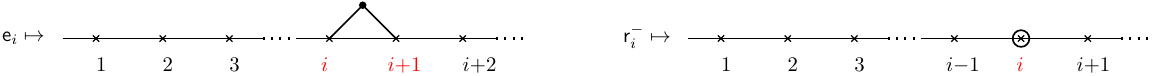}
    \caption{Generators for $\mnfor$}
    \label{fig:mnestfor}
\end{figure}

Note that if we restrict to the monoid generated by $\eletter{1},\eletter{2},\ldots$ then identifying these generators with $1,2,\ldots$ we recover the presentation $\langle i\cdot j=j\cdot (i+1)\suchthat i>j\rangle$ of  $\Th\cong \indexedforests$.

\subsection{The $\star$-monoid}
\label{subsec:partial_shift_monoid}

We define the \emph{$\star$-monoid $\mc{S}$} 
 as the submonoid of $\mnfor$ generated by $\rletter{1}^-,\rletter{2}^-,\ldots$.
Identifying these generators with the integers $1,2,\ldots$, this has a presentation 
$$
\mc{S}\coloneqq\langle i\suchthat i\star  j=j\star  (i+1)\text{ for }i\ge j\rangle.
$$
The relations allow any element  to be rewritten canonically in strictly increasing order, so the elements of $\mathcal{S}$ can be identified with finite subsets of $\NN$.
Under this correspondence if $A,B\subset \NN$ are finite subsets then
\[
A\star B =\{\overline{B}_i\suchthat i\in A\}\cup B.
\]
where $\overline{B}_{i}$ denotes the $i$th element of $\NN \setminus B$. 
For instance, if $A = \{1, 3, 4\} = 1 \star 3 \star 4$ and $B = \{2, 3, 6\} = 2 \star 3 \star 6$, then $A \star B = \{1, 2, 3, 5, 6, 7\}$.

Note that the defining relations of $\Th$ demand $i\cdot j=j\cdot (i+1)$ for $i>j$, so there is a natural map
$\indexedforests\to \mathcal{S}$.
We can describe this map in a straightforward way.
Given $F\in\indexedforests$ let $$L(F)=\{x\suchthat x,x+1\in \supp{F}\},$$
or equivalently as the set of canonical labels of the internal nodes of $F$. 
We note that this is in fact the definition of support in \cite{NT_forest}.

\begin{lem}
    For an indexed forest $F=i_1\cdots i_k$ we have $L(F)=i_1\star\cdots \star i_k$.
\end{lem}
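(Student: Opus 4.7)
The plan is to induct on $k$; the base case $k=0$ is immediate since $L(\emptyset) = \emptyset$ matches the empty product in $\mathcal{S}$, which is $\emptyset \subset \NN$. The inductive step reduces to verifying the one-letter identity
\[
L(F' \cdot \underline{i}) \;=\; L(F') \star \{i\} \qquad \text{for every } F' \in \indexedforests \text{ and every } i \in \NN,
\]
since $L(F) = L(F' \cdot \underline{i_k}) = L(F') \star \{i_k\} = (i_1 \star \cdots \star i_{k-1}) \star i_k$ by the inductive hypothesis applied to $F' = i_1 \cdots i_{k-1}$.

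To verify this identity I will work with $L$ in its incarnation as the set of canonical labels of internal nodes (the rightmost leaf descendant of the left child). Setting $F \coloneqq F' \cdot \underline{i}$, the monoidal product replaces the leaf at position $i$ of $F'$ by the new internal node $v_*$ coming from $\underline{i}$, whose two leaf-children now occupy positions $i$ and $i+1$ of $F$. Consequently, leaves of $F'$ at positions $< i$ sit at the same positions in $F$, while leaves at positions $> i$ are shifted up by one. Tracking canonical labels through this operation yields: the new node $v_*$ has canonical label $i$ in $F$; any internal node of $F'$ with canonical label $m<i$ keeps the same label in $F$; and any internal node with label $m \ge i$ acquires label $m+1$ in $F$. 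The boundary case $m=i$ is the most delicate and is the main technical point: there the rightmost leaf descendant of $v_L$ in $F'$ is exactly the leaf that gets replaced by $v_*$, whose own rightmost leaf descendant in $F$ is at position $i+1$.

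Putting this together gives
\[
L(F) \;=\; \{a \in L(F') : a < i\} \;\cup\; \{a+1 : a \in L(F'),\ a \ge i\} \;\cup\; \{i\}.
\]
On the other side, the explicit description $A \star B = \{\overline{B}_j : j \in A\} \cup B$ specialized to $B = \{i\}$ gives $\overline{B}_j = j$ for $j<i$ and $\overline{B}_j = j+1$ for $j \ge i$, so $L(F') \star \{i\}$ is exactly the set above. This closes the induction. Apart from the boundary-case analysis for the canonical-label shift, everything is a direct unpacking of the definitions of the monoid operation on $\indexedforests$ and of the $\star$-product on finite subsets of $\NN$.
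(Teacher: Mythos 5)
Your proof is correct and is exactly the argument the paper has in mind: the paper's (one-sentence) proof reduces to the single-step identity $L(G\cdot \underline{i})=L(G)\star i$ and leaves the verification to the reader, while you carry out that verification by tracking canonical labels through the graft and matching against the explicit description of $A\star\{i\}$. The boundary case $a=i$ you flag is precisely the point the paper's brevity hides, and your handling of it is right.
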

\begin{proof}
    This follows inductively from the fact that $L(G\cdot i)=L(G)\star i$ for any indexed forest $G$.
\end{proof}

\subsection{Marked bicolored nested forests and marked $RE^{\pm}$-forests}

\begin{defn}
A \emph{marked bicolored nested forest} is a marked nested forest where each internal node has been colored either black or white $\{\bnode,\wnode\}$, and denote $\mbnfor$ for the set of all such forests. 
This has a monoid structure inherited from $\mnfor$.
\end{defn}

\begin{figure}[!ht]
    \centering
    \includegraphics[scale=1]{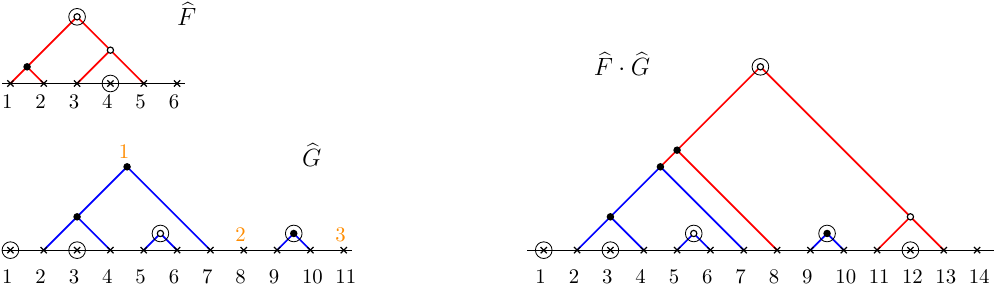}
    \caption{Two elements in $\mbnfor$ (left) and their product (right)}
    \label{fig:monoid_mbnestfor_eg}
\end{figure}

    \begin{prop}
    \label{fact:mbnrelations}
        The monoid structure on $\mbnfor$ can be described as the quotient of the free monoid $\reseq$ by the formal commutation relations
        \begin{gather*}
        \eletter{i} \eletter{j}=\eletter{j} \eletter{i+1}\text{ for }i> j\\
\rletter{i}^- \rletter{j}^-=\rletter{j}^- \rletter{i+1}^-\text{ for }i\ge j, \qquad \qquad 
\rletter{i}^+ \rletter{j}^+=\rletter{j}^+ \rletter{i+1}^+\text{ for }i>j,\\
\rletter{i}^+ \rletter{j}^-=\rletter{j}^- \rletter{i+1}^+\text{ for }i\ge j, \qquad \qquad 
\rletter{i}^- \rletter{j}^+=\rletter{j}^+ \rletter{i+1}^-\text{ for $i>j$},\\
\eletter{i} \rletter{j}^-=\rletter{j}^- \eletter{i+1}\text{ for }i\ge j, \qquad \qquad 
\rletter{i}^- \eletter{j}=\eletter{j} \rletter{i+1}^-\text{ for }i >j\\
\eletter{i} \rletter{j}^+=\rletter{j}^+ \eletter{i+1}\text{ for }i>j, \qquad \qquad 
\rletter{i}^+ \eletter{j}=\eletter{j}\rletter{i+1}^+\text{ for }i>j.
\end{gather*}
under the identification in Figure~\ref{fig:mbnestfor}.

\begin{figure}[!h]
    \centering
    \includegraphics[width=\linewidth]{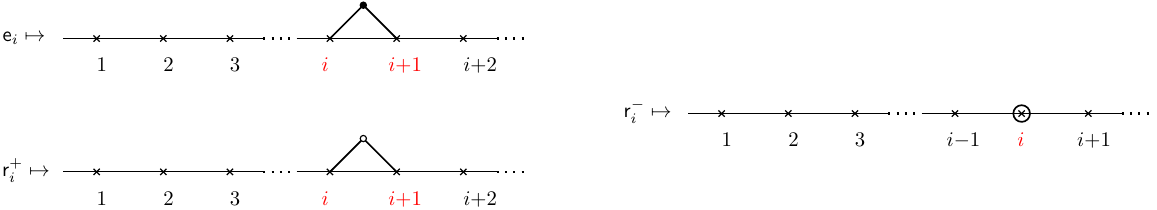}
    \caption{Generators for $\mbnfor$}
    \label{fig:mbnestfor}
\end{figure}
\end{prop}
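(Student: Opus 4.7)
The plan is to follow the template of \Cref{thm:mnforpresentation}, extending the argument to accommodate the additional generators $\rletter{i}^+$ and the bicoloring of internal nodes. The proof splits naturally into two parts: verifying that the listed relations hold in $\mbnfor$, and showing they are sufficient.

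For the first part, I would verify each of the eight families of commutation relations diagrammatically. For a given relation, I would interpret each generator via \Cref{fig:mbnestfor}, compute both sides using the monoid operation on $\mbnfor$ (attach the $k$th leaf of the left factor to the $k$th unmarked root of the right factor, preserving all markings and colorings), and confirm the results agree as marked bicolored nested forests. The index shift $j \mapsto j+1$ on one side of each relation reflects how a position label on the right is displaced when the generator on the left inserts its internal node or marked root. The strict versus non-strict bounds track whether two generators are allowed to coexist at the same position: same-type pairs ($\rletter{i}^+\rletter{j}^+$, $\eletter{i}\eletter{j}$) require strict $i>j$, while mixed-type pairs admit equality because they act on disjoint features of the forest (e.g.\ a marked versus an unmarked root).

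For the second part, I would establish a normal form for words in $\reseq$ modulo the listed relations. Using the four relations of the form ``$\rletter{i}^+ \cdot (\text{non-}{+})=(\text{non-}{+})\cdot \rletter{i+1}^+$'' I move every $\rletter{}^+$-letter to the left of every $\eletter{}$- and $\rletter{}^-$-letter. Within the resulting $\rletter{}^+$-prefix, repeated application of $\rletter{i}^+\rletter{j}^+=\rletter{j}^+\rletter{i+1}^+$ for $i>j$ produces a strictly increasing sequence of indices, which encodes the prefix as a finite subset $A\subset \NN$. The remaining suffix lies in the submonoid generated by $\{\eletter{i},\rletter{i}^-\}$, and its normal form is supplied by \Cref{thm:mnforpresentation} as an element of $\mnfor$. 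I would then argue that the induced map from pairs $(A, \wh{F})\in 2^{\NN}_{\mathrm{fin}}\times \mnfor$ to $\mbnfor$ is a bijection: the set $A$ records precisely the positions of the roots contributed by $\rletter{}^+$-letters (recognizable by their color or marking), while the $\mnfor$-part recovers the rest of the forest.

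The main obstacle is the bookkeeping in the first part: there are eight families of commutations to check, and one must verify that each index shift is correct and that the distinction between strict and non-strict inequalities exactly matches the combinatorial constraint imposed by the monoid operation. The case $\rletter{i}^+\rletter{j}^- = \rletter{j}^-\rletter{i+1}^+$ with $i=j$ is the most subtle, since here two same-indexed generators act at the same position but on features of different types, and one must confirm that the composite is unambiguously defined. Once this case analysis is complete, the normal-form argument is a straightforward extension of the one used for $\mnfor$ in \cite{NST_c}.
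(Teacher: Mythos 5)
Your first part—checking each of the eight families of relations by computing both sides diagrammatically in $\mbnfor$—is correct but far more work than the paper requires. The paper's proof is a one-liner: $\mbnfor$ is $\mnfor$ with the generator $\eletter{i}$ split into two colored copies, $\eletter{i}$ (black internal node) and $\rletter{i}^+$ (white internal node), and the listed relations are exactly the color-decorated versions of the $\mnfor$ relations in \Cref{thm:mnforpresentation}. Both soundness and completeness of the new presentation then reduce to the $\mnfor$ case, because each relation is a type-preserving commutation whose color-decorations are forced. In particular $\rletter{i}^+$ is an internal-node generator, not a second flavor of marked root; the strict inequality $i>j$ in $\rletter{i}^+\rletter{j}^+=\rletter{j}^+\rletter{i+1}^+$ (matching $\eletter{i}\eletter{j}$, unlike the non-strict $i\ge j$ in $\rletter{i}^-\rletter{j}^-$) already reflects this.

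That observation is what breaks your second part. The proposed normal form—a $\rletter{}^+$-prefix encoding a finite set $A$, followed by a word in $\eletter{}$ and $\rletter{}^-$ handled by \Cref{thm:mnforpresentation}—does not exist. For a subword $\eletter{a}\rletter{b}^+$ the relations let you move the $\rletter{}^+$ leftward only when $a>b$ (via $\eletter{i}\rletter{j}^+=\rletter{j}^+\eletter{i+1}$ with $i>j$) or when $b\ge a+2$ (via the reverse of $\rletter{i}^+\eletter{j}=\eletter{j}\rletter{i+1}^+$ with $i>j$); for $b\in\{a,a+1\}$ no relation applies, and the same obstruction occurs with $\rletter{}^-$ in place of $\eletter{}$. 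This is forced combinatorially: the leftmost letter of a word corresponds to the topmost layer of the forest, and $\eletter{1}\rletter{1}^+$ is a tree whose root is black with a white left child, so every word representing it must begin with an $\eletter{}$-letter. Hence the intended bijection between $\mbnfor$ and pairs (finite set of white positions, element of $\mnfor$) fails, and with it the completeness half of your argument. The fix is to abandon the prefix normal form and instead argue as the paper does, by bicoloring the known $\mnfor$ presentation.
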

\begin{proof}
    This follows from the presentation of \Cref{thm:mnforpresentation} as we replaced each $\eletter{i}$ generator with two copies of it, $\eletter{i}$ and $\rletter{i}^+$.
\end{proof}

\begin{defn}
\label{defn:markedRE}
    We define the \emph{marked $RE^{\pm}$-forests} $\mrefor$ to be the quotient of $\mbnfor$ under the further relations
    \begin{align*}\eletter{i} \rletter{i+1}^+=\rletter{i}^+ \eletter{i}\quad \text{ and }\quad\rletter{i}^+ \rletter{i+1}^+=\rletter{i}^+ \rletter{i}^+.\end{align*}
\end{defn}

The two relations in Definition~\ref{defn:markedRE} are graphically represented in Figure~\ref{fig:right_rotation}.
Note that these correspond to  \emph{rotations} on binary trees, which is how one obtains the \emph{Tamari lattice} \cite{Tam62}.
In our case we only need to perform these rotation moves when a white node is a right child. 
\begin{figure}[!ht]
    \centering
    \includegraphics[scale=1]{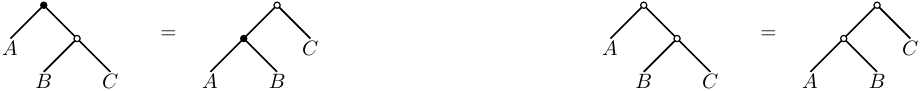}
    \caption{Additional relations in $\mrefor$}
    \label{fig:right_rotation}
\end{figure}

\begin{figure}[ht!]
    \centering
    \includegraphics[scale=0.8]{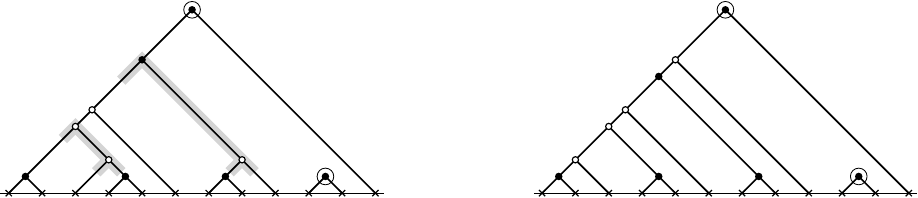}
    \caption{Two equivalent forests in $\mrefor$, with the right forest in normal form}
    \label{fig:refornormalform}
\end{figure}


Note that because $\mrefor$ is graded in $\mathbb{Z}^3$ by the number of $\rletter{}^+,\rletter{}^-,\eletter{}$, we can identify within $\mrefor$ the submonoids generated by only taking letters of a certain type. 
For example, the submonoid generated by $\eletter{}$ is $\Th\cong \indexedforests$, and the submonoid generated by $\eletter{}$ and $\rletter{}^-$ is $\mnfor$.

\begin{rem}
    Say that $F\in \mbnfor$ is in \emph{normal form} if no white internal node is a right child. It is clear that applying the additional relations in $\mrefor$ will eventually lead to such a forest, and in the sequel \cite{BGNST_Toappear} we will show that every element of $\mrefor$ in fact has a unique normal form.\footnote{As pointed out to us by F. Chapoton, this uniqueness can be directly proved using the Gr\"{o}bner bases for operads.}
\end{rem}

\section{Coefficient extraction and $\star$-composition}
\label{sec:coeffextraction}

This section solves the problem of writing $f \in \ZZ[\tl][\xl]$ as a $\ZZ[\tl]$-linear combination of double forest polynomials. 
For double Schubert polynomials we have the identity for any $f\in \poltx$ that
$$f=\sum a_w\schub{w}(\xl;\tl)\text{ \ for \  }a_w(\tl)=(\partial_w f)(\tl;\tl).$$
For double forest polynomials we will analogously define algebraic extraction operators $[\ev\star \eope{F}]:\poltx\to \ZZ[\tl]$
obtained by composing the $\eope{i}$ in a modified way we call ``$\star$-composition''
such that
$$f=\sum a_F(\tl)\forestpoly{F}(\xl;\tl)\text{ \ for \  }a_F(\tl)=[\ev \star \eope{F}]\forestpoly{F}.$$   

\subsection{$\star$-composition}
\label{subsec:star_composition}

Let $A\subset \NN$ be a finite subset and $\overline{A}=\NN\setminus A$ ordered naturally. 
For each $i \ge 1$, let $t_{i,A}\coloneqq t_{(\overline{A})_i}$, and $\wh{\tl}_A=(t_{1,A},t_{2,A},\ldots)$.
Now define 
\begin{align*}
\ev_A f(\xl;\tl)&=f(\wh{\tl}_A;\tl)\\
\rope{i,A}^-f(\xl;\tl)&=f(x_1,\ldots,x_{i-1},t_{i,A},x_i,x_{i+1},\ldots;\tl)\\
\rope{i,A}^+f(\xl;\tl)&=f(x_1,\ldots,x_{i-1},x_{i},t_{i,A},x_{i+1},\ldots;\tl)\\
\eope{i, A}f(\xl; \tl)&= \frac{\rope{i,A}^+f(\xl; \tl) -\rope{i,A}^-f(\xl; \tl) }{x_i-t_{i,A}}.
\end{align*}

\begin{defn}
We define a \emph{$\star$-word} to be a formal string $\Phi=X_{i_1}\star X_{i_2}\star \cdots \star X_{i_k}$ with each $X_i\in \{\rope{i}^+,\rope{i}^-,\eope{i}\}$, and we define the \emph{$\star$-weight} to be $\ul{\Phi}\coloneqq i_1\star \cdots \star i_k\in \mathcal{S}$. 
For $\rt\in \reseq$ we let $\Phi_{\rt}$ be obtained by replacing each letter with its corresponding operator and separating the consecutive letters with $\star$.

For $A\subset \NN$ a finite subset we define operations
\begin{align*}[X_{i_1}\star\cdots\star X_{i_k}]_A\coloneqq& X_{i_1,i_2\star\cdots i_k\star A}X_{i_2,i_3\star\cdots i_k\star A}\cdots X_{i_k,A}:\ZZ[\xl][\tl]\to \ZZ[\xl][\tl]\\
[\ev\star X_{i_1}\star\cdots \star X_{i_k}]_A\coloneqq&\ev_{i_1\star\cdots \star i_k\star A}[X_{i_1}\star\cdots \star X_{i_k}]_A:\ZZ[\xl][\tl]\to \ZZ[\tl].\end{align*}
We omit $A$ from the notation if $A$ is the empty set.
\end{defn}
 Note that from their definition we can recursively compute these operations as \begin{align*}
 [\Phi_1\star \Phi_2]_A&=[\Phi_1]_{\underline{\Phi_2}\star A}[\Phi_2]_A\\
 [\ev\star \Phi]_A&=[\ev]_{\underline{\Phi}\star A}[\Phi]_A.\end{align*}

\begin{eg}
    Let $\rt=\eletter{3}\rletter{2}^+\rletter{1}^-\eletter{2}$.
    Then the $\star$-word $\Phi_{\rt}$ is given by $\eope{3}\star\rope{2}^+\star \rope{1}^-\star \eope{2}$, and the $\star$-weight is given by $3\star 2\star 1 \star 2=\{1,2,4,6\}$.
    The corresponding operations for $A=\emptyset$ are
    \begin{align*}
        [\Phi_{\rt}]&=\eope{3,\{1,2,4\}}\rope{2,\{1,2\}}^+\rope{1,\{2\}}^-\eope{2}\\
        [\ev\star\Phi_{\rt}]&=\ev_{\{1,2,4,6\}}\eope{3,\{1,2,4\}}\rope{2,\{1,2\}}^+\rope{1,\{2\}}^-\eope{2}.
    \end{align*}
\end{eg}

Our next task is to make a connection between $\star$-composition and the monoids from Section~\ref{sec:monoids}.
To this end we introduce the notion of $\star$-compatibility.

\begin{defn}
\label{def:star_compatibility}
Given $\star$-words $B_1,\dots,B_k$ we say that a relation
\[
\sum_{1\leq i\leq k} a_i(\tl)[B_i]=0
\]
is \emph{$\star$-compatible} if we have the following equality of $\star$-weights: 
\[
    \ul{B_1}=\ul{B_2}=\cdots=\ul{B_k}.
\]
\end{defn}
\begin{prop}
\label{cor:GeneralRelations}
Given a $\star$-compatible relation $\sum a_{j}(\tl)[B_j]=0$, a finite set $A\subset\NN$, and $\star$-words $\Phi_1,\Phi_2$, the following hold:
\begin{align*}
\sum a_{j}(\wh{\tl}_{\ul{\Phi_2}\star A})[\Phi_1\star B_j\star \Phi_2]_A &=0,\\
\sum a_{j}(\wh{\tl}_{\ul{\Phi_2}\star A})[\ev\star \Phi_1\star B_j\star \Phi_2]_A &=0.
\end{align*}
\end{prop}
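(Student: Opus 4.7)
The plan is to prove the proposition in two stages: first reduce the general statement to the base case $\Phi_1 = \Phi_2 = 1$ (empty $\star$-words), and then handle this base case via an alphabet-shift ring isomorphism that identifies $[B_j]_{A'}$ with a conjugate of $[B_j]$.

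For the reduction, set $B \coloneqq \ul{B_j}$ (the common $\star$-weight guaranteed by $\star$-compatibility) and $A' \coloneqq \ul{\Phi_2}\star A$. Unfolding the definitions yields
\[
[\Phi_1\star B_j\star \Phi_2]_A = [\Phi_1]_{B\star A'}\circ [B_j]_{A'}\circ [\Phi_2]_A,
\]
in which $[\Phi_1]_{B\star A'}$ and $[\Phi_2]_A$ do not depend on $j$. The basic operators $\rope{i,C}^{\pm}$ and $\eope{i,C}$ are $\ZZ[\tl]$-linear because they only substitute $\xl$-variables by $\tl$-variables, so $[\Phi_1]_{B\star A'}$ commutes with multiplication by $a_j(\wh{\tl}_{A'}) \in \ZZ[\tl]$ and we may pull out all $j$-independent factors. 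The first identity thus reduces to proving
\[
\sum_j a_j(\wh{\tl}_{A'})[B_j]_{A'} = 0 \text{ on } \ZZ[\tl][\xl],
\]
while the second follows from the first by applying the $\ZZ[\tl]$-linear evaluation $[\ev]_{\ul{\Phi_1}\star B\star A'}$ on the outside, using that $[\ev\star \Phi_1\star B_j\star\Phi_2]_A = [\ev]_{\ul{\Phi_1}\star B\star A'} \circ [\Phi_1\star B_j\star \Phi_2]_A$.

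For the base case I will introduce a fresh equivariant family $\tl' = (t'_1, t'_2, \ldots)$ together with the ring isomorphism $\iota\colon \ZZ[\tl'][\xl] \xrightarrow{\sim} \ZZ[\wh{\tl}_{A'}][\xl]$ sending $t'_i \mapsto t_{i,A'}$ and the identity on $\xl$. Since each $[B_j]_{A'}$ uses only variables from $\wh{\tl}_{A'}\cup \xl$, it is $\ZZ[\{t_i\}_{i\in A'}]$-linear and hence determined by its restriction to $\ZZ[\wh{\tl}_{A'}][\xl]$. The heart of the argument, which I expect to be the main obstacle, is the conjugation identity
\[
[B_j]_{A'}\big|_{\ZZ[\wh{\tl}_{A'}][\xl]} = \iota \circ [B_j] \circ \iota^{-1}.
\]
Writing $B_j = X_{i_1}\star\cdots\star X_{i_k}$, this boils down to matching the substitution variable $t_{i_p,C_p}$ of the $p$-th factor of $[B_j]_{A'}$, with $C_p = i_{p+1}\star\cdots\star i_k\star A'$, with $\iota$ applied to the corresponding variable $t'_{i_p,C'_p}$ of $[B_j]$, with $C'_p = i_{p+1}\star\cdots\star i_k$. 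Using associativity of $\star$ to rewrite $C_p = C'_p\star A'$ and the defining formula $D\star E = \{\overline{E}_i\suchthat i\in D\}\cup E$, one computes
\[
(\overline{C_p})_{i_p} = (\overline{A'})_{(\overline{C'_p})_{i_p}},
\]
which is precisely the statement $\iota(t'_{i_p,C'_p}) = t_{i_p,C_p}$. Once this set-theoretic identity is in hand, conjugating the given relation $\sum_j a_j(\tl')[B_j] = 0$ by $\iota$ yields $\sum_j a_j(\wh{\tl}_{A'})[B_j]_{A'} = 0$ on $\ZZ[\wh{\tl}_{A'}][\xl]$, which extends $\ZZ[\{t_i\}_{i\in A'}]$-linearly to all of $\ZZ[\tl][\xl]$, completing the proof.
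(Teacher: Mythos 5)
Your proof is correct and follows essentially the same route as the paper: reduce to the base case via the composition law $[\Phi_1\star B_j\star \Phi_2]_A = [\Phi_1]_{\ul{B_j}\star\ul{\Phi_2}\star A}[B_j]_{\ul{\Phi_2}\star A}[\Phi_2]_A$, pull out the $j$-independent factors, and relabel $\tl$-variables. The only difference is that you make the base case precise via the conjugating isomorphism $\iota$ and the combinatorial identity $(\overline{C'\star A'})_i = (\overline{A'})_{(\overline{C'})_i}$, whereas the paper compresses this to the sentence that the shifted relation ``amounts to applying the identity using the equivariant variables $\wh{\tl}_{A'}$, and treating $\{t_i\}_{i\in A'}$ as formal commuting variables.''
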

\begin{proof}
Let $\ul{B_1}=\ul{B_2}=\cdots=M\in \mathcal{S}$. The second identity follows from the first by applying $\ev_{\ul{\Phi_1}\star M\star \ul{\Phi_2}\star A}$, so we will show the first identity. 
It is clear that $\sum a_j(\wh{\tl}_{\ul{\Phi_2}\star A})[B_j]_{\ul{\Phi_2}\star A}=0$ for all $A$, as this amounts to applying the identity using the equivariant variables $\wh{\tl}_{\ul{\Phi_2}\star A}$, and treating $\{t_i\}_{i\in \ul{\Phi_2}\star A}$ as formal commuting variables. We thus compute
\begin{align*}
    \sum a_{j}(\wh{\tl}_{A\star \ul{\Phi_2}})[\Phi_1\star B_j\star \Phi_2]_A=&\sum a_{j}(\wh{\tl}_{A\star \ul{\Phi_2}})[\Phi_1]_{M\star \ul{\Phi_2}\star A}[B_j]_{\ul{\Phi_1}\star A}[\Phi_2]_A\\=&[\Phi_1]_{M\star \ul{\Phi_2}\star A}\left(\sum a_j(\wh{\tl}_{\ul{\Phi_2}\star A})[B_j]_{\ul{\Phi_2}\star A}\right)[\Phi_2]_A=0.\qedhere
\end{align*}
\end{proof}

\begin{thm}
\label{thm:starcompositionrefor}\label{thm:straighteningrules}
Each relation $a_r b_s=c_t d_u$ from \Cref{fact:mbnrelations} and \Cref{defn:markedRE} (i.e. the defining relations of $\mrefor$) gives a $\star$-compatible relation
$$[A_r\star B_s]=[C_t\star D_u].$$

In particular, if $\rt,\rt'\in \reseq$ induce the same element of $\mrefor$, then
    $[\Phi_\rt]=[\Phi_{\rt'}]$
    is a $\star$-compatible relation. Furthermore if $\rt,\rt'$ give the same element of $\mrefor$ up to the locations of markings of roots then $[\ev\star \Phi_\rt]_A=[\ev\star \Phi_{\rt'}]_A$ for all finite sets $A$.
\end{thm}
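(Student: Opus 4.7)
The plan is to prove the three claims in sequence, reducing the full statement to a finite case analysis and then propagating.

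First I would verify $\star$-compatibility of each defining relation of $\mrefor$. Each relation (the eight commutation relations from \Cref{fact:mbnrelations} and the two rotation relations from \Cref{defn:markedRE}) replaces a two-letter word $a_rb_s$ by $c_td_u$, so I need $r\star s = t\star u$ in $\mathcal{S}$. For the commutation relations this is the defining identity $i\star j = j\star (i+1)$ for $i\ge j$ (or $i>j$), independent of the type of generator. For the rotations, I need $i\star i = i\star(i+1) = \{i,i+1\}$, which is an immediate calculation from $A\star B = \{\bar B_a : a\in A\}\cup B$.

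Next I would verify each defining relation as an operator identity. For the commutation cases, I write both sides out as substitution/quotient operations and observe that they insert (or divide out by) values at disjoint positions, so the order of application does not matter. The two rotations require a careful direct computation. For instance, for $[\rope{i}^+ \star \rope{i+1}^+] = [\rope{i}^+ \star \rope{i}^+]$, writing out both sides yields
\[
f(x_1,\ldots,x_i,t_i,t_{i+1},x_{i+1},x_{i+2},\ldots;\tl)
\]
in both cases: on the left, $\rope{i+1}^+$ inserts $t_{i+1}$ at position $i+2$, then $\rope{i,\{i+1\}}^+$ inserts $t_{i,\{i+1\}} = t_i$ at position $i+1$; on the right, $\rope{i}^+$ inserts $t_i$ at position $i+1$, then $\rope{i,\{i\}}^+$ inserts $t_{i,\{i\}} = t_{i+1}$ at position $i+2$. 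The identity $[\eope{i}\star\rope{i+1}^+] = [\rope{i}^+\star\eope{i}]$ follows from the analogous $\rope{i}^- / \rope{i}^+$ computation together with taking differences, since $\eope{i} = (\rope{i}^+-\rope{i}^-)/(x_i-t_i)$.

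With the defining relations established, the second statement of the theorem follows from an induction on the length of a rewriting chain $\rt = \rt^{(0)},\rt^{(1)},\ldots,\rt^{(m)}=\rt'$ in $\mrefor$. At each step $\rt^{(k)} = \mu(a_rb_s)\nu$ is replaced by $\rt^{(k+1)}=\mu(c_td_u)\nu$ using a single defining relation, so Step 2 together with \Cref{cor:GeneralRelations} (applied with $\Phi_1 = \Phi_\mu$, $\Phi_2 = \Phi_\nu$) yields the $\star$-compatible identity $[\Phi_{\rt^{(k)}}] = [\Phi_{\rt^{(k+1)}}]$; chaining gives $[\Phi_\rt] = [\Phi_{\rt'}]$.

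For the marking statement, I would enumerate the elementary moves that change marking data without changing the underlying element of $\mrefor$, show they fall into a small list (involving only local exchanges at positions where an internal node's marking does not affect its subsequent role as an attachment site), and verify case by case that each such move gives $[\ev\star X\star\Phi]_A = [\ev\star Y\star\Phi]_A$. The mechanism is that $\ev_{\underline{\Phi}\star A}$ substitutes every surviving $x$-variable with a $t$-variable, so the distinction encoded by a marking (which records how a root would be glued to a subsequent leaf) collapses once no polynomial operations remain to the left. The main obstacle of the proof will be this final step: pinning down the precise correspondence between root markings in $\mrefor$ and the allowed word transformations, and checking that each such transformation is absorbed by the outer $\ev$. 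The verification of the rotation relations in Step 2 is the most calculation-heavy piece of the earlier steps, but is mechanical once the variable-reindexing bookkeeping ($t_{i,\{i\}} = t_{i+1}$, etc.) is set up carefully.
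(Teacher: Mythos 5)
Your treatment of the first two claims matches the paper's: verify $\star$-compatibility of each defining relation (a short calculation in $\mathcal{S}$, and indeed $i\star(i+1)=i\star i=\{i,i+1\}$ for the rotations), verify each as an operator identity by direct computation with the reindexed $t_{i,A}$, and then propagate to any pair $\rt,\rt'$ representing the same element of $\mrefor$ by chaining single-relation substitutions through \Cref{cor:GeneralRelations}. The paper spot-checks the $\rope{}^\pm\star\rope{}^\pm$ and $\eope{}\star\eope{}$ identities and states that the rest are similar; your explicit check of $[\rope{i}^+\star\rope{i+1}^+]=[\rope{i}^+\star\rope{i}^+]$ is correct and representative.

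Where there is a genuine gap is the final marking statement. You propose to enumerate ``elementary moves that change marking data without changing the underlying element of $\mrefor$,'' but markings are part of the data of an element of $\mrefor$: a move that changes which roots are marked produces a \emph{different} element, so these moves are not relations of the monoid and are not implemented by a clear word-level operation. Your intuition that the marking distinction ``collapses once no polynomial operations remain to the left'' is correct, but you never pin down the mechanism, and you flag this yourself. The paper's route sidesteps the enumeration entirely. By construction, prepending $\rletter{1}^-$ to a word marks the leftmost unmarked root of the resulting element, so if $\rt,\rt'$ agree up to markings one can find $a,b$ such that $(\rletter{1}^-)^a\rt$ and $(\rletter{1}^-)^b\rt'$ give the \emph{same} element of $\mrefor$ (all roots marked). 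The key computation, which your proposal is missing, is that
\[
\ev_{1\star B}\,\rope{1,B}^- \;=\; \ev_B,
\qquad\text{so}\qquad
[\ev\star(\rope{1}^-)^{\star a}\star\Phi]_A=[\ev\star\Phi]_A,
\]
and this absorbs the prepended letters into $\ev$ and reduces the marking statement to the equal-element case you already handled.
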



\begin{proof}
First we check the $\rope{i}^{\pm}\star \rope{j}^{\pm}$ identities. For $\epsilon_1,\epsilon_2\in \{+,-\}$, let $a=j+\delta_{\epsilon_1,+}$ and let $b=i+\delta_{\epsilon_2,+}$, then if $i\ge j$ and we do not have $(i,j,\epsilon_1,\epsilon_2)=(i,i,-,+)$ then
    \begin{align*}[\rope{i}^{\epsilon_1}\star \rope{j}^{\epsilon_2}]f=\rope{i,\{j\}}^{\epsilon_1}\rope{j}^{\epsilon_2}f=&f(x_1,\ldots,x_{a-1},t_j,x_a,\ldots,x_{b-1},t_{i+1},x_b,\ldots;\tl)\\=&\rope{j,\{i+1\}}^{\epsilon_2}\rope{i+1}^{\epsilon_1}f=[\rope{j}^{\epsilon_2}\star \rope{i+1}^{\epsilon_1}]f.\end{align*}
    We now check the $\eope{i}\star \eope{j}$ identity. The two sides of the equation expand out to the following.
\begin{align*}
    [\eope{i}\star\eope{j}]&=\eope{i,\{j\}}\eope{j}
    =\eope{i,\{j\}}\frac{\rope{j}^+-\rope{j}^-}{x_{j}-t_{j}}
    =\frac{(\rope{i,\{j\}}^+-\rope{i,\{j\}}^-)(\rope{j}^+-\rope{j}^-)}{(x_i-t_{i,\{j\}})(x_j-t_{j})}\\
    [\eope{j}\star\eope{i+1}]&=\eope{j,\{i+1\}}\eope{i+1}f=\eope{j,\{i+1\}}\frac{\rope{i+1}^+-\rope{i+1}^-}{x_{i+1}-t_{i+1}}
    =\frac{(\rope{j,\{i+1\}}^+-\rope{j,\{i+1\}}^-)(\rope{i+1}^+-\rope{i+1}^-)}{(x_j-t_{j,\{i+1\}})(x_i-t_{i+1})}. 
\end{align*}
We have $\overline{\{j\}}_i=i+1$ and $\overline{\{i+1\}}_j=j$ so the denominators are equal, and the numerators are equal because $\rope{i,\{j\}}^{\epsilon_1}\rope{j}^{\epsilon_2}=[\rope{i}^{\epsilon_1}\star \rope{j}^{\epsilon_2}]=[\rope{j}^{\epsilon_2}\star \rope{i+1}^{\epsilon_1}]=\rope{j,\{i+1\}}^{\epsilon_2}\rope{i}^{\epsilon_1}$.
    
The other identities are similar and we omit their verification. That $[\Phi_\rt]=[\Phi_{\rt'}]$ is a $\star$-compatible relation now follows from  \Cref{cor:GeneralRelations}.
    
    Finally, if $\rt$ produces an element of $\mrefor$ then by definition $\rletter{1}^-\rt$ turns the leftmost unmarked root into a marked root. Therefore if $\rt,\rt'$ give the same element of $\mrefor$ up to the locations of markings then there exist $a,b$ such that $(\rletter{1}^{-})^a\rt$ and $(\rletter{1}^{-})^b\rt'$ give the same element of $\mrefor$, and so $[\ev\star \Phi_\rt]_A=[\ev\star (\rope{1}^-)^{\star a}\star \Phi_\rt]_A=[\ev\star(\rope{1}^-) ^{\star b}\star \Phi_{\rt'}]_A=[\ev\star \Phi_{\rt'}]_A.$
\end{proof}

\begin{rem}
    If we define for $\Phi_i\in \{\rope{i}^+,\rope{i}^-,\eope{i}\}$ the endomorphism 
    \[
    \wt{\Phi_i}\in\End\left(\bigoplus_{\text{ finite }A\subset \NN}\poltx\right)
    \]
    by linearly extending $\wt{\Phi_i}(f\cdot 1_A)\coloneqq(\Phi_{i,A}f)\cdot 1_{i\star A}$, then
    \[
    \wt{\Phi_{i_1}}\cdots \wt{\Phi_{i_k}}(f\cdot 1_A)=([\Phi_{i_1}\star\cdots \star \Phi_{i_k}]_Af)\cdot 1_{i_1\star\cdots \star i_k\star A}.
    \]
In particular, $\star$-compatible relations descend to genuine relations among the $\wt{\Phi_i}$, and they therefore give a representation of $\mrefor$. In particular, the $\wt{\eope{i}}$ operations are a genuine representation of the Thompson monoid with $\wteope{i}\wteope{j}=\wteope{j}\wteope{i+1}$ for $i>j$.
\end{rem}

\subsection{Coefficient extraction via $\star$-compositions}
\label{subsec:coefficient_extraction}
By \Cref{thm:straighteningrules}, if $F\in \indexedforests$ and we have two factorizations $F=i_1\cdots i_k=j_1\cdots j_k$, then we have a $\star$-compatible identity
$$
[\eope{i_1}\star\cdots \star \eope{i_k}]=[\eope{j_1}\star \cdots \star \eope{j_k}],
$$
as both sides correspond to the same element of $\Th\subset \mrefor$.
Hence we are free to replace $\eope{i_1}\star\cdots \star \eope{i_k}$ with $\eope{j_1}\star\cdots \star \eope{j_k}$ in any $\star$-composition by \Cref{cor:GeneralRelations}.

\begin{defn}
    We extend the definition of $[\Phi]_A$ for a $\star$-word $\Phi$ to also allow $\Phi$ to contain letters $\eope{F}$ for $F\in \indexedforests$, which should be interpreted as $\eope{i_1}\star\cdots \star \eope{i_k}$ for any factorization $i_1\star\cdots\star i_k$. 
    In particular,
    \begin{align*}[\eope{F}]_A\coloneqq &\eope{i_1,i_2\star\cdots i_k\star A}\eope{i_2,i_3\star\cdots \star i_k\star A}\cdots \eope{i_k,A}\\
    [\ev\star \eope{F}]_A\coloneqq &\ev_{L(F)\star A}\eope{i_1,i_2\star\cdots i_k\star A}\eope{i_2,i_3\star\cdots \star i_k\star A}\cdots \eope{i_k,A}.
    \end{align*}
\end{defn}

\begin{eg}
Take $A = \{2, 3\}$ and let $F$ be the forest with factorization $1\cdot 1 \cdot 3$.  Then
\[
[\eope{F}]_{\{2,3\}} = \eope{1, \{1, 2, 3, 4\}} \eope{1, \{2, 3, 4\}} \eope{3, \{2, 3\}}
\]
\end{eg}

The monoid $\indexedforests$ is right-cancellative \cite[Proposition 2.6]{DeTe19}, meaning that the equation $G=H\cdot F$ has at most one solution $H$ for fixed $F,G$. We shall denote this solution by $G/F$, and write $F\le_R G$ if $G/F$ exists.
 
\begin{prop}
\label{prop:FGtrimming}
For $F,G\in\indexedforests$, we have 
\[
[\eope{F}](\forestpoly{G}(\xl;\tl))=\left\lbrace 
\begin{array}{ll} \forestpoly{G/F}(\xl;\wh{\tl}_{L(F)}) & F\le_R G\\ 0 & \text{otherwise.}
\end{array}\right.
\]
\end{prop}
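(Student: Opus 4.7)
The natural approach is induction on $|F|$, but the statement as given is not quite strong enough to carry the induction, since applying $\eope{i}$ to $\forestpoly{G}(\xl;\tl)$ yields $\forestpoly{G/i}(\xl;\wh{\tl}_i)$, which replaces the equivariant alphabet $\tl$ by $\wh{\tl}_{\{i\}}$. The plan is therefore to prove the stronger assertion: for every finite $A \subset \NN$,
\[
[\eope{F}]_A(\forestpoly{G}(\xl;\wh{\tl}_A)) = \begin{cases} \forestpoly{G/F}(\xl;\wh{\tl}_{L(F)\star A}) & F\le_R G,\\ 0 & \text{otherwise.}\end{cases}
\]
The original statement is the special case $A=\emptyset$.

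The base case $F=\emptyset$ is immediate since $[\eope{\emptyset}]_A = \id$, $L(\emptyset)=\emptyset$, $F\le_R G$ always, and $G/F=G$. For the inductive step, choose any $i\in \qdes{F}$, which exists by \Cref{prop:NCsidec}'s forest analogue (in fact, this is a basic property of $\indexedforests$), and write $F=F'\cdot i$, so that by $\star$-compatibility of the factorization identity (\Cref{thm:straighteningrules}) we have the operator identity $[\eope{F}]_A = [\eope{F'}]_{i\star A}\,\eope{i,A}$.

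Now $\eope{i,A}$ acts on polynomials in $\poltx$ in exactly the same way that $\eope{i}$ acts with respect to the relabeled alphabet $\wh{\tl}_A$, since $t_{i,A}=t_{\overline{A}_i}$ is by definition the $i$th entry of $\wh{\tl}_A$. Applying the axiomatic defining relation of \Cref{thm:ForestDesiderata} in this relabeled alphabet, we obtain
\[
\eope{i,A}(\forestpoly{G}(\xl;\wh{\tl}_A)) = \begin{cases} \forestpoly{G/i}(\xl;\wh{\tl}_{i\star A}) & i\in\qdes{G},\\ 0 & \text{otherwise.}\end{cases}
\]
If $i\notin \qdes{G}$, both sides of the target identity vanish: the right side because $F\not\le_R G$ (any factorization $G=H\cdot F=H\cdot F'\cdot i$ would force $i\in\qdes{G}$), and the left side by the display. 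If $i\in\qdes{G}$, we then apply $[\eope{F'}]_{i\star A}$ to $\forestpoly{G/i}(\xl;\wh{\tl}_{i\star A})$ and invoke the inductive hypothesis with $A'=i\star A$, yielding $\delta_{F'\le_R G/i}\,\forestpoly{(G/i)/F'}(\xl;\wh{\tl}_{L(F')\star i\star A})$.

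The proof is then completed by three bookkeeping checks, which I expect to be routine given the monoid structure already developed: (i) right-cancellativity of $\indexedforests$ implies that, when $i\in\qdes{G}$, we have $F\le_R G$ if and only if $F'\le_R G/i$, and in that case $(G/i)/F'=G/F$; (ii) $L(F)=L(F')\star i$, which follows directly from the definition $L$ as $\star$-composition of the letters in any factorization; and (iii) therefore $\wh{\tl}_{L(F')\star i\star A} = \wh{\tl}_{L(F)\star A}$. The only mildly subtle point is making sure that the intermediate quantity $\eope{i,A}\forestpoly{G}$, which is the axiomatic output in the $\wh{\tl}_A$ alphabet, is correctly re-expressed as a polynomial in $\poltx$ specialized along $\wh{\tl}_{i\star A}$; this is exactly the identity of subsets $\widehat{(\wh{\tl}_A)}_i=\wh{\tl}_{i\star A}$ coming from the definition $i\star A = A\cup\{\overline{A}_i\}$.
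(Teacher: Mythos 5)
Your proof is correct and takes essentially the same approach as the paper's: induction on $|F|$, peeling off a terminal letter $i\in\qdes{F}$ via $F=(F/i)\cdot i$ and using the factorization $[\eope{F}]_A = [\eope{F/i}]_{i\star A}\,\eope{i,A}$ together with the defining recursion of $\forestpoly{G}$. The one difference is presentational: you explicitly strengthen the induction hypothesis to arbitrary depletion sets $A$, whereas the paper states the induction only for $A=\emptyset$ and applies it to $[\eope{F/i}]_{\{i\}}\forestpoly{G/i}(\xl;\wh{\tl}_i)$ without comment — which is really the same strengthened claim, justified by the observation you make that $\eope{i,A}$ acts on a polynomial in $(\xl;\wh{\tl}_A)$ exactly as $\eope{i}$ acts in the reduced alphabet. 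Your bookkeeping checks (right-cancellativity giving $F\le_R G\Leftrightarrow F'\le_R G/i$ when $i\in\qdes{G}$, the identity $L(F)=L(F')\star i$, and $\widehat{(\wh{\tl}_A)}_i=\wh{\tl}_{i\star A}$) are all correct and are precisely the facts the paper implicitly relies upon.
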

\begin{proof}
    This follows by induction on $|F|$. 
    If we write $F=(F/i)\cdot i$ then we obtain
    \begin{align*}
    [\eope{F}]\,\forestpoly{G}
    =
    [\eope{(F/i)\cdot i}]\,\forestpoly{G}
    =
    [\eope{F/i}]_{\{i\}}\,\delta_{i\le_R G}\,\forestpoly{G/i}(\xl;\wh{\tl}_{i})
    =
    \delta_{i\le_R G}\,[\eope{F/i}]_{\{i\}}\,\forestpoly{G/i}(\xl;\wh{\tl}_{i})
    \end{align*}
    By applying the inductive hypothesis to the right-hand side we obtain
    \begin{align*}
    \delta_{i\le_R G}\,\delta_{F/i\le_R G/i}\,\forestpoly{((G/i)/(F/i))}(\xl;\wh{\tl}_{ L(F/i)\star i})=&\delta_{F\le_R G}\,\forestpoly{G/F}(\xl;\wh{\tl}_{L(F)}),
    \end{align*}
    thereby concluding the proof.
\end{proof}
In \Cref{fig:starcomp} we see two different ways of computing $[\eope{1\cdot 3}]=[\eope{2\cdot 1}]$ applied to $\forestpoly{F}$ for $F=1\cdot 1\cdot3=1\cdot 2\cdot 1$.
\begin{figure}[!h]
    \centering
    \includegraphics[width=\linewidth]{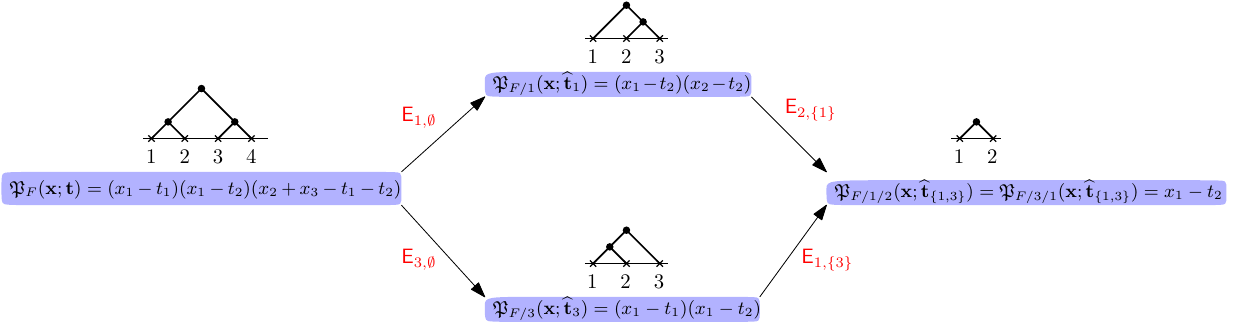}
    \caption{Applications of $\eope{i,A}$ operations to forest polynomials}
    \label{fig:starcomp}
\end{figure}
\begin{thm}
\label{thm:coeffextract}
    For
        $f\in \poltx$ we have
    $$f(\xl;\tl)=\sum_{F\in \indexedforests}a_F\forestpoly{F}(\xl;\tl) \text{ \ where \  }a_F=[\ev\star \eope{F}]f.$$
    \end{thm}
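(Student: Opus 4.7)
The plan is to reduce the claim to a pairing computation $[\ev\star \eope{F}]\forestpoly{G}=\delta_{F,G}$, and then read off coefficients.

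First I would note that by \Cref{cor:basiscor}\ref{basis:it1}, $f$ has a unique expansion $f=\sum_{G\in\indexedforests}a_G\forestpoly{G}(\xl;\tl)$ with coefficients $a_G\in\ZZ[\tl]$. Next I would observe that the operator $[\ev\star \eope{F}]$ is $\ZZ[\tl]$-linear: each building block $\eope{i,A}=(\rope{i,A}^{+}-\rope{i,A}^{-})/(x_i-t_{i,A})$ is a $\ZZ[\tl]$-linear operator on $\poltx$ (the substitutions $\rope{i,A}^{\pm}$ leave all $t$-variables fixed), and the final evaluation $\ev_{L(F)\star\emptyset}$ only substitutes for $x$-variables. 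Consequently
\[
[\ev\star \eope{F}]f=\sum_{G}a_G\cdot[\ev\star \eope{F}]\forestpoly{G}(\xl;\tl),
\]
so the theorem reduces to verifying $[\ev\star \eope{F}]\forestpoly{G}=\delta_{F,G}$.

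For this pairing I would invoke \Cref{prop:FGtrimming}: if $F\not\le_R G$ then $[\eope{F}]\forestpoly{G}=0$ and we are done, while if $F\le_R G$ then
\[
[\eope{F}]\forestpoly{G}(\xl;\tl)=\forestpoly{G/F}(\xl;\wh{\tl}_{L(F)}).
\]
Applying $\ev_{L(F)}$ substitutes each $x_i$ by $t_{i,L(F)}=(\wh{\tl}_{L(F)})_i$, so
\[
[\ev\star \eope{F}]\forestpoly{G}=\forestpoly{G/F}\bigl(\wh{\tl}_{L(F)};\,\wh{\tl}_{L(F)}\bigr).
\]
Treating the tuple $\wh{\tl}_{L(F)}$ as a relabeled set of equivariant variables $\tl'$, this is $\forestpoly{G/F}(\tl';\tl')$, which by the normalization clause of \Cref{thm:ForestDesiderata} equals $\delta_{G/F,\emptyset}$. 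Since $\indexedforests$ is right-cancellative, $G/F=\emptyset$ forces $G=F$, giving $\delta_{F,G}$ as desired.

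The only point requiring care is the last step: the normalization $\forestpoly{H}(\tl;\tl)=\delta_{H,\emptyset}$ is stated in terms of the canonical equivariant alphabet, so I would briefly justify that substituting $x_i\mapsto (\wh{\tl}_{L(F)})_i$ into a polynomial whose equivariant variables have been relabeled to $(\wh{\tl}_{L(F)})_i$ produces the same result—this is just a formal renaming of indeterminates and does not affect the recursive characterization of $\forestpoly{H}$. Everything else is a straightforward chain of definitions, so I do not anticipate a genuine obstacle beyond bookkeeping.
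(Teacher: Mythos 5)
Your proof is correct and follows essentially the same route as the paper: both reduce to the duality $[\ev\star\eope{F}]\forestpoly{G}=\delta_{F,G}$ via \Cref{prop:FGtrimming} and then apply $\ev_{L(F)}$ to get $\forestpoly{G/F}(\wh{\tl}_{L(F)};\wh{\tl}_{L(F)})=\delta_{G/F,\emptyset}$. The only additions you make—spelling out the $\ZZ[\tl]$-linearity of $[\ev\star\eope{F}]$ and the variable-renaming justification for the normalization step—are points the paper leaves implicit, so this is just a more carefully annotated version of the same argument.
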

    \begin{proof}
        This follows directly from \Cref{prop:FGtrimming} as \begin{equation*}
            [\ev\star  \eope{F}]\,\forestpoly{G}=\ev_{L(F)}[\eope{F}]\,\forestpoly{G}=\delta_{F\le_R G}\ev_{L(F)}\forestpoly{G/F}(\xl;\wh{\tl}_{L(F)})=\delta_{F\le_R G}\,\delta_{G/F,\emptyset}=\delta_{G,F}.\qedhere
        \end{equation*}
    \end{proof}

\begin{eg}
We have the forest polynomial decomposition of $(x_1-t_1)^3$ given by
$$(x_1-t_1)^3=(t_2-t_1)^2\underbrace{(x_1-t_1)}_{\forestpoly{1}}+(t_3+t_2-2t_1)\underbrace{(x_1-t_1)(x_1-t_2)}_{\forestpoly{1\cdot 1}}+\underbrace{(x_1-t_1)(x_1-t_2)(x_1-t_3)}_{\forestpoly{1\cdot 1 \cdot 1}}.$$
Let us see how to extract the coefficient of $\forestpoly{1\cdot 1}$ using these operations. We have $[\ev\star \eope{1\cdot 1}]=\ev_{1\star 1}\eope{1,\{1\}}\eope{1}=\ev_{\{1,2\}}\eope{1,\{1\}}\eope{1}$. We compute
\begin{align*}
    E_{\{1\},1}E_1(x_1-t_1)^3=E_{\{1\},1}(x_1-t_1)^2=\frac{(x_1-t_1)^2-(t_2-t_1)^2}{x_1-t_2}
    =x_1+t_2-2t_1.
\end{align*}
Evaluating this expression at $(x_1,x_2,\ldots)=\wh{\tl}_{\{1,2\}}=(t_3,t_4,\cdots)$ gives us $t_3+t_2-2t_1$.
\end{eg}

\section{Graham positivity via positive straightening rules}
\label{sec:graham}
Recall that the equivariant generalized Littlewood--Richardson (henceforth LR) coefficients are the structure coefficients arising from multiplying double Schubert polynomials
$$\schub{u}(\xl;\tl)\schub{w}(\xl;\tl)=\sum_vc^v_{u,w}(\tl)\schub{v}(\xl;\tl).$$
They specialize to the usual generalized LR coefficients $c^v_{u,w}$ when $\tl=\bm{0}$. 
As was shown by Graham \cite{Gra01} through algebraic-geometric means, the equivariant LR coefficients $c^v_{u,w}(\tl)$ are Graham-positive.

Combinatorially, the essential difficulty in showing Graham-positivity of $c^v_{u,w}(\tl)$ is that the Leibniz rule $\partial_i(fg)=\partial_i(f)g+(s_i\cdot f)\partial_i(g)$ involves $s_i $, which does not preserve Schubert positivity.
Our positive straightening rules allow us to bypass these difficulties for forest polynomials.

Our next result shows that $\star$-compositions of equivariant Bergeron--Sottile operations  induce evaluations at noncrossing partitions.

\begin{lem}
\label{lem:RcomposeNC}
For $\epsilon_1,\ldots,\epsilon_{j_k}\in \{-,+\}$ there exists $n$ and $\sigma\in \NC_n$ such that
    $[\ev\star \rope{j_1}^{\epsilon_1}\star \cdots \star \rope{j_k}^{\epsilon_{j_k}}]f=\ev_\sigma f$.
\end{lem}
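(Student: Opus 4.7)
The plan is to establish by induction on the length $k$ of the $\star$-word a slightly stronger statement that accommodates arbitrary subscripts $A$: for any finite $A\subset\NN$ and any $\star$-word $\Phi = \rope{j_1}^{\epsilon_1}\star\cdots\star \rope{j_k}^{\epsilon_k}$, there exists a permutation $\sigma\in S_\infty$ lying in $\NC_n$ for some $n$ such that
\[
[\ev\star \Phi]_A f = f\bigl(t_{\iota_A(\sigma(1))}, t_{\iota_A(\sigma(2))}, \ldots\bigr),
\]
where $\iota_A\colon\NN\to\overline{A}$ is the order-preserving bijection $j\mapsto (\overline{A})_j$. Specializing to $A = \emptyset$ recovers the lemma. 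The base case $k=0$ is immediate with $\sigma = \idem$, since then $[\ev]_A f = f(t_{\iota_A(1)},t_{\iota_A(2)},\ldots)$ by definition of $\wh{\tl}_A$.

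For the inductive step, I would write $\Phi = \Phi'\star \rope{j}^{\epsilon}$ and use the recursive definition of $\star$-composition to get $[\ev\star \Phi]_A = [\ev\star \Phi']_{j\star A}\circ \rope{j,A}^{\epsilon}$. Setting $m = \iota_A(j)$ and $p = j+\delta_{\epsilon=+}$, the operator $\rope{j,A}^{\epsilon}$ inserts the variable $t_m$ at position $p$ of the argument list. The inductive hypothesis applied to $[\ev\star \Phi']_{j\star A}$ yields a noncrossing permutation $\sigma'$, and combining this with the elementary identity $\iota_{j\star A}(i) = \iota_A(i+\delta_{i\ge j})$ gives a direct formula for $\sigma$: it is obtained from $\sigma'$ by first shifting every value $\ge j$ of $\sigma'$ up by one (turning $\sigma'$ into a bijection $\NN\to\NN\setminus\{j\}$), then inserting the value $j$ at position $p$.

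The main task is to verify that the resulting $\sigma$ is noncrossing. Let $\sigma^{\mathrm{fix}}$ denote the permutation obtained by the same shift-then-insert procedure but with $j$ inserted at position $j$ rather than at position $p$. Then $\sigma^{\mathrm{fix}}$ corresponds to the noncrossing partition of $\sigma'$ with a new singleton block $\{j\}$ adjoined (after the order-preserving relabeling that sends values $\ge j$ to values $\ge j+1$); since both operations preserve noncrossingness, $\sigma^{\mathrm{fix}}\in \NC_n$ for large $n$. If $\epsilon = -$ then $p = j$ and $\sigma = \sigma^{\mathrm{fix}}$ directly. If $\epsilon = +$ then $p = j+1$ and $\sigma = \sigma^{\mathrm{fix}}s_j$; but since $\sigma^{\mathrm{fix}}(j) = j$, the defining generating relation of $\sim_{\NC}$ applies, giving $\sigma\sim_{\NC}\sigma^{\mathrm{fix}}\sim_{\NC}\idem$, so $\sigma\in\NC_n$ by \Cref{prop:NCequiv}.

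The principal obstacle I anticipate is the bookkeeping required to relate $\iota_A$ and $\iota_{j\star A}$, which differ by the insertion of the single index $m = \iota_A(j)$ at position $j$. This reindexing is precisely what produces the clean shift-then-insert description of $\sigma$ in terms of $\sigma'$, and it is this description that makes the final noncrossing check transparent via the single allowed swap $\sigma^{\mathrm{fix}}\sim_{\NC}\sigma^{\mathrm{fix}}s_j$ when $\epsilon=+$.
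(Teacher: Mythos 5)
Your proof is correct and follows essentially the same inductive strategy as the paper: peel off the innermost operator $\rope{j}^{\epsilon}$ and observe that the resulting evaluation permutation is obtained from $\sigma'$ by inserting a singleton at position $j$ (yielding your $\sigma^{\mathrm{fix}}$, the paper's $\sigma'_-$) and, when $\epsilon=+$, multiplying by $s_j$, which stays noncrossing since $\sigma^{\mathrm{fix}}(j)=j$ (the paper's $\sigma'_+=\sigma'_-s_{j_k}$). Your strengthening of the inductive hypothesis to arbitrary finite $A$, with the explicit $\iota_A$ bookkeeping identity, makes rigorous a $t$-variable reindexing step that the paper leaves implicit, but the underlying argument is the same.
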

\begin{proof}
Suppose
$[\ev\star \rope{j_1}^{\epsilon_1}\star\cdots \star \rope{j_{k-1}}^{\epsilon_{k-1}}]=\ev_{\sigma'}$. 
Let $\sigma'_-$ be the  noncrossing partition obtained by inserting a trivial block in location $j_k$ and $\sigma'_+$ be the noncrossing partition obtained by joining parts $j_k$ and $j_k+1$ in $\sigma'_-$, or equivalently $\sigma'_+=\sigma'_- s_{j_k}$. 
Then
\begin{equation*}
    [\ev \star \rope{1}^{\epsilon_1}\star \cdots \star \rope{j_k}^{\epsilon_{k}}]f=[\ev \star \rope{1}^{\epsilon_1}\star \cdots \star \rope{j_{k-1}}^{\epsilon_{k-1}}]_{\{j_k\}}\rope{j_k}^{\epsilon_k}f=\ev_{\sigma_{\epsilon_k}'}f.\qedhere
\end{equation*}
\end{proof}

Using the relations in \Cref{thm:straighteningrules} we can move all $\eope{i}$ as far to the right past the $\rope{j}^{\epsilon}$ letters as possible until every subword $\eope{i}\star \rope{j}^{\epsilon}$ has either $(j,\epsilon)=(i+1,-)$ or $(i,+)$. 
It turns out there are two additional relations which let us move every $\eope{i}$ to the right of every $\rope{j}^{\epsilon}$.
\begin{prop}
\label{prop:additionalrelations}
    We  have the $\star$-compatible relations
\begin{align*}
[\eope{i}\star \rope{i+1}^-]&=[\rope{i}^+\star \eope{i}]+[\rope{i}^-\star \eope{i+1}]+(t_{i+1}-t_{i})[\eope{i}\star \eope{i+1}]\\
    [\eope{i}\star \rope{i}^+]&=[\rope{i}^+\star \eope{i}]+[\rope{i}^-\star\eope{i+1}]+(t_{i+1}-t_{i})[\eope{i}\star\eope{i}].
\end{align*}
\end{prop}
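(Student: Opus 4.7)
The plan is to establish each identity by direct computation, after separately verifying $\star$-compatibility. For compatibility, I would simply check that every $\star$-word appearing on either side has $\star$-weight $\{i,i+1\}$ in $\mathcal{S}$, which is immediate from $i\star i = i\star (i+1) = \{i,i+1\}$.

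For the algebraic verification, I would unfold all $\star$-compositions using the definitions in Section~\ref{subsec:star_composition}, noting in particular that $t_{i,\{i\}}=t_{i+1}$ and $t_{i,\{i+1\}}=t_{i}$. With the shorthand
\[
f\langle a_1,a_2,a_3\rangle := f(x_1,\ldots,x_{i-1},a_1,a_2,a_3,x_{i+1},x_{i+2},\ldots;\tl),
\]
every quantity that arises is a rational combination of the five evaluations
\[
A=f\langle x_i,t_i,t_{i+1}\rangle,\ \ B=f\langle x_i,t_{i+1},t_i\rangle,\ \ C=f\langle t_i,x_i,t_{i+1}\rangle,\ \ D=f\langle t_i,t_{i+1},x_i\rangle,\ \ F=f\langle t_{i+1},t_i,x_i\rangle.
\]
A direct expansion of the definitions yields
$[\eope{i}\star \rope{i+1}^-]f = (B-D)/(x_i-t_i)$,
$[\rope{i}^+\star \eope{i}]f = (A-C)/(x_i-t_i)$,
$[\rope{i}^-\star \eope{i+1}]f=(C-D)/(x_i-t_{i+1})$,
$[\eope{i}\star \eope{i+1}]f=\tfrac{1}{x_i-t_i}\bigl(\tfrac{A-B}{t_i-t_{i+1}}-\tfrac{C-D}{x_i-t_{i+1}}\bigr)$, and similarly for the second identity $[\eope{i}\star \rope{i}^+]f = (A-F)/(x_i-t_{i+1})$ and $[\eope{i}\star \eope{i}]f = \tfrac{1}{x_i-t_{i+1}}\bigl(\tfrac{A-C}{x_i-t_i}-\tfrac{F-D}{t_{i+1}-t_i}\bigr)$.

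The remaining step is a short common-denominator calculation. Substituting into the right-hand side of the first identity, the $A$-contributions from $[\rope{i}^+\star\eope{i}]$ and $(t_{i+1}-t_i)[\eope{i}\star\eope{i+1}]$ cancel, and the remaining $(C-D)$ terms combine via the identity
\[
\frac{1}{x_i-t_{i+1}}-\frac{t_{i+1}-t_i}{(x_i-t_i)(x_i-t_{i+1})}=\frac{1}{x_i-t_i},
\]
collapsing the sum to $(B-D)/(x_i-t_i)$, which matches the left-hand side. An entirely analogous cancellation, now with the dual common-denominator identity $\tfrac{1}{x_i-t_i}+\tfrac{t_{i+1}-t_i}{(x_i-t_i)(x_i-t_{i+1})}=\tfrac{1}{x_i-t_{i+1}}$ collapsing the $A$-terms, reduces the right-hand side of the second identity to $(A-F)/(x_i-t_{i+1})$ and proves that identity as well.

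The main obstacle is purely notational: tracking how $\star$-composition reindexes the equivariant variables in each factor, so that for instance $\eope{i}\star\eope{i}$ carries the denominator $x_i-t_{i+1}$ rather than $x_i-t_i$, and making sure the substitutions into the formulas for $A,B,C,D,F$ are done consistently. Once this bookkeeping is in place, both identities reduce to the one-line algebraic manipulation displayed above.
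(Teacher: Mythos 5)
Your proof is correct, and it follows essentially the same route as the paper, which omits the argument with the remark that it is ``proved analogously to the relations in Theorem~\ref{thm:starcompositionrefor}.'' That earlier proof is exactly this kind of direct expansion: unfold each $\star$-composition into the five relevant specializations of $f$ and their rational combinations, then verify the identity by clearing denominators. Your bookkeeping of the depleted equivariant variables ($t_{i,\{i\}}=t_{i+1}$, $t_{i,\{i+1\}}=t_i$) and the resulting denominators $x_i-t_i$ versus $x_i-t_{i+1}$ is the crux and is handled correctly; the final cancellations via $\tfrac{1}{x_i-t_{i+1}}-\tfrac{t_{i+1}-t_i}{(x_i-t_i)(x_i-t_{i+1})}=\tfrac{1}{x_i-t_i}$ and its counterpart check out. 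The $\star$-compatibility is also correctly reduced to $i\star i=i\star(i+1)=\{i,i+1\}$.
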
\begin{proof}
    These are proved analogously to the relations in \Cref{thm:straighteningrules} so we omit the proof. 
\end{proof}
\begin{lem}
\label{lem:straightenmonadically}
    $[\Phi_{i_1}\star \cdots \Phi_{i_k}]_A$
    is a Graham-positive combination of expressions of the form
    $[\rope{j_1}^{\epsilon_1}\star \cdots \star\rope{j_\ell}^{\epsilon_\ell}\star \eope{K}]_A,$ and $[\ev\star \Phi_{i_1}\star \cdots \star \Phi_{i_k}]_A$ is a Graham-positive combination of expressions of the form $[\ev\star \rope{j_1}^{\epsilon_1}\star \cdots \star\rope{j_\ell}^{\epsilon_\ell}\star \eope{K}]_A$.
\end{lem}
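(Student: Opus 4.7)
The plan is to repeatedly rewrite any $\star$-word by applying Graham-positive local identities that push each $\eope{i}$ past any $\rope{j}^{\epsilon}$ immediately to its right, until no $E$-letter precedes any $R$-letter. At that point every $E$-letter sits at the tail of the word, so the trailing block $\eope{m_1}\star\cdots\star\eope{m_p}$ collapses to a single $\eope{K}$ for $K = m_1\cdots m_p\in\indexedforests$ via the Thompson monoid relations supplied by Theorem~\ref{thm:straighteningrules}, giving the desired form.

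The required local rewrites for an adjacent pair $\eope{i}\star\rope{j}^{\epsilon}$ split by how close $j$ is to $i$. When $j$ is well-separated from $i$---$j\le i$ for $\epsilon=-$, or $j<i$ for $\epsilon=+$, or $j>i+1$ in either sign---Theorem~\ref{thm:straighteningrules} provides a single-term commutation with coefficient $1$. The case $j=i+1$, $\epsilon=+$ is covered by the rotation identity $[\eope{i}\star \rope{i+1}^+]=[\rope{i}^+\star \eope{i}]$, also from Theorem~\ref{thm:straighteningrules}. The two remaining ``clashing'' cases $[\eope{i}\star \rope{i+1}^-]$ and $[\eope{i}\star \rope{i}^+]$ are handled by the three-term expansions of Proposition~\ref{prop:additionalrelations}, whose only non-constant coefficient is $(t_{i+1}-t_i)$. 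Every such rewrite is a $\star$-compatible identity, so applied in context it preserves the polynomial identity by Proposition~\ref{cor:GeneralRelations}; the index shift induced there sends $(t_{i+1}-t_i)$ to some $t_b-t_a$ with $b>a$, which remains Graham-positive as a telescoping sum of the simple roots $t_{k+1}-t_k$.

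Termination will be controlled by the lexicographic monovariant $(n(W),i(W))$, where $n(W)$ is the number of $R$-letters in $W$ and $i(W)$ is the number of pairs of positions $p<q$ at which the $p$-th letter is an $E$-letter and the $q$-th is an $R$-letter. A commutation rewrite fixes $n$ and drops $i$ by exactly one, while each of the three terms produced by a Proposition~\ref{prop:additionalrelations} rewrite strictly decreases $n$. The lex measure therefore strictly decreases on every branch of the process, which hence terminates in finitely many steps at a word of the required form. The $[\ev\star\cdots]_A$ statement then follows by applying $\ev$ on the left throughout, using that every rewrite we employ is $\star$-compatible.

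The main delicacy to handle is the ``third term'' in each identity of Proposition~\ref{prop:additionalrelations}, which trades an $R$-letter for a duplicated $E$-letter: the inversion count $i$ can actually increase on that branch, but the outer coordinate $n$ strictly drops, so lex order on $(n,i)$ remains a valid monovariant. With this choice in hand the Graham-positivity and termination arguments decouple cleanly, and the lemma follows by induction on the lex measure.
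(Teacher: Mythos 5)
Your proposal is correct and follows essentially the same strategy as the paper: repeatedly apply the $\star$-compatible rewrites of Theorem~\ref{thm:straighteningrules} and Proposition~\ref{prop:additionalrelations} to push every $\eope{}$ past every $\rope{}^{\pm}$, track Graham-positivity of coefficients through Proposition~\ref{cor:GeneralRelations}, and conclude by termination of the rewriting. Your case analysis of the adjacent pairs $\eope{i}\star\rope{j}^{\epsilon}$ is complete and matches the paper's use of the $\mrefor$ relations together with the two ``clashing'' expansions, and your observation that the only non-constant coefficient $(t_{i+1}-t_i)$ stays a positive root after the reindexing of Proposition~\ref{cor:GeneralRelations} is exactly right.

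One sentence needs fixing: you assert that ``each of the three terms produced by a Proposition~\ref{prop:additionalrelations} rewrite strictly decreases $n$.'' That is false for the first two terms. In, say, $[\eope{i}\star\rope{i+1}^-] = [\rope{i}^+\star\eope{i}] + [\rope{i}^-\star\eope{i+1}] + (t_{i+1}-t_i)[\eope{i}\star\eope{i+1}]$, the first two terms still have exactly one $R$-letter and one $E$-letter; they behave like commutations — $n$ fixed, your inversion count $i$ drops by one — and only the third term drops $n$. Your own ``delicacy'' paragraph implicitly acknowledges this by singling out the third term as the one that trades $R$ for $E$, so the monovariant argument you run is fine; just the earlier sentence contradicts it. With that sentence corrected to say that the first two terms of Proposition~\ref{prop:additionalrelations} preserve $n$ and drop $i$ while only the third drops $n$, the lex-monovariant argument is airtight. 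Incidentally, your choice of inversion count as the secondary coordinate is slightly cleaner than the paper's phrasing ``the sum of the indices of the $\Phi_i$ equal to $\rope{}^{\pm}$,'' which, read as the sum of subscripts, would fail to decrease under the commutations $\eletter{i}\rletter{j}^\pm = \rletter{j}^\pm\eletter{i+1}$; your measure avoids that ambiguity.
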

\begin{proof}
The second one follows from the first since we can write
$$[\ev\star \Phi_{i_1}\star \cdots \star \Phi_{i_k}]_A=[\ev]_{i_1\star \cdots \star i_k\star A}[\Phi_{i_1}\star \cdots \star \Phi_{i_k}]_A,$$
straighten $[\Phi_{i_1}\star \cdots \star \Phi_{i_k}]_A$, and then recombine the $\ev$.

For the first part, we use induction on the number of $\Phi_i$ equal to $\rope{}^{\pm}$. 
When there are none we are already done. 
If there is a sub-expression $\eope{i_j}\star \rope{i_{j+1}}^{\pm}$ then we apply the corresponding straightening rule to move $\eope{}$ to the right of $\rope{}^{\pm}$ from \Cref{thm:starcompositionrefor} unless we are in the situations $\eope{i}\star \rope{i}^+$ or $\eope{i}\star \rope{i+1}^-$ in which case we use one of the additional relations \Cref{prop:additionalrelations} as modified appropriately by \Cref{cor:GeneralRelations}.

Every term in the resulting expression either decreases the number of $\rope{}^{\pm}$ or decreases the sum of the indices of the $\Phi_i$ equal to $\rope{}^{\pm}$. 
So this process must stop eventually.
\end{proof}
\begin{thm}
\label{thm:PartialApplicationGrahamPositive}    $[\Phi_{i_1}\star\cdots \star \Phi_{i_k}]\,\forestpoly{F}$ is a Graham-positive linear combination of forest polynomials $\forestpoly{G}(\xl;\wh{\tl}_{i_1\star\cdots \star i_k})$.
\end{thm}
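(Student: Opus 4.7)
The plan is to extract the coefficients in the expansion $[\Phi]\,\forestpoly{F} = \sum_G a_G(\tl)\,\forestpoly{G}(\xl;\wh{\tl}_{\ul{\Phi}})$ (where $\Phi = \Phi_{i_1}\star\cdots\star\Phi_{i_k}$ and $\ul{\Phi} = i_1\star\cdots\star i_k$) via coefficient extraction adapted to the depleted variable ring, and then use \Cref{lem:straightenmonadically} to reduce each coefficient to a Graham-positive combination of evaluations of forest polynomials at noncrossing permutations, whose Graham-positivity is known from \Cref{cor:graham_positivity_forests}. First I will observe that $\{\forestpoly{G}(\xl;\wh{\tl}_C) : G\in\indexedforests\}$ is a $\ZZ[\tl]$-basis for $\ZZ[\tl][\xl]$; this follows from \Cref{cor:basiscor} combined with the factorization $\ZZ[\tl]\cong \ZZ[\wh{\tl}_C]\otimes_\ZZ \ZZ[\{t_i : i\in C\}]$. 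The natural $\wh{\tl}_C$-analog of \Cref{prop:FGtrimming} then yields $[\ev\star\eope{G}]_C\,\forestpoly{H}(\xl;\wh{\tl}_C) = \delta_{G,H}$, so the coefficient $a_G(\tl)$ is extracted via
$$a_G(\tl) \;=\; [\ev\star\eope{G}]_{\ul{\Phi}}\,[\Phi]\,\forestpoly{F} \;=\; [\ev\star\eope{G}\star\Phi_{i_1}\star\cdots\star\Phi_{i_k}]\,\forestpoly{F},$$
where the second equality follows by unwinding $\star$-composition.

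Next I will apply \Cref{lem:straightenmonadically} to rewrite $\eope{G}\star\Phi_{i_1}\star\cdots\star\Phi_{i_k}$ as a Graham-positive combination of straightened $\star$-words of the form $\rope{j'_1}^{\epsilon'_1}\star\cdots\star\rope{j'_{\ell'}}^{\epsilon'_{\ell'}}\star\eope{K'}$, all sharing the common $\star$-weight $L(G)\star\ul{\Phi}$. Prepending $\ev$ preserves this Graham-positive expansion, so it suffices to analyze each individual evaluation $[\ev\star\rope{j'_1}^{\epsilon'_1}\star\cdots\star\rope{j'_{\ell'}}^{\epsilon'_{\ell'}}\star\eope{K'}]\,\forestpoly{F}$. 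In each such term, \Cref{prop:FGtrimming} implies the action of $\eope{K'}$ on $\forestpoly{F}$ produces either $0$ or $\forestpoly{F/K'}(\xl;\wh{\tl}_{L(K')})$; the remaining operation $[\ev\star\rope{j'_1}^{\epsilon'_1}\star\cdots\star\rope{j'_{\ell'}}^{\epsilon'_{\ell'}}]_{L(K')}$ corresponds, via the natural $\wh{\tl}_{L(K')}$-analog of \Cref{lem:RcomposeNC}, to an evaluation $\ev_\sigma$ at a noncrossing permutation $\sigma$ on the $\wh{\tl}_{L(K')}$-indices. The $\wh{\tl}_{L(K')}$-analog of \Cref{cor:graham_positivity_forests} then shows $\ev_\sigma\,\forestpoly{F/K'}(\xl;\wh{\tl}_{L(K')})$ is Graham-positive in $\wh{\tl}_{L(K')}$, and so in $\tl$.

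Since the product of Graham-positive elements of $\ZZ[\tl]$ is Graham-positive, each contribution to $a_G(\tl)$ is Graham-positive, and hence so is $a_G(\tl)$ itself. The main obstacle is articulating the $\wh{\tl}_C$-world analogs of \Cref{prop:FGtrimming}, \Cref{lem:RcomposeNC}, and \Cref{cor:graham_positivity_forests} invoked above; these follow essentially by base change from the original proofs, but require careful tracking of the reindexings of $t$-variables that arise from $\star$-composition with nontrivial subscripts.
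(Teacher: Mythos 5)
Your proposal is correct and follows essentially the same route as the paper's proof: extract the coefficient $a_G(\tl)$ as $[\ev\star\eope{G}\star\Phi_{i_1}\star\cdots\star\Phi_{i_k}]\,\forestpoly{F}$, apply \Cref{lem:straightenmonadically} to reduce to terms of the form $[\ev\star\rope{j_1}^{\epsilon_1}\star\cdots\star\rope{j_\ell}^{\epsilon_\ell}\star\eope{K}]\,\forestpoly{F}$, peel off $\eope{K}$ via \Cref{prop:FGtrimming}, identify the remaining evaluation with $\ev_\sigma$ via \Cref{lem:RcomposeNC}, and conclude by the AJS--Billey Graham-positivity. The only superficial differences are that you cite \Cref{cor:graham_positivity_forests} rather than \Cref{thm:AJS_Billey_for_Forests} directly (these are interchangeable here) and you spell out the depleted-variable bookkeeping more explicitly, which the paper handles implicitly by the same substitution principle used in the proof of \Cref{cor:GeneralRelations}.
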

\begin{proof}
    We want to show that $[\ev\star\eope{G}]_{i_1\star \cdots \star i_k}[\Phi_{i_1}\star\cdots \star \Phi_{i_k}]\forestpoly{F}=[\ev\star \eope{G}\star \Phi_{i_1}\star \cdots \star \Phi_{i_k}]\,\forestpoly{F}$
    is Graham-positive. 
    By \Cref{lem:straightenmonadically} it suffices to show this for $[\ev\star \rope{j_1}^{\epsilon_1}\star\rope{j_2}^{\epsilon_2}\star \cdots \star \rope{j_k}^{\epsilon_k}\star\eope{K}]\,\forestpoly{F}$. 
    We compute
    \begin{align*}
    [\ev\star \rope{j_1}^{\epsilon_1}\star\rope{j_2}^{\epsilon_2}\star \cdots \star \rope{j_k}^{\epsilon_k}\star\eope{K}]\,\forestpoly{F}
    =&[\ev\star \rope{j_1}^{\epsilon_1}\star\rope{j_2}^{\epsilon_2}\star \cdots \star \rope{j_k}^{\epsilon_k}]_{L(K)}\,\eope{K}\forestpoly{F}\end{align*}
    The right-hand side equals $0$ unless $K\le_R F$, in which case it equals
    \begin{align*}\left([\ev \star \rope{j_1}^{\epsilon_1}\star\rope{j_2}^{\epsilon_2}\star \cdots \star \rope{j_k}^{\epsilon_k}]\,\forestpoly{F/K}\right)(\wh{\tl}_{L(K)})
    =&(\ev_{\sigma}\forestpoly{F/K})(\wh{\tl}_{L(K)})\end{align*}
    for some noncrossing partition $\sigma$ by \Cref{lem:RcomposeNC}.
    Graham-positivity now follows from \Cref{thm:AJS_Billey_for_Forests}.
\end{proof}
\begin{prop}
\label{prop:ForestLeibniz}
For $f,g\in \ZZ[\tl][\xl]$  and $A\subset \NN$ we have the ``Leibniz rule''
$$\eope{i,A}(fg)=(\eope{i,A}f)(\rope{i,A}^-g)+(\rope{i,A}^+f)(\eope{i,A}g).$$
\end{prop}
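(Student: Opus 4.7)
The proof is a direct computation using the fact that the substitution operators $\rope{i,A}^+$ and $\rope{i,A}^-$ are $\ZZ[\tl]$-algebra morphisms (they simply evaluate the variable in position $i$ at $t_{i,A}$ and shift the remaining $x$-variables accordingly), so in particular $\rope{i,A}^{\pm}(fg) = (\rope{i,A}^{\pm}f)(\rope{i,A}^{\pm}g)$. First I would expand the left-hand side using the definition of $\eope{i,A}$, obtaining a difference $(\rope{i,A}^+f)(\rope{i,A}^+g) - (\rope{i,A}^-f)(\rope{i,A}^-g)$ divided by $x_i - t_{i,A}$.

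The key step is then the standard telescoping trick: add and subtract the mixed term $(\rope{i,A}^+f)(\rope{i,A}^-g)$ in the numerator. This splits the numerator into $(\rope{i,A}^+f)\bigl(\rope{i,A}^+g - \rope{i,A}^-g\bigr) + \bigl(\rope{i,A}^+f - \rope{i,A}^-f\bigr)(\rope{i,A}^-g)$. After dividing each piece by $x_i - t_{i,A}$ and recognizing the resulting fractions as $\eope{i,A}g$ and $\eope{i,A}f$ respectively, we recover precisely the claimed identity.

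There is no real obstacle: the only thing to check is that the operators $\rope{i,A}^{\pm}$ really are algebra morphisms on $\ZZ[\tl][\xl]$, which is immediate from their definitions as pure substitutions of variables. Note that the asymmetric form of the Leibniz rule (with $\rope{i,A}^-g$ on the right of the first term and $\rope{i,A}^+f$ on the left of the second) is dictated by our choice of which mixed term to add and subtract; the alternative choice $(\rope{i,A}^-f)(\rope{i,A}^+g)$ would yield the mirror identity with the roles of $\rope{i,A}^+$ and $\rope{i,A}^-$ swapped.
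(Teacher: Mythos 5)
Your proof is correct and is essentially the paper's own argument: the paper simply displays the telescoping identity obtained by adding and subtracting $(\rope{i,A}^+f)(\rope{i,A}^-g)$, whereas you spell out the algebra-morphism property of $\rope{i,A}^{\pm}$ and the add-and-subtract step explicitly. (Incidentally, the paper's displayed identity contains a small typo, writing $\rope{i,A}^-f$ where $\rope{i,A}^-g$ is meant in the last numerator; your version has it right.)
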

\begin{proof}
This follows from the identity
    \begin{equation*}
     \frac{\rope{i,A}^+(fg)-\rope{i,A}^-(fg)}{x_i-t_{A,i}}=\frac{\rope{i,A}^+f-\rope{i,A}^-f}{x_i-t_{A,i}}\rope{i,A}^-(g)+\rope{i,A}^+f\frac{\rope{i,A}^+g-\rope{i,A}^-f}{x_i-t_{A,i}}.\qedhere
    \end{equation*}
\end{proof}
\begin{thm}
\label{thm:forestmultpos}
    The coefficients $c^H_{F,G}(\tl)$ in the expansion
    $$\forestpoly{F}(\xl;\tl)\forestpoly{G}(\xl;\tl)=\sum_H c^H_{F,G}(\tl)\,\forestpoly{H}(\xl;\tl)$$
    are Graham-positive.
\end{thm}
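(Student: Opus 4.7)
The plan is to derive the Graham-positivity from the coefficient extraction formula of Theorem~\ref{thm:coeffextract} together with the Leibniz rule for $\eope{i,A}$ (Proposition~\ref{prop:ForestLeibniz}) and the partial-application positivity result (Theorem~\ref{thm:PartialApplicationGrahamPositive}). The key idea is that the Leibniz rule is already ``multiplicatively positive'' in the sense that it only introduces $\rope{}^{\pm}$ operators (which feed into the split factors without any negative combination), so iterating it will express the action of $\eope{H}$ on the product $\forestpoly{F}\forestpoly{G}$ as a positive combination of products of $\star$-word operators acting separately on $\forestpoly{F}$ and on $\forestpoly{G}$.

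First I would observe that by Theorem~\ref{thm:coeffextract}, $c^H_{F,G}(\tl) = [\ev\star\eope{H}](\forestpoly{F}\forestpoly{G})$. Fix any trimming sequence $H = i_1\cdots i_k$ so that $\eope{H} = \eope{i_1}\star\cdots\star\eope{i_k}$. I would then iterate Proposition~\ref{prop:ForestLeibniz} from the innermost (rightmost) factor outward: at step $j$, Proposition~\ref{prop:ForestLeibniz} lets one replace $\eope{i_j,A}$ applied to a product $(uv)$ by $(\eope{i_j,A}u)(\rope{i_j,A}^- v) + (\rope{i_j,A}^+ u)(\eope{i_j,A}v)$, where the $A$ at each step is exactly $i_{j+1}\star\cdots\star i_k$, so by definition of $\star$-composition the resulting operators on $u$ and $v$ are themselves $\star$-words with $\star$-weight $L(H)$. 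Inducting on $k$, this produces the identity
\[
[\eope{H}](\forestpoly{F}\forestpoly{G}) = \sum_{S\subseteq[k]} \bigl([\Psi_S^F]\forestpoly{F}\bigr)\cdot \bigl([\Psi_S^G]\forestpoly{G}\bigr),
\]
where $\Psi_S^F$ is obtained from $\eope{i_1}\star\cdots\star\eope{i_k}$ by replacing $\eope{i_j}$ with $\rope{i_j}^+$ for each $j\notin S$, and $\Psi_S^G$ by replacing $\eope{i_j}$ with $\rope{i_j}^-$ for each $j\in S$. Both $\Psi_S^F$ and $\Psi_S^G$ have $\star$-weight $L(H)$.

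Next I would apply $\ev_{L(H)}$ to both sides. Since evaluation is a ring homomorphism, it distributes across the product, yielding
\[
c^H_{F,G}(\tl) = \sum_{S\subseteq[k]} \bigl([\ev\star\Psi_S^F]\forestpoly{F}\bigr)\cdot \bigl([\ev\star\Psi_S^G]\forestpoly{G}\bigr).
\]
Now Theorem~\ref{thm:PartialApplicationGrahamPositive} directly states that $[\Psi_S^F]\forestpoly{F}$ is a Graham-positive $\ZZ[\tl]$-linear combination of forest polynomials in $\wh{\tl}_{L(H)}$; applying $\ev_{L(H)}$ kills every such forest polynomial except the constant one (since $\forestpoly{G'}(\wh{\tl}_{L(H)};\wh{\tl}_{L(H)}) = \delta_{G',\emptyset}$), leaving exactly the Graham-positive coefficient of $\forestpoly{\emptyset}$ in that expansion. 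The same holds for $[\ev\star\Psi_S^G]\forestpoly{G}$.

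Finally, since Graham-positivity (membership in $\ZZ_{\ge 0}[t_2-t_1, t_3-t_2, \ldots]$) is preserved under products and sums, each summand of the above expression is Graham-positive, and hence so is $c^H_{F,G}(\tl)$. The only conceptual subtlety is verifying that the Leibniz decomposition produces exactly the $\star$-words $\Psi_S^F, \Psi_S^G$ described, i.e., that the $\star$-subscripts of the operators on the $\forestpoly{F}$ and $\forestpoly{G}$ factors at each step of the induction match those induced by the $\star$-composition convention; this follows directly from the definition $[\eope{i}\star\Phi] = \eope{i,\underline{\Phi}}\circ[\Phi]$ together with the unmodified form of the Leibniz rule, so there is no real obstacle—the main work is bookkeeping the $\star$-subscripts.
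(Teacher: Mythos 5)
Your proof is correct, and it takes a genuinely different route from the paper's. The paper's proof peels off a single $\eope{i}$ with $i\in\qdes{H}$, uses the Leibniz rule once, applies the trimming recursion $\eope{i}\forestpoly{F}=\delta_{i\in\qdes{F}}\forestpoly{F/i}(\xl;\wh{\tl}_i)$ to keep one factor a genuine forest polynomial, invokes \Cref{thm:PartialApplicationGrahamPositive} on only the \emph{other} factor, and then closes by induction on $|H|$. You instead fully unroll the Leibniz rule across all $k$ letters of a trimming sequence, obtaining the decoupled identity $[\eope{H}](\forestpoly{F}\forestpoly{G})=\sum_{S\subseteq[k]}\bigl([\Psi_S^F]\forestpoly{F}\bigr)\bigl([\Psi_S^G]\forestpoly{G}\bigr)$, apply $\ev_{L(H)}$ as a ring homomorphism, and then use \Cref{thm:PartialApplicationGrahamPositive} (in the form of its $G=\emptyset$ coefficient) symmetrically on \emph{both} factors. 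The bookkeeping you flag as the ``only subtlety'' --- that at step $j$ the depletion set is $i_{j+1}\star\cdots\star i_k$ regardless of which branch of Leibniz was taken, so the resulting operators on the $F$- and $G$-factors are indeed $\star$-words with $\star$-weight $L(H)$ --- is correct precisely because the $\star$-weight of $X_{i_1}\star\cdots\star X_{i_k}$ depends only on the subscripts and not on whether $X_{i_j}\in\{\eope{i_j},\rope{i_j}^+,\rope{i_j}^-\}$. Both proofs rest on the same two ingredients (\Cref{prop:ForestLeibniz} and \Cref{thm:PartialApplicationGrahamPositive}); the paper's is more economical, while yours buys a finer structural output, exhibiting $c^H_{F,G}(\tl)$ explicitly as a sum over $S\subseteq[k]$ of products of two Graham-positive quantities, which is a refinement that could be useful for extracting a combinatorial rule.
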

\begin{proof}
Let $i\in \qdes{H}$. 
Then by \Cref{prop:ForestLeibniz} we have \begin{align*}
c^H_{F,G}(\tl)&=[\ev\star \eope{H}]\,\forestpoly{F}\forestpoly{G}\\
&=[\ev\star \eope{H/i}]_{\{i\}}\left(\delta_{i\in \qdes{F}}\forestpoly{F}(\xl;\wh{\tl}_i)\,\rope{i}^-\forestpoly{G}+\delta_{i\in \qdes{G}}(\rope{i}^+\forestpoly{F})\,\forestpoly{G/i}(\xl;\wh{\tl}_i)\right).
\end{align*}
    As $\rope{i}^{-}\forestpoly{G}$ and $\rope{i}^{+}\forestpoly{F}$ are Graham-positive linear combinations of forest polynomials $\forestpoly{K}(\xl;\wh{\tl}_i)$ by \Cref{thm:PartialApplicationGrahamPositive}, the result follows by induction after applying the observation that
    \begin{equation*}
        [\ev\star \eope{H/i}]_{\{i\}}(f(\xl;\wh{\tl}_i))=([\ev\star \eope{H/i}]f)(\wh{\tl}_i).\qedhere
    \end{equation*}
\end{proof}

\begin{thm}
\label{thm:schubertexpandpos}
The coefficients $c^F_w(\tl)$ in the expansion
$$\schub{w}(\xl;\tl)=\sum_Fc^F_w(\tl)\,\forestpoly{F}(\xl;\tl)$$ are Graham-positive.
\end{thm}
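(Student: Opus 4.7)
The starting observation is that by \Cref{thm:coeffextract}, the coefficients extract as $c^F_w(\tl) = [\ev\star\eope{F}]\schub{w}$. I propose to proceed by induction on $\ell(w)$, with base case $w = \idem$ handled immediately by $\schub{\idem} = \forestpoly{\emptyset}$. The case $F = \emptyset$ is separately trivial throughout the induction, since $c^{\emptyset}_w = \schub{w}(\tl;\tl) = \delta_{w,\idem}$ by the normalization of double Schubert polynomials.

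For the inductive step, assume the statement for all $v$ with $\ell(v) < \ell(w)$ and fix $F \ne \emptyset$. I would choose any $i \in \qdes{F}$ and write $F = (F/i)\cdot i$, so $L(F) = L(F/i)\star\{i\}$. The identity $\eope{i} = \rope{i}^+\partial_i$ from \Cref{sec:equivariant_forest_polynomials} combined with the Schubert recursion $\partial_i\schub{w} = \delta_{i \in \des{w}}\schub{ws_i}$ yields $\eope{i}\schub{w} = \delta_{i \in \des{w}}\rope{i}^+\schub{ws_i}$; if $i \notin \des{w}$ we immediately conclude $c^F_w = 0$. Otherwise, using the $\star$-composition unfolding of \Cref{subsec:star_composition} to absorb the $\rope{i}^+$ into the outer operator,
\[
c^F_w \;=\; \ev_{L(F)}\,[\eope{F/i}]_{\{i\}}\rope{i}^+\schub{ws_i} \;=\; [\ev\star\eope{F/i}\star\rope{i}^+]\schub{ws_i}.
\]
By induction, $\schub{ws_i} = \sum_G c^G_{ws_i}(\tl)\,\forestpoly{G}(\xl;\tl)$ with each $c^G_{ws_i}(\tl)$ Graham-positive, so
\[
c^F_w \;=\; \sum_G c^G_{ws_i}(\tl)\;[\ev\star\eope{F/i}\star\rope{i}^+]\forestpoly{G}.
\]
Applying \Cref{thm:PartialApplicationGrahamPositive} to $\Phi = \eope{F/i}\star\rope{i}^+$ expresses $[\Phi]\forestpoly{G}$ as a Graham-positive $\ZZ[\tl]$-combination of forest polynomials $\forestpoly{H}(\xl;\wh{\tl}_{L(F)})$. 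Evaluating at $\xl = \wh{\tl}_{L(F)}$ and using the normalization $\forestpoly{H}(\wh{\tl}_{L(F)};\wh{\tl}_{L(F)}) = \delta_{H,\emptyset}$ collapses this sum to the Graham-positive coefficient of $\forestpoly{\emptyset}$. Hence $c^F_w$ is a sum of products of Graham-positive expressions, and is therefore Graham-positive.

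The main technical obstacle I anticipate is bridging the Schubert-native divided difference $\partial_i$ and the forest-native operator $\eope{i}$. The identity $\eope{i} = \rope{i}^+\partial_i$ handles this exchange at a single step, trading $\eope{i}\schub{w}$ for $\rope{i}^+\schub{ws_i}$; the $\star$-composition machinery of \Cref{sec:coeffextraction} is precisely what is needed to reabsorb the resulting $\rope{i}^+$ into the composite operator $\eope{F}$, reducing Graham-positivity for Schuberts to the already-established Graham-positivity of $\star$-compositions acting on forest polynomials.
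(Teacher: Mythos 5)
Your proof is correct, but it routes through a genuinely different inductive strategy than the paper's. The paper strengthens the statement to be proved by induction on $\ell(w)$: it shows that \emph{every} expression of the form $[\ev\star\Phi_{i_1}\star\cdots\star\Phi_{i_k}]\,\schub{w}$ is Graham-positive, not just the specific extractions $[\ev\star\eope{F}]\,\schub{w}$. This lets the induction stay entirely in the Schubert world — after peeling off $\eope{i}=\rope{i}^-\partial_i$, the left-over $\rope{i}^-$ just gets absorbed into a larger $\star$-word applied to $\schub{ws_i}$, and the strengthened hypothesis applies. The base case ($K=\emptyset$ after straightening via \Cref{lem:straightenmonadically}) bottoms out at the AJS--Billey formula for \emph{Schubert} polynomials (\Cref{th:AJSB}). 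You, by contrast, keep the unstrengthened inductive hypothesis but pay for it by converting $\schub{ws_i}$ into the double forest basis (valid since $\ell(ws_i)<\ell(w)$), applying $\ZZ[\tl]$-linearity, and then invoking \Cref{thm:PartialApplicationGrahamPositive} — whose proof ultimately bottoms out at the AJS--Billey formula for \emph{forest} polynomials (\Cref{thm:AJS_Billey_for_Forests}). Both routes are sound: the choice of $\rope{i}^+$ versus $\rope{i}^-$ is cosmetic since $\eope{i}=\rope{i}^+\partial_i=\rope{i}^-\partial_i$, the unfolding $[\ev\star\eope{F/i}\star\rope{i}^+]=\ev_{L(F)}[\eope{F/i}]_{\{i\}}\rope{i}^+$ is exactly the recursion in \Cref{subsec:star_composition}, and the final evaluation step correctly uses $\forestpoly{H}(\wh{\tl}_{L(F)};\wh{\tl}_{L(F)})=\delta_{H,\emptyset}$ to collapse the \Cref{thm:PartialApplicationGrahamPositive} expansion to a single Graham-positive scalar. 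What your approach buys is a tighter parallel with the paper's proof of \Cref{thm:forestmultpos}, which also reduces to \Cref{thm:PartialApplicationGrahamPositive}; what the paper's approach buys is independence from the forest-polynomial decomposition of $\schub{ws_i}$, going straight to Schubert evaluations at noncrossing partitions instead.
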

\begin{proof}
We have
\begin{align*}
    c_w^F(\tl)=[\ev\star \eope{F}]\,\schub{w}.
\end{align*}
We prove the stronger statement that any expression $[\ev\star\Phi_{i_1}\star \cdots \Phi_{i_k}]\,\schub{w}$ is Graham-positive using induction on $\ell(w)$. 
By \Cref{lem:straightenmonadically}, it suffices to show this for $
[
\ev\star \rope{j_1}^{\epsilon_1}\star \cdots \star\rope{j_\ell}^{\epsilon_\ell}\star \eope{K}]_A$.
    
    If $K=\emptyset$, then this is equal to $\ev_{\sigma}\schub{w}$ for some $\sigma\in \NC_n$ by Lemma~\ref{lem:RcomposeNC}.
    The Graham-positivity now follows from the AJS--Billey formula {\cite{AJS,Bil99}}  recalled in \Cref{th:AJSB}. 
    
    So suppose $K\ne \emptyset$, and consider $i\in \qdes{K}$. 
    By writing $\eope{i}$ as $\rope{i}^{-}\partial_i$ we have
    \begin{align*}
    [\ev\star \rope{j_1}^{\epsilon_1}\star \cdots \star\rope{j_\ell}^{\epsilon_\ell}\star \eope{K}]\schub{w}=&[\ev\star \rope{j_1}^{\epsilon_1}\star \cdots \star\rope{j_\ell}^{\epsilon_\ell}\star \eope{K/i}]_{\{i\}} \rope{i}^-\partial_i\schub{w}.
    \end{align*}
    The right-hand side equals $0$ if $i\notin\des{w}$. 
    If $i\in \des{w}$, then it equals
    \begin{align*}
   [\ev\star \rope{j_1}^{\epsilon_1}\star \cdots \star\rope{j_\ell}^{\epsilon_\ell}\star \eope{K/i}]_{\{i\}}\, \rope{i}^-\schub{ws_i}
    =&[\ev\star \rope{j_1}^{\epsilon_1}\star \cdots \star\rope{j_\ell}^{\epsilon_\ell}\star \eope{K/i}\star \rope{i}^-]\,\schub{ws_i}.
    \end{align*}
    Since $\ell(ws_i)<\ell(w)$ when $i\in \des{w}$ the result follows by induction.
\end{proof}

As special cases of the preceding two results we obtain the following results.
\begin{cor}The following hold.
    \begin{enumerate}
    \item 
    The coefficients $a^e_{c,d}(\tl)$  in
    $$\slide{c}(x_1,\ldots,x_n;\tl)\slide{d}(x_1,\ldots,x_n;\tl)=\sum a^e_{c,d}(\tl)\slide{e}(x_1,\ldots,x_n;\tl)$$
    are Graham-positive.
    \item 
    The coefficients $b_{\lambda}^c(\tl)$ in $$s_\lambda(x_1,\ldots,x_n;\tl)=\sum b_{\lambda}^c(\tl)\slide{c}(x_1,\ldots,x_n;\tl)$$
    are Graham-positive.
    \end{enumerate}
\end{cor}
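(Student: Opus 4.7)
The plan is to deduce both statements directly from Theorems~\ref{thm:forestmultpos} and \ref{thm:schubertexpandpos}, using the identification $\slide{c}(\xl_n;\tl) = \forestpoly{F_c}(\xl_n;\tl)$ for $F_c \in \zigzag{n}$ together with the characterization of $\eqsym{n}$ in Theorem~\ref{thm:qsymbasis}. The key observation is that both $\slide{c}(\xl_n;\tl)\slide{d}(\xl_n;\tl)$ and $s_\lambda(\xl_n;\tl)$ lie in the subring $\eqsym{n}$, which will force the Graham-positive double forest expansions provided by the earlier theorems to be supported only on zigzag forests.

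For (1), Theorem~\ref{thm:forestmultpos} applied to $F = F_c$ and $G = F_d$ gives
$$\slide{c}(\xl_n;\tl)\slide{d}(\xl_n;\tl) = \forestpoly{F_c}(\xl;\tl)\forestpoly{F_d}(\xl;\tl) = \sum_{H \in \indexedforests} c^H_{F_c,F_d}(\tl)\,\forestpoly{H}(\xl;\tl),$$
with every $c^H_{F_c,F_d}(\tl)$ Graham-positive. Since $\eqsym{n}$ is a $\ZZ[\tl]$-algebra, the left-hand side lies in $\eqsym{n}$; by Theorem~\ref{thm:qsymbasis} and the $\ZZ[\tl]$-linear independence of the double forest polynomials (Corollary~\ref{cor:basiscor}), the coefficient $c^H_{F_c,F_d}(\tl)$ must vanish unless $H \in \zigzag{n}$, i.e.\ $H = F_e$ for some $e \in \padded{n}$. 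Hence $a^e_{c,d}(\tl) = c^{F_e}_{F_c,F_d}(\tl)$ is Graham-positive.

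For (2), the double Schur polynomial $s_\lambda(\xl_n;\tl)$ coincides (up to the standard identification) with the double Schubert polynomial $\schub{w_\lambda}(\xl;\tl)$ for the Grassmannian permutation $w_\lambda$ with unique descent at position $n$. Applying Theorem~\ref{thm:schubertexpandpos} to $w_\lambda$ yields a Graham-positive expansion
$$s_\lambda(\xl_n;\tl) = \schub{w_\lambda}(\xl;\tl) = \sum_{F \in \indexedforests} c_{w_\lambda}^{F}(\tl)\,\forestpoly{F}(\xl;\tl).$$
Because $s_\lambda \in \esym{n} \subset \eqsym{n}$, the same linear-independence argument as in (1) forces $c_{w_\lambda}^F(\tl) = 0$ unless $F \in \zigzag{n}$, and therefore $b_\lambda^c(\tl) = c_{w_\lambda}^{F_c}(\tl)$ is Graham-positive.

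The only subtle point, and the step I would be most careful about, is the identification between $s_\lambda(\xl_n;\tl)$ and a specific double Schubert polynomial $\schub{w_\lambda}(\xl;\tl)$: depending on conventions, one may need an explicit reindexing of the equivariant variables to match the factorial Schur in the form used throughout the paper. This is a standard verification from the divided difference recursion ($\partial_i \schub{w_\lambda} = 0$ for $i \neq n$, giving symmetry in $x_1,\ldots,x_n$) and does not affect the Graham-positivity conclusion, since Graham-positivity is preserved under the relevant substitutions.
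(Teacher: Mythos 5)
Your proof is correct and follows the same route as the paper: invoke Theorems~\ref{thm:forestmultpos} and \ref{thm:schubertexpandpos} to get Graham-positive double forest expansions, then observe that the left-hand sides lie in $\eqsym{n}$, so by Theorem~\ref{thm:qsymbasis} only zigzag forests (i.e.\ double fundamental quasisymmetric polynomials) can appear. Your flagged concern about the $s_\lambda = \schub{w_\lambda}$ identification is already settled by the paper's own conventions (see the proof of Corollary~\ref{cor:coxeter}, which uses exactly this identification), so no extra reindexing is needed.
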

\begin{proof}
    These immediately follow from  \Cref{thm:forestmultpos} and \Cref{thm:schubertexpandpos} respectively -- we note that the left-hand sides of the two equations lie in $\eqsym{n}$ and hence the only double forest polynomials that can appear in their expansion are the double fundamental quasisymmetric polynomials by \Cref{thm:qsymbasis}.
\end{proof}

\pagebreak

\begin{longtable}{|c|l|} 
\hline
$F$ & \text{Expansion of $\forestpoly{F}$ with $y_{ij}\coloneqq x_i-t_j$ and $z_{ij}\coloneqq t_j-t_i$ }  \\
\hline
$\emptyset $ & $ 1 $\\
$1$ & $y_{11}$\\
$2$ & $y_{12}+z_{12}+y_{22}$\\
$3$ & $y_{13}+z_{13}+y_{23}+z_{23}+y_{33}$\\
$1\cdot 1$  & $y_{11}y_{12}$\\
$1\cdot 2$ & $y_{11}z_{12}+y_{11}y_{22}$\\
$1\cdot 3$ & $z_{13}y_{11}+y_{11}y_{23}+z_{23}y_{11}+y_{11}y_{33}+y_{11}y_{13}$\\
$2\cdot 2$ & $y_{12}y_{13}+z_{12}y_{13}+y_{13}y_{22}+z_{13}y_{22}+y_{22}y_{23}$\\
$2\cdot 3$ & $z_{13}y_{12}+y_{12}y_{23}+z_{23}y_{12}+y_{12}y_{33}+z_{12}z_{13}+z_{12}y_{23}+z_{12}z_{23}+z_{12}y_{33}+z_{23}y_{22}+$\\
{} & $y_{22}y_{23}$\\
$1\cdot 1\cdot 1$ & $y_{11}y_{12}y_{13}$\\
$1\cdot 1\cdot 2$ & $z_{12}y_{11}y_{13}+y_{11}y_{13}y_{22}$\\
$1\cdot 1\cdot 3$ & $z_{13}y_{12}y_{11}+y_{11}y_{12}y_{23}+z_{23}y_{11}y_{12}+y_{11}y_{12}y_{33}$\\
$1\cdot 2\cdot 2$ & $z_{13}y_{11}y_{22}+y_{11}y_{22}y_{23}$\\
$1\cdot 2\cdot 3$ & $z_{12}z_{13}y_{11}+z_{12}y_{11}y_{23}+z_{12}z_{23}y_{11}+z_{12}y_{11}y_{33}+z_{23}y_{11}y_{22}+y_{11}y_{22}y_{33}$\\
$2\cdot 2\cdot 2$ & $y_{12}y_{13}y_{14}+z_{12}y_{13}y_{14}+y_{13}y_{14}y_{22}+z_{13}y_{14}y_{22}+y_{14}y_{22}y_{23}+z_{14}y_{22}y_{23}+y_{22}y_{23}y_{24}$\\
$2\cdot 2\cdot 3$ & $z_{13}y_{12}y_{14}+y_{12}y_{14}y_{23}+z_{23}y_{12}y_{14}+y_{12}y_{14}y_{33}+z_{12}z_{13}y_{14}+z_{12}y_{14}y_{23}+z_{12}z_{23}y_{14}+$\\
{} & $z_{12}y_{14}y_{33}+z_{23}y_{14}y_{22}+y_{14}y_{22}y_{33}+z_{23}z_{14}y_{22}+z_{14}y_{22}y_{33}+z_{23}y_{22}y_{24}+y_{22}y_{24}y_{33}$\\
$2\cdot 2\cdot 4$ & $z_{14}   y_{12}   y_{13}+y_{12}   y_{13}   y_{24}+z_{24}   y_{13}   y_{12}+ y_{12}   y_{13}   y_{34}+z_{34}   y_{12}   y_{13}+y_{12}   y_{13}   y_{44}+z_{12} z_{14}y_{13}+$\\
{} & $
z_{12}   y_{13}   y_{24}+
z_{12}   z_{24}   y_{13}+z_{12}   y_{13}   y_{34}+z_{12}   z_{34}   y_{13}+z_{12}   y_{13}   y_{44}+z_{24}   y_{13}   y_{22}+y_{13}   y_{22}   y_{34}+ 
$\\
{} & $
z_{34}   y_{13}   y_{22}+y_{13}   y_{22}   y_{44}
+
z_{13}   z_{24}   y_{22}+z_{13}   y_{22}   y_{34}+z_{13}   z_{34}   y_{22}+z_{13}   y_{22}   y_{44}+z_{24}   y_{23}   y_{22}+$\\
{} & $y_{22}   y_{23}   y_{34}+z_{34}   y_{22}   y_{23}+y_{22}   y_{23}   y_{44}+
z_{14}y_{13}y_{22}+y_{13}y_{22}y_{24}+z_{13}z_{14}y_{22}+z_{13}y_{22}y_{24}$ \\
$2\cdot 3\cdot 4$ & $z_{13}z_{14}y_{12}+ z_{13}y_{12}y_{24}+z_{13}z_{24}y_{12}
   + z_{13}y_{12}y_{34}+z_{13}z_{34}y_{12}
   +   z_{13}     y_{12}     y_{44}
     + z_{24}     y_{12}     y_{23}+$\\
     {} & $y_{12}y_{23} y_{34}
   +z_{34}y_{12}y_{23}
 +y_{12}y_{23}y_{44}
 +z_{23} z_{24}y_{12}
   +  z_{23}y_{12}y_{34}
 +z_{23}z_{34}y_{12}
   +   z_{23}y_{12}y_{44}+$\\
 {} & $
    z_{34}y_{12}y_{33}
 +y_{12}y_{33}y_{44}
 +z_{12}z_{13}z_{14}
   +   z_{12}z_{13}y_{24}
    +  z_{12}z_{13}z_{24}
    +  z_{12}z_{13}y_{34}
    +  z_{12}z_{13}z_{34}+$\\
 {} & $
     z_{12}     z_{13}     y_{44}
 +z_{12}     z_{24}     y_{23}
   +   z_{12}     y_{23}     y_{34}
 +z_{12}     z_{34}     y_{23}
   +   z_{12}     y_{23}     y_{44}
    +  z_{12}     z_{23}     z_{24}
 +z_{12}     z_{23}     y_{34}+$\\
 {} & $
    z_{12}     z_{23}     z_{34}
 +z_{12}     z_{23}     y_{44}
 +z_{12}     z_{34}     y_{33}
   +   z_{12}     y_{33}     y_{44}+
 z_{23}     z_{24}     y_{22}
    +  z_{23}     y_{22}     y_{34}
 +z_{23}     z_{34}     y_{22}+$\\
 {} & $
     z_{23}y_{22}y_{44}+z_{34}y_{22}y_{33}
 +y_{22}y_{33}y_{44}
$\\
$1\cdot 1\cdot 1\cdot 1$ & $y_{11}y_{12}y_{13}y_{14}$\\
$1\cdot 1\cdot 1\cdot 2$ & $z_{12}y_{11}y_{13}y_{14}+y_{11}y_{13}y_{14}y_{22}$\\
$1\cdot 1\cdot 2\cdot 2$ & $z_{13}y_{11}y_{14}y_{22}+y_{11}y_{14}y_{22}y_{23}$\\
$1\cdot 1\cdot 3\cdot 3$ & $z_{14}  y_{12}  y_{11}  y_{23}+z_{23}  z_{14}  y_{11}  y_{12}+z_{14}  y_{12}  y_{11}  y_{33}+y_{11}  y_{12}  y_{23}  y_{24}+z_{23}  y_{11}  y_{12}  y_{24}+$\\
{} & $y_{11}  y_{12}  y_{24}  y_{33}+z_{24}  y_{11}  y_{12}  y_{33}+y_{11}  y_{12}  y_{33}  y_{34}
$\\
\hline
\caption{Expansions of $\forestpoly{F}$ from the vine model}
\label{table:forestpolys}
\end{longtable}

\pagebreak
\begin{longtable}{|c|l|} 
\hline
$w$ & \text{Double forest expansion of $\schub{w}$}  \\
\hline
$\id $ & $ 1 $\\
$21$ & $\forestpoly{1}$\\
$132$ & $\forestpoly{2}$\\
$231$ & $\forestpoly{1\cdot 2}$\\
$312$ & $\forestpoly{1\cdot 1}$\\
$321$ & $\forestpoly{1\cdot 1\cdot 2}+(t_3-t_2)\,\forestpoly{1\cdot 2}$\\
$1243$ & $\forestpoly{3}$\\
$1342$ & $\forestpoly{2\cdot 3}$\\
$1423$ & $\forestpoly{2\cdot 2}$\\
$1432$ & $\forestpoly{2\cdot 2\cdot 3} + (t_4-t_3)\,\forestpoly{2\cdot 3}+\forestpoly{1\cdot 2\cdot 2}$\\
$2143$ & $\forestpoly{1\cdot 3}$\\
$2341$ & $\forestpoly{1\cdot 2\cdot 3}$\\
$2413$ & $\forestpoly{1\cdot 2\cdot 2}+\forestpoly{1\cdot 1\cdot 2}$\\
$2431$ & $\forestpoly{1\cdot 2\cdot 2\cdot 3}+\forestpoly{1\cdot 1 \cdot 2\cdot  3}+(t_4-t_3)\,\forestpoly{1\cdot 2\cdot 3}+(t_2-t_1)\,\forestpoly{1\cdot 2\cdot 2}$\\
$3142$ & $\forestpoly{1\cdot 1\cdot 3}$\\
$3241$ & $\forestpoly{1\cdot 1\cdot 2\cdot 3}+(t_4-t_2)\,\forestpoly{1\cdot 2\cdot 3}$\\
$3412$ & $\forestpoly{1\cdot 1\cdot 2\cdot 2}+(t_4-t_2)\,\forestpoly{1\cdot 2\cdot 2}$\\
$3421$ & $\forestpoly{1\cdot 1\cdot 2\cdot 2\cdot 3}+(t_5-t_2)\,\forestpoly{1\cdot 2\cdot 2\cdot  3}+(t_2-t_1)\forestpoly{1\cdot 1\cdot  2\cdot 2}+(t_2-t_1)(t_4-t_2)\,\forestpoly{1\cdot 2\cdot 2}$\\
$4123$ & $\forestpoly{1\cdot 1\cdot 1}$\\
$4132$ & $\forestpoly{1\cdot 1\cdot 1\cdot 3}+(t_4-t_3)\,\forestpoly{1\cdot 1\cdot 3}$\\
$4213$ & $\forestpoly{1\cdot 1\cdot 1\cdot 2}+(t_4-t_2)\,\forestpoly{1\cdot 1\cdot 2}$\\
$4231$ & $\forestpoly{1\cdot 1\cdot 1\cdot 2\cdot 3}+(t_5+t_4-t_3-t_2)\,\forestpoly{1\cdot 1\cdot 2\cdot 3}+(t_4-t_3)(t_4-t_2)\,\forestpoly{1\cdot 2\cdot 3}$\\
$4312$ & $\forestpoly{1\cdot 1\cdot 1\cdot 2\cdot 2}+(t_5+t_4-t_3-t_2)\,\forestpoly{1\cdot 1\cdot 2\cdot 2}+(t_4-t_3)(t_4-t_2)\,\forestpoly{1\cdot 2\cdot 2}$\\
$4321$ & $\forestpoly{1\cdot 1\cdot 1\cdot 2\cdot 2\cdot 3}+(t_6+t_5-t_3-t_2)\,\forestpoly{1\cdot 1\cdot 2\cdot 2\cdot 3}+(t_2-t_1)\,\forestpoly{1\cdot 1\cdot 1\cdot 2\cdot 2}+(t_5-t_2)(t_5-t_3)\,\forestpoly{1\cdot 2\cdot 2\cdot 3}+$\\
${}$ & $(t_2-t_1)(t_5+t_4-t_3-t_2)\,\forestpoly{1\cdot 1\cdot 2\cdot 2}+(t_2-t_1)(t_4-t_2)(t_4-t_3)\forestpoly{1\cdot 2\cdot 2}$\\
$12354$ & $\forestpoly{4}$\\
$12453$ & $\forestpoly{3\cdot 4}$\\
$12534$ & $\forestpoly{3\cdot 3}$\\
$12543$ & $\forestpoly{3\cdot 3\cdot 4}+\forestpoly{2\cdot 3\cdot 3}+(t_5-t_4)\forestpoly{3\cdot 4}$\\
$13452$ & $\forestpoly{2\cdot 3\cdot 4}$\\
$13524$ & $\forestpoly{2\cdot 3\cdot 3}+\forestpoly{2\cdot 2\cdot 3}$\\
$14523$ & $\forestpoly{2\cdot 2\cdot 3\cdot 3}+\forestpoly{1\cdot 2\cdot 2\cdot 3}+(t_5-t_3)\forestpoly{2\cdot 3\cdot 3}$\\
$34512$ & $\forestpoly{1\cdot 1\cdot 2\cdot 2\cdot 3\cdot 3}+(t_6-t_2)\forestpoly{1\cdot 2\cdot 2\cdot 3\cdot 3}+(t_3-t_1)\forestpoly{1\cdot 1\cdot 2\cdot 2\cdot 3}+(t_3-t_1)(t_5-t_2)\forestpoly{1\cdot 2\cdot 2\cdot 3}$\\
\hline
\caption{Some double Schubert to double  forest expansions}
\label{table:Schubtofor}
\end{longtable}

\pagebreak
\begin{longtable}{|c|c|l|} 
\hline
$c$ & $d$ & \text{``Equivariant summand'' in $\slide{c}(\xl;\tl)\cdot \slide{d}(\xl;\tl)$}  \\
\hline

$0001$ & $0001$  & $ (t_5-t_4)\slide{0001} $\\

$0001$ & $0 0 1 1$ & $(t_5-t_3)\slide{0011}$\\
$0001$ & $0 0 0 2$ & $(t_6-t_4)\slide{0002}$\\

$0001$ & $0 0 0 3$ & $(t_7-t_4)\slide{0003}$\\
$0001$ & $0 0 1 2$ & $(t_6 + t_5 - t_4 - t_3
)\slide{0012}$\\
$0001$ & $0 0 2 1$ & $(t_6  - t_3
)\slide{0021}+(t_6-t_5)\slide{0012}$\\
$0001$ & $0111$ & $(t_5-t_2)\slide{0111}$\\

$0011$ & $0011$ & $(t_6-t_3)\slide{0012}+(t_4-t_3)\slide{0021}+(t_5 + t_4 - t_3 - t_2)\slide{0111}+(t_4 - t_3) (t_5 - t_3)\slide{0011}$\\
$0011$ & $0002$ & $(t_6-t_3)(\slide{0012}+\slide{0021})$\\

$0011$ & $0003$ & $(t_7-t_3)(\slide{0013}+\slide{0031}+\slide{0022})$\\
$0011$ & $0012$ & $(t_7-t_3)\slide{0013}+(t_5-t_3)\slide{0022}+(t_6+t_5-t_3-t_2)(\slide{0112}+\slide{0121})+$\\
${}$ & ${}$ & $(t_5-t_3)(t_6-t_3)\slide{0012}$\\
$0011$ & $0021$ & $(t_7+t_6 - t_5-t_3)\slide{0022}+(t_6-t_5)\slide{0112}+(t_4-t_3)\slide{0031}+$\\
${}$ & ${}$ & $(2t_6 +t_4 -t_5  - t_3 - t_2)\slide{0121}+(t_6 + t_4 - t_3 - t_2)\slide{0211}+(t_6-t_3)(t_6-t_5)\slide{0012}+
$\\
${}$ & ${}$ & $(t_4-t_3)(t_6-t_3)\slide{0021}$\\

$0011$ & $0111$ & $(t_6-t_2)\slide{0112}+(t_6+t_4-2t_2)\slide{0121}+(t_4-t_2)\slide{0211}+ (t_5 + t_4 - t_2 - t_1)\slide{1111} +$\\
${}$ & ${}$ & $(t_4-t_2)(t_5-t_2)\slide{0111}$\\

$0002$ & $0002$ & $(t_7 + t_6 - t_5 - t_4)\slide{0003}+(2t_6-t_5-t_4)\slide{0012}+(t_6-t_5)\slide{0021}+$\\
{} & {} & $(t_6-t_4)(t_6-t_5)\slide{0002}$\\
$0002$ & $0003$ & $(t_8 + t_7 - t_5 - t_4
)\slide{0004}+(2t_7-t_5-t_4)(\slide{0013}+\slide{0022})+(t_7-t_5)\slide{0031}+$\\
${}$ & ${}$ &
$(t_7-t_4)(t_7-t_5)\slide{0003}$\\

$0002$ & $0012$ & $(t_7 + t_6 - t_4 - t_3
)(\slide{0013}+\slide{0022})+(t_6-t_4)\slide{0112}+(t_6-t_2)\slide{0121}+$\\{} & {} & $(t_6-t_3)(t_6 - t_4)\slide{0012}
$
\\
$0002$ & $0021$ & $(t_7-t_5)\slide{0013}+(t_7 +t_6 - t_5 - t_3
)(\slide{0031}+\slide{0022})+(t_6-t_5)(\slide{0112}+\slide{0211})+$\\
${}$ & ${}$ & $(2t_6-t_5-t_3)\slide{0121}+(t_6-t_5)(t_6-t_3)(\slide{0012}+\slide{0021})$
\\
$0002$ & $0111$ & $(t_6-t_2)(\slide{0112}+\slide{0121}+\slide{0211})$\\
$0012$ & $0012$ & $(t_8-t_3)\slide{0014}+(t_7-t_2)\slide{0131}+(t_5-t_2)\slide{0221}+(t_5-t_4)(\slide{0212}+\slide{1112})+$\\
{} & {} & $(t_7-t_2)\slide{0131}+(t_5-t_2)\slide{0221}+(t_6 + t_5 - t_4 - t_3)\slide{0023}+(t_6+t_5-t_2-t_1)\slide{1121}+$\\
{} & {} & $(t_7 + t_6 + t_5 - t_4 - t_3 - t_2)(\slide{0113}+2\slide{0122})+(t_5-t_2)(t_6-t_2)\slide{0121}+$\\
{} & {} & $(t_6 + t_5 - t_4 - t_3)(t_7 - t_3))\slide{0013}+(t_5-t_4)(t_5-t_3)\slide{0022}+$\\
{} & {} & $(t_5 - t_4)(t_6 + t_5 - t_3 - t_2)\slide{0112}+(t_5 - t_3)(t_5-t_4)(t_6 - t_3)\slide{0012}$\\
\hline
\caption{Expansions of $\slide{c}\cdot \slide{d}$ where we only record summands indexed by padded compositions of size $<|c|+|d|$}
\label{table:fundamentalprod}
\end{longtable}

\bibliographystyle{hplain}
\bibliography{main.bib}
\end{document}